\newtheorem{lemma}{Lemma}[section]
\newtheorem{thm}[lemma]{Theorem}
\newtheorem{rem}[lemma]{Remark}
\newtheorem{prop}[lemma]{Proposition}
\newtheorem{cor}[lemma]{Corollary}
\newtheorem{example}[lemma]{Example}
\newtheorem{defn}[lemma]{Definition}
\newcommand{\cl}{C \kern -0.1em \ell}
\newcommand\matR{{\mathbb{R}}}
\renewcommand{\hbar}{{\overline{h}}}
\newfont{\Got}{eufm10 scaled 1200}
\newcommand\calD{{\mathcal D}}
\begin{document}

\title{Pseudo-bundles of exterior algebras as diffeological Clifford modules}

\author{Ekaterina~{\textsc Pervova}}

\maketitle

\begin{abstract}
\noindent We consider the diffeological pseudo-bundles of exterior algebras, and the Clifford action of the corresponding Clifford algebras, associated to a given finite-dimensional and locally trivial
diffeological vector pseudo-bundle, as well as the behavior of the former three constructions (exterior algebra, Clifford action, Clifford algebra) under the diffeological gluing of pseudo-bundles. Despite these
being our main object of interest, we dedicate significant attention to the issues of compatibility of pseudo-metrics, and the gluing-dual commutativity condition, that is, the condition ensuring that the dual of the
result of gluing together two pseudo-bundles can equivalently be obtained by gluing together their duals, which is not automatic in the diffeological context. We show that, assuming that the dual of the gluing
map, which itself does not have to be a diffeomorphism, on the total space is one, the commutativity condition is satisfied, via a natural map, which in addition turns out to be an isometry for the
natural pseudo-metrics on the pseudo-bundles involved.

\noindent MSC (2010): 53C15 (primary), 57R35, 57R45 (secondary).
\end{abstract}

\section*{Introduction}

This work is intended as a supplement to \cite{clifford-alg}, dealing with some issues regarding pseudo-bundles of exterior algebras associated to finite-dimensional diffeological vector pseudo-bundles,
viewed as pseudo-bundles of diffeological Clifford modules, so endowed with the action of the corresponding diffeological pseudo-bundles of Clifford algebras. Let us explain as briefly as possible what all
these objects are; the precise definitions are to be found in a dedicated section, or else in the references given therein.

First of all, the \emph{diffeology}. The notion is due to J.M. Souriau \cite{So1}, \cite{So2} and is a categorical extension of the notion of a smooth structure; in essence, or maybe as an example, it is a way to
consider a topological space which is in no way a manifold, as if it were one. In and of itself, a diffeology on a set is a collection of maps into this set which are declared to be smooth; this collection must satisfy
certain conditions. The set is then a diffeological space, and all the basic constructions follow; there is a notion of smooth maps between two diffeological spaces, that of the underlying topology, the so-called
\emph{D-topology} (introduced in \cite{iglFibre}, see also \cite{CSW_Dtopology} for a recent treatment), and so on. The notion builds on existing ones, such as those of \emph{differentiable spaces},
\emph{V-manifolds}, and so on (see, for instance, \cite{satake1957}, \cite{chen1977}, to name a few). A particularly important aspect of diffeology is that all the usual topological constructions, notably subsets
and quotients, have an inherited diffeological structure (unlike the case of smooth manifolds, where subsets and quotients are quite rarely smooth manifolds themselves).

The basic object for us, though, is not just a diffeological space, but a \emph{diffeological vector pseudo-bundle} (\cite{iglFibre}, \cite{vincent}, \cite{iglesiasBook}, \cite{pseudobundles}). The difference with
respect to the standard notion is not only in the fact that the smooth structure is replaced by a diffeological one (under some respects this would be a minor difference), but also in that it does not have to be
locally trivial, although in many contexts we do add this assumption, see the discussion of  \emph{pseudo-metrics}, which replace the usual Riemannian metrics --- diffeological pseudo-bundles frequently
do not carry the latter.

On the other hand, as explained in the references listed above, the usual operations on vector bundles have their diffeological counterparts; in particular, direct sums, tensor products, and taking duals all
apply. From this, obtaining pseudo-bundles of tensor algebras, those of exterior algebras, or defining, in the abstract, pseudo-bundles of Clifford modules is automatic.

The point of view that we take in this paper has to do with studying the behavior of these concepts under the operation of \emph{diffeological gluing}. This procedure is one of the many possible extensions of
the concept of an atlas on a smooth manifold, and the resulting spaces are among the more obvious extensions of smooth manifolds and include some well-known singular spaces; for instance, a manifold
with a conical singularity can be seen as a result of gluing of a usual smooth manifold to a single-point space.

For the basic operations on diffeological vector pseudo-bundles, the behavior under diffeological gluing was considered in \cite{pseudobundles} (see also \cite{pseudometric-pseudobundle} for some details);
for tensor algebras and pairs of given Clifford modules, in \cite{clifford-alg}. What is lacking is a study of gluing of pseudo-bundles of the exterior algebras, in particular, the covariant version. This paper aims to
fill this void (another motivation for it is to provide some necessary building blocks for defining the notion of a diffeological Dirac operator and studying its behavior under gluing, see \cite{dirac}).

\paragraph{The content} Section 1 goes over the main definitions used and introduces notation. In Section 2 we consider the compatibility of pseudo-metrics in terms of the assumptions on the gluing map(s);
in Section 3 we relate this to the gluing-dual commutativity condition, and in Section 4 we show that the compatibility of dual pseudo-metrics implies that the commutativity condition must be satisfied. In
Section 5 we show that the gluing-dual commutativity diffeomorphism is an isometry. All these allow us to consider, in Sections 6-8, Clifford algebras (the covariant case), the exterior algebras, and the
corresponding Clifford actions; in particular, in Section 8 we establish, where appropriate, several equivalences showing that, again under the gluing-dual commutativity assumption, everything reduces to two
cases, the contravariant case and the covariant one. Section 9 contains a couple of simple (but necessarily lengthy) examples.

\paragraph{Acknowledgments} This work benefitted from some assistance, for which I would like to thank\\ Prof.~Riccardo Zucchi, even if he always says that he is being overvalued, some of his doctorate
students (Martina and Leonardo), although they do not expect this at all, and also Prof.~Mario Petrini (he most definitely will be surprised).

\section{Main definitions and known facts}

We now go, as briefly as possible, over the main definitions that appear in what follows (for terms whose use is not as frequent, we will provide definitions as we go along).

\subsection{Diffeology and diffeological vector spaces}

Let $X$ be a set. A \textbf{diffeology} on $X$ (see \cite{So1}, \cite{So2}) is a set $\calD=\{p:U\to X\}$ of maps into $X$, each defined on a domain of some $\matR^n$ (with varying $n$), that satisfies the
following conditions: 1) it includes all constant maps, \emph{i.e.}, maps of form $U\to\{x_0\}$, for all open (possibly disconnected) sets $U\subseteq\matR^n$ and for all points $x_0\in X$; 2) for any
$\calD\ni p:U\to X$ and for any usual smooth map $g:V\to U$ (again defined on some domain $V\subseteq\matR^m$) we have $p\circ g\in\calD$; and 3) if a set map $p:U\to X$ is such that its domain of 
definition $U\subseteq\matR^n$ has an open cover $U=\cup_{i\in I}U_i$ for which $p|_{U_i}\in\calD$ then $p\in\calD$.

The maps composing $\calD$ are called \textbf{plots}. A standard example of diffeology/ diffeological space is a usual smooth manifold $M$, with diffeology composed of all usual smooth maps into $M$. On
the other hand, any set (usually at a least a topological space) admits plenty of non-standard diffeologies, obtained via the concept of a \emph{generated diffeology}.

\paragraph{Generated diffeologies} Given a fixed set $X$, various diffeologies on it can be compared with respect to the inclusion;\footnote{In fact, each diffeology is just a set of maps, which may wholly
contain another diffeology, or be contained in one; in fact, there is a complete lattice on them on any $X$.} for two diffeologies $\calD$ and $\calD'$ such that $\calD\subset\calD'$ one says that $\calD$ is
\textbf{finer} than $\calD'$, whereas $\calD'$ is said to be \textbf{coarser}. Frequently, for a given property $P$ which a diffeology might possess, there is the finest and/or the coarsest diffeology with the
property $P$ (see \cite{iglesiasBook}, Sect. 1.25); this fact is often used in describing concrete diffeologies, or defining a class of them. A specific example of the former is the \textbf{generated diffeology}: for a
set $X$ and a set $A=\{p:U\to X\}$ of maps into $X$, the diffeology \textbf{generated by $A$} is the smallest diffeology on $X$ that contains $A$. Notice that $A$ can be \emph{any} set; it might include
non-differentiable maps, discontinuous ones, and so on.

\paragraph{Smooth maps} A map $f:X\to Y$ between two diffeological spaces $X$ and $Y$ is considered \textbf{smooth} if for every plot $p$ of $X$ the composition $f\circ p$ is a plot of $Y$. Note that it
might easily happen that all $f\circ p$ are plots of $Y$, but that \emph{vice versa} is not true: $Y$ may have plots that do not have form $f\circ p$, whatever the plot $p$ of $X$. On the other hand, if for every
plot $q:U\to Y$ of $Y$ and for every point $u\in U$ there is a plot $p_u$ of $X$ such that in a neighborhood of $u$ we have $q=f\circ p_u$ then we say that the diffeology of $Y$ is the \textbf{pushforward} of the
diffeology of $X$ via $f$, and conversely, the diffeology of $X$ is the \textbf{pullback} of the diffeology of $Y$ by $f$.

\paragraph{Subset, quotient, product, and disjoint union diffeologies} All typical topological constructions admit diffeological counterparts. If $X'$ is any subset of a diffeological space $X$, it carries the
\textbf{subset diffeology} that consists of all plots of $X$ whose range is contained in $X'$; and if $X/\sim$ is any\footnote{We stress that this can indeed be any quotient, with no restrictions on the equivalence
relation $\sim$.} quotient of $X$, with $\pi:X\to X/\sim$ being the natural projection, then the standard choice of diffeology on $X/\sim$ is the \textbf{quotient diffeology}, defined as the pushforward of the
diffeology of $X$ by $\pi$. As we said above, this means that locally each plot of $X/\sim$ has form $\pi\circ p$, where $p$ is a plot of $X$.

Let us now have several (a finite number of, although the definition can be stated more broadly) diffeological spaces $X_1,\ldots,X_n$. Their usual direct product also has its standard diffeology, the
\textbf{product diffeology}, defined as the coarsest diffeology such that the projection on each term is smooth. Locally any plot of this diffeology is just an $n$-tuple $(p_1,\ldots,p_n)$, where each $p_i$ is a plot
of the corresponding $X_i$. Finally, the disjoint union $\sqcup_{i=1}^nX_i$ of these spaces has the \textbf{disjoint union diffeology}, this being the finest diffeology such that the inclusion of each term into the
disjoint union is smooth. Locally, any plot of this diffeology is a plot of precisely one of terms $X_i$.

\paragraph{Functional diffeology} The space $C^{\infty}(X,Y)$ of all smooth (in the diffeological sense) maps between two diffeological spaces $X$ and $Y$ also has its standard diffeology, called the
\textbf{functional diffeology}. It consists of all possible maps $q:U\to C^{\infty}(X,Y)$ such that for every plot $p:U'\to X$ of $X$ the natural evaluation map $U\times U'\ni(u,u')\mapsto q(u)(p(u'))\in Y$ is smooth
(with respect to the diffeology of $Y$; the product $U\times U'$ is still a domain, therefore asking for the evaluation map to be smooth is equivalent to asking it to be a plot of $Y$).

\paragraph{Diffeological vector spaces} This is one specific instance of a diffeological space endowed also with an algebraic structure whose operations are smooth for the diffeology involved.\footnote{There
are analogous notions of a diffeological group, diffeological algebra, and so on.} A (real) \textbf{diffeological vector space} is a vector space $V$ endowed with a diffeology such that the addition map
$V\times V\to V$ and the scalar multiplication map $\matR\times V\to V$ are smooth (for the product diffeology on $V\times V$ and $\matR\times V$ respectively). For a fixed $V$ there can be many such
diffeologies; any one of them is called a \textbf{vector space diffeology} (on $V$). If $V$ is finite-dimensional, and so as just a vector space is isomorphic to some $\matR^n$, then the finest of all vector space
diffeologies is the one consisting of all usual smooth maps into it.\footnote{In contrast with non-vector space diffeologies, the finest of which is always the discrete one, consisting of constant maps only.} This
diffeology is called the \textbf{standard diffeology}; endowed with it, $V$ is called a \textbf{standard space}.

All usual operations on vector spaces (taking subspaces, quotients, direct sums, tensor products, and duals) admit their natural diffeological counterparts (\cite{vincent}, \cite{wu}; see also \cite{multilinear}), via
the more general diffeological constructions described above. Thus, any vector subspace of a diffeological vector space $V$ is automatically endowed with the subset diffeology; every quotient space carries
the quotient diffeology; the direct sum carries the product diffeology (relative to the diffeologies of its terms); and the tensor product has the quotient diffeology of the finest vector space diffeology on the free
product of the factors that contains the product diffeology on their direct product. Finally, the diffeological dual $V^*$ of $V$ is defined as the space of all diffeologically smooth linear maps $L^{\infty}(V,\matR)$
(where $\matR$ is standard) endowed with the functional diffeology. Notice that, unless a finite-dimensional $V$ is a standard space, we have $\dim(V^*)<\dim(V)$ (and in general, the space of smooth linear
maps between two diffeological vector spaces is strictly smaller than the space of all linear maps).

\paragraph{Pseudo-metrics and characteristic subspaces} A finite-dimensional diffeological vector space $V$ does not admit a smooth scalar product, unless it is standard (see \cite{iglesiasBook}). The best
possible substitute for it is any smooth symmetric semi-definite positive bilinear form of rank $\dim(V^*)$; (at least one) such a form exists on any finite-dimensional $V$ and is called a \textbf{pseudo-metric}.

A pseudo-metric $g$ on a finite-dimensional diffeological vector space $V$ allows to identify in $V$ the unique vector subspace $V_0$ which is maximal, with respect to inclusion, for the following two
properties: the subset diffeology of $V_0$ is that of a standard space, and $V_0$ splits off smoothly in $V$, which means there is a usual vector space direct sum decomposition $V=V_0\oplus V_1$ such that 
the corresponding direct sum diffeology on $V$ relative to the subset diffeologies on $V_0$ and $V_1$ coincides with the initial diffeology of $V$.\footnote{ \emph{A priori}, this
direct-sum-diffeology-from-subset-diffeologies is finer, so not all usual direct sum decompositions of diffeological vector spaces are smooth; see an example in \cite{pseudometric}.} This subspace can be
described as the subspace generated by all the eigenvectors of $g$ that are relative to the positive eigenvalues; however, it does not actually depend on the specific choice of a pseudo-metric and is instead
an invariant of $V$ itself. It is called the \textbf{characteristic subspace} of $V$.

\subsection{Diffeological vector pseudo-bundles and pseudo-metrics on them}

A \emph{diffeological vector pseudo-bundle} is a diffeological counterpart of a usual smooth vector bundle.\footnote{The choice of the term \emph{diffeological vector pseudo-bundle} is ours; the same
object is called just a \emph{diffeological fibre bundle} in \cite{iglFibre}, a \emph{regular vector bundle} in \cite{vincent}, and a \emph{diffeological vector space over $X$} in \cite{CWtangent}. The choice of the
term pseudo-bundle is meant to distinguish these objects from the numerous other versions of bundles that have appeared so far.} Apart from the diffeological smoothness replacing the usual concept of
smooth maps, they lack an atlas of local trivializations, although in many contexts, and in most of what follows, we do add this assumption.

\paragraph{Diffeological vector pseudo-bundles} Let $V$ and $X$ be diffeological spaces, and let $\pi:V\to X$ be a smooth surjective map. The map $\pi$, or the total space $V$, is called a \textbf{diffeological 
vector pseudo-bundle} if for each $x\in X$ the pre-image $\pi^{-1}(x)$ carries a vector space structure such that the following three maps are smooth: the addition map $V\times_X V\to V$ (where $V\times_X V$ 
is endowed with the subset diffeology as a subset of $V\times V$), the scalar multiplication map $\matR\times V\to V$, and the zero section $X\to V$. An example of a diffeological vector pseudo-bundle which 
is not locally trivial, can be found in \cite{CWtangent} (see Example 4.3).

\paragraph{The fibrewise operations and fibrewise diffeologies} Since each fibre of a diffeological vector pseudo-bundle is a diffeological vector space, all the usual operations on vector bundles (direct sums,
tensor products, dual bundles) can be performed on/with pseudo-bundles (see \cite{vincent}, and also \cite{pseudobundles} for some details), although not in an entirely similar way (the lack of local
trivializations prevents that). Instead, these operations are performed by first carrying out the operation in question to each fibre, defining the total space of the new pseudo-bundle as the union of the resulting
diffeological vector spaces (with the obvious fibering over the base), and finally defining the diffeology of this total space as the finest that induces on each fibre its existing diffeology.\footnote{This obviously
poses the question of the existence of such diffeology; this was considered in \cite{vincent}; see also \cite{CWtangent}, Proposition 4.6 for a relevant methodology.}

As an example, and also because this instance will be particularly important for us, let us consider dual pseudo-bundles. Let $\pi:V\to X$ be a diffeological vector pseudo-bundle with finite-dimensional
fibres. The dual pseudo-bundle of $V$ is
$$V^*=\cup_{x\in X}(\pi^{-1}(x))^*,$$ where $(\pi^{-1}(x))^*=L^{\infty}(\pi^{-1}(x),\matR)$ is the diffeological dual of the diffeological vector space $\pi^{-1}(x)$, the pseudo-bundle projection $\pi^*$ is given by
$\pi^*((\pi^{-1}(x))^*)=\{x\}$ for all $x\in X$, and the diffeology on $V^*$ is characterized as follows: a map $q:\matR^l\supseteq U'\to V^*$ is a plot of $V^*$ if and only if for every plot
$p:\matR^m\supseteq U\to V$ of $V$ the evaluation map $(u',u)\mapsto q(u')(p(u))\in\matR$ is smooth for the subset diffeology on its domain of definition $\{(u',u)\,|\,\pi^*(q(u'))=\pi(p(u))\}\subseteq\matR^{l+m}$
and the standard diffeology on $\matR$. The collection of all possible maps $q$ satisfying this property does form a diffeology, equipped with which, $V^*$ becomes a difffeological vector pseudo-bundle, and
furthermore, the corresponding subset diffeology on each fibre $(\pi^*)^{-1}(x)$ coincides with the usual (functional) diffeology on $(\pi^{-1}(x))^*$.

Finally, another useful observation (and maybe a peculiarity of diffeology) is that any collection of vector subspaces, one per fibre, in a diffeological vector pseudo-bundle is again a diffeological vector
pseudo-bundle (called a \textbf{diffeological sub-bundle}), for the subset diffeology. Likewise, any collection of quotients, one of each fibre, is a diffeological vector pseudo-bundle for the quotient diffeology.
We call these facts a peculiarity since they go well beyond what happens for the usual smooth vector bundles.

\paragraph{Pseudo-metrics on diffeological vector pseudo-bundles} Let $\pi:V\to X$ be a diffeological vector pseudo-bundle with finite-dimensional fibres. A \textbf{pseudo-metric} on it is a smooth section of
the pseudo-bundle $V^*\otimes V^*$ such that for each $x\in X$ the bilinear form $g(x)$ is a pseudo-metric, in the sense of diffeological vector spaces, on the fibre $\pi^{-1}(x)$. Not all pseudo-bundles admit
a pseudo-metric (see \cite{pseudobundles}; it is not quite clear yet under which conditions a pseudo-bundle admits a pseudo-metric), although if a pseudo-bundle is locally trivial with a finite atlas of local
trivializations, the reasoning similar to that in the case of usual smooth vector bundles would allow to conclude its existence on any pseudo-bundle with the above two properties.

\subsection{Diffeological gluing}

On the level of the underlying topological\footnote{This means the  topology underlying the diffeological structure, the so-called D-topology, see \cite{iglFibre}. In most significant examples, however, the
diffeology is put on a space already carrying a topological structure, and in way such that the D-topology coincides with it.} spaces, diffeological gluing (introduced in \cite{pseudobundles}) is just the usual
topological gluing. The result is endowed with a canonical diffeology, called the gluing diffeology. It is the finest diffeology for several properties, and is usually finer than other natural diffeologies on the
same space.

\subsubsection{Gluing of spaces, maps, and pseudo-bundles}

The basic ingredient in the definition of the diffeological gluing procedure is the operation of gluing of two diffeological spaces, where we must essentially specify which diffeology is assigned to the space 
obtained by the usual topological gluing. This basic construction is then extended to gluing of smooth maps between diffeological spaces, a particularly important instance of which is the gluing of diffeological 
vector pseudo-bundles.

\paragraph{Diffeological spaces} Let $X_1$ and $X_2$ be two diffeological spaces, and let $f:X_1\supseteq Y\to X_2$ be a map defined on a subset $Y$ of $X_1$ and smooth for the subset diffeology on $Y$.
Let
$$X_1\cup_f X_2:=\left(X_1\sqcup X_2\right)/_{\sim},$$ where the equivalence relation $\sim$ is given by, $X_1\sqcup X_2\ni x_1\sim x_2\in X_1\sqcup X_2$ if and only if either $x_1=x_2$ or $x_1\in Y$ and 
$x_2=f(x_1)$. Denote by $\pi:X_1\sqcup X_2\to X_1\cup_f X_2$ the quotient projection, and define the \textbf{gluing diffeology} on $X_1\cup_f X_2$ to be the pushforward of the disjoint union diffeology on 
$X_1\sqcup X_2$ by $\pi$.\footnote{Notice that in the case when the gluing map $f$ is invertible as map from its domain to its range, and only in this case, $X_1\cup_f X_2$ is a span of the spaces $X_1$ 
and $X_2$. However, \emph{a priori} the gluing construction is more general.}

This construction is particularly adapted to endowing piecewise-linear objects with a diffeology (among others). An easiest example is a wedge of two lines, which can be identified with the union of the two 
coordinate axes in $\matR^2$. It is interesting to notice that the gluing diffeology on this union is finer than the subset diffeology relative to its inclusion into $\matR^2$, as demonstrated by an example due 
to Watts, see Example 2.67 in \cite{watts}. (From this, it is easy to extrapolate the existence of similar examples in other dimensions). Indeed, relative to the gluing diffeology the natural inclusions
\footnote{They are defined by composition of the obvious inclusions into $X_1\sqcup X_2$, with the quotient projection $\pi$.}
$$i_1:X_1\setminus Y\hookrightarrow X_1\cup_f X_2\mbox{ and }i_2:X_2\hookrightarrow X_1\cup_f X_2$$ are smooth, and their ranges form a disjoint cover of $X_1\cup_f X_2$. Notice in particular that
$X_1$ does not in general inject into $X_1\cup_f X_2$, while $X_2$ always does; in other words, \emph{a priori} the operation of gluing is not symmetric.\footnote{Which is a natural consequence of the 
construction and also its merit, as it allows to treat, for instance, conical singularities as the results of gluing to a one-point space. For this reason, although in most cases we deal with gluings along 
invertible maps, hence symmetric ones, we treat them as if they were not, to keep the discussion as general as possible.}

This asymmetry is demonstrated, for instance, by the following (useful in practice) description of plots of the gluing diffeology. Since the latter is a pushforward diffeology, any plot of it lifts to a plot
of the covering space $X_1\sqcup X_2$. By the properties of the disjoint union diffeology, this means that if $p:U\to X_1\cup_f X_2$ is a plot and $U$ is connected then it either lifts to a plot $p_1$
of $X_1$ or a plot $p_2$ of $X_2$. By construction of $X_1\cup_f X_2$, we obtain that in the former case
$$p(u)=\left\{\begin{array}{ll} i_1(p_1(u)) & \mbox{if }p_1(u)\in X_1\setminus Y,\\
i_2(f(p_1(u))) & \mbox{if }p_1(u)\in Y, \end{array}\right.$$ while in the latter case we simply have $p=i_2\circ p_2$.

\paragraph{Gluing of maps} The operation of gluing of diffeological spaces, when performed on the domains and possibly the ranges of some given smooth maps, defines a gluing of these maps, provided the
maps themselves satisfy a natural compatibility condition.\footnote{According to a personal preference, one could use the language of category theory and describe the gluing operation as the pushforward 
of the pair of maps $\mbox{Id}:X_1\to X_1$ and $f:X_1\supset Y\to X_2$ to the category of smooth maps between diffeologies, that enjoys the bifunctoriality property.} More precisely, suppose first that 
$X_1,X_2,Z$ are diffeological spaces and $\varphi_i:X_i\to Z$ for $i=1,2$ are smooth maps. Let
$f:X_1\supseteq Y\to X_2$ be a smooth map, and suppose that $\varphi_2(f(y))=\varphi_1(y)$ for all $y\in Y$ (the maps $\varphi_1$ and $\varphi_2$ are then said to be \textbf{$f$-compatible}). Then the map
$\varphi_1\cup_f\varphi_2:X_1\cup_f X_2\to Z$ given by
$$(\varphi_1\cup_f\varphi_2)(x)=\left\{\begin{array}{ll}
\varphi_1(i_1^{-1}(x)) & \mbox{if }x\in\mbox{Range}(i_1),\\
\varphi_2(i_2^{-1}(x)) & \mbox{if }x\in\mbox{Range}(i_2)
\end{array}\right.$$ is well-defined and smooth. In fact, assigning the map $\varphi_1\cup_f\varphi_2\in C^{\infty}(X_1\cup_f X_2,Z)$ to each pair $(\varphi_1,\varphi_2)$, with $\varphi_i\in C^{\infty}(X_i,Z)$ for
$i=1,2$, of $f$-compatible maps yields a map
$$C^{\infty}(X_1,Z)\times_{comp}C^{\infty}(X_2,Z)\to C^{\infty}(X_1\cup_f X_2,Z)$$ that is smooth for the functional diffeology on $C^{\infty}(X_1\cup_f X_2,Z)$ and the subset
diffeology\footnote{Relative to the product diffeology on $C^{\infty}(X_1,Z)\times C^{\infty}(X_2,Z)$ which in turn comes from the functional diffeologies on $C^{\infty}(X_1,Z)$ and $C^{\infty}(X_2,Z)$.} on the set
of $f$-compatible pairs $C^{\infty}(X_1,Z)\times_{comp}C^{\infty}(X_2,Z)$ (see \cite{pseudometric-pseudobundle} for details).

Finally, all of this extends to the case of two maps with distinct ranges, that is, $\varphi_1:X_1\to Z_1$ and $\varphi_2:X_2\to Z_2$, with appropriate gluings of $X_1$ to $X_2$ and $Z_1$ to $Z_2$.
Specifically, let again $f:X_1\supseteq Y\to X_2$ be smooth, but consider also a smooth $g:Z_1\supseteq\varphi_1(Y)\to Z_2$; assume that $\varphi_2(f(y))=g(\varphi_1(y))$ for all $y\in Y$ (the maps
$\varphi_1$ and $\varphi_2$ are said to be \textbf{$(f,g)$-compatible}). Notice that the counterparts of $i_1$ and $i_2$ for the space $Z_1\cup_g Z_2$ are the natural inclusion
maps $j_1:Z_1\setminus\varphi_1(Y)\hookrightarrow Z_1\cup_g Z_2$ and $j_2:Z_2\to Z_1\cup_g Z_2$. We define the map $\varphi_1\cup_{(f,g)}\varphi_2$ by setting
$$(\varphi_1\cup_{(f,g)}\varphi_2)(x)=\left\{\begin{array}{ll}
j_1(\varphi_1(i_1^{-1}(x))) & \mbox{if }x\in\mbox{Range}(i_1),\\
j_2(\varphi_2(i_2^{-1}(x))) & \mbox{if }x\in\mbox{Range}(i_2).
\end{array}\right.$$ All the analogous statements, in particular, the smoothness of the thus-defined map $C^{\infty}(X_1,Z_1)\times_{comp}C^{\infty}(X_2,Z_2)\to C^{\infty}(X_1\cup_f X_2,Z_1\cup_g Z_2)$,
continue to hold (see \cite{pseudometric-pseudobundle}).

\paragraph{Pseudo-bundles} Let us now turn to gluing of two pseudo-bundles; this is of course a specific instance of gluing of $(f,g)$-compatible maps. We specifically indicate it in order to to fix notation and
standard terminology, and also to give conditions under which the result of gluing is again a diffeological vector pseudo-bundle.

Let $\pi_1:V_1\to X_1$ and $\pi_2:V_2\to X_2$ be two such pseudo-bundles, let $f:X_1\supseteq Y\to X_2$ be a smooth map, and let $\tilde{f}:V_1\supseteq\pi_1^{-1}(Y)\to V_2$ be any smooth lift of $f$
that is linear on each fibre (where it is defined). The gluing of these pseudo-bundles consists in the already-defined operations of gluing $V_1$ to $V_2$ along $\tilde{f}$, gluing $X_1$ to $X_2$ along
$f$, and $\pi_1$ to $\pi_2$ along $(\tilde{f},f)$. It is easy to check (see \cite{pseudobundles}) that the resulting map $\pi_1\cup_{(\tilde{f},f)}\pi_2:V_1\cup_{\tilde{f}}V_2\to X_1\cup_f X_2$ is a diffeological
vector pseudo-bundle for the gluing diffeologies on $V_1\cup_{\tilde{f}}V_2$ and $X_1\cup_f X_2$; in particular, the vector space structure on its fibres is inherited from either $V_1$ or $V_2$ (more precisely,
it is inherited from $V_1$ on fibres over the points in $i_1(X_1\setminus Y)$, and from $V_2$ on fibres over the points in $i_2(X_2)$).

\paragraph{Standard notation for gluing of pseudo-bundles} We now fix some standard notation that applies to pseudo-bundles specifically. We have already described the standard inclusions
$$i_1:X_1\setminus Y\hookrightarrow X_1\cup_f X_2,\,\,\,i_2:X_2\hookrightarrow X_1\cup_f X_2,$$
$$j_1:V_1\setminus\pi_1^{-1}(Y)\hookrightarrow V_1\cup_{\tilde{f}}V_2,\,\,\,j_2:V_2\hookrightarrow V_1\cup_{\tilde{f}}V_2.$$ When dealing with more than one gluing at a time, we will needed a more 
complicated notation, which is as follows. Let $\chi_1\cup_{(\tilde{h},h)}\chi_2:W_1\cup_{\tilde{h}}W_2\to Z_1\cup_h Z_2$ be any pseudo-bundle obtained by gluing; denote by $Y'\subset Z_1$ the domain of 
definition of $h$. Then the counterparts of $i_1$ and $i_2$ will be denoted by\footnote{The rule-of-thumb is that $i_1^{smth}$ and $i_2^{smth}$ are used respectively for the left-hand and the right-hand factor 
in the base space of the pseudo-bundle under consideration, while $j_1^{smth}$ and $j_2^{smth}$ refer to the left-hand and the right-hand factor of the total space.}
$$i_1^{Z_1}:Z_1\setminus Y'\hookrightarrow Z_1\cup_h Z_2\mbox{ and }i_2^{Z_2}:Z_2\hookrightarrow Z_1\cup_h Z_2\mbox{ for }Z_1\cup_h Z_2$$
$$j_1^{W_1}:W_1\setminus\chi_1^{-1}(Y')\hookrightarrow W_1\cup_{\tilde{h}}W_2\mbox{ and }j_2^{W_2}:W_2\hookrightarrow W_1\cup_{\tilde{h}}W_2\mbox{ for }W_1\cup_{\tilde{h}}W_2.$$
Obviously, $i_1^{W_1}$ and $j_1^{W_1}$ would mean the same thing, and the same goes for $i_2^{W_2}$ and $j_2^{W_2}$; however, we use the different letters so that there always be a clear distinction
between the base space and the total space. Finally, since the base space will be the same for all our pseudo-bundles, and so we will usually use the abbreviated notation $i_1,i_2$ for it.

\paragraph{The switch map} As we mentioned many times already, the operation of gluing for diffeological spaces is asymmetric. However, if we assume that gluing map $f$ is a diffeomorphism with its image
then obviously, we can use its inverse to perform the gluing in the reverse order, with the two results, $X_1\cup_f X_2$ and $X_2\cup_{f^{-1}}X_1$, being canonically diffeomorphic via the
so-called \textbf{switch map}
$$\varphi_{X_1\leftrightarrow X_2}:X_1\cup_f X_2\to X_2\cup_{f^{-1}}X_1.$$ Using the notation just introduced, this map can be described by
$$\left\{\begin{array}{ll}
\varphi_{X_1\leftrightarrow X_2}(i_1^{X_1}(x))=i_2^{X_1}(x) & \mbox{for }x\in X_1\setminus Y,\\
\varphi_{X_1\leftrightarrow X_2}(i_2^{X_2}(f(x)))=i_2^{X_1}(x) & \mbox{for }x\in Y,\\
\varphi_{X_1\leftrightarrow X_2}(i_2^{X_2}(x))=i_1^{X_2}(x) & \mbox{for }x\in X_2\setminus f(Y).\end{array}\right.$$ This is well-defined, not only because the maps $i_1^{X_1}$ and $i_2^{X_2}$
are injective with disjoint ranges covering $X_1\cup_f X_2$, but also because $f$ is a diffeomorphism with its image.

\subsubsection{Gluing and operations}

Diffeological gluing of pseudo-bundles is relatively well-behaved with respect to the usual operations on vector bundles. More precisely, it commutes with the direct sum and the tensor product,
while he situation is somewhat more complicated for the dual pseudo-bundles, see \cite{pseudobundles} (the facts needed are recalled below).

\paragraph{Direct sum} Gluing of diffeological vector pseudo-bundles commutes with the direct sum in the following sense. Given a gluing along $(\tilde{f},f)$ of a pseudo-bundle $\pi_1:V_1\to X_1$ to a
pseudo-bundle $\pi_2:V_2\to X_2$, as well as a gluing along $(\tilde{f}',f)$ of a pseudo-bundle $\pi_1':V_1'\to X_1$ to a pseudo-bundle $\pi_2':V_2'\to X_2$, there are two natural
pseudo-bundles that can be formed from these by applying the operations of gluing and direct sum. These are the pseudo-bundles
$$(\pi_1\cup_{(\tilde{f},f)}\pi_2)\oplus(\pi_1'\cup_{(\tilde{f}',f)}\pi_2'):(V_1\cup_{\tilde{f}}V_2)\oplus(V_1'\cup_{\tilde{f}'}V_2')\to X_1\cup_f X_2\mbox{ and }$$
$$(\pi_1\oplus\pi_1')\cup_{(\tilde{f}\oplus\tilde{f}',f)}(\pi_2\oplus\pi_2'):(V_1\oplus V_1')\cup_{\tilde{f}\oplus\tilde{f}'}(V_2\oplus V_2')\to X_1\cup_f X_2;$$ they are diffeomorphic as pseudo-bundles, that is, there
exists a fibrewise linear diffeomorphism
$$\Phi_{\cup,\oplus}:(V_1\cup_{\tilde{f}}V_2)\oplus(V_1'\cup_{\tilde{f}'}V_2')\to(V_1\oplus V_1')\cup_{\tilde{f}\oplus\tilde{f}'}(V_2\oplus V_2')$$ (see below) that covers the identity map on the base
$X_1\cup_f X_2$.

\paragraph{Tensor product} What has just been said about the direct sum, applies equally well to the tensor product. Specifically, the two possible pseudo-bundles are
$$(\pi_1\cup_{(\tilde{f},f)}\pi_2)\otimes(\pi_1'\cup_{(\tilde{f}',f)}\pi_2'):(V_1\cup_{\tilde{f}}V_2)\otimes(V_1'\cup_{\tilde{f}'}V_2')\to X_1\cup_f X_2\mbox{ and }$$
$$(\pi_1\otimes\pi_1')\cup_{(\tilde{f}\otimes\tilde{f}',f)}(\pi_2\otimes\pi_2'):(V_1\otimes V_1')\cup_{\tilde{f}\otimes\tilde{f}'}(V_2\otimes V_2')\to X_1\cup_f X_2,$$ and again, they are
pseudo-bundle-diffeomorphic via
$$\Phi_{\cup,\otimes}:(V_1\cup_{\tilde{f}}V_2)\otimes(V_1'\cup_{\tilde{f}'}V_2')\to(V_1\otimes V_1')\cup_{\tilde{f}\otimes\tilde{f}'}(V_2\otimes V_2')$$ covering the identity on $X_1\cup_f X_2$.

\paragraph{The dual pseudo-bundle} The case of dual pseudo-bundles is substantially different. For one thing, to even make sense of the commutativity question, we must assume that $f$ is invertible (which
in general it does not have to be). Moreover, even with this assumption, in general the operation of gluing does \emph{not} commute with that of taking duals. The reason of this is easy to
explain. Let $\pi_1:V_1\to X_1$ and $\pi_2:V_2\to X_2$ be two diffeological vector pseudo-bundles, and let $(\tilde{f},f)$ be a gluing between them; consider the pseudo-bundle
$\pi_1\cup_{(\tilde{f},f)}\pi_2:V_1\cup_{\tilde{f}}V_2\to X_1\cup_f X_2$ and the corresponding dual pseudo-bundle
$$(\pi_1\cup_{(\tilde{f},f)}\pi_2)^*:(V_1\cup_{\tilde{f}}V_2)^*\to X_1\cup_f X_2;$$ compare it with the result of the induced gluing (performed along the pair $(\tilde{f}^*,f)$) of $\pi_2^*:V_2^*\to X_2$ to
$\pi_1^*:V_1^*\to X_1$, that is, the pseudo-bundle
$$\pi_2^*\cup_{(\tilde{f}^*,f)}\pi_1^*:V_2^*\cup_{\tilde{f}^*}V_1^*\to X_2\cup_{f^{-1}}X_1.$$ It then follows from the construction itself that for any $y\in Y$ (the domain of gluing) we have
$$((\pi_1\cup_{(\tilde{f},f)}\pi_2)^*)^{-1}(i_2^{X_2}(f(y)))\cong(\pi_2^{-1}(f(y)))^*\mbox{ and }(\pi_2^*\cup_{(\tilde{f}^*,f)}\pi_1^*)^{-1}(i_2^{X_1}(y))\cong(\pi_1^{-1}(y))^*;$$
since $i_2^{X_2}(f(y))$ and $i_2^{X_1}(y)$ are related by the switch map, for the two pseudo-bundles to be diffeomorphic in a natural way\footnote{For us this means, via a diffeomorphism covering the
switch map.} the two vector spaces $(\pi_2^{-1}(f(y)))^*$ and $(\pi_1^{-1}(y))^*$ must be diffeomorphic, and \emph{a priori} they are not.\footnote{They may have different dimensions.}

Thus, we obtain the necessary condition (for the pseudo-bundles $(V_1\cup_{\tilde{f}}V_2)^*$ and $V_2^*\cup_{\tilde{f}^*}V_1^*$ to be diffeomorphic), which is that
$(\pi_2^{-1}(f(y)))^*\cong(\pi_1^{-1}(y))^*$ for all $y\in Y$. We do note right away that this condition may not be sufficient, in the sense that two pseudo-bundles over the same base may have all
respective fibres diffeomorphic without being diffeomorphic themselves (this can be illustrated by the standard example of open annulus and open M\"obius strip, both of which, equipped with the
standard diffeology\footnote{That is, one determined by their usual smooth structure.} can be seen as pseudo-bundles over the circle). We will leave it at that for now, returning to the issue of the
gluing-dual commutativity later in the paper.

\subsubsection{The commutativity diffeomorphisms}

We now say more about the commutativity diffeomorphisms mentioned in the previous section.\footnote{If one wishes, these could be described as universal factorization properties in the category of 
diffeological pseudo-bundles}

\paragraph{The diffeomorphism $\Phi_{\cup,\oplus}$} We have already mentioned the existence of this diffeomorphism, which is a pseudo-bundle map
$$\Phi_{\cup,\oplus}:(V_1\cup_{\tilde{f}}V_2)\oplus(V_1'\cup_{\tilde{f}'}V_2')\to(V_1\oplus V_1')\cup_{\tilde{f}\oplus\tilde{f}'}(V_2\oplus V_2')$$
that covers the identity map on $X_1\cup_f X_2$. We now add that this map can be described (in fact, fully defined) by the following identities:
$$\Phi_{\cup,\oplus}\circ(j_1^{V_1}\oplus j_1^{V_1'})=j_1^{V_1\oplus V_1'}\,\,\mbox{ and }\,\, \Phi_{\cup,\oplus}\circ(j_2^{V_2}\oplus j_2^{V_2'})=j_2^{V_2\oplus V_2'}.$$

\paragraph{The diffeomorphism $\Phi_{\cup,\otimes}$} Once again, the case of the tensor product is very similar to that of the direct sum. The already-mentioned diffeomorphism
$$\Phi_{\cup,\otimes}:(V_1\cup_{\tilde{f}}V_2)\otimes(V_1'\cup_{\tilde{f}'}V_2')\to(V_1\otimes V_1')\cup_{\tilde{f}\otimes\tilde{f}'}(V_2\otimes V_2')$$ is uniquely determined by the identities
$$\Phi_{\cup,\otimes}\circ(j_1^{V_1}\otimes j_1^{V_1'})=j_1^{V_1\otimes V_1'}\,\,\mbox{ and }\,\, \Phi_{\cup,\otimes}\circ(j_2^{V_2}\otimes j_2^{V_2'})=j_2^{V_2\otimes V_2'};$$ notice that these, themselves,
suffice to ensure that it covers the identity map on $X_1\cup_f X_2$.

\paragraph{The gluing-dual commutativity conditions, and diffeomorphism $\Phi_{\cup,*}$} We have already described the situation regarding the gluing-dual commutativity, first of all the fact, that it is far from
being always present. At this moment we concentrate on what it actually means for the gluing to commute with taking duals (once again leaving aside the question when this does happen). Specifically, we
say that the \textbf{gluing-dual commutativity condition} holds, if there exists a diffeomorphism
$$\Phi_{\cup,*}:(V_1\cup_{\tilde{f}}V_2)^*\to V_2^*\cup_{\tilde{f}^*}V_1^*$$ that covers the switch map, that is,
$$(\pi_2^*\cup_{(\tilde{f}^*,f^{-1})}\pi_1^*)\circ\Phi_{\cup,*}=\varphi_{X_1\leftrightarrow X_2}\circ(\pi_1\cup_{(\tilde{f},f)}\pi_2)^*,$$ and such that the following are true:
$$\left\{\begin{array}{ll}
\Phi_{\cup,*}\circ((j_1^{V_1})^*)^{-1}=j_2^{V_1^*} & \mbox{on }(\pi_2^*\cup_{(\tilde{f}^*,f^{-1})}\pi_1^*)^{-1}(i_2^{X_1}(X_1\setminus Y)),\\
\Phi_{\cup,*}\circ((j_2^{V_2})^*)^{-1}=j_2^{V_1^*}\circ\tilde{f}^* & \mbox{on }(\pi_2^*\cup_{(\tilde{f}^*,f^{-1})}\pi_1^*)^{-1}(i_2^{X_1}(Y)),\\
\Phi_{\cup,*}\circ((j_2^{V_2})^*)^{-1}=j_1^{V_2^*} & \mbox{on }(\pi_2^*\cup_{(\tilde{f}^*,f^{-1})}\pi_1^*)^{-1}(i_1^{X_2}(X_2\setminus f(Y))).\end{array}\right.$$

\subsubsection{Gluing and pseudo-metrics}

The behavior of pseudo-metrics under gluing depends significantly on whether the gluing-dual commutativity condition is satisfied. More precisely, if we glue together two pseudo-bundles carrying a
pseudo-metric each, then under a certain natural compatibility condition (see below) for these pseudo-metrics, the new pseudo-bundle carries a pseudo-metric as well; but how the latter is
constructed depends on the existence of the commutativity diffeomorphism $\Phi_{\cup,*}$.

\paragraph{The compatibility notion for pseudo-metrics} Let $\pi_1:V_1\to X_1$ and $\pi_2:V_2\to X_2$ be two finite-dimensional diffeological vector pseudo-bundles, endowed with pseudo-metrics $g_1$
and $g_2$ respectively, and let $(\tilde{f},f)$ be a pair of smooth maps that defines gluing of $X_1$ to $X_2$; let $Y\subseteq X_1$ be the domain of definition of $f$. The pseudo-metrics $g_1$ and $g_2$
are said to be \textbf{compatible with (the gluing along) the pair $(\tilde{f},f)$} if for all $y\in Y$ and for all $v,w\in\pi_1^{-1}(y)$ the following is true:
$$g_1(y)(v,w)=g_2(f(y))(\tilde{f}(v),\tilde{f}(w)).$$ If $f$ is invertible, this means that $g_2$ and $g_1$ are $(f^{-1},\tilde{f}^*\otimes\tilde{f}^*)$-compatible as smooth maps
$X_2\to V_2^*\otimes V_2^*$ and $X_1\to V_1^*\otimes V_1^*$ respectively.

As can be expected,\footnote{The notion of a pseudo-metric is designed to be a generalization from a scalar product on a vector space, where the compatibility with a given $f$ is equivalent to $f$
being an isometry (of its domain with its range).} the existence of compatible pseudo-metrics on two pseudo-bundles imposes substantial restrictions on their fibres over the domain of gluing. Later in the
paper we will make precise statements to this effect.

\paragraph{The induced pseudo-metric in the presence of $\Phi_{\cup,*}$} If we assume that the gluing-dual commutativity condition is satisfied, this implies also that $f$ is invertible
(with smooth inverse). In this case we can use the map
$$g_2\cup_{(f^{-1},\tilde{f}^*\otimes\tilde{f}^*)}g_1:X_2\cup_{f^{-1}}X_1\to(V_2^*\otimes V_2^*)\cup_{\tilde{f}^*\otimes\tilde{f}^*}(V_1^*\otimes V_1^*)$$ to
construct a pseudo-metric on $V_1\cup_{\tilde{f}}V_2$ by taking the following composition of it with the switch map and the commutativity diffeomorphisms:
$$\tilde{g}=\left(\Phi_{\cup,*}^{-1}\otimes\Phi_{\cup,*}^{-1}\right)\circ\Phi_{\otimes,\cup}\circ(g_2\cup_{(f^{-1},\tilde{f}^*\otimes\tilde{f}^*)}g_1)\circ\varphi_{X_1\leftrightarrow X_2},$$ where
$\varphi_{X_1\leftrightarrow X_2}$ is the switch map, $\Phi_{\cup,*}$ (of which we need the inverse) is the just-seen gluing-dual commutativity diffeomorphism, while
$\Phi_{\otimes,\cup}:(V_2^*\otimes V_2^*)\cup_{\tilde{f}^*\otimes\tilde{f}^*}(V_1^*\otimes V_1^*)\to(V_2^*\cup_{\tilde{f}^*}V_1^*)\otimes(V_2^*\cup_{\tilde{f}^*}V_1^*)$
is the appropriate version of the tensor product-gluing commutativity diffeomorphism.

\paragraph{Constructing a pseudo-metric on $V_1\cup_{\tilde{f}}V_2$ when $\Phi_{\cup,*}$ does not exist} Although we will mostly deal with the cases where the gluing-dual commutativity condition is
present (and so the above definition of the pseudo-metric $\tilde{g}$ on $V_1\cup_{\tilde{f}}V_2$ is sufficient), we briefly mention that even if such condition does not hold, the flexibility
of diffeology allows for a direct construction of a pseudo-metric on $V_1\cup_{\tilde{f}}V_2$. This construction uses the fact that each fibre of $V_1\cup_{\tilde{f}}V_2$ is naturally identified with one
of either $V_1$ or $V_2$; accordingly, $\tilde{g}$ can be defined to coincide with either $g_1$ or $g_2$ on each fibre individually. The surprising fact is that $\tilde{g}$ coming from this construction is
still diffeologically smooth across the fibres; see \cite{pseudometric-pseudobundle} for details.

\subsection{The pseudo-bundle of smooth linear maps}

The last more-or-less standard construction that we need is that of \textbf{pseudo-bundle of smooth linear maps}. Let $\pi_1:V_1\to X$ and $\pi_2:V_2\to X$ be two finite-dimensional diffeological vector
pseudo-bundles with the same space $X$. For every $x\in X$ the space $L^{\infty}(\pi_1^{-1}(x),\pi_2^{-1}(x))$ of smooth linear maps $\pi_1^{-1}(x)\to\pi_2^{-1}(x)$ is a (finite-dimensional)
diffeological vector space for the functional diffeology. The union
$$\mathcal{L}(V_1,V_2)=\cup_{x\in X}L^{\infty}(\pi_1^{-1}(x),\pi_2^{-1}(x))$$ of all these spaces has the obvious projection (denoted $\pi^L$) to $X$, and the pre-image
of each point under this projection has vector space structure. It becomes a diffeological vector pseudo-bundle when endowed with the \textbf{pseudo-bundle functional diffeology}, that is defined as the
finest diffeology containing all maps $p:U\to\mathcal{L}(V_1,V_2)$, with $U\subseteq\matR^m$ an arbitrary domain, that possess the following property: for every plot $q:\matR^{m'}\supseteq U'\to V_1$
of $V_1$ the corresponding evaluation map $(u,u')\mapsto p(u)(q(u'))\in V_2$ defined on $Y'=\{(u,u')\,|\,\pi^L(p(u))=\pi_1(q(u'))\}\subset U\times U'$ is smooth for the subset diffeology of $Y'$. This is the type
of object where the Clifford actions live.

\subsection{The pseudo-bundles of Clifford algebras and Clifford modules}

For diffeological pseudo-bundles, these have already been described in the abstract setting (see \cite{clifford-alg}). We briefly summarize the main points that appear therein, noting that the main
conclusions do not differ from the usual case, or are as expected anyhow.

\subsubsection{The pseudo-bundle $cl(V,g)$}

Let $\pi:V\to X$ be a finite-dimensional diffeological pseudo-bundle endowed with a pseudo-metric $g$. The construction of the corresponding pseudo-bundle $cl(V,g)$ of Clifford algebras is the
immediate one, since all the operations involved have already been described. Specifically, the \textbf{pseudo-bundle of Clifford algebras $\pi^{Cl}:cl(V,g)\to X$} is given by
$$cl(V,g):=\cup_{x\in X}cl(\pi^{-1}(x),g(x))$$ and is endowed with the quotient diffeology coming from \textbf{pseudo-bundle of tensor algebras} $\pi^{T(V)}:T(V)\to X$.
The latter pseudo-bundle has total space given by $T(V):=\cup_{x\in X}T(\pi^{-1}(x))$, where $T(\pi^{-1}(x)):=\bigoplus_{r}(\pi^{-1}(x))^{\otimes r}$ is the usual tensor algebra of the diffeological vector space
$\pi^{-1}(x)$ (in particular, it is endowed with the vector space direct sum diffeology relative to the tensor product diffeology on each factor).

\begin{rem}
We will not make much use of the algebra structure on this pseudo-bundle, and so will actually consider all the direct sums involved to be finite (limited by the maximum of the dimensions of
fibres of $V$ in question --- this includes the assumption that such maximum exists), thus considering, instead of the whole $T(V)$ its finite-dimensional sub-bundle $T_{\leqslant n}(V)$, with fibre at
$x$ the space $T_{\leqslant n}(\pi^{-1}(x))$ consisting of all tensors in $T(\pi^{-1}(x))$ of degree at most $n$. These fibres are not algebras, but of course each of them is a vector subspace of the
corresponding fibre of $T(V)$.
\end{rem}

Recall that the subset diffeology on each fibre of $T(V)$ is that of the tensor algebra\footnote{Or of its appropriate vector subspace, see the remark above.} of the individual fibre $\pi^{-1}(x)$. In
each such fibre we choose the subspace $W_x$ that is the kernel of the universal map $T(\pi^{-1}(x))\to\cl(\pi^{-1}(x),g(x))$. Then, as is generally the case, $W=\cup_{x\in X}W_x\subset T(V)$ endowed with
the subset diffeology relative to this inclusion is a sub-bundle of $T(V)$. The fibre of the corresponding quotient pseudo-bundle at any given point $x\in X$ is $cl(\pi^{-1}(x),g(x))$, and the quotient
diffeology on the fibre is that of the Clifford algebra over the vector space $\pi^{-1}(x)$. This is exactly the diffeology that we endow $\cl(V,g)$ with.

\subsubsection{The pseudo-bundle $\cl(V_1\cup_{\tilde{f}}V_2,\tilde{g})$ as the result of a gluing}

The main result, that we immediately state and that appears in \cite{clifford-alg}, is the following one.

\begin{thm}\label{gluing:clifford:cited:thm}
Let $\pi_1:V_1\to X_1$ and $\pi_2:V_2\to X_2$ be two diffeological vector pseudo-bundles, let $(\tilde{f},f)$ be two maps defining a gluing between these two pseudo-bundles, both of which are
diffeomorphisms, and let $g_1$ and $g_2$ be pseudo-metrics on $V_1$ and $V_2$ respectively, compatible with this gluing. Let $\tilde{g}$ be the pseudo-metric on $V_1\cup_{\tilde{f}}V_2$ induced by
$g_1$ and $g_2$. Then there exists a map $\tilde{F}^{\cl}$ defining a gluing of the pseudo-bundles $\cl(V_1,g_1)$ and $\cl(V_2,g_2)$, and a diffeomorphism $\Phi^{\cl}$ between the pseudo-bundles
$\cl(V_1,g_1)\cup_{\tilde{F}^{\cl}}\cl(V_2,g_2)$ and $\cl(V_1\cup_{\tilde{f}}V_2,\tilde{g})$ covering the identity on $X_1\cup_f X_2$.
\end{thm}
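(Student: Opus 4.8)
The plan is to deduce the statement from two facts already available: that gluing commutes with the tensor product and the direct sum (via the diffeomorphisms $\Phi_{\cup,\otimes}$ and $\Phi_{\cup,\oplus}$), and that the Clifford relations assemble into a sub-bundle respected by the gluing. Since $\cl(V,g)$ is by construction a fibrewise quotient of the tensor algebra pseudo-bundle, the whole argument reduces to commuting gluing with the fibrewise quotient, once the tensor-algebra case has been settled.

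First I would define the gluing map $\tilde{F}^{\cl}$. The assumption that $g_1$ and $g_2$ are compatible with the gluing says precisely that $\tilde{f}$ is a fibrewise isometry over $Y$, i.e.\ $g_1(y)(v,w)=g_2(f(y))(\tilde{f}(v),\tilde{f}(w))$ for all $y\in Y$ and $v,w\in\pi_1^{-1}(y)$. Hence the fibrewise homomorphism of tensor algebras induced by $\tilde{f}$ carries the kernel of the universal map $T(\pi_1^{-1}(y))\to\cl(\pi_1^{-1}(y),g_1(y))$ into the corresponding kernel over $f(y)$, and therefore descends to a smooth, fibrewise-linear lift $\tilde{F}^{\cl}$ of $f$, defined on $(\pi^{Cl}_1)^{-1}(Y)\subseteq\cl(V_1,g_1)$ with values in $\cl(V_2,g_2)$. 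This is the map along which we glue the two Clifford pseudo-bundles.

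Next, working with the truncated tensor algebra $T_{\leqslant n}$ (so that all direct sums are finite), I would iterate $\Phi_{\cup,\otimes}$ and $\Phi_{\cup,\oplus}$ to obtain a pseudo-bundle diffeomorphism $\Phi^T\colon T(V_1\cup_{\tilde{f}}V_2)\to T(V_1)\cup_{\tilde{F}^T}T(V_2)$ covering the identity, where $\tilde{F}^T$ is the gluing of tensor algebras induced by $\tilde{f}$. The key fibrewise observation is then that $\Phi^T$ carries the Clifford-relations sub-bundle $W\subset T(V_1\cup_{\tilde{f}}V_2)$ onto $W_1\cup_{\tilde{F}^T}W_2$, where $W_1,W_2$ are the corresponding sub-bundles for $V_1,V_2$: over $i_1(X_1\setminus Y)$ the induced pseudo-metric $\tilde{g}$ restricts to $g_1$ and the fibre of $W$ is that of $W_1$; over $i_2(X_2)$ it restricts to $g_2$ and the fibre of $W$ is that of $W_2$; and over the gluing locus the isometry property of $\tilde{f}$ matches the two. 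Since $\Phi^T$ is a diffeomorphism taking $W$ onto $W_1\cup_{\tilde{F}^T}W_2$, it descends to a diffeomorphism of quotient pseudo-bundles from $\cl(V_1\cup_{\tilde{f}}V_2,\tilde{g})=T(V_1\cup_{\tilde{f}}V_2)/W$ onto $\bigl(T(V_1)\cup_{\tilde{F}^T}T(V_2)\bigr)/\bigl(W_1\cup_{\tilde{F}^T}W_2\bigr)$.

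It remains to identify this last quotient with $\cl(V_1,g_1)\cup_{\tilde{F}^{\cl}}\cl(V_2,g_2)$, that is, to show that the fibrewise quotient commutes with gluing at the level of diffeologies, and this I expect to be the main obstacle. Fibrewise there is nothing to prove, as each fibre of either side is the same Clifford algebra with the same quotient diffeology, so the content is purely diffeological: both the quotient diffeology and the gluing diffeology are pushforwards (by the fibrewise projection and by the projection from the disjoint union, respectively), and one must check that the two iterated pushforwards onto the common underlying set coincide. I would verify this by a plot chase using the explicit description of plots of a gluing diffeology recalled earlier in Section~1: any plot of either side locally lifts to a plot of one factor, hence to a plot of the corresponding tensor algebra, and these lifts are interchanged by $\Phi^T$, which yields smoothness of $\Phi^{\cl}$ and of its inverse. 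Finally, that $\Phi^{\cl}$ covers the identity on $X_1\cup_f X_2$ is immediate from the same property of $\Phi^T$ and of the quotient projections.
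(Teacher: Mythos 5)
Your proposal is correct, and it arrives at exactly the maps the paper uses, but by a different route for the diffeomorphism. The construction of $\tilde{F}^{\cl}$ coincides with the paper's: both descend the tensor-algebra homomorphism induced by $\tilde{f}$ through the universal property, with the compatibility of $g_1$ and $g_2$ guaranteeing that the Clifford relations (equivalently, the kernels of the universal maps) are preserved. The difference lies in how $\Phi^{\cl}$ is obtained. The paper simply declares it to be the natural fibrewise identification --- over $i_1^{X_1}(x_1)$ both fibres are $\cl(\pi_1^{-1}(x_1),g_1)$, over $i_2^{X_2}(x_2)$ both are $\cl(\pi_2^{-1}(x_2),g_2)$ --- and defers the verification that this identification is a diffeomorphism to the cited reference. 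You instead reduce the problem to previously established ingredients: iterate $\Phi_{\cup,\otimes}$ and $\Phi_{\cup,\oplus}$ on the truncated tensor algebras to get $\Phi^T$, check that $\Phi^T$ carries the relations sub-bundle $W$ onto $W_1\cup_{\tilde{F}^T}W_2$, descend to quotients, and then prove that fibrewise quotient commutes with gluing by a plot chase (both diffeologies being iterated pushforwards, plots of either side locally lift to plots of $T(V_1)$ or $T(V_2)$, and these lifts are interchanged by $\Phi^T$). You correctly isolated this last interchange as the only genuinely diffeological point; the fibrewise statement is vacuous. It is worth noting that your strategy is precisely the one the paper itself adopts later, in Section 7, for the exterior algebras (the diffeomorphism $\Phi^{\bigwedge_*}$ is built as $\Phi_{\cup,\oplus}^{(\dim_V)}\circ\bigoplus_n\Phi_{\cup,\otimes}^{(\otimes n)}$ restricted to the image of the alternating operator), so your proof has the merit of treating the Clifford and exterior cases uniformly and of making the smoothness of $\Phi^{\cl}$ and its inverse explicit rather than cited; the price is the reliance on the truncation $T_{\leqslant n}$ (a finite upper bound on fibre dimensions), which the paper also imposes in its Remark 1.1, so nothing is lost.
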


Let us briefly describe the maps $\tilde{F}^{\cl}$ and $\Phi^{\cl}$. The construction of $\tilde{F}^{\cl}$ is the immediately obvious one. It is defined on each fibre over a point $y\in Y$ as the map
$\cl(\pi_1^{-1}(y),g_1|_{\pi_1^{-1}(y)})\to\cl(\pi_2^{-1}(f(y)),g_2|_{\pi_2^{-1}(f(y))})$ induced by $\tilde{f}$ via the universal property of Clifford algebras. In practice, this
means that on each fibre $\tilde{F}^{\cl}$ is linear and multiplicative (with respect to the tensor product), so if $v_1\otimes\ldots\otimes v_k$ is a representative of an equivalence
class in $\cl(\pi_1^{-1}(y),g_1|_{\pi_1^{-1}(y)})$ (viewed as the appropriate quotient of $T(\pi_1^{-1}(y))$) then by this definition
$$\tilde{F}^{\cl}(v_1\otimes\ldots\otimes v_k)=\tilde{F}^{Cl}(v_1)\otimes\ldots\otimes\tilde{F}^{\cl}(v_k)=\tilde{f}(v_1)\otimes\ldots\otimes\tilde{f}(v_k).$$ That this is well-defined as a map
$\cl(\pi_1^{-1}(y),g_1|_{\pi_1^{-1}(y)})\to\cl(\pi_2^{-1}(f(y)),g_2|_{\pi_2^{-1}(f(y))})$ follows from the compatibility of pseudo-metrics $g_1$ and $g_2$. Indeed, if $v,w\in\pi_1^{-1}(y)$ then
$$\tilde{F}^{\cl}(v\otimes w+w\otimes v+2g_1(y)(v,w))=\tilde{f}(v)\otimes\tilde{f}(w)+\tilde{f}(w)\otimes \tilde{f}(v)+2g_1(y)(v,w)$$ by the above formula, and $2g_1(y)(v,w)=2g_2(f(y))(\tilde{f}(v),\tilde{f}(w))$
by the compatibility. Thus, $\tilde{F}^{\cl}$ preserves the defining relation for Clifford algebras, so indeed it is well-defined (on each fibre; hence on the whole pseudo-bundle).

The diffeomorphism $\Phi^{\cl}$, which we specify to be $\cl(V_1,g_1)\cup_{\tilde{F}^{\cl}}\cl(V_2,g_2)\to\cl(V_1\cup_{\tilde{f}}V_2,\tilde{g})$, is then the natural
identification; namely, by definitions of the gluing operation and that of the induced pseudo-metric $\tilde{g}$, over a point of form $x=i_1^{X_1}(x_1)$ both the fibre of
$\cl(V_1,g_1)\cup_{\tilde{F}^{\cl}}\cl(V_2,g_2)$ and that of $\cl(V_1\cup_{\tilde{f}}V_2,\tilde{g})$ are naturally identified with $\cl(\pi_1^{-1}(x),g_1|_{\pi_1^{-1}(x)})$, while over any point of
form $x=i_2^{X_2}(x_2)$ they are identified with $\cl(\pi_2^{-1}(x),g_2|_{\pi_2^{-1}(x)})$.

\subsubsection{Gluing of pseudo-bundles of Clifford modules}

A statement similar to that of the Theorem cited in the previous section can also be obtained for given Clifford modules over the algebras $\cl(V_1,g_1)$ and $\cl(V_2,g_2)$. This requires some
additional assumptions on these modules, and the appropriate notion of the compatibility of the actions.

\paragraph{The two Clifford modules} Let $\pi_1:V_1\to X_1$, $\pi_2:V_2\to X_2$, $(\tilde{f},f)$, $g_1$, and $g_2$ be as in Theorem \ref{gluing:clifford:cited:thm}. Recall that this yields the
following pseudo-bundles, $\pi_1^{\cl}:\cl(V_1,g_1)\to X_1$, $\pi_2^{\cl}:\cl(V_2,g_2)\to X_2$, and
$\pi_1^{\cl}\cup_{(\tilde{F}^{\cl},f)}\pi_2^{\cl}: \cl(V_1,g_1)\cup_{\tilde{F}^{\cl}}\cl(V_2,g_2)=\cl(V_1\cup_{\tilde{f}}V_2,\tilde{g})\to X_1\cup_f X_2$.

Now suppose that we are given two pseudo-bundles of Clifford modules, $\chi_1:E_1\to X_1$ and $\chi_2:E_2\to X_2$ (over $\cl(V_1,g_1)$ and $\cl(V_2,g_2)$ respectively), that is, there is a
smooth pseudo-bundle map $c_i:cl(V_i,g_i)\to\mathcal{L}(E_i,E_i)$ that covers the identity on the bases. Suppose further that there is a smooth fibrewise linear map
$\tilde{f}':\chi_1^{-1}(Y)\to\chi_2^{-1}(f(Y))$ that covers $f$. We describe the pseudo-bundle $E_1\cup_{\tilde{f}'}E_2$ as a Clifford module over $\cl(V_1\cup_{\tilde{f}}V_2,\tilde{g})$, with respect to
an action induced by $c_1$ and $c_2$.

\paragraph{Compatibility of $c_1$ and $c_2$} Similar to how it occurs for smooth maps, there is always an action of $\cl(V_1\cup_{\tilde{f}}V_2,\tilde{g})$ on $E_1\cup_{\tilde{f}'}E_2$
induced by $c_1$ and $c_2$, it is smooth on each fibre but in general, it might not be smooth across the fibres. For it to be so, we need a notion of compatibility for Clifford actions, which is as follows.

\begin{defn}
The actions $c_1$ and $c_2$ are \textbf{compatible} (with respect to $\tilde{F}^{\cl}$ and $\tilde{f}'$) if for all $y\in Y$, for all $v\in(\pi_1^{\cl})^{-1}(y)$, and for all $e_1\in\chi_1^{-1}(y)$ we have
$$\tilde{f}'(c_1(v)(e_1))=c_2(\tilde{F}^{\cl}(v))(\tilde{f}'(e_1)).$$
\end{defn}

We note that the compatibility of $c_1$ and $c_2$ as it has been just defined, does not automatically translate into their $(\tilde{F}^{\cl},\tilde{f}')$ compatibility as smooth maps in
$\cl(V_i,g_i)\to\mathcal{L}(E_i,E_i)$; see \cite{clifford-alg} for a discussion on this.

\paragraph{The induced action} Assuming now that the two given actions $c_1$ and $c_2$ are compatible in the sense just stated, we can define an induced action on $E_1\cup_{\tilde{f}'}E_2$, that is, a
smooth homomorphism
$$c:\cl(V_1\cup_{\tilde{f}}V_2,\tilde{g})\to\mathcal{L}(E_1\cup_{\tilde{f}'}E_2,E_1\cup_{\tilde{f}'}E_2).$$ Using the already-mentioned identification, via the diffeomorphism
$\Phi^{\cl}$, of $\cl(V_1\cup_{\tilde{f}}V_2,\tilde{g})$ with $\cl(V_1,g_1)\cup_{\tilde{F}^{\cl}}\cl(V_2,g_2)$, the action $c$ can be described by defining first
$$c'(v)(e)=\left\{\begin{array}{ll}
j_1^{E_1}\left(c_1((j_1^{\cl(V_1,g_1)})^{-1}(v))((j_1^{E_1})^{-1}(e))\right) & \mbox{if }v\in\mbox{Im}(j_1^{\cl(V_1,g_1)})\Rightarrow e\in\mbox{Im}(j_1^{E_1}), \\
j_2^{E_2}\left(c_2((j_2^{cl(V_2,g_2)})^{-1}(v))((j_2^{E_2})^{-1}(e))\right) & \mbox{if }v\in\mbox{Im}(j_2^{cl(V_2,g_2)})\Rightarrow e\in\mbox{Im}(j_2^{E_2}). \end{array}\right. $$
Since the images of the inductions $j_1^{\cl(V_1,g_1)}$ and $j_2^{\cl(V_2,g_2)}$ are disjoint and cover $\cl(V_1,g_1)\cup_{\tilde{F}^{\cl}}\cl(V_2,g_2)$, and those of
$j_1^{E_1}$ and $j_2^{E_2}$ cover $E_1\cup_{\tilde{f}'}E_2$ (and are disjoint as well), this is a well-defined fibrewise action of the former on the latter. From the formal point of view, we must also
pre-compose it with the inverse of $\Phi^{\cl}$, to obtain an action
$$c=c'\circ(\Phi^{\cl})^{-1}:\cl(V_1\cup_{\tilde{f}}V_2,\tilde{g})\to\mathcal{L}(E_1\cup_{\tilde{f}'}E_2,E_1\cup_{\tilde{f}'}E_2).$$ Then, the following is true (see \cite{clifford-alg}).

\begin{thm}
The action $c$ is smooth as a map $\cl(V_1\cup_{\tilde{f}}V_2,\tilde{g})\to\mathcal{L}(E_1\cup_{\tilde{f}'}E_2,E_1\cup_{\tilde{f}'}E_2)$.
\end{thm}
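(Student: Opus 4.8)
The plan is to verify smoothness of $c$ by lifting it to the functional-diffeology description of $\mathcal{L}(E_1\cup_{\tilde{f}'}E_2,E_1\cup_{\tilde{f}'}E_2)$ and checking the defining evaluation condition on an arbitrary plot. Recall that, by the definition of the pseudo-bundle functional diffeology, a map $p:U\to\mathcal{L}(E_1\cup_{\tilde{f}'}E_2,E_1\cup_{\tilde{f}'}E_2)$ is a plot if and only if for every plot $q:U'\to E_1\cup_{\tilde{f}'}E_2$ the evaluation map $(u,u')\mapsto p(u)(q(u'))$ is smooth on the appropriate fibre-product subset of $U\times U'$. Since $c=c'\circ(\Phi^{\cl})^{-1}$ and $\Phi^{\cl}$ is a diffeomorphism, it suffices to prove that $c'$ is smooth; so first I would reduce to checking the evaluation map for $c'$ against an arbitrary plot $r:U\to\cl(V_1,g_1)\cup_{\tilde{F}^{\cl}}\cl(V_2,g_2)$ and an arbitrary plot $q:U'\to E_1\cup_{\tilde{f}'}E_2$.

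Next I would exploit the local structure of plots of gluing diffeologies, recalled in Section~1: on a connected domain, a plot of $\cl(V_1,g_1)\cup_{\tilde{F}^{\cl}}\cl(V_2,g_2)$ lifts either to a plot of $\cl(V_1,g_1)$ or to a plot of $\cl(V_2,g_2)$, and likewise for $q$ into $E_1\cup_{\tilde{f}'}E_2$. Thus I would cover $U\times U'$ by the (relatively open) pieces determined by which component each of $r$ and $q$ lands in, and on each piece rewrite the evaluation $c'(r(u))(q(u'))$ using the two-branch formula defining $c'$. On the branch where both land in the first factor, the expression becomes $j_1^{E_1}\!\big(c_1(\cdots)(\cdots)\big)$, which is smooth because $c_1$ is a smooth action (hence a plot of $\mathcal{L}(E_1,E_1)$, so its evaluation against plots of $E_1$ is smooth) and $j_1^{E_1}$ is a smooth inclusion; the second-factor branch is handled symmetrically using smoothness of $c_2$ and $j_2^{E_2}$.

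The subtle point, and the main obstacle, is matching the two branches along the gluing locus, i.e.\ over points of the base lying in $i_2^{X_1}(Y)$ (equivalently $i_2^{X_2}(f(Y))$), where a single plot of the glued total space may switch between lifts. Here I would invoke the \textbf{compatibility} of $c_1$ and $c_2$, which gives precisely $\tilde{f}'(c_1(v)(e_1))=c_2(\tilde{F}^{\cl}(v))(\tilde{f}'(e_1))$, to show that the two formulas for $c'$ agree on the overlap; this is what guarantees the piecewise-defined evaluation map glues to a globally well-defined one, rather than merely a fibrewise-defined object. Once the branches are shown to agree on overlaps, the smoothness of the evaluation map follows by the sheaf condition (axiom~3 of diffeology): smoothness is a local property, and it holds on each member of an open cover of the domain.

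Finally I would note that since $U$ and $U'$ were arbitrary plots (and any plot is locally, on connected pieces, of the form analyzed), the evaluation-map criterion holds in full generality, so $c'$ is a plot of the pseudo-bundle functional diffeology, whence $c=c'\circ(\Phi^{\cl})^{-1}$ is smooth as claimed. The only genuinely nontrivial ingredient is the compatibility hypothesis; everything else reduces to the local description of gluing plots together with the smoothness already built into $c_1$, $c_2$, $j_1^{E_i}$, $j_2^{E_i}$, and $\Phi^{\cl}$.
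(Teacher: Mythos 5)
The paper itself does not prove this theorem: it is quoted verbatim from \cite{clifford-alg} (``Then, the following is true (see \cite{clifford-alg})''), so there is no in-paper proof to compare against, and your proposal has to be judged on its own merits. Its skeleton is the natural one and is almost certainly the right one: reduce from $c$ to $c'$ via the diffeomorphism $\Phi^{\cl}$, verify the evaluation criterion of the pseudo-bundle functional diffeology against arbitrary plots, lift both plots to the factors of the respective gluings on connected domains, and let the compatibility of $c_1$ and $c_2$ do the work at the gluing locus. The flaw is in how you finish.

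The pieces determined by the two-branch formula for $c'$ are neither open nor overlapping, so the step ``branches agree on overlaps, hence smoothness by the sheaf axiom'' fails twice over. If $r$ lifts to a plot $r_1$ of $\cl(V_1,g_1)$ and $q$ lifts to a plot $q_1$ of $E_1$, which branch applies at $(u,u')$ is decided by whether the lifted base point lies in $Y$; but $Y$ is an arbitrary subset of $X_1$ (in both of the paper's examples it is a single point), so the preimage of $Y$ under a smooth map is typically closed and thin, not open. Moreover, since $\mbox{Im}(j_1^{E_1})$ and $\mbox{Im}(j_2^{E_2})$ are \emph{disjoint}, there is no overlap on which two formulas could be compared --- well-definedness of $c'$ is immediate and was never the issue. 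Axiom 3 of a diffeology requires an \emph{open} cover, and in diffeology a map that is smooth on each piece of a non-open partition need not be smooth; this is precisely the subtlety that makes gluing statements of this kind non-trivial. The repair is to use compatibility not to ``match'' branches but to rewrite the evaluation map globally: on the domain where both plots lift to the first factors, the definition of $c'$ together with the identity $\tilde{f}'(c_1(v)(e))=c_2(\tilde{F}^{\cl}(v))(\tilde{f}'(e))$ shows that the evaluation map equals $\Pi\circ\left((u,u')\mapsto c_1(r_1(u))(q_1(u'))\right)$, where $\Pi:E_1\to E_1\cup_{\tilde{f}'}E_2$ is the composition of the inclusion $E_1\hookrightarrow E_1\sqcup E_2$ with the quotient projection; $\Pi$ is smooth by the very definition of the gluing (pushforward) diffeology, and the inner map is smooth because $c_1$ is a smooth action, so the composite is smooth with no patching argument at all. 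The two mixed cases (one plot lifting to the first factor, the other to the second) sit entirely over the gluing locus, and are handled by working with arbitrary plots of the fibre-product domain in its subset diffeology: such plots land in $(\pi_1^{\cl})^{-1}(Y)$, respectively $\chi_1^{-1}(Y)$, so smoothness of $\tilde{F}^{\cl}$ and $\tilde{f}'$ on those subsets plus smoothness of $c_2$ suffices; the case of both plots lifting to the second factors needs only smoothness of $c_2$ and $j_2^{E_2}$.
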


\section{The induced pseudo-metrics on dual pseudo-bundles}

Let $\pi:V\to X$ be a locally trivial finite-dimensional diffeological vector pseudo-bundle endowed with a pseudo-metric $g$; then its dual pseudo-bundle $\pi^*:V^*\to X$ admits an induced
pseudo-metric $g^*$ (see \cite{pseudometric-pseudobundle}; we also recall the definition below). In this section we consider gluings of two pseudo-bundles endowed with compatible pseudo-metrics, and the
corresponding dual constructions; starting from indicating which restrictions are imposed in the initial pseudo-bundles by the existence of compatible pseudo-metrics on them, we proceed to
discuss when the induced pseudo-metrics on the dual pseudo-bundles are compatible in their turn, and finally, how it relates to the gluing-dual commutativity condition.

\subsection{The induced pseudo-metric on $V^*$}

Let $\pi:V\to X$ be a locally trivial finite-dimensional diffeological vector pseudo-bundle, and let $g$ be a pseudo-metric on it. Under the assumption of local triviality of
$V$,\footnote{Which at the moment does not appear to be particularly limiting, in the sense that we do not know of any non-locally-trivial pseudo-bundles that admit pseudo-metrics in the
first place.} the dual pseudo-bundle $V^*$ carries an induced pseudo-metric $g^*$, which is obtained by what can be considered a diffeological counterpart of the usual natural pairing.

Specifically, consider the pseudo-bundle map $\Phi:V\to V^*$ defined by
$$\Phi(v)=g(\pi(v))(v,\cdot)\mbox{ for all }v\in V.$$ It is not hard to show (see \cite{pseudometric} for the case of a single diffeological vector space, and then \cite{pseudometric-pseudobundle} for the case of
pseudo-bundles) that $\Phi$ is surjective, smooth, and linear on each fibre.

The \textbf{induced pseudo-metric $g^*$} is given by the following equality:
$$g^*(x)(\Phi(v),\Phi(w)):=g(x)(v,w)\mbox{ for all }x\in X \mbox{ and for all }v,w\in V\mbox{ such that }\pi(v)=\pi(w)=x.$$ This is well-defined, because whenever $\Phi(v)=\Phi(v')$ (which
obviously can occur only for $v,v'$ belonging to the same fibre), the vectors $v$ and $v'$ differ by an element of the isotropic subspace of the fibre to which they belong. Furthermore, since each
fibre of the dual pseudo-bundle (in the finite-dimensional case) carries the standard diffeology (see \cite{pseudometric}), $g^*(x)$ is always a scalar product. Notice that without the requirement of local 
triviality, we cannot guarantee that $g^*$ is indeed a pseudo-metric, and more precisely, that it is smooth (the map $\Phi$ always has a right inverse, which \emph{a priori} may not be smooth). 

\begin{lemma}
Let $\pi:V\to X$ be a finite-dimensional diffeological vector pseudo-bundle endowed with a pseudo-metric $g$, let $\pi^*:V^*\to X$ be the dual pseudo-bundle, and let $g^*$ be the induced
pseudo-metric. Then for all $x\in X$ the symmetric bilinear form $g^*(x)$ on $(\pi^*)^{-1}(x)$ is non-degenerate.
\end{lemma}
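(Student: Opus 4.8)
The plan is to work fibrewise and reduce the statement to a purely linear-algebraic fact about a single diffeological vector space and its dual, exploiting the definition of $g^*$ via the pairing map $\Phi$. Fix $x\in X$, write $V_x=\pi^{-1}(x)$, and let $g(x)$ be the given pseudo-metric on $V_x$. By the discussion preceding the lemma, the map $\Phi:V_x\to(V_x)^*$ given by $\Phi(v)=g(x)(v,\cdot)$ is surjective and linear, and $g^*(x)$ is characterized by $g^*(x)(\Phi(v),\Phi(w))=g(x)(v,w)$. So I would first record that every element of $(\pi^*)^{-1}(x)=(V_x)^*$ has the form $\Phi(v)$ for some $v\in V_x$, which lets me express non-degeneracy of $g^*(x)$ entirely in terms of $g(x)$ and $\Phi$.

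Next I would set up the non-degeneracy argument directly. Suppose $\xi\in(V_x)^*$ satisfies $g^*(x)(\xi,\eta)=0$ for every $\eta\in(V_x)^*$. Writing $\xi=\Phi(v)$ and $\eta=\Phi(w)$ with $v,w\in V_x$, this says $g(x)(v,w)=0$ for all $w\in V_x$ (here using surjectivity of $\Phi$ to let $w$ range over all of $V_x$ as $\eta$ ranges over $(V_x)^*$). Hence $v$ lies in the radical (isotropic subspace) of the form $g(x)$. The key point is then that $\Phi$ sends exactly this radical to zero: by the very definition $\Phi(v)=g(x)(v,\cdot)$, we have $\Phi(v)=0$ precisely when $g(x)(v,\cdot)$ is the zero functional, i.e.\ when $v$ is in the radical. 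Therefore $\xi=\Phi(v)=0$, which is exactly non-degeneracy of $g^*(x)$.

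The one structural subtlety I want to handle cleanly, rather than an obstacle per se, is the compatibility of the kernel of $\Phi$ with the radical of $g(x)$. A pseudo-metric on a finite-dimensional diffeological vector space has rank $\dim((V_x)^*)$ and its radical is the complement of the characteristic subspace, so $\Phi$ descends to an \emph{isomorphism} of $V_x/\ker\Phi$ onto $(V_x)^*$; the well-definedness remark already preceding the lemma (that $\Phi(v)=\Phi(v')$ forces $v-v'$ to lie in the isotropic subspace) is the fact I would cite to identify $\ker\Phi$ with the radical. Once this identification is in hand, the argument above is immediate. I would also note, as a sanity check consistent with the text, that each fibre $(V_x)^*$ carries the standard diffeology and $g^*(x)$ is in fact a genuine scalar product, of which non-degeneracy is a trivial consequence; but the self-contained radical argument is the cleaner route since it does not require invoking that $g^*(x)$ is positive-definite.

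I do not expect a genuine hard step here: the content is entirely in unwinding the definition of $g^*(x)$ through $\Phi$ and using surjectivity of $\Phi$ together with the identification $\ker\Phi=\mathrm{radical}(g(x))$. The only place to be careful is to make sure the universal quantifier over $\eta\in(V_x)^*$ really does translate into a universal quantifier over $w\in V_x$, which is precisely where surjectivity of $\Phi$ (established in the cited references) is doing the work.
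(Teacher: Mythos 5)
Your proof is correct. Note that the paper itself gives no explicit proof of this lemma: it is stated as an immediate consequence of the preceding discussion, where the author observes (citing her earlier work on pseudo-metrics) that each fibre $(\pi^*)^{-1}(x)$ of the dual pseudo-bundle carries the standard diffeology and that $g^*(x)$ is therefore a genuine scalar product, of which non-degeneracy is a trivial corollary. Your argument takes a different, more self-contained route: rather than invoking positive-definiteness, you unwind the defining identity $g^*(x)(\Phi(v),\Phi(w))=g(x)(v,w)$ and use exactly two facts, namely that $\Phi$ is surjective onto $(\pi^{-1}(x))^*$ and that $\ker\Phi$ coincides with the radical of $g(x)$; then any $\xi=\Phi(v)$ that is $g^*(x)$-orthogonal to everything forces $g(x)(v,w)=0$ for all $w$, hence $v$ lies in the radical and $\xi=\Phi(v)=0$. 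What your approach buys is independence from the diffeological input: it never needs the standardness of the dual fibre or the positive-definiteness of $g^*(x)$, and would apply verbatim to any symmetric bilinear form pushed forward along its own pairing map, so it is purely linear-algebraic. What the paper's implicit route buys is brevity and a stronger conclusion ($g^*(x)$ is an honest scalar product, not merely non-degenerate), which is the form in which the fact is actually used later in the text. Your quantifier bookkeeping (translating ``for all $\eta$'' into ``for all $w$'' via surjectivity of $\Phi$) and your identification of $\ker\Phi$ with the isotropic subspace are both exactly right, so there is no gap.
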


\subsection{Existence of compatible pseudo-metrics: the case of a diffeological vector space}

Before treating various issues regarding the induced pseudo-metrics, it makes sense to consider in more detail what the compatibility of two pseudo-metrics means. We do so starting with the case of just
diffeological vector spaces (we consider the duals of vector spaces in the section that immediately follows, and pseudo-bundles in the one after that).

Let $V$ and $W$ be finite-dimensional diffeological vector spaces, let $g_V$ be a pseudo-metric on $V$, and let $g_W$ be a pseudo-metric on $W$. We assume that we are given a smooth linear
map $f:V\to W$, with respect to which $g_V$ and $g_W$ are compatible, $g_V(v_1,v_2)=g_W(f(v_1),f(v_2))$. We show that, quite similarly to usual vector spaces and scalar products, there are
pairs of diffeological ones such that no pair of pseudo-metrics is compatible with respect to any smooth linear $f$. The similarity that we are referring to has to do with the fact that the
compatibility of two pseudo-metrics with respect to $f$ essentially amounts to $f$ being a diffeological analogue of an isometry onto a subspace. As is well-known, between two usual vector spaces such
isometry may not exist (it is necessary that the dimension of the domain space must be less or equal to that of the target space), and something similar happens for diffeological vector spaces; and then
further conditions are added in terms of their diffeological structures.

\subsubsection{The characteristic subspaces of $V$ and $W$}

Assuming that two given pseudo-metrics $g_V$ and $g_W$ on diffeological vector spaces $V$ and $W$ respectively are compatible with a given $f:V\to W$ has several implications for the
diffeological structures of $V$ and $W$; describing these requires the following notion.

Given a pseudo-metric $g$ on a finite-dimensional diffeological vector space $V$, the subspace $V_0\leqslant V$ generated by all the eigenvectors of $g$ relative to the non-zero eigenvalues has subset
diffeology that is standard; and among all subspaces of $V$ whose diffeology is standard, it has the maximal dimension, which is equal to $\dim(V^*)$. In general, $V$ contains more than one subspace of
dimension $\dim(V^*)$ whose diffeology is standard. But the subspace $V_0$ is the only one that also splits off as a smooth direct summand.\footnote{Which means that the direct sum diffeology
coincides with $V$'s or $W$'s own diffeology, or, alternatively, that the composition of each plot of $V$ (respectively $W$) with the projection on $V_0$ (respectively $W_0$) is a plot of the latter.}
Thus, $V_0$ does not actually depend on the choice of a pseudo-metric and is an invariant of the space itself (see \cite{pseudometric}). We call this subspace the \textbf{characteristic subspace} of $V$.

Let us now return to the two diffeological vector spaces $V$ and $W$ above. Let $V_0$ and $W_0$ be their characteristic subspaces, and let $V_1\leqslant V$ and $W_1\leqslant W$ be the isotropic
subspaces relative to $g_V$ and $g_W$ respectively, such that $V=V_0\oplus V_1$ and $W=W_0\oplus W_1$ with each of these decompositions being smooth. We also recall \cite{pseudometric} that
$V_0$ not only has the same dimension as $V^*$, but for any fixed pseudo-metric is diffeomorphic to it, via (the restriction to $V_0$ of) the map $\Phi_V:v\mapsto g_V(v,\cdot)$; likewise, $W_0$ is
diffeomorphic to $W^*$ via $\Phi_W:w\mapsto g_W(w,\cdot)$.

\subsubsection{The necessary conditions}

Let us now assume that the given $g_V$, $g_W$, and $f$ satisfy the compatibility condition. The corollaries of this assumption can be described in terms of the characteristic subspaces of $V$ and $W$,
and therefore in terms of their diffeological duals.

\paragraph{The kernel of $f$} The first corollary is quite trivial, and starts with a simple linear algebra argument. Let $v\in V$ belong to the kernel of $f$. Then by the compatibility assumption
for $g_V$ and $g_W$ we have
$$g_V(v,v')=g_W(0,f(v'))=0\mbox{ for any }v'\in V.$$ Thus, the kernel of $f$ is contained in the maximal isotropic subspace $V_1$, therefore the restriction of $f$ to $V_0$ is a
bijection with its image. This restriction is of course a smooth map, and since $V_0$ splits off as a smooth direct summand, it is an induction (that is, a diffeomorphism with its image). Finally, $f$
itself is a diffeomorphism of $V_0\oplus\left(V_1/\mbox{Ker}(f)\right)$ with its image in $W$. In particular, we have the following.

\begin{lemma}\label{exist:compatible:pseudo-metrics:f:necessary:lem}
Let $V$ and $W$ be finite-dimensional diffeological vector spaces, and let $f:V\to W$ be a smooth linear map. If $V$ and $W$ admit pseudo-metrics compatible with $f$ then the $\mbox{Ker}(f)\cap V_0=\{0\}$.
\end{lemma}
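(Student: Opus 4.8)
The plan is to show that any vector in $\mbox{Ker}(f)$ is automatically $g_V$-orthogonal to the whole of $V$, and then to use the fact that $g_V$ restricts to a genuine scalar product on the characteristic subspace $V_0$ to rule out any nonzero such vector lying in $V_0$.

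First I would take an arbitrary $v\in\mbox{Ker}(f)\cap V_0$ and aim to prove $v=0$. For any $v'\in V$ the compatibility of the two pseudo-metrics with $f$ gives
$$g_V(v,v')=g_W(f(v),f(v'))=g_W(0,f(v'))=0,$$
using $f(v)=0$ together with the bilinearity of $g_W$. Hence $v$ lies in the radical of $g_V$, i.e.\ in the maximal isotropic subspace, which under the smooth direct sum decomposition $V=V_0\oplus V_1$ is precisely $V_1$.

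The second step is to invoke the structural description of the characteristic subspace recalled above: $V_0$ is generated by the eigenvectors of $g_V$ relative to the nonzero eigenvalues, so $g_V|_{V_0}$ is a genuine scalar product, in particular non-degenerate. Since $v\in V_0$ and, by the previous step, $g_V(v,w)=0$ for every $w\in V$ --- a fortiori for every $w\in V_0$ --- non-degeneracy of $g_V|_{V_0}$ forces $v=0$. Equivalently, one can read off $v\in V_0\cap V_1=\{0\}$ directly from the decomposition $V=V_0\oplus V_1$.

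I do not expect any genuine obstacle here; the single non-formal ingredient is the already-cited fact that $V_1$ is precisely the radical of $g_V$ (equivalently, that $g_V$ restricts to a scalar product on $V_0$), which is what converts the elementary orthogonality computation into the stated conclusion. Everything else is linear algebra, and the diffeological structure enters only through this structural description of the characteristic subspace.
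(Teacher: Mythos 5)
Your proof is correct and follows essentially the same route as the paper: the identical compatibility computation $g_V(v,v')=g_W(f(v),f(v'))=g_W(0,f(v'))=0$ places $\mbox{Ker}(f)$ in the isotropic subspace $V_1$, and the conclusion then follows from $V_0\cap V_1=\{0\}$ (or, equivalently, from non-degeneracy of $g_V$ on $V_0$), exactly as in the paper's argument preceding the lemma.
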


Notice that in the standard case $V$ and $W$ would be vector spaces, $g_V$ and $g_W$ scalar products on them, and $f$ an isometry of $V$ with its image in $W$. In particular, $f$ would be injective;
Lemma \ref{exist:compatible:pseudo-metrics:f:necessary:lem} is the diffeological counterpart of that.

\paragraph{The dimensions of $V$ and $W$, and those of $V^*$ and $W^*$} Continuing the analogy with the standard case, we observe that the standard inequality $\dim(V)\leqslant\dim(W)$ does not have
to hold in the diffeological setting. What instead is true, is the corresponding inequality for the dimensions of their dual spaces, which follows from the lemma below.

\begin{lemma}
Let $V$ and $W$ be finite-dimensional diffeological vector spaces, let $f:V\to W$ be a smooth linear map, and suppose that $V$ and $W$ carry pseudo-metrics $g_V$ and $g_W$ respectively, compatible with
respect to $f$. Then the subset diffeology of $f(V_0)$ is the standard one.
\end{lemma}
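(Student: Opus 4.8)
The plan is to show that the subset diffeology on $f(V_0)$ coincides with the standard one by producing a \emph{smooth linear retraction} of $W$ onto $V_0$, along which every subset-plot of $f(V_0)$ can be pulled back to $V_0$. Before that I would record the two easy ingredients. Since the subset diffeology on the vector subspace $f(V_0)\subseteq W$ is itself a vector space diffeology, and the standard diffeology is the finest vector space diffeology on a finite-dimensional space, the standard diffeology on $f(V_0)$ is automatically contained in its subset diffeology; hence only the reverse inclusion has to be proved, namely that every plot $q:U\to f(V_0)$ which is smooth into $W$ is an ordinary smooth map. Also, by Lemma \ref{exist:compatible:pseudo-metrics:f:necessary:lem} compatibility forces $\mbox{Ker}(f)\cap V_0=\{0\}$, so $f|_{V_0}$ is injective and $\dim f(V_0)=\dim V_0$.

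The main step is to build a smooth linear map $r:W\to V_0$ with $r\circ f|_{V_0}=\mbox{id}_{V_0}$. I would use the smooth splitting $W=W_0\oplus W_1$, which furnishes a smooth linear projection $P:W\to W_0$ (its smoothness being exactly the assertion that the decomposition is smooth), together with the fact that $W_0$ is a standard space. The key algebraic observation is that $f(V_0)\cap W_1=\{0\}$: if $v\in V_0$ satisfies $f(v)\in W_1=\mbox{rad}(g_W)$, then for every $v'\in V_0$ compatibility gives $g_V(v,v')=g_W(f(v),f(v'))=0$, and since $g_V$ is non-degenerate on $V_0$ this forces $v=0$. Combined with the injectivity of $f|_{V_0}$, this makes $P\circ f|_{V_0}:V_0\to W_0$ an injective linear map into a standard space; choosing any linear left inverse $s:W_0\to V_0$ (which is automatically smooth, as all linear maps out of the standard space $W_0$ are smooth) and setting $r=s\circ P$ yields the retraction, with $r|_{f(V_0)}=(f|_{V_0})^{-1}$.

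To conclude, given a subset-plot $q:U\to f(V_0)$, the composite $r\circ q:U\to V_0$ is a plot of $V_0$, hence an ordinary smooth map because $V_0$ is standard; then $q=f|_{V_0}\circ(r\circ q)$ is ordinary smooth into $f(V_0)$. This supplies the missing inclusion, so the two diffeologies agree and $f(V_0)$ is standard. The main obstacle is precisely the construction of $r$: the purely linear left inverse of $P\circ f|_{V_0}$ must be promoted to a diffeologically smooth map, which is why the smoothness of the projection $P$ and the automatic smoothness of linear maps out of the standard space $W_0$ do the real work. (Alternatively, once one grants the induction property of $f|_{V_0}$ asserted just above, the statement is immediate, since a finite-dimensional diffeological vector space that is linearly diffeomorphic to the standard space $V_0$ is itself standard; I would nonetheless prefer the self-contained retraction argument, to avoid any circularity with that prose claim.)
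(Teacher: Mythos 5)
Your proof is correct, but it follows a genuinely different route from the paper's. The paper's argument is a short coordinate computation: pick a $g_V$-orthonormal basis $e_1,\ldots,e_n$ of $V_0$; by compatibility $f(e_1),\ldots,f(e_n)$ is a $g_W$-orthonormal basis of $f(V_0)$; the coordinate functionals $w\mapsto g_W(f(e_i),w)$ are smooth because the pseudo-metric $g_W$ is, so every subset-plot of $f(V_0)$ has ordinary smooth coordinates and is therefore an ordinary smooth map. In particular the paper never invokes the smooth splitting $W=W_0\oplus W_1$. You replace these coordinate functionals by a globally defined smooth linear retraction $r=s\circ P:W\to V_0$, which costs you three extra inputs, all legitimately available in the paper's framework: the smoothness of the splitting projection $P$, the (correctly proved) observation $f(V_0)\cap W_1=\{0\}$, and the automatic smoothness of linear maps out of a standard space. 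What your route buys: it proves strictly more, namely that $f|_{V_0}$ is a diffeomorphism of $V_0$ onto its image whose inverse extends to a smooth linear map defined on all of $W$ --- precisely the ``induction'' claim that the paper asserts in prose, without proof, just before Lemma \ref{exist:compatible:pseudo-metrics:f:necessary:lem} (and which you rightly decline to use, since that would be circular); moreover $f|_{V_0}\circ r$ is then a smooth projection of $W$ onto $f(V_0)$, so the smooth splitting of $f(V_0)$ in $W$ (the paper's Lemma \ref{compatible:pseudometrics:image:smooth:summand:lem}) drops out of your construction as a byproduct rather than needing a separate argument. What the paper's route buys: brevity and minimal hypotheses --- only compatibility and the smoothness of $g_W$ --- and it is the same device the paper reuses essentially verbatim to prove Lemma \ref{compatible:pseudometrics:image:smooth:summand:lem}. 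At bottom the two arguments are cousins: the smoothness of your projection $P$ is, in the underlying theory of pseudo-metrics on a single space, itself established by coordinate functionals of exactly the form $g_W(u_j,\cdot)$ that the paper uses directly.
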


\begin{proof}
Let $e_1,\ldots,e_n$ be a $g_V$-orthonormal basis of $V_0$; then by compatibility $f(e_1),\ldots,f(e_n)$ is a $g_W$-orthonormal basis of $f(V_0)$, which can be completed to a basis of eigenvectors of
$g_W$. It suffices to show that the projection of $W$ on the line generated by each $f(e_i)$ is a usual smooth function. Since this projection is given by $w\mapsto g_W(f(e_i),w)$, the claim follows from the
smoothness of $g_W$.
\end{proof}

Now, the fact that $f(V_0)$ carries the standard diffeology, does not automatically imply that it is contained in $W_0$ --- there are standard subspaces that are not (we will however show later on that
this inclusion does hold for $f(V_0)$). However, $f(V_0)$ is still a standard subspace of $W$, and since $W_0$ has maximal dimension among such subspaces, we have
$$\dim(V^*)=\dim(V_0)=\dim(f(V_0))\leqslant\dim(W_0)=\dim(W^*).$$ Therefore we have the following statement.

\begin{prop}
Let $V$ and $W$ be finite-dimensional diffeological vector spaces. If there exist a smooth linear map $f:V\to W$ and pseudo-metrics $g_V$ and $g_W$ on $V$ and $W$ respectively, compatible with respect
to $f$, then
$$\dim(V^*)\leqslant\dim(W^*).$$
\end{prop}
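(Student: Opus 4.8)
The plan is to assemble the proposition from the two facts established immediately above, using the characteristic subspaces as the bridge between the dimensions of the duals and the standard subspaces that $f$ can produce. First I would recall that for a finite-dimensional diffeological vector space equipped with a pseudo-metric, the characteristic subspace $V_0$ of $V$ is a standard subspace of maximal dimension, and that this dimension equals $\dim(V^*)$; the same holds for $W_0\leqslant W$. Thus $\dim(V_0)=\dim(V^*)$ and $\dim(W_0)=\dim(W^*)$, and the entire argument reduces to comparing $\dim(V_0)$ with $\dim(W_0)$ by transporting $V_0$ into $W$ via $f$.

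Next I would show that $f$ restricts to an injection on $V_0$. This is exactly Lemma~\ref{exist:compatible:pseudo-metrics:f:necessary:lem}, which gives $\mbox{Ker}(f)\cap V_0=\{0\}$; hence $f|_{V_0}$ is a linear bijection onto $f(V_0)$ and $\dim(f(V_0))=\dim(V_0)=\dim(V^*)$. Then I would invoke the lemma proved just before the proposition, which asserts that the subset diffeology of $f(V_0)$ is the standard one, so that $f(V_0)$ is a standard subspace of $W$.

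Finally I would close the argument by the maximality property of $W_0$: since $W_0$ has the largest dimension among all standard subspaces of $W$, and $f(V_0)$ is such a subspace, we obtain $\dim(f(V_0))\leqslant\dim(W_0)$. Chaining the equalities with this inequality yields
$$\dim(V^*)=\dim(V_0)=\dim(f(V_0))\leqslant\dim(W_0)=\dim(W^*),$$
which is the claim.

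The two genuinely nontrivial inputs---that $f(V_0)$ remains standard, and that standardness caps the dimension at $\dim(W^*)$---are both already in hand (the former as the lemma just established, the latter from the characterization of $W_0$ as the maximal standard subspace), so the proposition itself is essentially bookkeeping of dimensions. The only subtle point I must respect is that $f(V_0)$ need not be contained in $W_0$, as the text explicitly flags: so I cannot conflate ``$f(V_0)$ is standard'' with ``$f(V_0)\subseteq W_0$''. The dimension comparison has to be routed through the maximality of $\dim(W_0)$ among standard subspaces rather than through any inclusion, and keeping that distinction clean is the one place where the argument could go wrong.
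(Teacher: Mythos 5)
Your proposal is correct and follows essentially the same route as the paper: injectivity of $f$ on $V_0$ from Lemma \ref{exist:compatible:pseudo-metrics:f:necessary:lem}, standardness of $f(V_0)$ from the preceding lemma, and the dimension bound via the maximality of $\dim(W_0)$ among standard subspaces, yielding the chain $\dim(V^*)=\dim(V_0)=\dim(f(V_0))\leqslant\dim(W_0)=\dim(W^*)$. You also correctly flag, as the paper does, that at this stage one must not assume $f(V_0)\subseteq W_0$ and must instead argue through maximal dimension.
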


In other words, if $\dim(V^*)>\dim(W^*)$, then no two pseudo-metrics on $V$ and $W$ are compatible, whatever the map $f$ (which obviously mimics the standard situation: there is no isometry from the space
of a bigger dimension to one of smaller dimension).\footnote{The choices of $f$ however could be plenty; it suffices to take $V$ the standard $\matR^n$ and $W$ any other diffeological vector space of
dimension strictly smaller than $n$. Any linear map from $V$ to $W$ is then going to be smooth (see Section 3.9 in \cite{iglesiasBook}).}

\paragraph{The subspace $f(V_0)$ in $W$} We now show that the \emph{a priori} case when $f(V_0)$ is not contained in $W_0$ is actually impossible, that is, if $f:V\to W$ is such that $V$ and $W$ admit
compatible pseudo-metrics then $f$ sends the characteristic subspace of $V$ to the characteristic subspace of $W$.

\begin{lemma}\label{compatible:pseudometrics:image:smooth:summand:lem}
Let $V$ and $W$ be finite-dimensional diffeological vector spaces, let $f:V\to W$ be a smooth linear map, and suppose that $V$ and $W$ admit compatible pseudo-metrics $g_V$ and $g_W$ respectively.
Then $f(V_0)$ splits off smoothly in $W$.
\end{lemma}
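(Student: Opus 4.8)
The plan is to produce an explicit smooth linear projection $P\colon W\to W$ whose image is $f(V_0)$ and which restricts to the identity on $f(V_0)$; once such a $P$ is available, the smoothness of the splitting $W=f(V_0)\oplus\ker P$ is essentially formal. Recall from the preceding discussion that $V=V_0\oplus V_1$ smoothly, that $f|_{V_0}$ is injective (by Lemma \ref{exist:compatible:pseudo-metrics:f:necessary:lem}), and that $f(V_0)$ carries the standard diffeology; in particular $g_W$ restricts to a genuine scalar product on the finite-dimensional standard space $f(V_0)$. Note that at this stage we cannot simply inherit the splitting from $W=W_0\oplus W_1$, since we do not yet know that $f(V_0)\subseteq W_0$; the projection must be built by hand, and the pseudo-metric $g_W$ is exactly what supplies it.

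Concretely, I would fix a $g_V$-orthonormal basis $e_1,\ldots,e_n$ of $V_0$ and set $u_i=f(e_i)$. By the compatibility of $g_V$ and $g_W$ one has $g_W(u_i,u_j)=g_V(e_i,e_j)=\delta_{ij}$, so $u_1,\ldots,u_n$ is a $g_W$-orthonormal basis of $f(V_0)$. Define
$$P(w)=\sum_{i=1}^n g_W(u_i,w)\,u_i.$$
Orthonormality gives $P(u_j)=u_j$, hence $P$ is the identity on $f(V_0)$ and $P^2=P$, so $P$ is a linear projection with image $f(V_0)$. For smoothness, observe that each functional $w\mapsto g_W(u_i,w)$ is smooth, being the composition of the (smooth) bilinear form $g_W$ with the constant plot at $u_i$ in its first slot; therefore $P$, a fixed linear combination of these functionals with fixed coefficient vectors $u_i$, is a smooth map $W\to W$.

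Finally I would deduce the smooth splitting $W=f(V_0)\oplus U$ with $U=\ker P$. Since $P$ and $\mathrm{id}-P$ are both smooth, with images $f(V_0)$ and $U$ respectively, for every plot $q$ of $W$ the compositions $P\circ q$ and $(\mathrm{id}-P)\circ q$ are plots of $W$ whose ranges lie in $f(V_0)$ and in $U$, hence are plots of these subspaces for their subset diffeologies. Writing $q=P\circ q+(\mathrm{id}-P)\circ q$ then exhibits $q$ as a plot of the direct-sum diffeology on $f(V_0)\oplus U$. As the direct-sum diffeology is in any case finer than that of $W$, the two coincide, which is precisely the assertion that $f(V_0)$ splits off smoothly.

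The step carrying the real content is producing a \emph{smooth} projection: this is exactly where the smoothness of $g_W$ and the nondegeneracy of its restriction to $f(V_0)$ (both consequences of compatibility) are used. Granting this, the equality of diffeologies follows formally, the reverse inclusion into the direct-sum diffeology being the one that the smoothness of $P$ buys us; everything else is routine linear algebra once the $g_W$-orthogonal projection has been written down.
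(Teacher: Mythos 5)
Your proof is correct and follows essentially the same route as the paper: both fix a $g_V$-orthonormal basis $e_1,\ldots,e_n$ of $V_0$, use compatibility to see that $f(e_1),\ldots,f(e_n)$ is $g_W$-orthonormal, and then show the splitting is smooth by observing that the projection onto $f(V_0)$, namely $w\mapsto\sum_i g_W(f(e_i),w)f(e_i)$, has coefficients that are smooth by smoothness of the pseudo-metric. The only cosmetic difference is that you take the complement to be $\ker P$ of the explicitly defined projection, while the paper completes the $f(e_i)$ to a $g_W$-orthogonal eigenbasis of $W$ and takes the span of the added vectors, which is the same subspace.
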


\begin{proof}
Let $e_1,\ldots,e_n$ be a $g_V$-orthonormal basis of $V_0$. Then by assumption $f(e_1),\ldots,f(e_n)$ is a $g_W$-orthonormal basis of $f(V_0)$. This can be completed to an orthogonal basis of $W$
composed of eigenvectors of $g_W$; denote by $u_1,\ldots,u_k$ the elements added, ordered in such a way that the eigenvectors corresponding to the zero eigenvalue are the last $m$ vectors. Let
us show that the usual direct sum decomposition $W=f(V_0)\oplus\mbox{Span}(u_1,\ldots,u_k)$ is a smooth one.

Let $p:U\to W$ be a plot of $W$, and let $p'$ be its composition with the projection (associated to the direct sum decomposition just mentioned) of $W$ to $f(V_0)$. It suffices to show that $p'$ is a
plot of $f(V_0)$. Notice that by the choice of the basis $f(e_1),\ldots,f(e_n),u_1,\ldots,u_k$ of $W$ (more precisely, by the $g_W$-orthogonality of said basis) we have
$$p'(u)=g_W(f(e_1),p(u))f(e_1)+\ldots+g_W(f(e_n),p(u))f(e_n),$$ where each coefficient $g_W(f(e_i),p(u))$ is an ordinary smooth function $U\to\matR$ by the smoothness of the pseudo-metric $g_W$.
This means precisely that $p'$ is a plot of $f(V_0)$, whence the claim.
\end{proof}

From the lemma just proven, we can now easily draw the following conclusion.

\begin{cor}
Under the assumptions of Lemma \ref{compatible:pseudometrics:image:smooth:summand:lem}, there is the inclusion $f(V_0)\leqslant W_0$.
\end{cor}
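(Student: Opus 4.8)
The plan is to read the conclusion directly off the orthogonal diagonalization already set up in the proof of Lemma~\ref{compatible:pseudometrics:image:smooth:summand:lem}, and then to pin down the resulting positive subspace as the characteristic subspace $W_0$ by invoking its uniqueness. First I would start, exactly as in that proof, from a $g_V$-orthonormal basis $e_1,\ldots,e_n$ of $V_0$; by compatibility the images $f(e_1),\ldots,f(e_n)$ form a $g_W$-orthonormal basis of $f(V_0)$, so $g_W$ restricts to a positive-definite form on $f(V_0)$. I would then extend this system to a $g_W$-orthogonal basis $f(e_1),\ldots,f(e_n),u_1,\ldots,u_k$ of $W$; since $g_W$ is positive semidefinite, each added vector $u_j$ has either positive or zero $g_W$-norm, and the zero-norm vectors span precisely the isotropic subspace $W_1$.

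Next I would single out $W_0':=\mathrm{Span}$ of all the positive-norm vectors among $f(e_1),\ldots,f(e_n),u_1,\ldots,u_k$, so that by construction $f(V_0)\subseteq W_0'$ and $W=W_0'\oplus W_1$. The point is that $W_0'$ is both standard and a smooth direct summand: normalizing the positive vectors to a $g_W$-orthonormal family $b_1,\ldots,b_r$ (with $r=\dim(W^*)$), the projection $w\mapsto\sum_i g_W(b_i,w)b_i$ onto $W_0'$ is smooth because $g_W$ is smooth, which is exactly the computation already carried out in Lemma~\ref{compatible:pseudometrics:image:smooth:summand:lem}; the same formula shows that the subset diffeology of $W_0'$ is standard. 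Since $W_0'$ has the maximal possible dimension $\dim(W^*)$ for a standard subspace and also splits off smoothly, the uniqueness of the characteristic subspace (it is the only maximal subspace that is simultaneously standard and a smooth summand) forces $W_0'=W_0$. Combined with $f(V_0)\subseteq W_0'$, this yields the claim.

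The main obstacle is precisely the point flagged in the text just before the statement: standardness of $f(V_0)$ by itself does not place it inside $W_0$, since $W$ may contain standard subspaces that are not smooth summands and do not lie in $W_0$. The extra leverage must come from Lemma~\ref{compatible:pseudometrics:image:smooth:summand:lem}, i.e.\ from the fact that $f(V_0)$ is not merely standard but embeds into a $g_W$-orthogonal positive-definite system that generates a \emph{maximal} standard smooth summand. Thus the delicate step is not any computation but the identification $W_0'=W_0$, which relies on invariance/uniqueness of the characteristic subspace rather than on the particular pseudo-metric or on the particular diagonalizing basis chosen; one should be slightly careful here, because different orthogonalizations produce \emph{a priori} different positive subspaces, all of the same dimension, and only the uniqueness statement guarantees that every one of them that is also a smooth summand coincides with the invariant $W_0$.
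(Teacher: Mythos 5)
Your proposal is correct and follows essentially the same route as the paper: both arguments extend the $g_W$-orthonormal system $f(e_1),\ldots,f(e_n)$ to a $g_W$-orthogonal basis, take the span of the positive-norm vectors (your $W_0'$, written as $f(V_0)\oplus W_0'$ in the paper), check it is standard, splits off smoothly, and has dimension $\dim(W^*)$, and then invoke the uniqueness of the characteristic subspace to identify it with $W_0$. The only difference is that you spell out the smoothness-of-projection computation and the maximality/uniqueness step, which the paper's two-line proof leaves implicit.
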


\begin{proof}
The subspace $W_0$ is the only subspace of dimension equal to that of $W^*$ that has standard diffeology and splits off smoothly. Since $f(V_0)\oplus W_0'$ has all the same properties, we obtain that
$f(V_0)\oplus W_0'=W_0$.
\end{proof}

\paragraph{The summary of necessary conditions} We collect the conclusions of this section in the following statement.

\begin{thm}\label{necessary:exist:pseudometrics:vspaces:thm}
Let $V$ and $W$ be two finite-dimensional diffeological vector spaces, and let $f:V\to W$ be a smooth linear map. If there exist pseudo-metrics $g_V$ and $g_W$ on $V$ and $W$ respectively that are
compatible with respect to $f$ then the following are true:
\begin{enumerate}
\item $\dim(V^*)\leqslant\dim(W^*)$;
\item $\mbox{Ker}(f)\cap V_0=\{0\}$, where $V_0$ is the characteristic subspace of $V$;
\item The subset diffeology on $f(V_0)$ relative to its inclusion into $W$ is standard;
\item $f(V_0)$ splits off smoothly in $W$.
\end{enumerate}
\end{thm}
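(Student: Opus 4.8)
The plan is straightforward, because this theorem is a summary statement: each of its four assertions has already been established individually earlier in this section, and all of them under exactly the standing hypothesis assumed here, namely the existence of pseudo-metrics $g_V$ and $g_W$ on $V$ and $W$ that are compatible with respect to $f$. Accordingly, I would prove the theorem simply by assembling the four preceding results, verifying only that their hypotheses coincide with the present one, which they do.

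Concretely, assertion (1), the inequality $\dim(V^*)\leqslant\dim(W^*)$, is precisely the content of the Proposition proved just above; it comes from the chain $\dim(V^*)=\dim(V_0)=\dim(f(V_0))\leqslant\dim(W_0)=\dim(W^*)$, using that $f(V_0)$ is a standard subspace of $W$ together with the maximality of $\dim(W_0)$ among standard subspaces. Assertion (2), that $\mbox{Ker}(f)\cap V_0=\{0\}$, is exactly Lemma \ref{exist:compatible:pseudo-metrics:f:necessary:lem}, obtained from the elementary observation that any $v\in\mbox{Ker}(f)$ satisfies $g_V(v,v')=g_W(0,f(v'))=0$ for all $v'$, so $v$ lies in the maximal isotropic subspace $V_1$, which meets $V_0$ trivially.

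Next, assertion (3), that the subset diffeology on $f(V_0)$ is standard, is the statement of the Lemma preceding that Proposition: a $g_V$-orthonormal basis $e_1,\ldots,e_n$ of $V_0$ is carried by $f$ to a $g_W$-orthonormal family $f(e_1),\ldots,f(e_n)$, whose span inherits the standard diffeology because each projection $w\mapsto g_W(f(e_i),w)$ is an ordinary smooth function by smoothness of $g_W$. Finally, assertion (4), that $f(V_0)$ splits off smoothly in $W$, is Lemma \ref{compatible:pseudometrics:image:smooth:summand:lem}, where one completes $f(e_1),\ldots,f(e_n)$ to a $g_W$-orthogonal eigenbasis of $W$ and checks that the associated projection of an arbitrary plot onto $f(V_0)$ is again a plot. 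Collecting these four facts yields the theorem.

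I do not expect any genuine obstacle at this stage: the entire burden of proof was discharged in the individual lemmas and the proposition, and the only thing to confirm is that each was established under the same compatibility hypothesis assumed here. The one point worth stating explicitly is the division of labour among the assertions: (1) and (2) are the direct diffeological analogues of the classical dimension and injectivity constraints for an isometric embedding, whereas (3) and (4) carry the genuinely diffeological content, since together with the Corollary to Lemma \ref{compatible:pseudometrics:image:smooth:summand:lem} they show that $f$ sends the characteristic subspace of $V$ into that of $W$.
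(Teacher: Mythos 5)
Your proposal is correct and matches the paper exactly: the theorem is stated there as a summary ("We collect the conclusions of this section in the following statement") with no separate proof, its content being precisely the assembly of Lemma \ref{exist:compatible:pseudo-metrics:f:necessary:lem}, the lemma on the standardness of the subset diffeology of $f(V_0)$, the proposition on $\dim(V^*)\leqslant\dim(W^*)$, and Lemma \ref{compatible:pseudometrics:image:smooth:summand:lem}, all proved under the same compatibility hypothesis. Your identification of each assertion with its corresponding prior result, and of the underlying arguments, is accurate.
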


\subsubsection{Sufficient conditions}

Suppose now that $V$ and $W$ are such that the just-mentioned necessary condition is satisfied, and let $f:V\to W$ be a smooth linear map such that $\mbox{Ker}(f)\cap V_0=\{0\}$ (where $V_0$ is
the characteristic subspace of $V$). By definition of a pseudo-metric, if $V=V_0\oplus V_1$ is a smooth decomposition of $V$\footnote{As we have said already, $V_0$ is uniquely defined by
$V$; this is not necessarily true of $V_1$, which is defined uniquely only when the pseudo-metric has been fixed already. However, there is always at least one choice of $V_1$; what we mean
at the moment is that such a choice is fixed arbitrarily.} then $g_V$ is defined by its restriction to $V_0$ (which is a scalar product) and is extended by zero elsewhere. The same is true of
$g_W$ and the corresponding smooth decomposition $W_0\oplus W_1$. In this way we obtain the following.

\begin{prop}\label{compatible:pseudometrics:vspace:st-to-st:prop}
Let $V$ and $W$ be finite-dimensional diffeological vector spaces such that $\dim(V^*)\leqslant\dim(W^*)$, and let $f:V\to W$ be a smooth linear map such that $\mbox{Ker}(f)\cap V_0=\{0\}$ and
$f(V_0)\leqslant W_0$. Then $V$ and $W$ admit pseudo-metrics compatible with respect to $f$.
\end{prop}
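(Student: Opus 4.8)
The plan is to build both pseudo-metrics explicitly out of a single scalar product, using the description recalled immediately before the statement: a pseudo-metric on a finite-dimensional diffeological vector space is a scalar product on its characteristic subspace, extended by zero on a smooth complement. First I would fix a smooth decomposition $V=V_0\oplus V_1$ and choose an arbitrary scalar product $s$ on $V_0$; extending it by zero on $V_1$ defines $g_V$. Since $\mbox{Ker}(f)\cap V_0=\{0\}$ and $f(V_0)\leqslant W_0$, the restriction $f|_{V_0}\colon V_0\to W_0$ is injective with image $f(V_0)\leqslant W_0$ (note this already forces $\dim(V^*)=\dim V_0\leqslant\dim W_0=\dim(W^*)$, so the stated inequality is automatic). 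I would then transport $s$ to a scalar product on $f(V_0)$ via $f|_{V_0}$, extend it to a scalar product on $W_0$ by choosing any scalar product on an algebraic complement of $f(V_0)$ in $W_0$ with the two summands declared orthogonal, fix a smooth decomposition $W=W_0\oplus W_1$, and extend by zero on $W_1$ to obtain $g_W$. Both $g_V$ and $g_W$ are smooth because the projections onto the characteristic subspaces are smooth, and each has the correct rank, so both are genuine pseudo-metrics.

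Denoting by $P_0^V$ and $P_0^W$ the projections onto $V_0$ and $W_0$ along the chosen complements, compatibility $g_V(v,v')=g_W(f(v),f(v'))$ reduces to the intertwining identity $P_0^W\circ f=f\circ P_0^V$ as maps $V\to W_0$: granting it, $g_W(f(v),f(v'))=s(P_0^V v,P_0^V v')=g_V(v,v')$ by construction of $s$ on $f(V_0)$, which is a one-line computation. The identity $P_0^W f=f P_0^V$ splits into two parts. On $V_0$ it is immediate from $f(V_0)\leqslant W_0$, which gives $P_0^W\circ f|_{V_0}=f|_{V_0}$. The remaining part is that $f$ should annihilate the isotropic directions as seen from $W_0$, that is $f(V_1)\subseteq W_1$.

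The genuinely non-formal step, and the one I expect to be the main obstacle, is exactly $f(V_1)\subseteq W_1$; I would prove it through the diffeological duals. For a smooth decomposition $V=V_0\oplus V_1$ one has $V^*\cong V_0^*\oplus V_1^*$, since a smooth functional is recovered from its restrictions to the two summands by composing with the smooth projections; as $\dim(V^*)=\dim V_0=\dim(V_0^*)$, this forces $V_1^*=0$, so every smooth linear functional on $V$ vanishes on $V_1$. On the other side, $W_1$ is the common kernel of all smooth functionals, $W_1=\bigcap_{\mu\in W^*}\mbox{Ker}(\mu)=\mbox{Ker}(P_0^W)$. Now for each $\mu\in W^*$ the composite $\mu\circ f$ is a smooth linear functional on $V$, hence vanishes on $V_1$; thus $\mu(f(V_1))=0$ for all $\mu\in W^*$, which is precisely $f(V_1)\subseteq W_1$ (and in particular the conclusion does not depend on which smooth complements $V_1,W_1$ were chosen). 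With this in hand the two parts of the intertwining identity hold, compatibility follows from the one-line computation above, and the construction is complete.
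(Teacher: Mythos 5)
Your proof is correct, and its construction of $g_V$ and $g_W$ is the same as the paper's: a scalar product on $V_0$ extended by zero on a smooth complement, transported through the injective map $f|_{V_0}$ to $f(V_0)$, extended orthogonally to all of $W_0$, and by zero on $W_1$. The genuine difference lies in the compatibility check. The paper, after setting up exactly these two pseudo-metrics, asserts that compatibility ``is immediate from their definitions'' and never discusses vectors with a component in $V_1$; but, as you rightly observe, for $v\in V_1$ one has $g_V(v,v)=0$ while $g_W(f(v),f(v))$ is the squared length of the $W_0$-component of $f(v)$, so everything hinges on the inclusion $f(V_1)\subseteq W_1$, i.e.\ on your intertwining identity $P_0^W\circ f=f\circ P_0^V$. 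Your dual-space argument for it --- $V^*\cong V_0^*\oplus V_1^*$ for a smooth decomposition, so $\dim(V^*)=\dim(V_0)=\dim(V_0^*)$ forces $V_1^*=0$, hence $\mu\circ f\in V^*$ kills $V_1$ for every $\mu\in W^*$ --- is correct and supplies precisely the lemma the paper's proof leaves implicit (the same point is glossed over again in the paper's proof of the subsequent criterion, Theorem \ref{criterio:exist:pseudometrics:vspaces:thm}, where the analogous equality is attributed solely to the orthogonality of $f(V_0)$ and its complement in $W_0$). So your route is the paper's construction made airtight by one additional lemma; beyond rigor, it buys the observation that the hypothesis $\dim(V^*)\leqslant\dim(W^*)$ is redundant, and that the conclusion does not depend on the choice of smooth complements. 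The only assertion you leave unproved is the equality $\bigcap_{\mu\in W^*}\mbox{Ker}(\mu)=W_1$: the inclusion you actually use (that the common kernel is contained in $W_1$) needs the one-line remark that functionals of the form $\lambda\circ P_0^W$, with $\lambda$ linear on the standard space $W_0$, separate the points of $W_0$ --- immediate, but worth stating, since that is the direction doing the work.
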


\begin{proof}
Let us fix smooth decompositions $V=V_0\oplus V_1$ and $W=W_0\oplus W_1$. To construct a pseudo-metric $g_V$, choose a basis $v_1,\ldots,v_k$ of $V_0$ and a basis $v_{k+1},\ldots,v_n$ of $V_1$;
then set
$$g_V(v_i,v_j)=\delta_{i,j}\mbox{ for }i,j=1,\ldots,k\mbox{ and }g_V(v_i,v_{k+j})=0\mbox{ for }i=1,\ldots,n,\,j=1,\ldots,n-k,$$ and extend by bilinearity and symmetricity. To define $g_W$, then,
consider $f(v_1),\ldots,f(v_k)\in W_0$; notice that they are linearly independent by the assumption on $\mbox{Ker}(f)$. Add first $u_1,\ldots,u_l\in W_0$ to obtain the basis
$f(v_1),\ldots,f(v_k),u_1,\ldots,u_l$ of $W_0$. Finally, choose a basis $w_1,\ldots,w_m$ of $W_1$ to obtain the basis $f(v_1),\ldots,f(v_k),u_1,\ldots,u_l,w_1,\ldots,w_m$ of the whole
$W$. It then suffices to define $g_W$ to be
$$g_W(f(v_i),f(v_j))=\delta_{i,j},\,\,\,g_W(u_i,u_j)=\delta_{i,j},\,\,\,g_W(f(v_i),u_j)=0,\,\,\,g_W(f(v_i),w_p)=0,\,\,\, g_W(u_i,w_p)=0$$ and extend by bilinearity and symmetry. The bilinear maps $g_V$
and $g_W$ thus obtained are smooth, because each of the characteristic subspaces $V_0$ and $W_0$ splits off as a smooth direct summand, and by construction $g_V$ and $g_W$ are zero maps
outside of $V_0$ and $W_0$ respectively. Finally, that they are pseudo-metrics and are compatible with each other is immediate from their definitions, whence the conclusion.
\end{proof}

We are ready to establish the final criterion of the existence of compatible pseudo-metrics on a pair of diffeological vector spaces, that we state in the following form.

\begin{thm}\label{criterio:exist:pseudometrics:vspaces:thm}
Let $V$ and $W$ be two finite-dimensional diffeological vector spaces, and let $f:V\to W$ be a smooth linear map. Then $V$ and $W$ admit compatible pseudo-metrics if and only if
$\mbox{Ker}(f)\cap V_0=\{0\}$ and $f(V_0)\leqslant W_0$.
\end{thm}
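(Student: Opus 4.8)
The plan is to recognize that this theorem merely repackages results already established earlier in the section, so both directions follow by assembling prior statements, with one small dimension-count observation needed to close the gap. The only substantive point is that the dimension inequality assumed separately in Proposition \ref{compatible:pseudometrics:vspace:st-to-st:prop} turns out to be redundant.

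For the forward direction (``only if''), I would suppose that $V$ and $W$ admit pseudo-metrics compatible with respect to $f$. The condition $\mbox{Ker}(f)\cap V_0=\{0\}$ is then immediate from item 2 of Theorem \ref{necessary:exist:pseudometrics:vspaces:thm} (equivalently, from Lemma \ref{exist:compatible:pseudo-metrics:f:necessary:lem}). The inclusion $f(V_0)\leqslant W_0$ is exactly the content of the Corollary following Lemma \ref{compatible:pseudometrics:image:smooth:summand:lem}. Thus both asserted conditions hold, and this direction requires no further work beyond citing these two facts.

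For the backward direction (``if''), I would suppose $\mbox{Ker}(f)\cap V_0=\{0\}$ and $f(V_0)\leqslant W_0$, and aim to invoke Proposition \ref{compatible:pseudometrics:vspace:st-to-st:prop}, which produces compatible pseudo-metrics from these same two hypotheses together with the inequality $\dim(V^*)\leqslant\dim(W^*)$. The task therefore reduces to deriving that inequality from the two given conditions, which is a purely linear-algebraic step. Since $V_0$ is diffeomorphic to $V^*$, one has $\dim(V^*)=\dim(V_0)$, and similarly $\dim(W^*)=\dim(W_0)$. The hypothesis $\mbox{Ker}(f)\cap V_0=\{0\}$ makes the restriction $f|_{V_0}$ injective, so $\dim(f(V_0))=\dim(V_0)$; the hypothesis $f(V_0)\leqslant W_0$ gives $\dim(f(V_0))\leqslant\dim(W_0)$. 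Chaining these yields
$$\dim(V^*)=\dim(V_0)=\dim(f(V_0))\leqslant\dim(W_0)=\dim(W^*).$$
With the dimension inequality in hand, Proposition \ref{compatible:pseudometrics:vspace:st-to-st:prop} applies directly and furnishes the desired compatible pseudo-metrics.

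There is no genuine obstacle in this argument; the only point worth flagging is precisely the observation that $\dim(V^*)\leqslant\dim(W^*)$ is a consequence of the two kernel-and-image conditions rather than an independent hypothesis, which is what allows the cleaner two-condition characterization stated in the theorem. Everything else is a direct citation of the necessary-conditions Theorem \ref{necessary:exist:pseudometrics:vspaces:thm} and the sufficiency Proposition \ref{compatible:pseudometrics:vspace:st-to-st:prop}.
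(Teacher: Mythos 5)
Your proof is correct, but the sufficiency half takes a genuinely different route from the paper's. The paper does not pass through Proposition \ref{compatible:pseudometrics:vspace:st-to-st:prop} at all: it proves sufficiency by a direct construction, fixing an \emph{arbitrary} pseudo-metric $g_V$ on $V$, smoothly decomposing $W=\left(f(V_0)\oplus W_0'\right)\oplus W_1$ (this uses $f(V_0)\leqslant W_0$, so that $f(V_0)$ splits off smoothly), and defining $g_W$ to coincide with $g_V$ on $f(V_0)$ (transported by $f$), to be any scalar product on $W_0'$ orthogonal to $f(V_0)$, and to vanish on $W_1$; compatibility is then verified by the chain $g_V(v',v'')=g_V(v_0',v_0'')=g_W(f(v_0'),f(v_0''))=g_W(f(v'),f(v''))$. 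Your reduction is legitimate: the hypotheses of Proposition \ref{compatible:pseudometrics:vspace:st-to-st:prop} are exactly your two conditions plus $\dim(V^*)\leqslant\dim(W^*)$, and your derivation of that inequality is sound --- indeed it is the same dimension chain $\dim(V^*)=\dim(V_0)=\dim(f(V_0))\leqslant\dim(W_0)=\dim(W^*)$ that the paper itself uses when proving its earlier proposition on necessary dimension conditions. What each approach buys: yours is shorter and makes explicit the useful observation that the dimension hypothesis in Proposition \ref{compatible:pseudometrics:vspace:st-to-st:prop} is redundant once the kernel and image conditions are imposed; the paper's construction yields a slightly stronger conclusion than the theorem states, namely that \emph{every} prescribed pseudo-metric $g_V$ on $V$ admits some $g_W$ on $W$ compatible with it, not merely that one compatible pair exists. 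Your necessity direction coincides with the paper's, which likewise just cites Lemma \ref{exist:compatible:pseudo-metrics:f:necessary:lem} and (the corollary to) Lemma \ref{compatible:pseudometrics:image:smooth:summand:lem}.
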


\begin{proof}
The fact that these two conditions are necessary follows from Lemma \ref{exist:compatible:pseudo-metrics:f:necessary:lem} and Lemma \ref{compatible:pseudometrics:image:smooth:summand:lem}, so let us
show that they are sufficient. Let $V=V_0\oplus V_1$ be a smooth decomposition, let $g_V$ be any pseudo-metric on $V$, and let $W=f(V_0)\oplus W'$ be a smooth decomposition that exists by
assumption. Notice that, since $W'$ is just another instance of a finite-dimensional diffeological space, it has its own smooth decomposition of form $W_0'\oplus W_1$, where $W_0'$ is standard and
$W_1$ has trivial diffeological dual; so the whole of $W$ smoothly decomposes as $W=\left(f(V_0)\oplus W_0'\right)\oplus W_1$. Notice that this implies that $W^*=\left(f(V_0)\oplus W_0'\right)^*$ (by
the smoothness of the decomposition), in particular, they have the same dimension.

Let us define a pseudo-metric $g_W$, by setting it to coincide with $g_V$ (in the obvious sense) on $f(V_0)$, choosing any scalar product for its restriction on $W_0'$, while requiring $W_0'$ to be
orthogonal to $f(V_0)$, and finally setting $W_1$ to be an isotropic subspace. That this is indeed a pseudo-metric follows from the considerations above, so it remains to show that $g_W$ is indeed
compatible with $g_V$. This essentially follows from the construction, more precisely, from the fact that $f(V_0)$ is orthogonal to any its direct complement. Indeed, if
$v'=v_0'+v_1'$ and $v''=v_0''+v_1''$ are any two elements of $V$, then
$$g_V(v',v'')=g_V(v_0',v_0'')=g_W(f(v_0'),f(v_0''))=g_W(f(v_0')+f(v_1'),f(v_0'')+f(v_1''))=g_W(f(v'),f(v'')),$$ where the third equality is by the orthogonality just mentioned. This means that $g_V$ and $g_W$ are
compatible with $f$, and the proof is finished.
\end{proof}

\subsection{Compatibility of the dual pseudo-metrics: diffeological vector spaces}

We now consider the induced pseudo-metrics on the duals of diffeological vector spaces; the main question that we aim to answer is, under what conditions the pair of pseudo-metrics dual to
(induced by) two compatible ones is in turn compatible.

\paragraph{The induced pseudo-metric $g^*$ on $V^*$: definition} Recall (\cite{pseudometric}) that, given a finite-dimensional diffeological vector space $V$ endowed with a pseudo-metric $g$, the
diffeological dual of $V$ carries the induced pseudo-metric $g^*$ (actually, a scalar product, since the diffeological dual of any finite-dimensional diffeological vector space is standard) defined by
$$g^*(v_1^*,v_2^*):=g(v_1,v_2),$$ where $v_i\in V$ is any element such that $v_i^*(\cdot)=g(v_i,\cdot)$ for $i=1,2$. That this is well-defined, \emph{i.e.}, the result does not depend on the choice of $v_i$ (as
long as $g(v_i,\cdot)$ remains the same), and that $v_i^*$ always admits such a form, was shown in \cite{pseudometric}.

\paragraph{The compatibility for the induced pseudo-metrics} Let $g_V$ and $g_W$ be pseudo-metrics on $V$ and $W$ respectively, compatible with respect to $f$. Let $w_1^*,w_2^*\in W^*$; then there
exist $w_1,w_2\in W$, defined up to the cosets of the isotropic subspace of $g_W$, such that $w_i^*(\cdot)=g_W(w_i,\cdot)$ for $i=1,2$, by definition of the dual pseudo-metric
$$g_W^*(w_1^*,w_2^*)=g_W(w_1,w_2),$$ and finally, $f^*(w_i^*)(\cdot)=w_i^*(f(\cdot))=g_W(w_i,f(\cdot))$.

The compatibility condition that we need to check is the following one:
$$g_W^*(w_1^*,w_2^*)=g_V^*(f^*(w_1^*),f^*(w_2^*)).$$ Now, in order to calculate the right-hand term in this expression, we must choose $v_1$ and $v_2$, again defined up to their cosets with respect to
the isotropic subspace of $g_V$, such that $g_V(v_i,v')=f^*(w_i^*)(v')=g_W(w_i,f(v'))$, for all elements of $v'\in V$ and for $w_1^*,w_2^*\in W^*$. The term on the right then becomes
$g_V^*(f^*(w_1^*),f^*(w_2^*))=g_V(v_1,v_2)$.

\paragraph{The dual pseudo-metrics and compatibility} Let us now consider the pseudo-metrics on $V^*$ and $W^*$ dual to a pair of compatible pseudo-metrics on $V$ and $W$. We observe right away that 
in general, the induced pseudo-metrics are not compatible. This follows from Lemma \ref{exist:compatible:pseudo-metrics:f:necessary:lem}, as well as from the standard theory, all diffeological constructions 
being in fact extensions of the standard ones.

\begin{example}
Let $V$ be the standard $\matR^n$, with the canonical basis denoted by $e_1,\ldots,e_n$, and let $W$ be the standard $\matR^{n+k}$, with the canonical basis denoted by
$u_1,\ldots,u_n,u_{n+1},\ldots,u_{n+k}$. Let $f:V\to W$ be the embedding of $V$ via the identification of $V$ with the subspace generated by $u_1,\ldots,u_n$, given by $e_i\mapsto u_i$ for
$i=1,\ldots,n$. Let $g_V$ be any scalar product on $\matR^n$; this trivially induces a scalar product on $f(V)=\mbox{Span}(u_1,\ldots,u_n)\leqslant W_0$, and let $g_W$ be
any extension of it to a scalar product on the whole $W$.

Let us consider the dual map on the dual the standard complement of the subspace $\mbox{Span}(u_1,\ldots,u_n)$, that is, on the dual of $\mbox{Span}(u_{n+1},\ldots,u_{n+k})$. This dual is the usual dual,
so it is $\mbox{Span}(u^{n+1},\ldots,u^{n+k})$. Let $v$ be any element of $V$; since $f(v)\in\mbox{Span}(u_1,\ldots,u_n)$, we have
$$f^*(u^{n+i})(v)=u^{n+i}(f(v))=0,$$ so in the end we obtain that $\mbox{Ker}(f^*)=\mbox{Span}(u^{n+1},\ldots,u^{n+k})$.

Finally, let us consider the compatibility condition. We observe that
$$g_W^*(u^{n+i},u^{n+i})=g_W(u_{n+i},u_{n+i})>0,$$ since $g_W$ is a scalar product, while, of course,
$$g_V^*(f^*(u^{n+i}),f^*(u^{n+i}))=0.$$ Quite evidently, the compatibility condition cannot be satisfied (unless $k=0$).
\end{example}

\paragraph{Sufficient conditions for compatibility of the induced pseudo-metrics} It can be inferred from the above example that the induced pseudo-metrics on the duals of standard spaces are
compatible only if the spaces have the same dimension (which is not surprising, since in this case the notion of the induced pseudo-metric itself coincides with the standard one). This can be
generalized to the following statement.

\begin{thm}\label{crit:dual:pseudometrics:comp:vspaces:thm}
Let $V$ and $W$ be two finite-dimensional diffeological vector spaces, and let $f:V\to W$ be a smooth linear map such that $\mbox{Ker}(f)\cap V_0=\{0\}$ and $f(V_0)\leqslant W_0$. Let $g_V$
and $g_W$ be compatible pseudo-metrics on $V$ and $W$ respectively. Then the induced pseudo-metrics $g_W^*$ and $g_V^*$ are compatible with $f^*$ if and only if $f^*:W^*\to V^*$ is a diffeomorphism.
\end{thm}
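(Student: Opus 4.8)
The plan is to transport the whole question to the characteristic subspaces $V_0$ and $W_0$, where the two pseudo-metrics restrict to genuine scalar products, and to exploit that $V^*$ and $W^*$ are standard spaces. First I would record two structural consequences of the hypotheses. On one hand, compatibility of $g_V$ and $g_W$ with $f$, together with $f(V_0)\leqslant W_0$, shows that the restriction $f_0:=f|_{V_0}:V_0\to W_0$ satisfies $g_W|_{W_0}(f_0(v),f_0(v'))=g_V|_{V_0}(v,v')$, so $f_0$ is an isometric embedding of inner-product spaces (it is injective precisely because $\mbox{Ker}(f)\cap V_0=\{0\}$). On the other hand, the inequality $\dim(V_0)=\dim(V^*)\leqslant\dim(W^*)=\dim(W_0)$ is available from Theorem~\ref{necessary:exist:pseudometrics:vspaces:thm}. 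Throughout I would use that the maps $\Phi_V:V_0\to V^*$ and $\Phi_W:W_0\to W^*$ are diffeomorphisms which, by the defining relation $g^*(\Phi(v_1),\Phi(v_2))=g(v_1,v_2)$ of the induced pseudo-metric, are isometries of $(V_0,g_V|_{V_0})$ with $(V^*,g_V^*)$ and of $(W_0,g_W|_{W_0})$ with $(W^*,g_W^*)$.

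The pivotal step is to identify $f^*$, read through these isometries, with the adjoint of $f_0$. I would set $A:=\Phi_V^{-1}\circ f^*\circ\Phi_W:W_0\to V_0$ and check that $A=f_0^*$, the adjoint with respect to $g_V|_{V_0}$ and $g_W|_{W_0}$. Indeed, for $w\in W_0$ one has $f^*(\Phi_W(w))=g_W(w,f(\cdot))\in V^*$, so $A(w)\in V_0$ is the unique element with $g_V(A(w),v)=g_W(w,f(v))$ for all $v\in V$; restricting to $v\in V_0$ and using $f(V_0)\leqslant W_0$ yields $g_V|_{V_0}(A(w),v)=g_W|_{W_0}(w,f_0(v))$, which is exactly the defining relation of $f_0^*$. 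Since $V^*$ and $W^*$ are standard, $f^*$ is a diffeomorphism if and only if it is a linear isomorphism, hence if and only if $A$ is a linear isomorphism of $W_0$ onto $V_0$.

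With this in hand both implications become short. Transporting the condition $g_W^*(w_1^*,w_2^*)=g_V^*(f^*(w_1^*),f^*(w_2^*))$ through $\Phi_V$ and $\Phi_W$ rewrites it as $g_W|_{W_0}(w_1,w_2)=g_V|_{V_0}(A(w_1),A(w_2))$ for all $w_1,w_2\in W_0$, that is, as the statement that $A$ is an isometry. For the forward direction, an isometry $A$ is injective, so $\dim(W_0)\leqslant\dim(V_0)$; combined with the reverse inequality recalled above this forces $\dim(V_0)=\dim(W_0)$, whence $A$ is a bijective isometry and in particular a linear isomorphism, and so $f^*$ is a diffeomorphism. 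Conversely, if $f^*$ is a diffeomorphism then $A$ is a linear isomorphism, so $\dim(V_0)=\dim(W_0)$; the isometric embedding $f_0$ between inner-product spaces of equal dimension is then a bijective isometry, and $A=f_0^*=f_0^{-1}$ is an isometry, which is precisely the compatibility of $g_W^*$ and $g_V^*$ with $f^*$.

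The step demanding the most care is the identification $A=f_0^*$. It relies on the fact that every functional in $V^*$ automatically annihilates the isotropic summand $V_1$ (whose diffeological dual is trivial), so that the functional $g_W(w,f(\cdot))$ is already determined by its restriction to $V_0$ and indeed lies in $\Phi_V(V_0)$; this is what makes $A$ well-defined as a map into $V_0$ and equal to the adjoint of $f_0$ rather than of $f$. Once this is secured, the remainder is the elementary observation that the adjoint of an isometric embedding of finite-dimensional inner-product spaces is an isometry exactly when the embedding is surjective, i.e. when the two characteristic subspaces have equal dimension.
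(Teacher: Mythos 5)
Your proof is correct, and every external fact you lean on is available in the paper: the pairing maps $\Phi_V|_{V_0}$ and $\Phi_W|_{W_0}$ are isometric bijections onto $V^*$ and $W^*$ (this is also exactly what makes your $A$ well defined), the dimension inequality $\dim(V^*)\leqslant\dim(W^*)$ is Theorem \ref{necessary:exist:pseudometrics:vspaces:thm}, and the restrictions of $g_V$, $g_W$ to the characteristic subspaces are genuine scalar products. The difference from the paper's proof is one of organization rather than substance. The paper treats the two implications by different means: its \emph{only if} direction is the same abstract argument you give (compatibility makes $f^*$ an isometry between the standard scalar-product spaces $W^*$ and $V^*$, injectivity plus the reverse dimension inequality forces equal dimensions, and a linear bijection of standard spaces is a diffeomorphism), while its \emph{if} direction is an explicit computation with a $g_V$-orthogonal basis $e_1,\ldots,e_n$ of $V_0$: one verifies $f^*((f(e_i))^*)=e_i^*$ and checks compatibility on the resulting basis $(f(e_1))^*,\ldots,(f(e_n))^*$ of $W^*$. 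You replace that basis computation by the coordinate-free device $A=\Phi_V^{-1}\circ f^*\circ\Phi_W=f_0^*$ and the elementary lemma that the adjoint of an isometric embedding of finite-dimensional inner-product spaces is an isometry precisely when the embedding is onto. Note that the paper's identity $f^*((f(e_i))^*)=e_i^*$ is exactly your relation $A\circ f_0=\mathrm{id}_{V_0}$, i.e. $f_0^*\circ f_0=\mathrm{id}$, so the computational core coincides. What your packaging buys is a single clean equivalence (compatibility of $g_W^*$ and $g_V^*$ $\Leftrightarrow$ $A$ is an isometry) from which both implications fall out uniformly, and it makes fully explicit that $f^*$ itself is the diffeomorphism in the forward direction, a point the paper states rather tersely; what the paper's version buys is self-containedness, in that it never needs the notion of adjoint, only direct evaluation on a basis.
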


\begin{proof}
The \emph{only if} part of the statement, illustrated by the example above, follows from standard reasoning. Indeed, $g_W^*$ and $g_V^*$ are usual scalar products on standard spaces $W^*$ and $V^*$
respectively, and their compatibility means that $f^*$ is a usual isometry, whose existence implies that $W^*$ and $V^*$ have the same dimension, and being standard spaces, this means that they are
diffeomorphic as diffeological vector spaces.

Let us prove the \emph{if} part, namely, that $g_W^*$ and $g_V^*$ are compatible under the assumptions of the proposition. Let $e_1,\ldots,e_n$ be a $g_V$-orthogonal basis of $V_0$. Since $g_V$
and $g_W$ are compatible, $f(e_1),\ldots,f(e_n)$ is a $g_W$-orthogonal basis of $f(V_0)$. Now, $(e_1)^*,\ldots,(e_n)^*$ (recall here that $v^*$ for $v\in V$ stands for the map
$v^*(\cdot)=g_V(v,\cdot)$) form a basis of $V^*$ (which is also orthogonal with respect to the induced pseudo-metric $g_V^*$), while $(f(e_1))^*,\ldots,(f(e_n))^*$ are linearly independent elements of
$W^*$. Since $V^*$ and $W^*$ have the same dimension, $(f(e_1))^*,\ldots,(f(e_n))^*$ actually forms a basis of $W^*$; and so, $g_W^*$ is entirely determined by its values on pairs
$(f(e_i))^*,(f(e_j))^*$, and moreover, we have $$g_W^*((f(e_i))^*,(f(e_j))^*)=g_W(f(e_i),f(e_j))=g_V(e_i,e_j)=g_V^*(e_i^*,e_j^*).$$

Finally, $f^*((f(e_i))^*)(v)=(f(e_i))^*(f(v))=g_W(f(e_i),f(v))=g_V(e_i,v)=e_i^*(v)$ for all $v\in V$. Therefore
$$g_V^*(e_i^*,e_j^*)=g_V^*(f^*((f(e_i))^*),f^*((f(e_j))^*))=g_W^*((f(e_i))^*,(f(e_j))^*),$$ at which point the compatibility, with respect to $f^*$, of the pseudo-metrics $g_W^*$ and $g_V^*$ follows from
$(f(e_1))^*,\ldots,(f(e_n))^*$ being a basis of $W^*$.
\end{proof}

\subsection{Compatibility of the dual pseudo-metrics: diffeological vector pseudo-bundles}

Let us now consider the following question: if two given pseudo-metrics $g_1$ and $g_2$ are compatible with respect to the gluing along a given pair of maps $(f,\tilde{f})$, when is it true
that $g_2^*$ and $g_1^*$ are compatible with the gluing defined by $(f^{-1},\tilde{f}^*)$? (Obviously, we assume here that $f$ is invertible).

\paragraph{The compatibility condition for $g_2^*$ and $g_1^*$} Let $\pi_1:V_1\to X_1$ and $\pi_2:V_2\to X_2$ be locally trivial finite-dimensional diffeological vector pseudo-bundles, let
$(\tilde{f},f)$ be a gluing of the former to the latter such that $f$ is a diffeomorphism with its image, and let $g_1$ and $g_2$ be pseudo-metrics on $V_1$ and $V_2$ respectively, compatible with
respect to the given gluing. The latter induces a well-defined gluing, along the maps $\tilde{f}^*$ and $f^{-1}$, of the dual pseudo-bundle $\pi_2^*:V_2^*\to X_2$ to the pseudo-bundle
$\pi_1^*:V_1^*\to X_1$, the result of which is the pseudo-bundle $\pi_2^*\cup_{(\tilde{f}^*,f^{-1})}\pi_1^*:V_2^*\cup_{\tilde{f}^*}V_1^*\to X_2\cup_{f^{-1}}X_1$, while $g_2$ and $g_1$ induce
pseudo-metrics $g_2^*:X_2\to(V_2^*)^*\otimes(V_2^*)^*$ and $g_1^*:X_1\to(V_1^*)^*\otimes(V_1^*)^*$ on the dual pseudo-bundles. They satisfy the usual compatibility condition if
$$g_1^*(f^{-1}(y'))(\tilde{f}^*(v^*),\tilde{f}^*(w^*))=g_2^*(y')(v^*,w^*)$$ $$\mbox{for all }y'\in Y'=f(Y)\mbox{ and for all }v^*,w^*\in(\pi_2^{-1}(y'))^*.$$

\paragraph{The necessary condition} The compatibility between $g_2^*$ and $g_1^*$ implies in particular that for all $y\in Y$ the pseudo-metrics $g_2^*(f(y))$ and $g_1^*(y)$ are compatible with the
smooth linear map $\tilde{f}^*|_{\pi_1^{-1}(y)}$ between diffeological vector spaces $(\pi_2^{-1}(f(y)))^*$ and $(\pi_1^{-1}(y))^*$. Thus, Theorem \ref{crit:dual:pseudometrics:comp:vspaces:thm} implies that
$\tilde{f}^*$ is a diffeomorphism on each fibre.

\begin{prop}
Let $\pi_1:V_1\to X_1$ and $\pi_2:V_2\to X_2$ be diffeological vector pseudo-bundles, locally trivial and with finite-dimensional fibres, and let $(\tilde{f},f)$ be an invertible gluing between
them. Suppose that $g_1$ and $g_2$ are two pseudo-metrics on these pseudo-bundles compatible with the gluing along $(\tilde{f},f)$. If the induced pseudo-metrics $g_2^*$ and $g_1^*$ are compatible with
the gluing along $(\tilde{f}^*,f^{-1})$ then the restriction of $\tilde{f}^*$ on each fibre in its domain of definition is a diffeomorphism.
\end{prop}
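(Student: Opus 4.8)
The plan is to reduce the statement, which concerns pseudo-bundles, to the corresponding fibrewise statement about diffeological vector spaces, and then to invoke Theorem~\ref{crit:dual:pseudometrics:comp:vspaces:thm}. The starting observation is that compatibility of pseudo-metrics on pseudo-bundles, as defined in Section~1, is by construction a fibrewise condition: the assumed compatibility of $g_2^*$ and $g_1^*$ with the gluing along $(\tilde{f}^*,f^{-1})$ says exactly that for every $y\in Y$ one has $g_2^*(f(y))(v^*,w^*)=g_1^*(y)(\tilde{f}^*(v^*),\tilde{f}^*(w^*))$ for all $v^*,w^*\in(\pi_2^{-1}(f(y)))^*$. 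This is precisely the assertion that, over $y$, the pseudo-metrics $g_2^*(f(y))$ and $g_1^*(y)$ are compatible, in the sense of diffeological vector spaces, with the smooth linear map $\tilde{f}^*|_{(\pi_2^{-1}(f(y)))^*}\colon(\pi_2^{-1}(f(y)))^*\to(\pi_1^{-1}(y))^*$.

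First I would fix $y\in Y$ and pass to the single-fibre setting, writing $V=\pi_1^{-1}(y)$, $W=\pi_2^{-1}(f(y))$, and $\phi=\tilde{f}|_{\pi_1^{-1}(y)}\colon V\to W$. The hypothesis that $g_1$ and $g_2$ are compatible with the gluing along $(\tilde{f},f)$ restricts, over $y$, to the compatibility of $g_1(y)$ and $g_2(f(y))$ with $\phi$; in particular $V$ and $W$ admit compatible pseudo-metrics with respect to $\phi$. By Theorem~\ref{criterio:exist:pseudometrics:vspaces:thm} (equivalently, by Lemmas~\ref{exist:compatible:pseudo-metrics:f:necessary:lem} and~\ref{compatible:pseudometrics:image:smooth:summand:lem}) this forces $\mbox{Ker}(\phi)\cap V_0=\{0\}$ and $\phi(V_0)\leqslant W_0$, which are exactly the standing hypotheses required by Theorem~\ref{crit:dual:pseudometrics:comp:vspaces:thm}.

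Next I would identify the dual data. The dual map $\phi^*\colon W^*\to V^*$ is nothing but the restriction $\tilde{f}^*|_{(\pi_2^{-1}(f(y)))^*}$, while $g_2^*(f(y))$ and $g_1^*(y)$ are exactly the induced pseudo-metrics $g_W^*$ and $g_V^*$ appearing in Theorem~\ref{crit:dual:pseudometrics:comp:vspaces:thm}. By the opening observation these induced pseudo-metrics are compatible with $\phi^*$, so the \emph{only if} direction of that theorem applies and yields that $\phi^*=\tilde{f}^*|_{(\pi_2^{-1}(f(y)))^*}$ is a diffeomorphism. Since $y\in Y$ was arbitrary, $\tilde{f}^*$ restricts to a diffeomorphism on each fibre over its domain of definition, which is the assertion.

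The argument is essentially a bookkeeping reduction together with one application of the vector-space criterion, so there is no deep obstacle. The only point that is not purely formal is the verification that the hypotheses $\mbox{Ker}(\phi)\cap V_0=\{0\}$ and $\phi(V_0)\leqslant W_0$ of Theorem~\ref{crit:dual:pseudometrics:comp:vspaces:thm} actually hold; I would stress that these are not extra assumptions but automatic consequences of the existence of compatible pseudo-metrics on the fibres, guaranteed by Theorem~\ref{criterio:exist:pseudometrics:vspaces:thm}. The remaining care is only to keep the gluing directions straight, namely that on the duals the base map is $f^{-1}$ and the total-space map is $\tilde{f}^*\colon V_2^*\to V_1^*$, so that the fibre over $f(y)$ in $V_2^*$ is carried to the fibre over $y$ in $V_1^*$.
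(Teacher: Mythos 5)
Your proposal is correct and follows essentially the same route as the paper: the paper's own justification (the paragraph preceding the proposition) likewise observes that compatibility of $g_2^*$ and $g_1^*$ restricts, fibre by fibre, to compatibility of $g_2^*(f(y))$ and $g_1^*(y)$ with the restriction of $\tilde{f}^*$, and then invokes the \emph{only if} direction of Theorem~\ref{crit:dual:pseudometrics:comp:vspaces:thm}. Your extra step of checking that $\mbox{Ker}(\phi)\cap V_0=\{0\}$ and $\phi(V_0)\leqslant W_0$ hold automatically, via Theorem~\ref{criterio:exist:pseudometrics:vspaces:thm}, is a point the paper leaves implicit, and you also correctly fix the domain of the restricted dual map (the paper writes $\tilde{f}^*|_{\pi_1^{-1}(y)}$ where $\tilde{f}^*|_{(\pi_2^{-1}(f(y)))^*}$ is meant).
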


\paragraph{Criterion of compatibility} The statement that follows shows that, for the two induced pseudo-metrics to be compatible with the induced gluing, the map dual to the gluing map $\tilde{f}$ must
satisfy a rather stringent condition (although an expected one). 

\begin{thm}\label{when:dual:pseudometrics:compatible:bundles:thm}
Let $\pi_1:V_1\to X_1$ and $\pi_2:V_2\to X_2$ be two diffeological vector pseudo-bundles, locally trivial and with finite-dimensional fibres, let $(\tilde{f},f)$ be a gluing between them, and let $g_1$
and $g_2$ be compatible pseudo-metrics on $V_1$ and $V_2$ respectively. Then the induced pseudo-metrics $g_2^*$ and $g_1^*$ on the corresponding dual pseudo-bundles are compatible if and only if
$\tilde{f}^*$ is a pseudo-bundle diffeomorphism of its domain with its image.
\end{thm}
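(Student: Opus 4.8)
The plan is to reduce the pseudo-bundle statement to the vector-space criterion of Theorem~\ref{crit:dual:pseudometrics:comp:vspaces:thm} applied one fibre at a time, and then to bridge the gap between a fibrewise diffeomorphism and a genuine pseudo-bundle diffeomorphism by invoking local triviality. The key preliminary observation is that both sides of the asserted equivalence are, at heart, fibrewise. Indeed, the compatibility condition for $g_2^*$ and $g_1^*$ is by its very statement fibrewise, asserting $g_1^*(f^{-1}(y'))(\tilde{f}^*(v^*),\tilde{f}^*(w^*))=g_2^*(y')(v^*,w^*)$ for every $y'\in f(Y)$ and all $v^*,w^*$ in the corresponding fibre of $V_2^*$. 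Moreover, the hypotheses $\mbox{Ker}(\tilde{f})\cap V_0=\{0\}$ and $\tilde{f}(V_0)\leqslant W_0$ required to invoke Theorem~\ref{crit:dual:pseudometrics:comp:vspaces:thm} hold on each fibre: since $g_1$ and $g_2$ are compatible, the restriction $\tilde{f}|_{\pi_1^{-1}(y)}:\pi_1^{-1}(y)\to\pi_2^{-1}(f(y))$ satisfies exactly these conditions for every $y\in Y$, by Theorem~\ref{criterio:exist:pseudometrics:vspaces:thm}. Throughout I write $T_y:=\tilde{f}^*|_{(\pi_2^{-1}(f(y)))^*}$ for the fibrewise dual map $(\pi_2^{-1}(f(y)))^*\to(\pi_1^{-1}(y))^*$.

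For the \emph{if} direction I would proceed as follows. Assuming $\tilde{f}^*$ is a pseudo-bundle diffeomorphism of its domain with its image, each $T_y$ is a diffeomorphism of diffeological vector spaces. Applying Theorem~\ref{crit:dual:pseudometrics:comp:vspaces:thm} with $V=\pi_1^{-1}(y)$, $W=\pi_2^{-1}(f(y))$, and $f=\tilde{f}|_{\pi_1^{-1}(y)}$ then yields that $g_2^*(f(y))$ and $g_1^*(y)$ are compatible with $T_y$. Since this holds for every $y\in Y$, the fibrewise compatibilities assemble into precisely the pseudo-bundle compatibility condition displayed above, and this direction is complete.

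For the \emph{only if} direction, the fibrewise content is handled symmetrically: the assumed compatibility of $g_2^*$ and $g_1^*$ specializes, for each $y\in Y$, to compatibility of the fibrewise dual pseudo-metrics, whence Theorem~\ref{crit:dual:pseudometrics:comp:vspaces:thm} forces each $T_y$ to be a diffeomorphism --- this is exactly the content of the Proposition preceding the present theorem. What remains, and what I expect to be the crux, is to upgrade this family of fibrewise diffeomorphisms to a \emph{pseudo-bundle} diffeomorphism onto its image, i.e.\ to establish smoothness of the fibrewise inverse across the fibres. The map $\tilde{f}^*$ itself is smooth, being the dual of the smooth fibrewise-linear $\tilde{f}$, so only the inverse is at issue.

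The inverse I would produce from the pseudo-metrics together with local triviality. Because the fibres of $V_1^*$ and $V_2^*$ are standard and these dual pseudo-bundles inherit local triviality from $V_1$ and $V_2$, the restriction of $\tilde{f}^*$ over $f(Y)$ reads, in local trivializations, as a smooth family $y'\mapsto A(y')$ of invertible linear maps between spaces of equal, locally constant dimension $\dim(V_2^*)=\dim(V_1^*)$; then $y'\mapsto A(y')^{-1}$ is smooth by the smoothness of inversion on the general linear group, which is exactly smoothness of the fibrewise inverse. Equivalently, the assumed compatibility makes $\tilde{f}^*$ a fibrewise isometry for the genuine, smoothly varying scalar products $g_2^*$ and $g_1^*$, so its inverse coincides with its metric adjoint, smooth because these pseudo-metrics are smooth sections. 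The main obstacle is precisely this passage from fibrewise to global smoothness of the inverse: in the diffeological setting a fibrewise linear isomorphism need not admit a smooth inverse, and it is the local triviality hypothesis --- absent which even the existence of a well-behaved $g^*$ is not guaranteed, as already remarked --- that makes the argument go through.
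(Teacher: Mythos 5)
Your proposal is correct and follows essentially the same route as the paper's own proof: both directions reduce to the fibrewise vector-space criterion (Theorem~\ref{crit:dual:pseudometrics:comp:vspaces:thm}), with bijectivity and smoothness of $\tilde{f}^*$ coming from the fibrewise considerations and duality, and smoothness of $(\tilde{f}^*)^{-1}$ across fibres supplied by local triviality together with the standardness of the dual fibres. The only cosmetic difference is that the paper re-derives the fibrewise computation explicitly in the \emph{if} direction, whereas you invoke the vector-space theorem directly (and you spell out the local-triviality argument, via smooth inversion of the transition matrices, in more detail than the paper does).
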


Notice that diffeological vector spaces may have diffeomorphic duals without being diffeomorphic themselves, and the same is true for diffeological vector pseudo-bundles.

\begin{proof}
By assumption, $g_1$ and $g_2$ are compatible with the gluing given by the pair $(\tilde{f},f)$, that is
$$g_2(f(y))(\tilde{f}(v),\tilde{f}(w))=g_1(y)(v,w)\mbox{ for all }y\in Y\mbox{ and for all }v,w\in\pi_1^{-1}(y).$$ Suppose first that $\tilde{f}^*$ is a diffeomorphism with its image.
Then, first of all, by the definition of $g_2^*$ we have 
$$g_2^*(y')(v^*,w^*)=g_2(y')(v,w)$$ for all $y'\in f(Y)$, for all $v^*,w^*\in(\pi_2^{-1}(y'))^*$, and for $v,w\in(\pi_2^{-1}(y'))_0$. Notice that $v$ and $w$ are uniquely
defined by the latter condition; and they are such that $v^*(\cdot)=g_2(y')(v,\cdot)$ and $w^*(\cdot)=g_2(y')(w,\cdot)$. Notice also (we will need this immediately below) that this means
$$\tilde{f}^*(v^*)(\cdot)=v^*(\tilde{f}(\cdot))=g_2(y')(v,\tilde{f}(\cdot))=g_2(y')(\tilde{f}(v_1),\tilde{f}(\cdot))=g_1(f^{-1}(y'))(v_1,\cdot),$$
where $v_1\in(\pi_1^{-1}(f^{-1}(y')))_0$ is such that $v=\tilde{f}(v_1)$; such an element exists and is uniquely defined because $\tilde{f}^*$ being a diffeomorphism is equivalent to
$\tilde{f}$ being a diffeomorphism between each pair of subspaces $(\pi_1^{-1}(f^{-1}(y')))_0$ and $(\pi_2^{-1}(y'))_0$. Similarly, we have
$$\tilde{f}^*(w^*)(\cdot)=g_2(y')(\tilde{f}(w_1),\tilde{f}(\cdot))=g_1(f^{-1})(w_1,\cdot)$$ for $w_1\in(\pi_1^{-1}(f^{-1}(y')))_0$ such that $w=\tilde{f}(w_1)$. It remains now to consider the left-hand
part of the compatibility condition. We have:
$$g_1^*(f^{-1}(y'))(\tilde{f}^*(v^*),\tilde{f}^*(w))=g_1(f^{-1}(y'))(v_1,w_1)=g_2(y')(\tilde{f}(v_1),\tilde{f}(w_1))=g_2(y')(v,w)=g_2^*(y')(v^*,w^*),$$ as wanted.

Let us prove the \emph{vice versa} of the statement, that is, let us assume that $g_2^*$ and $g_1^*$ are compatible, and let us show that $\tilde{f}^*$ is a diffeomorphism. We notice first of all that it
follows from the considerations made for individual vector spaces that $\tilde{f}^*$ is bijective and, as is the case for any dual map, it is smooth. Finally, the smoothness of its inverse follows
from the fact that $V_1^*$ and $V_2^*$ are locally trivial and have standard fibres.
\end{proof}

\section{Compatibility of pseudo-metrics and the gluing-dual commutativity conditions}

In this section we consider which correlations there might be between the notion of compatible pseudo-metrics on two given pseudo-bundles (with a specified gluing), and the gluing-dual
commutativity conditions. We start by taking our two usual pseudo-bundles, $\pi_1:V_1\to X_1$ and $\pi_2:V_2\to X_2$, and a gluing along $(\tilde{f},f)$ between them. Assuming that these
pseudo-bundles admit pseudo-metrics $g_1$ and $g_2$ compatible with the gluing, we consider the following questions: 1) what are the implications of the existence of compatible pseudo-metrics for the
pseudo-bundles themselves? 2) if the gluing-dual commutativity condition holds for $V_1$, $V_2$, and $(\tilde{f},f)$, does it necessarily hold for $V_2^*$, $V_1^*$, and $(\tilde{f}^*,f^{-1})$?
3) if $g_2^*$ and $g_1^*$ exist, under what conditions are they compatible with $(\tilde{f}^*,f^{-1})$, in particular, is their compatibility equivalent to the gluing-dual commutativity? 4) does
taking the dual pseudo-metric commute (in the notation to be introduced, this will be the equality $\tilde{g}^*=\widetilde{g^*}$) with the gluing of pseudo-metrics (as defined in
\cite{pseudometric-pseudobundle})? We consider these questions in order, after quickly introducing a preliminary notion.

\subsection{The characteristic sub-bundle of a finite-dimensional vector pseudo-bundle}

Let $\pi:V\to X$ be a finite-dimensional diffeological vector pseudo-bundle, and let $(\pi^{-1}(x))_0$ be the characteristic subspace of the fibre $\pi^{-1}(x)$. Denote by $V_0$ the sub-bundle
of $V$ defined as 
$$V_0:=\cup_{x\in X}(\pi^{-1}(x))_0.$$ We say that $V_0$ is the \textbf{characteristic sub-bundle} of the pseudo-bundle $V$. It is evident from the construction that the
characteristic sub-bundle of a locally trivial pseudo-bundle is itself locally trivial. Furthermore, every pseudo-metric on $V$ is uniquely defined by its restriction to $V_0$. Finally, the
\emph{vice versa} of the latter statement is also true, if we assume $V$ to be locally trivial.

\begin{lemma}
Let $\pi:V\to X$ be a locally trivial diffeological vector pseudo-bundle, let $\pi_0:V_0\to X$ be its characteristic sub-bundle, and let $g_0$ be a pseudo-metric on $V_0$. Then there
exists one, and only one, pseudo-metric $g$ on $V$ whose restriction on $V_0$ coincides with $g_0$.
\end{lemma}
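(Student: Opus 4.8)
The plan is to separate the statement into its uniqueness and existence halves, since uniqueness is essentially the fact recalled just above the lemma, while existence is where local triviality does the work. For uniqueness, suppose $g$ and $g'$ are two pseudo-metrics on $V$ that both restrict to $g_0$ on $V_0$. For each $x\in X$ the forms $g(x)$ and $g'(x)$ are pseudo-metrics on the fibre $\pi^{-1}(x)$ having the same restriction to the characteristic subspace $(\pi^{-1}(x))_0$; since a pseudo-metric on a finite-dimensional diffeological vector space is determined by its restriction to the characteristic subspace, we get $g(x)=g'(x)$ for every $x$, hence $g=g'$. I note that this half uses neither local triviality nor the existence of $g_0$ as an extendable object, only the injectivity of fibrewise restriction.

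For existence I would first build $g$ fibrewise. Because characteristic subspaces are standard, $g_0$ restricts on each fibre $(\pi^{-1}(x))_0$ to a genuine scalar product (its rank equals the full fibre dimension). On each fibre fix a smooth decomposition $\pi^{-1}(x)=(\pi^{-1}(x))_0\oplus(\pi^{-1}(x))_1$, let $P_x$ be the projection onto $(\pi^{-1}(x))_0$, and set $g(x)(v,w):=g_0(x)(P_x v,P_x w)$. This $g(x)$ is symmetric, semi-positive-definite, of rank $\dim(\pi^{-1}(x))_0=\dim(\pi^{-1}(x))^*$, and restricts to $g_0(x)$ on $(\pi^{-1}(x))_0$ since $P_x$ fixes that subspace; hence it is a pseudo-metric extending $g_0(x)$. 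By the very fibrewise uniqueness invoked above, $g(x)$ does not depend on the choice of complement $(\pi^{-1}(x))_1$, so we obtain a well-defined fibrewise bilinear form $g$ on $V\times_X V$, equivalently a fibrewise-defined section of $V^*\otimes V^*$.

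The remaining, and main, task is to show this $g$ is smooth. Here I would use local triviality, which (as recalled earlier) also makes $V_0$ locally trivial. Cover $X$ by opens $U_i$ over which $V|_{U_i}\cong U_i\times F$ with $V_0|_{U_i}\cong U_i\times F_0$, where $F$ is the typical fibre and $F_0$ its characteristic subspace. Fixing one smooth decomposition $F=F_0\oplus F_1$ with smooth projection $P_F:F\to F_0$, the fibrewise projection becomes $\mathrm{id}_{U_i}\times P_F$ in this trivialization, so over $U_i$ the form $g$ is the composite of $g_0$ (smooth, being a pseudo-metric on $V_0$) with the smooth map $\mathrm{id}_{U_i}\times P_F$ in each argument, hence smooth on $U_i$. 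On overlaps the local expressions agree because on each fibre they both equal the unique pseudo-metric extending $g_0(x)$; since smoothness of a map of diffeological spaces is a local condition (the sheaf axiom for plots), the globally defined $g$ is smooth, and as each $g(x)$ is a pseudo-metric on its fibre, $g$ is the desired pseudo-metric on $V$ restricting to $g_0$.

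I expect the smoothness across fibres to be the only real obstacle: the fibrewise projections $P_x$ depend on a choice of isotropic complement that is neither canonical nor, a priori, globally defined, so one cannot simply exhibit a single smooth global projection $V\to V_0$ and compose. Local triviality is precisely what overcomes this, furnishing an honest product decomposition and a smooth projection over each chart, while the fibrewise uniqueness from the first two paragraphs guarantees that these chart-wise constructions glue to one global, choice-independent $g$. Without local triviality the fibrewise projections need not assemble into a smooth map — paralleling the earlier remark that the dual pseudo-metric $g^*$ may fail to be smooth in that case — which is why the hypothesis is essential.
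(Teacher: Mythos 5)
Your proof is correct and takes essentially the same route as the paper's: the paper defines $g$ by letting it coincide with $g_0$ on pairs of vectors in $V_0$ and extending by zero off the characteristic sub-bundle, which is exactly your formula $g(x)(v,w)=g_0(x)(P_xv,P_xw)$. The only difference is one of detail --- the paper dismisses uniqueness and smoothness across fibres as following ``from the definitions,'' whereas you spell these out via fibrewise uniqueness, choice-independence of the complement, and the local trivializations; this is a faithful elaboration, not a different argument.
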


\begin{proof}
It suffices to define $g(x)(v',v'')=\left\{\begin{array}{ll} g_0(x)(v',v'') & \mbox{if }v',v''\in\pi_0^{-1}(x)\\ 0 & \mbox{otherwise}; \end{array}\right.$ the conclusion then follows from the definitions
of a pseudo-metric and that of the characteristic sub-bundle.
\end{proof}

\begin{prop}
Let $\pi:V\to X$ be a locally trivial finite-dimensional diffeological vector pseudo-bundle that admits a pseudo-metric $g$. Then its characteristic sub-bundle $\pi_0:V_0\to X$ is diffeomorphic
to its dual pseudo-bundle $\pi^*:V^*\to X$ via the natural pairing map associated to $g$.
\end{prop}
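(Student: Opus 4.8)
The plan is to show that the restriction $\Phi_0:=\Phi|_{V_0}$ of the natural pairing map $\Phi:V\to V^*$, $\Phi(v)=g(\pi(v))(v,\cdot)$, to the characteristic sub-bundle is a fibrewise linear diffeomorphism covering the identity on $X$. First I would reduce the claim to the fibrewise level plus a smoothness statement for the global inverse. On each fibre $\pi^{-1}(x)$ the restriction of $\Phi$ to the characteristic subspace $(\pi^{-1}(x))_0$ is, by the single-vector-space fact recalled above (namely that $V_0$ is diffeomorphic to $V^*$ via the restriction of $v\mapsto g(v,\cdot)$), a linear diffeomorphism of $(\pi^{-1}(x))_0$ onto $(\pi^{-1}(x))^*=(\pi^*)^{-1}(x)$; indeed $\Phi$ kills the isotropic complement and is injective on $(\pi^{-1}(x))_0$, while $\dim((\pi^{-1}(x))_0)=\dim((\pi^{-1}(x))^*)$. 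Hence $\Phi_0$ is a fibrewise linear bijection covering the identity on $X$, and it is smooth, being the restriction to the sub-bundle $V_0$ (with its subset diffeology) of the smooth map $\Phi$.

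The only nontrivial point is the smoothness of the inverse $\Phi_0^{-1}:V^*\to V_0$, and this is where local triviality enters. Both $V_0$ and $V^*$ are locally trivial: $V_0$ because it is the characteristic sub-bundle of a locally trivial pseudo-bundle (as already observed), and $V^*$ because over any trivializing neighborhood $U$ for $V$, with $V|_U\cong U\times F$, one has $V^*|_U\cong U\times F^*$. Moreover both bundles have standard fibres, the characteristic subspace being standard by definition and the diffeological dual of a finite-dimensional diffeological vector space being standard. Thus over a suitable neighborhood $U$ of any point of $X$ I can fix smooth local frames and identify $V_0|_U\cong U\times\matR^k$ and $V^*|_U\cong U\times\matR^k$ with their standard structures, where $k=\dim(V^*)$.

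In these local coordinates $\Phi_0$ is given, at each base point, by the Gram matrix $G(x)=(g(x)(e_i(x),e_j(x)))_{i,j}$ of $g$ with respect to the chosen frame of $V_0$; this matrix depends smoothly on $x$ by the smoothness of $g$, and it is invertible at every point because $\Phi_0$ is a fibrewise bijection (equivalently, because $g$ restricts to a genuine scalar product on $V_0$). Since matrix inversion is a smooth operation on the open set of invertible matrices, the assignment $x\mapsto G(x)^{-1}$ is smooth, and therefore $\Phi_0^{-1}$, which in these coordinates is multiplication by $G(x)^{-1}$, carries plots of $V^*$ to plots of $V_0$. As smoothness of a map is a local property, this proves $\Phi_0^{-1}$ is globally smooth, so $\Phi_0$ is the desired pseudo-bundle diffeomorphism covering the identity.

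The main obstacle is precisely the smoothness of the inverse: in the diffeological category a fibrewise linear smooth bijection covering the identity need not be a diffeomorphism, so the fibrewise result alone is insufficient. Local triviality, by allowing a reduction to a genuine matrix computation over standard fibres together with the smoothness of matrix inversion, is exactly what rescues the argument; without it, the right inverse of $\Phi$ may fail to be smooth, as already flagged in the discussion of the induced pseudo-metric $g^*$.
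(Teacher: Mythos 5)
Your proof is correct and follows essentially the same route as the paper's: restrict the pairing map $\psi_g(v)(\cdot)=g(\pi(v))(v,\cdot)$ to the characteristic sub-bundle, invoke the single-space fact that it is a fibrewise diffeomorphism onto each dual fibre, and use local triviality (together with the standardness of the fibres of $V_0$ and $V^*$) to obtain smoothness of the inverse across fibres. Your Gram-matrix and matrix-inversion computation simply makes explicit the step that the paper asserts in one sentence, namely that local triviality forces $\psi_g^{-1}$ to be smooth as well.
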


\begin{proof}
Let $\psi_g:V_0\to V^*$ be the natural pairing associated to $g$, that is,
$$\psi_g(v)(\cdot)=g(\pi(v))(v,\cdot).$$ That this is a bijection follows from its fibrewise nature and it being a bijection on each individual fibre (see
\cite{pseudometric}); furthermore (see the same source), as a map on the characteristic subspace it is a diffeomorphism with the dual fibre. By the assumption of local triviality this implies that
$\psi_g$, as well as its inverse, are smooth across the fibres as well, so they are smooth as a whole, whence the conclusion.\footnote{It is easy prove that $\psi_g$ is smooth even
if we do not assume $V$ to be locally trivial.}
\end{proof}

\subsection{Implications of compatibility of $g_1$ and $g_2$ for $V_1$ and $V_2$}

This extends the criterion for diffeological vector spaces (Theorem \ref{criterio:exist:pseudometrics:vspaces:thm}). The pseudo-bundle version is an immediate consequence and is as follows.

\begin{prop}
Let $\pi_1:V_1\to X_1$ and $\pi_2:V_2\to X_2$ be two diffeological vector pseudo-bundles, locally trivial and with finite-dimensional fibres, let $(\tilde{f},f)$ be a gluing between them, and let $g_1$
and $g_2$ be compatible pseudo-metrics. Then $\tilde{f}$ determines, over the domain of gluing, a smooth embedding of the characteristic sub-bundle of $V_1$ into the characteristic sub-bundle of $V_2$.
\end{prop}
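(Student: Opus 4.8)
The plan is to reduce the pseudo-bundle statement to the already-established vector-space criterion, applied fibrewise, and then to upgrade the resulting fibrewise embedding to a smooth pseudo-bundle embedding using local triviality. First I would fix a point $y$ in the domain of gluing $Y$ and consider the smooth linear map $\tilde{f}|_{\pi_1^{-1}(y)}:\pi_1^{-1}(y)\to\pi_2^{-1}(f(y))$ between the corresponding fibres. Since $g_1$ and $g_2$ are compatible with the gluing along $(\tilde{f},f)$, their restrictions $g_1(y)$ and $g_2(f(y))$ are compatible, in the vector-space sense, with respect to this $\tilde{f}|_{\pi_1^{-1}(y)}$. By Theorem~\ref{criterio:exist:pseudometrics:vspaces:thm} (or directly by Lemma~\ref{exist:compatible:pseudo-metrics:f:necessary:lem} together with Corollary under Lemma~\ref{compatible:pseudometrics:image:smooth:summand:lem}), this forces $\mbox{Ker}(\tilde{f}|_{\pi_1^{-1}(y)})\cap(\pi_1^{-1}(y))_0=\{0\}$ and $\tilde{f}((\pi_1^{-1}(y))_0)\leqslant(\pi_2^{-1}(f(y)))_0$. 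The first condition says $\tilde{f}$ is injective on the characteristic subspace of the fibre; the second says its image lands inside the characteristic subspace of the target fibre. Combined, $\tilde{f}$ restricts on each fibre to a linear isomorphism of $(\pi_1^{-1}(y))_0$ onto its image inside $(\pi_2^{-1}(f(y)))_0$, and since both carry the standard diffeology, this fibrewise restriction is a diffeomorphism with its image.

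Next I would assemble these fibrewise maps into a pseudo-bundle morphism. By definition of the characteristic sub-bundles $(V_1)_0=\cup_{y}(\pi_1^{-1}(y))_0$ and $(V_2)_0=\cup_{x}(\pi_2^{-1}(x))_0$, the collection of restrictions $\tilde{f}|_{(\pi_1^{-1}(y))_0}$ over $y\in Y$ defines a set map from the portion of $(V_1)_0$ over $Y$ into $(V_2)_0$, covering $f$ on the base. This map is the restriction of the smooth map $\tilde{f}$ to a sub-bundle carrying the subset diffeology, hence it is smooth; and it is fibrewise linear and fibrewise injective by the previous paragraph. So it is already a smooth fibrewise-injective pseudo-bundle morphism covering $f|_Y$. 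The remaining point is to see that it is an embedding, i.e.\ a diffeomorphism onto its image with the subset diffeology, rather than merely a smooth bijection onto the image.

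The step I expect to be the main obstacle is precisely upgrading the smooth fibrewise-isomorphism to a smooth inverse across the fibres, that is, showing the inverse map is smooth in the pseudo-bundle sense and not only on each fibre separately. Here I would invoke local triviality of $V_1$ (and hence of its characteristic sub-bundle, as noted just before the statement): the characteristic sub-bundle of a locally trivial pseudo-bundle is locally trivial with standard fibres, so over a trivializing neighborhood the map $\tilde{f}$ restricted to $(V_1)_0$ becomes, up to the local trivializations, a family of linear isomorphisms $\mathbb{R}^k\to\mathbb{R}^k$ depending smoothly on the base parameter. The inverse of a smoothly-varying family of linear isomorphisms between standard spaces is again smooth (the entries of the inverse matrix are rational functions of the entries, with nonvanishing determinant), so the inverse is smooth locally, and local smoothness glues to global smoothness by the sheaf condition (axiom~3) of diffeology. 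This is exactly the same mechanism used at the end of the proof of Theorem~\ref{when:dual:pseudometrics:compatible:bundles:thm} to deduce smoothness of $(\tilde{f}^*)^{-1}$ from local triviality and standardness of fibres, so I would cite that reasoning.

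Finally I would record that, over the domain of gluing $Y$, the assembled map is a smooth fibrewise-linear injection with smooth inverse onto its image, i.e.\ a smooth embedding of the characteristic sub-bundle of $V_1$ into that of $V_2$ covering $f$, which is the assertion. I would note in passing that the image need not be all of the characteristic sub-bundle of $V_2$ over $f(Y)$, since $\dim(\pi_1^{-1}(y))^*\leqslant\dim(\pi_2^{-1}(f(y)))^*$ may be strict, consistent with the Proposition preceding Theorem~\ref{necessary:exist:pseudometrics:vspaces:thm}; this is an embedding, not necessarily an isomorphism, which is all that is claimed.
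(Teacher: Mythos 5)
Your proposal is correct and follows essentially the same route as the paper: the paper's own proof likewise consists of applying Theorem~\ref{criterio:exist:pseudometrics:vspaces:thm} to the restriction of $\tilde{f}$ to each fibre over the domain of gluing, and then observing that the assembled map is smooth across the fibres because $\tilde{f}$ is. The only difference is that the paper stops there, whereas you additionally spell out the smoothness of the inverse across the fibres via local triviality; this extra step is in the spirit of the paper (it is exactly the mechanism invoked at the end of the proof of Theorem~\ref{when:dual:pseudometrics:compatible:bundles:thm}), but as written it carries a slight imprecision: since $\dim\left((\pi_1^{-1}(y))_0\right)$ may be strictly smaller than $\dim\left((\pi_2^{-1}(f(y)))_0\right)$ (as you yourself note in your closing remark), the local matrices are injective rectangular ones rather than square isomorphisms, so their inversion on the image should go through a left pseudo-inverse such as $(A^{T}A)^{-1}A^{T}$, whose entries are again smooth functions of the entries of $A$ with nonvanishing denominator $\det(A^{T}A)$, rather than through the inverse-matrix formula.
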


\begin{proof}
This follows directly from the already-mentioned Theorem \ref{criterio:exist:pseudometrics:vspaces:thm}, applied to the restriction of $\tilde{f}$ on each fibre in its domain of
definition; the theorem affirms that such restriction is an embedding of each characteristic subspace, so the fibre of the characteristic sub-bundle, of $V_1$ into that of $V_2$. We should
only add that the restriction of $\tilde{f}$ onto the intersection of its domain of definition with the characteristic sub-bundle of $V_1$ is smooth across the fibres, because $\tilde{f}$ is so.
\end{proof}

\subsection{The gluing-dual commutativity condition and gluing along a diffeomorphism}

We now recall a statement (which essentially appears in \cite{pseudobundles}, Lemma 5.17) that (together with some results from the previous sections) will allow us to deduce the gluing-dual
commutativity in a number of cases. The statement basically is that if the gluing of two pseudo-bundles is performed along a diffeomorphism, then the gluing-dual commutativity condition always
holds. We also add the explicit construction of the commutativity diffeomorphism (which was not specified in the above source).

\begin{thm}\label{gluing:diffeo:implies:gluing-dual:commute:thm}
Let $\chi_1:W_1\to X_1$ and $\chi_2:W_2\to X_2$ be two diffeological vector pseudo-bundles, let $h:X_1\supseteq Y\to X_2$ be a smooth invertible map with smooth inverse, and let $\tilde{h}$ be its
smooth fibrewise linear lift that is a diffeomorphism of its domain with its image. Then the map
$$\Psi_{\cup,*}:(W_1\cup_{\tilde{h}}W_2)^*\to W_2^*\cup_{\tilde{h}^*}W_1^*$$ defined by 
$$\Psi_{\cup,*}=\left\{\begin{array}{ll}
j_2^{W_1^*}\circ(j_1^{W_1})^* & \mbox{on }((\chi_1\cup_{(\tilde{h},h)}\chi_2)^*)^{-1}(i_1^{X_1}(X_1\setminus Y)) \\
j_2^{W_1^*}\circ\tilde{h}^*\circ(j_2^{W_2})^* & \mbox{on }((\chi_1\cup_{(\tilde{h},h)}\chi_2)^*)^{-1}(i_2^{X_2}(h(Y)) \\
j_1^{W_2^*}\circ(j_2^{W_2})^* & \mbox{on }((\chi_1\cup_{(\tilde{h},h)}\chi_2)^*)^{-1}(i_2^{X_2}(X_2\setminus h(Y))
\end{array}\right.$$ is a pseudo-bundle diffeomorphism covering the switch map $\varphi_{X_1\leftrightarrow X_2}$.
\end{thm}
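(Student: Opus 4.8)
The plan is to check, in order, that $\Psi_{\cup,*}$ is a well-defined fibrewise linear bijection, that it covers the switch map, and that it is a diffeomorphism; only the last point, and within it the smoothness of $\Psi_{\cup,*}$ (as opposed to that of its inverse), carries any real difficulty.

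The first two points are bookkeeping that I would dispatch quickly. The three base sets $i_1^{X_1}(X_1\setminus Y)$, $i_2^{X_2}(h(Y))$ and $i_2^{X_2}(X_2\setminus h(Y))$ are pairwise disjoint and cover $X_1\cup_h X_2$, so the three clauses prescribe $\Psi_{\cup,*}$ on a partition of $(W_1\cup_{\tilde{h}}W_2)^*$ and there is no overlap to reconcile. On each clause the map is a composition of fibrewise linear isomorphisms --- the transposes $(j_1^{W_1})^*$, $(j_2^{W_2})^*$ of the inclusions (which restrict fibrewise to isomorphisms of a fibre of $W_1\cup_{\tilde{h}}W_2$ with a fibre of $W_1$ or $W_2$), the transpose $\tilde{h}^*$ (invertible on the fibres over $Y$, since $\tilde{h}$ is a fibrewise diffeomorphism there), and the structural inclusions $j_1^{W_2^*}$, $j_2^{W_1^*}$ --- so $\Psi_{\cup,*}$ is a fibrewise linear bijection. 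Reading off the base point of each clause (clause one carries the fibre over $i_1^{X_1}(x)$ to that over $i_2^{X_1}(x)$, clause two the fibre over $i_2^{X_2}(h(y))$ to that over $i_2^{X_1}(y)$, clause three the fibre over $i_2^{X_2}(x')$ to that over $i_1^{X_2}(x')$) gives the covering of $\varphi_{X_1\leftrightarrow X_2}$ directly.

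For smoothness I would first note the asymmetry of the two diffeologies: the target $W_2^*\cup_{\tilde{h}^*}W_1^*$ carries a pushforward diffeology (its plots lift locally to plots of $W_2^*$ or $W_1^*$), whereas the source $(W_1\cup_{\tilde{h}}W_2)^*$ carries the dual diffeology (a map into it is smooth once its pairings against plots of $W_1\cup_{\tilde{h}}W_2$ are smooth). The inverse $\Psi_{\cup,*}^{-1}$ thus runs in the easy direction, out of a gluing and into a dual: I would lift a plot of the target to $W_2^*$ or $W_1^*$ over an open cover, pair the result against a (locally lifted) plot of $W_1\cup_{\tilde{h}}W_2$, and observe that each case collapses to an evaluation of a plot of $W_i^*$ against a plot of $W_i$, possibly precomposed with the smooth maps $\tilde{h}$ or $\tilde{h}^*$; smoothness then follows from the definition of the dual diffeologies of $W_1^*$ and $W_2^*$.

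The smoothness of $\Psi_{\cup,*}$ itself is the crux, precisely because it maps out of a dual bundle whose plots do \emph{not} factor through $W_1^*$ or $W_2^*$. The device that resolves this is to descend to the base. Given a plot $q$ of $(W_1\cup_{\tilde{h}}W_2)^*$, its underlying base plot into $X_1\cup_h X_2$ lifts, over an open cover of the domain, to a plot of $X_1$ or of $X_2$, and on each such open the dual fibre is uniformly a fibre of $W_1^*$ or of $W_2^*$. On an $X_1$-open I would assemble $\hat{q}=(j_1^{W_1})^*\circ q$ over $X_1\setminus Y$ with $\hat{q}=\tilde{h}^*\circ(j_2^{W_2})^*\circ q$ over $Y$, and verify that $\hat{q}$ is a single plot of $W_1^*$: its pairing with any plot $p_1$ of $W_1$ equals $q$ evaluated on the image of $p_1$ in $W_1\cup_{\tilde{h}}W_2$ (which over $Y$ is read through $j_2^{W_2}\circ\tilde{h}$), hence is smooth because $q$ is a dual plot and $\tilde{h}$ is smooth. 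Then $\Psi_{\cup,*}\circ q$ equals $\pi\circ\hat{q}$ there (with $\pi$ the gluing quotient projection), a plot of the gluing diffeology; symmetrically, on an $X_2$-open it equals $\pi\circ((j_2^{W_2})^*\circ q)$. Being a plot on each member of an open cover, $\Psi_{\cup,*}\circ q$ is a plot globally by the locality axiom, so $\Psi_{\cup,*}$ is smooth. The one point I would flag as needing care is consistency where the two clause-prescriptions could meet: this never happens, because no connected base plot of $X_2\cup_{h^{-1}}X_1$ can reach both $i_1^{X_2}(X_2\setminus h(Y))$ and $i_2^{X_1}(X_1\setminus Y)$, which is exactly what confines each open of the cover to a single factor. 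With these in hand $\Psi_{\cup,*}$ is the asserted pseudo-bundle diffeomorphism; alternatively, its smoothness could be imported from the existence result in \cite{pseudobundles}, Lemma 5.17, by noting that the three clauses coincide with the defining conditions of $\Phi_{\cup,*}$.
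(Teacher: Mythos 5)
Your proposal is correct and follows essentially the same route as the paper's proof: both reduce smoothness to the observation that, over a connected (or locally lifted) base plot, a plot of $(W_1\cup_{\tilde h}W_2)^*$ is the transport, along the clause maps, of a plot of $W_1^*$ or of $W_2^*$, so that composing with $\Psi_{\cup,*}$ yields a pushforward plot of the gluing $W_2^*\cup_{\tilde h^*}W_1^*$, with the locality axiom finishing the argument. The differences are only in presentation: you justify the plot-shape claim via the duality evaluation criterion (the paper asserts it as the "general shape" of plots), and you treat the inverse by a direct pairing argument where the paper declares it "completely analogous."
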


\begin{proof}
It is easy to see that $\Psi_{\cup,*}$ is a bijection, so let us show that it is smooth (the proof that its inverse is smooth is then analogous). Let us first consider the general shape of an arbitrary
plot $q^*$ of $(W_1\cup_{\tilde{h}}W_2)^*$, and that of an arbitrary plot $s^*$ of $W_2^*\cup_{\tilde{h}^*}W_1^*$. Let $q^*:U\to(W_1\cup_{\tilde{h}}W_2)^*$ be any plot; we can however
assume that $U$ is connected, so $(\chi_1\cup_{(\tilde{h},h)}\chi_2)\circ q$, which is a plot of $X_1\cup_h X_2$, lifts to a plot of $X_1$ or to a plot of $X_2$. This means that $q^*$ acts only 
on fibres of $W_1\cup_{\tilde{h}}W_2$ that pullback to $W_1$ or to $W_2$, respectively. In the former case we have that there exists a plot $q_1^*$ of $W_1^*$ such that
$$q^*=\left\{\begin{array}{ll} ((j_1^{W_1})^*)^{-1}\circ q_1^* & \mbox{over }i_1^{X_1}(X_1\setminus Y) \\
((j_2^{W_2})^*)^{-1}\circ(\tilde{h}^*)^{-1}\circ q_1^* & \mbox{over }i_2^{X_2}(h(Y));
\end{array}\right.$$ in the latter case there exists a plot $q_2^*$ of $W_2^*$ such that
$$q^*=((j_2^{W_2})^*)^{-1}\circ q_2^*.$$ For analogous reasons, if $s^*:U'\to W_2^*\cup_{\tilde{h}^*}W_1^*$ defined on a connected domain $U'$, then either there exists a plot $s_2^*$ of $W_2^*$ such that
$$s^*=\left\{\begin{array}{ll} j_1^{W_2^*}\circ s_2^* & \mbox{over }i_1^{X_2}(X_2\setminus h(Y)) \\
j_2^{W_1^*}\circ\tilde{h}^*\circ s_2^* & \mbox{over }i_2^{X_1}(h(Y)),
\end{array}\right.$$ or else there exists a plot $s_1^*$ of $W_1^*$ such that
$$s^*=j_2^{W_2^*}\circ s_1^*.$$

Let us consider $\Psi_{\cup,*}\circ q^*$ for $q^*$ of the first and the second type. Assume that $q^*$ is of the first type. Then by direct calculation we obtain
$$\Psi_{\cup,*}\circ q^*=\left\{\begin{array}{ll}
j_2^{W_1^*}\circ q_1^* & \mbox{over }i_2^{X_1}(X_1\setminus Y), \\
j_2^{W_1^*}\circ\tilde{h}^*\circ(j_2^{W_2})^*\circ((j_2^{W_2})^*)^{-1}\circ(\tilde{h}^*)^{-1}\circ q_1^* & \mbox{over }i_2^{X_1}(Y),
\end{array}\right.$$ that is, $\Psi_{\cup,*}\circ q^*=j_2^{W_1^*}\circ q_1^*$ over the whole of $i_2^{X_1}(X_1)$, which corresponds to a plot $s^*$ of the second type, for $s_1^*:=q_1^*$.

Similarly, if $q^*$ has its second possible form, we obtain
$$\Psi_{\cup,*}\circ q^*=\left\{\begin{array}{ll}
j_2^{W_1^*}\circ\tilde{h}^*\circ q_2^* & \mbox{over }i_2^{X_1}(Y)\\
j_1^{W_2^*}\circ q_2^* & \mbox{over }i_1^{X_2}(X_2\setminus h(Y)),
\end{array}\right.$$ that is, the first possible form of a plot $s^*$, with $s_2^*:=q_2^*$.

Finally, the smoothness of $(\Psi_{\cup,*})^{-1}$, whose formula
$$(\Psi_{\cup,*})^{-1}=\left\{\begin{array}{ll}
((j_1^{W_1})^*)^{-1}\circ(j_2^{W_1^*})^{-1} & \mbox{on }(\chi_2^*\cup_{(\tilde{h},h)}\chi_1^*)^{-1}(i_2^{X_1}(X_1\setminus Y)) \\
((j_2^{W_2})^*)^{-1}\circ(\tilde{h}^*)^{-1}\circ(j_2^{W_1^*})^{-1} & \mbox{on }(\chi_2^*\cup_{(\tilde{h},h)}\chi_1^*)^{-1}(i_2^{X_1}(Y) \\
((j_2^{W_2})^*)^{-1}\circ(j_1^{W_2^*})^{-1} & \mbox{on }(\chi_2^*\cup_{(\tilde{h},h)}\chi_1^*)^{-1}(i_1^{X_2}(X_2\setminus h(Y))
\end{array}\right.$$ is given by the inverses of the three maps that $\Psi_{\cup,*}$ itself, is established in a completely analogous fashion.
\end{proof}

\subsection{Gluing-dual commutativity conditions for $(\tilde{f},f)$ and $(\tilde{f}^*,f^{-1})$}

We now consider the gluing-dual commutativity condition for $V_2^*$, $V_1^*$, and $(\tilde{f}^*,f^{-1})$, under the assumption that such condition holds for $V_1$, $V_2$, and $(\tilde{f},f)$. For the
duals, this condition takes form of the existence of a diffeomorphism
$$\Phi_{\cup,*}^{(*)}:(V_2^*\cup_{\tilde{f}^*}V_1^*)^*\to V_1^{**}\cup_{\tilde{f}^{**}}V_2^{**}$$ covering the inverse of the switch map 
$\varphi_{X_1\leftrightarrow X_2}$ and satisfying the following:
$$\left\{\begin{array}{ll}
\Phi_{\cup,*}^{(*)}\circ((j_2^{V_2^*})^*)^{-1}=j_1^{V_1^{**}} & \mbox{on }(\pi_1^{**}\cup_{(\tilde{f}^{**},f)}\pi_2^{**})^{-1}(i_1^{X_1}(X_1\setminus Y)),\\
\Phi_{\cup,*}^{(*)}\circ((j_1^{V_1^*})^*)^{-1}=j_2^{V_2^{**}}\circ\tilde{f}^* & \mbox{on }(\pi_1^{**}\cup_{(\tilde{f}^{**},f)}\pi_2^{**})^{-1}(i_2^{X_2}(f(Y))),\\
\Phi_{\cup,*}^{(*)}\circ((j_2^{V_1^*})^*)^{-1}=j_2^{V_2^{**}} & \mbox{on }(\pi_1^{**}\cup_{(\tilde{f}^{**},f)}\pi_2^{**})^{-1}(i_2^{X_2}(X_2\setminus f(Y))).\end{array}\right.$$ 
Notice that this formula can serve as a definition of a certain map $\Phi_{\cup,*}^{(*)}$ between the domain and the range indicated; what we really need to do is to show that
it is a diffeomorphism.

\begin{thm}
Let $\pi_1:V_1\to X_1$ and $\pi_2:V_2\to X_2$ be two locally trivial finite-dimensional diffeological vector pseudo-bundles, let $(\tilde{f},f)$ be a pair of smooth maps that defines a gluing of
the former pseudo-bundle to the latter, and let $\Phi_{\cup,*}:(V_1\cup_{\tilde{f}}V_2)^*\to V_2^*\cup_{\tilde{f}^*}V_1^*$ be the canonical gluing-dual commutativity diffeomorphism. 
Let $g_1$ and $g_2$ be pseudo-metrics on $V_1$ and $V_2$ respectively, compatible with respect to the gluing. Then there exists a diffeomorphism
$$\Phi_{\cup,*}^{(*)}:(V_2^*\cup_{\tilde{f}^*}V_1^*)^*\to V_1^{**}\cup_{\tilde{f}^{**}}V_2^{**}$$ covering the map
$(\varphi_{X_1\leftrightarrow X_2})^{-1}:X_2\cup_{f^{-1}}X_1\to X_1\cup_f X_2$.
\end{thm}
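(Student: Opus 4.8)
The plan is to reduce the statement to Theorem~\ref{gluing:diffeo:implies:gluing-dual:commute:thm}, applied to the dual pseudo-bundles $\pi_2^*:V_2^*\to X_2$ and $\pi_1^*:V_1^*\to X_1$ glued along the pair $(\tilde{f}^*,f^{-1})$. That theorem asserts that whenever a gluing of pseudo-bundles is performed along a fibrewise lift that is a diffeomorphism of its domain with its image, the gluing-dual commutativity condition holds, with an explicitly constructed commutativity diffeomorphism $\Psi_{\cup,*}$ covering the relevant switch map. Here the switch map for $V_2^*\cup_{\tilde{f}^*}V_1^*$ over $X_2\cup_{f^{-1}}X_1$ is precisely $(\varphi_{X_1\leftrightarrow X_2})^{-1}$, exactly the map the statement requires $\Phi_{\cup,*}^{(*)}$ to cover. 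Thus, once the hypotheses of Theorem~\ref{gluing:diffeo:implies:gluing-dual:commute:thm} are verified with $W_1=V_2^*$, $W_2=V_1^*$, $h=f^{-1}$, and $\tilde{h}=\tilde{f}^*$, the desired $\Phi_{\cup,*}^{(*)}$ is simply the resulting $\Psi_{\cup,*}$.

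The only nontrivial hypothesis to check is that $\tilde{f}^*$ is a fibrewise linear diffeomorphism of its domain $(\pi_2^*)^{-1}(f(Y))$ with its image in $V_1^*$; this is the step I expect to be the main obstacle, although the groundwork is already in place. Being a dual map, $\tilde{f}^*$ is automatically smooth and fibrewise linear, so it suffices to show that on each fibre it is a bijection onto the corresponding fibre of $V_1^*$, after which local triviality of $V_2^*$ and $V_1^*$ together with their fibres being standard yields smoothness of the inverse across the fibres (exactly as in the last step of the proof of Theorem~\ref{when:dual:pseudometrics:compatible:bundles:thm}). The fibrewise bijectivity can be extracted directly from the assumed commutativity diffeomorphism $\Phi_{\cup,*}$: by its defining identity over the gluing region, namely $\Phi_{\cup,*}\circ((j_2^{V_2})^*)^{-1}=j_2^{V_1^*}\circ\tilde{f}^*$ on the fibres over $i_2^{X_1}(Y)$, the restriction of $\Phi_{\cup,*}$ to the fibre over such a point coincides, up to the natural identifications, with the dual map $\tilde{f}^*:(\pi_2^{-1}(f(y)))^*\to(\pi_1^{-1}(y))^*$; since $\Phi_{\cup,*}$ is a diffeomorphism, each of its fibre restrictions is one, hence so is $\tilde{f}^*$ on each fibre.

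To make use of the compatible pseudo-metrics, I could obtain the same conclusion by a dimension count. The fibrewise version of Theorem~\ref{criterio:exist:pseudometrics:vspaces:thm} shows that the compatibility of $g_1$ and $g_2$ forces $\tilde{f}$ to restrict, on each fibre over $Y$, to an injection of the characteristic subspace $(\pi_1^{-1}(y))_0$ into $(\pi_2^{-1}(f(y)))_0$, while the existence of $\Phi_{\cup,*}$ forces the fibrewise isomorphism $(\pi_2^{-1}(f(y)))^*\cong(\pi_1^{-1}(y))^*$ and hence the equality $\dim(\pi_1^{-1}(y))^*=\dim(\pi_2^{-1}(f(y)))^*$. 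As these duals carry the dimensions of the respective characteristic subspaces, the image $\tilde{f}((\pi_1^{-1}(y))_0)$ is full-dimensional in $(\pi_2^{-1}(f(y)))_0$, so the two coincide; a smooth functional on $\pi_2^{-1}(f(y))$ pulled back to zero by $\tilde{f}$ must then vanish on all of $(\pi_2^{-1}(f(y)))_0$, hence be zero. This makes $\tilde{f}^*$ fibrewise injective, and by matching dimensions bijective, recovering the same conclusion (in agreement with Theorem~\ref{when:dual:pseudometrics:compatible:bundles:thm}).

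With $\tilde{f}^*$ established as a fibrewise diffeomorphism of its domain with its image, I would invoke Theorem~\ref{gluing:diffeo:implies:gluing-dual:commute:thm} to produce $\Psi_{\cup,*}$ and set $\Phi_{\cup,*}^{(*)}:=\Psi_{\cup,*}$; it covers the switch map for $V_2^*\cup_{\tilde{f}^*}V_1^*$, namely $(\varphi_{X_1\leftrightarrow X_2})^{-1}$, as required. The remaining task is the routine bookkeeping of checking that the three-case formula delivered by that theorem, after the substitutions $W_1=V_2^*$, $W_2=V_1^*$, $W_1^*=V_2^{**}$, $W_2^*=V_1^{**}$, and $\tilde{h}^*=\tilde{f}^{**}$, matches the three defining identities for $\Phi_{\cup,*}^{(*)}$ listed before the statement, once the source-base regions $i_1^{X_2}(X_2\setminus f(Y))$, $i_2^{X_1}(Y)$, and $i_2^{X_1}(X_1\setminus Y)$ are transported to the target base $X_1\cup_f X_2$ through the switch map. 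This carries no real content beyond careful notation tracking.
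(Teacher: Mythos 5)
Your proposal is correct and follows essentially the same route as the paper: the paper's proof likewise observes that the existence of the canonical $\Phi_{\cup,*}$ forces $\tilde{f}^*$ to be a diffeomorphism of its domain with its image (exactly via the identity $\tilde{f}^*=(j_2^{V_1^*})^{-1}\circ\Phi_{\cup,*}\circ((j_2^{V_2})^*)^{-1}$ over the gluing locus, as in your second paragraph), and then invokes Theorem~\ref{gluing:diffeo:implies:gluing-dual:commute:thm} with $W_1=V_2^*$, $W_2=V_1^*$, $h=f^{-1}$, $\tilde{h}=\tilde{f}^*$ to write down $\Phi_{\cup,*}^{(*)}$ explicitly as the resulting $\Psi_{\cup,*}$, covering $(\varphi_{X_1\leftrightarrow X_2})^{-1}$. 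Your alternative dimension-count argument via the compatible pseudo-metrics is a harmless addition but not needed, since the first argument already suffices.
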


The claim of the theorem could be restated by saying that the dual pseudo-bundles $V_2^*$ and $V_1^*$ satisfy the gluing-dual commutativity condition for the gluing pair $(\tilde{f}^*,f^{-1})$.

\begin{proof}
Recall that the gluing-dual commutativity condition for the initial pseudo-bundles, that is, for $V_1$ and $V_2$, implies that $\tilde{f}^*$ is a diffeomorphism of its domain with its image. It
is then a direct consequence of Theorem \ref{gluing:diffeo:implies:gluing-dual:commute:thm} that the following map
$$\Phi_{\cup,*}^{(*)}=\left\{\begin{array}{ll}
j_2^{V_2^{**}}\circ\left(j_1^{V_2^*}\right)^* & \mbox{on }\left((\pi_2^*\cup_{(\tilde{f}^*,f^{-1})}\pi_1^*)^*\right)^{-1}(i_1^{X_2}(X_2\setminus f(Y))),\\
j_2^{V_2^{**}}\circ\tilde{f}^*\circ\left(j_2^{V_1^*}\right)^* & \mbox{on }\left((\pi_2^*\cup_{(\tilde{f}^*,f^{-1})}\pi_1^*)^*\right)^{-1}(i_2^{X_1}(Y)),\\
j_1^{V_1^{**}}\circ\left(j_2^{V_1^*}\right)^* & \mbox{on }\left((\pi_2^*\cup_{(\tilde{f}^*,f^{-1})}\pi_1^*)^*\right)^{-1}(i_2^{X_1}(X_1\setminus Y)) 
\end{array}\right.$$ is the desired gluing-dual commutativity diffeomorphism.
\end{proof}

\section{Compatibility of $g_2^*$ and $g_1^*$ implies the gluing-dual commutativity condition for $V_1$ and $V_2$}

So far we have spoken of the gluing-dual commutativity condition as a prerequisite to obtaining a canonical construction of the induced pseudo-metric on the pseudo-bundle obtained by gluing. In principle,
it is not a necessary condition (a pseudo-metric on $V_1\cup_{\tilde{f}}V_2$ can be constructed directly out of compatible pseudo-metrics on $V_1$ and $V_2$, using the flexibility
of diffeology in piecing together smooth maps); however, if we want at the same time to consider the dual pseudo-bundles $V_2^*$ and $V_1^*$, and to ensure that the induced pseudo-metrics on them are
again compatible, the reasoning involved starts to come rather close to the gluing-dual commutativity. Indeed, in this section we show that there is essentially an equivalence between the compatibility of 
$g_2^*$ with $g_1^*$, and the existence of a gluing-dual commutativity diffeomorphism $\Phi_{\cup,*}$.

\subsection{From $\Phi_{\cup,*}$ to the compatibility of $g_2^*$, $g_1^*$, and $(\tilde{f}^*,f^{-1})$}

It is not difficult to show that if we assume the gluing-dual commutativity, and more precisely, the existence of the specific diffeomorphism $\Phi_{\cup,*}:(V_1\cup_{\tilde{f}}V_2)^*\to
V_2^*\cup_{\tilde{f}^*}V_1^*$ given by
$$\Phi_{\cup,*}=\left\{\begin{array}{ll}
j_2^{V_1^*}\circ(j_1^{V_1})^* & \mbox{on }((\pi_1\cup_{(\tilde{f},f)}\pi_2)^*)^{-1}(i_1^{X_1}(X_1\setminus Y)) \\
j_2^{V_1^*}\circ\tilde{f}^*\circ(j_2^{V_2})^* & \mbox{on }((\pi_1\cup_{(\tilde{f},f)}\pi_2)^*)^{-1}(i_2^{X_2}(f(Y)))\\
j_1^{V_2^*}\circ(j_2^{V_2})^* & \mbox{on }((\pi_1\cup_{(\tilde{f},f)}\pi_2)^*)^{-1}(i_2^{X_2}(X_2\setminus f(Y)))
\end{array}\right.$$ then the dual pseudo-metrics, if they exist, are compatible.

\begin{thm}\label{gluing-dual:commute:dual:pseudometrics:compatible:thm}
Let $\pi_1:V_1\to X_1$ and $\pi_2:V_2\to X_2$ be two finite-dimensional locally trivial diffeological vector pseudo-bundles, let $(\tilde{f},f)$ be a pair of smooth maps
defining a gluing of $V_1$ to $V_2$, and let $g_1$ and $g_2$ be pseudo-metrics on $V_1$ and $V_2$ respectively, compatible with this gluing; assume that $V_1$, $V_2$, and $(\tilde{f},f)$ satisfy the
gluing-dual commutativity condition. Then $g_2^*$ and $g_1^*$ are compatible with the gluing of $V_2^*$ to $V_1^*$ along the pair $(\tilde{f}^*,f^{-1})$.
\end{thm}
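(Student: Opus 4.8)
The plan is to reduce the statement to the fibrewise criterion already established in Theorem \ref{when:dual:pseudometrics:compatible:bundles:thm}, which says that, for compatible $g_1$ and $g_2$ on locally trivial finite-dimensional pseudo-bundles, the induced dual pseudo-metrics $g_2^*$ and $g_1^*$ are compatible with the gluing along $(\tilde{f}^*,f^{-1})$ if and only if $\tilde{f}^*$ is a pseudo-bundle diffeomorphism of its domain with its image. Since $g_1$ and $g_2$ are compatible by hypothesis and local triviality is assumed (so that $g_1^*$ and $g_2^*$ exist in the first place), the whole claim collapses to a single assertion: that the gluing-dual commutativity condition forces $\tilde{f}^*$ to be such a diffeomorphism.

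To prove that assertion I would use the diffeomorphism $\Phi_{\cup,*}:(V_1\cup_{\tilde{f}}V_2)^*\to V_2^*\cup_{\tilde{f}^*}V_1^*$ whose existence is exactly what the commutativity condition postulates, together with the defining identity it satisfies over the gluing region, namely $\Phi_{\cup,*}\circ((j_2^{V_2})^*)^{-1}=j_2^{V_1^*}\circ\tilde{f}^*$ on $(\pi_2^*\cup_{(\tilde{f}^*,f^{-1})}\pi_1^*)^{-1}(i_2^{X_1}(Y))$. Over this region the relevant fibres are identified, via the switch map, with $(\pi_2^{-1}(f(y)))^*$ and $(\pi_1^{-1}(y))^*$, and both $(j_2^{V_2})^*$ and $j_2^{V_1^*}$ restrict there to diffeomorphisms onto their images. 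Solving the identity for $\tilde{f}^*$ then gives $\tilde{f}^*=(j_2^{V_1^*})^{-1}\circ\Phi_{\cup,*}\circ((j_2^{V_2})^*)^{-1}$, which exhibits $\tilde{f}^*$ as a composition of the diffeomorphism $\Phi_{\cup,*}$ with two inductions, hence as a pseudo-bundle diffeomorphism of its domain onto its image. This is also the fact recalled in the proof of the $\Phi_{\cup,*}^{(*)}$ theorem of the previous section.

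Having shown that $\tilde{f}^*$ is a diffeomorphism onto its image, I would invoke the \emph{if} direction of Theorem \ref{when:dual:pseudometrics:compatible:bundles:thm} to conclude at once that $g_2^*$ and $g_1^*$ are compatible with the gluing along $(\tilde{f}^*,f^{-1})$, which is the desired statement.

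The hard part, such as it is, will be the bookkeeping of the fibrewise identifications over $i_2^{X_1}(Y)$: one must check that there $(j_2^{V_2})^*$ and $j_2^{V_1^*}$ really are invertible on the pertinent fibres and that the resulting composite $\tilde{f}^*$ is smooth with smooth inverse across the fibres, not merely fibre by fibre. This is exactly where local triviality of $V_1$ and $V_2$, and the standardness of the fibres of the dual pseudo-bundles (which is what makes each $g_i^*(x)$ an honest scalar product), are needed; once the switch-map matching of base points is respected and these identifications are set up carefully, the remaining manipulation is routine.
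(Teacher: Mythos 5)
Your proposal is correct and follows essentially the same route as the paper's own proof: reduce via Theorem \ref{when:dual:pseudometrics:compatible:bundles:thm} to showing that $\tilde{f}^*$ is a diffeomorphism of its domain with its image, then extract $\tilde{f}^*=(j_2^{V_1^*})^{-1}\circ\Phi_{\cup,*}\circ((j_2^{V_2})^*)^{-1}$ from the second defining identity of the gluing-dual commutativity condition over the gluing region, exhibiting it as a composition of diffeomorphisms. The paper's argument is exactly this, with the restriction region denoted $Z_2^*:=((\pi_1\cup_{(\tilde{f},f)}\pi_2)^*)^{-1}(i_2^{X_2}(f(Y)))$.
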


\begin{proof}
By Theorem \ref{when:dual:pseudometrics:compatible:bundles:thm} it suffices to show that $\tilde{f}^*$ is a diffeomorphism of its domain with its image, and this is a trivial consequence of the form
in which we stated the gluing-dual commutativity condition, namely, as the smoothness of the specific diffeomorphism $\Phi_{\cup,*}$. Indeed, denoting for brevity
$Z_2^*:=((\pi_1\cup_{(\tilde{f},f)}\pi_2)^*)^{-1}(i_2^{X_2}(f(Y)))$, we immediately obtain
$$\tilde{f}^*=(j_2^{V_1^*})^{-1}\circ\Phi_{\cup,*}|_{Z_2^*}\circ((j_2^{V_2})^*)^{-1};$$ thus, $\tilde{f}^*$ is a composition of three diffeomorphisms, so a diffeomorphism itself.
\end{proof}

\subsection{The \emph{vice versa}: the smoothness of $\Phi_{\cup,*}$ out of compatibility of $g_2^*$ and $g_1^*$}

The reverse implication, that is, obtaining a smooth $\Phi_{\cup,*}$ assuming the compatibility of $g_2^*$ and $g_1^*$, is now easily obtained by applying Theorem
\ref{gluing:diffeo:implies:gluing-dual:commute:thm}, and the criteria for compatibility of the dual pseudo-metrics. Do note that the application of Theorem
\ref{gluing:diffeo:implies:gluing-dual:commute:thm} is not straightforward; indeed, it speaks of gluing along a diffeomorphism, and the assumptions formulated in terms of various pseudo-metrics do
not provide for $\tilde{f}$ being one. Therefore we need some preliminary considerations.

\subsubsection{Characteristic sub-bundles and the respective dual pseudo-bundles}

In order to obtain our desired conclusion, namely, that the compatibility of pseudo-metrics dual to a given pair of compatible pseudo-metrics provides for the gluing-dual commutativity, we need
several preliminary statements. We collect them in this section; they are more or less of independent interest.

\paragraph{The pseudo-metric $\tilde{g}$ on the pseudo-bundle $W_1\cup_{\tilde{h}}W_2$} Assuming that $W_1$ and $W_2$ admit compatible pseudo-metrics $g_1$ and $g_2$ allows (without any
further assumptions on $\tilde{h}$) for a direct construction of a pseudo-metric $\tilde{g}$ on $W_1\cup_{\tilde{h}}W_2$, which fibrewise coincides with either $g_1$ or $g_2$, as appropriate. This
pseudo-metric is defined by the following formula:
$$\tilde{g}(x)=\left\{\begin{array}{ll}
g_1((i_1^{X_1})^{-1}(x))\circ((j_1^{W_1})^{-1}\otimes(j_1^{W_1})^{-1}) & \mbox{for }x\in i_1^{X_1}(X_1\setminus Y) \\
g_2((i_2^{X_2})^{-1}(x))\circ((j_2^{W_2})^{-1}\otimes(j_2^{W_2})^{-1}) & \mbox{for }x\in i_2^{X_2}(X_2).
\end{array}\right.$$

\paragraph{The switch map for characteristic sub-bundles} As we have already commented, a pseudo-metric on a pseudo-bundle is essentially determined by its behavior on the characteristic sub-bundle; and
furthermore, assuming the local triviality and the existence of a pseudo-metric, the characteristic sub-bundle is diffeomorphic to the dual pseudo-bundle. Thus, we can expect significant correlations
between these three notions; and in this paragraph we specify some of them, as needed to reach the final aim of this section.

\begin{lemma}\label{switch:characteristic:is:diffeo:lem}
Let $\chi_1:W_1\to X_1$ and $\chi_2:W_2\to X_2$ be two locally trivial diffeological vector pseudo-bundles, let $W_1^0$ and $W_2^0$ be their characteristic sub-bundles, let $h:X_1\supseteq Y\to X_2$
be a smooth invertible map with smooth inverse, and let $\tilde{h}$ be its smooth fibrewise linear lift such that its restriction $\tilde{h}_0$ on $\mbox{Domain}(\tilde{h})\cap W_1^0$ is a
diffeomorphism. Then there is a canonical diffeomorphism
$$\varphi_{W_1^0\leftrightarrow W_2^0}:W_1^0\cup_{\tilde{h}_0}W_2^0\to W_2^0\cup_{\tilde{h}_0^{-1}}W_1^0$$ covering the switch map $\varphi_{X_1\leftrightarrow X_2}$.
\end{lemma}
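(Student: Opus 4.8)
The plan is to construct $\varphi_{W_1^0\leftrightarrow W_2^0}$ explicitly as the switch map for the gluing of the two characteristic sub-bundles, and then verify directly that it is a diffeomorphism covering $\varphi_{X_1\leftrightarrow X_2}$. The key observation is that the switch map construction from Section~1 (the paragraph on the switch map) applies verbatim to \emph{any} gluing performed along an invertible fibrewise linear lift of an invertible base map. The hypothesis is precisely that $h$ is a diffeomorphism with smooth inverse, and that the restriction $\tilde{h}_0$ of $\tilde{h}$ to $\mbox{Domain}(\tilde{h})\cap W_1^0$ is a diffeomorphism of its domain with its image. Thus the gluing $W_1^0\cup_{\tilde{h}_0}W_2^0$ is a gluing along a pair of diffeomorphisms, and the general switch-map construction furnishes a canonical diffeomorphism onto $W_2^0\cup_{\tilde{h}_0^{-1}}W_1^0$.

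First I would note that the characteristic sub-bundles $W_1^0$ and $W_2^0$ are themselves locally trivial diffeological vector pseudo-bundles (this was remarked when the characteristic sub-bundle was introduced), so that all the gluing machinery applies to them. Next I would invoke the fact that $\tilde{h}_0$, being a diffeomorphism of its domain with its image that is fibrewise linear and covers $h$, is exactly the kind of gluing lift to which the switch map construction applies. Concretely, using the notation $j_1^{W_1^0},j_2^{W_2^0}$ for the inductions into $W_1^0\cup_{\tilde{h}_0}W_2^0$ and $j_1^{W_2^0},j_2^{W_1^0}$ for those into $W_2^0\cup_{\tilde{h}_0^{-1}}W_1^0$, I would define $\varphi_{W_1^0\leftrightarrow W_2^0}$ on the three pieces of the disjoint cover of $W_1^0\cup_{\tilde{h}_0}W_2^0$ by the same three-case formula that defines $\varphi_{X_1\leftrightarrow X_2}$: on fibres over $i_1^{X_1}(X_1\setminus Y)$ it is $j_2^{W_1^0}\circ(j_1^{W_1^0})^{-1}$, on fibres over the glued locus $i_2^{X_2}(h(Y))$ it is $j_2^{W_1^0}\circ\tilde{h}_0^{-1}\circ(j_2^{W_2^0})^{-1}$, and on fibres over $i_2^{X_2}(X_2\setminus h(Y))$ it is $j_1^{W_2^0}\circ(j_2^{W_2^0})^{-1}$.

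Having written the formula, I would check the three routine points: that the map is well-defined (the domains form a disjoint cover, and on the overlap locus the two potential expressions agree precisely because $\tilde{h}_0$ is invertible), that it covers $\varphi_{X_1\leftrightarrow X_2}$ (immediate from comparing the piecewise definition with the base-level switch map), and that it is smooth with smooth inverse. Smoothness follows by the standard plot-lifting argument: any plot of $W_1^0\cup_{\tilde{h}_0}W_2^0$ on a connected domain lifts, by the pushforward nature of the gluing diffeology, to a plot of $W_1^0$ or of $W_2^0$, and composing with $\varphi_{W_1^0\leftrightarrow W_2^0}$ produces, in each case, a plot of $W_2^0\cup_{\tilde{h}_0^{-1}}W_1^0$ because $\tilde{h}_0$ and $\tilde{h}_0^{-1}$ are smooth. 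The inverse is given by the analogous three-case formula and is handled symmetrically.

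The main obstacle, such as it is, is bookkeeping rather than mathematics: one must be careful that the hypothesis supplies invertibility of $\tilde{h}$ only on the characteristic part, so the construction genuinely lives on the characteristic sub-bundles and not on the whole of $W_1,W_2$ (where $\tilde{h}$ need not be a diffeomorphism). This is exactly why the statement is phrased for $\tilde{h}_0$, and it is the single point requiring attention. Once this is granted, the result is a direct specialization of the switch-map construction to the sub-bundles, so the proof reduces to writing the piecewise formula and checking well-definedness and smoothness by plot lifting.
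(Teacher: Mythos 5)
Your proposal is correct and follows essentially the same route as the paper's own proof: write down the explicit three-case switch-map formula on the disjoint cover of $W_1^0\cup_{\tilde{h}_0}W_2^0$, note that the inverse is given by the symmetric formula, and establish smoothness by lifting a plot on a connected domain to a plot of $W_1^0$ or of $W_2^0$ and computing the composition directly. The paper's proof does exactly this (it additionally observes that the map is its own inverse in the appropriate sense), so there is nothing to add.
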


\begin{proof}
The diffeomorphism in question is in fact the same concept as the switch map (which we implied is a diffeomorphism, but did not prove that). Indeed, we denote the map obtained by analogy with
$\varphi_{X_1\leftrightarrow X_2}$, by $\varphi_{W_1^0\leftrightarrow W_2^0}$ and define it to be
$$\varphi_{W_1^0\leftrightarrow W_2^0}=\left\{\begin{array}{ll}
j_2^{W_1^0}\circ(j_1^{W_1^0})^{-1} & \mbox{on }(\chi_1^0\cup_{(\tilde{h}_0,h)}\chi_2^0)^{-1}(i_1^{X_1}(X_1\setminus Y)) \\
j_2^{W_1^0}\circ(\tilde{h}_0)^{-1}\circ(j_2^{W_2^0})^{-1} & \mbox{on }(\chi_1^0\cup_{(\tilde{h}_0,h)}\chi_2^0)^{-1}(i_2^{X_2}(h(Y))) \\
j_1^{W_2^0}\circ(j_2^{W_2^0})^{-1} & \mbox{on }(\chi_1^0\cup_{(\tilde{h}_0,h)}\chi_2^0)^{-1}(i_2^{X_2}(X_2\setminus f(Y))),
\end{array}\right.$$ where by $\chi_i^0$ we denote the restriction of $\chi_i$ to $W_i^0$. Notice that it is its own inverse; let us show that it is smooth.

Let $p:U\to W_1^0\cup_{\tilde{h}_0}W_2^0$; it suffices to consider the case when $U$ is connected. Under such assumption, $p$ lifts to either a plot $p_1$ of $W_1^0$ or to a plot $p_2$ of $W_2^0$,
therefore $p$ itself either has form
$$p=\left\{\begin{array}{ll} j_1^{W_1^0}\circ p_1 & \mbox{on }p_1^{-1}(W_1^0\setminus\mbox{Domain}(\tilde{h}_0)) \\
j_2^{W_2^0}\circ\tilde{h}_0\circ p_1 & \mbox{on }p_1^{-1}(\mbox{Domain}(\tilde{h}_0))
\end{array}\right.$$ in the former case, or it has form $p=j_2^{W_2^0}\circ p_2$ in the latter case. Accordingly, by direct calculation we obtain that $\varphi_{W_1^0\leftrightarrow W_2^0}\circ
p=j_2^{W_2^0}\circ p_1$ for $p$ that lifts to $p_1$ and
$$\varphi_{W_1^0\leftrightarrow W_2^0}\circ p=\left\{\begin{array}{ll}
j_2^{W_1^0}\circ(\tilde{h}_0)^{-1}\circ p_2 & \mbox{on }p_2^{-1}(\mbox{Range}(\tilde{h}_0)) \\
j_1^{W_2^0}\circ p_2 & \mbox{on }p_2^{-1}(W_2^0\setminus\mbox{Range}(\tilde{h}_0))
\end{array}\right.$$ for $p$ that lifts for $p_2$. Clearly, in both cases the result is a plot of $W_2^0\cup_{(\tilde{h}_0)^{-1}}W_1^0$, whence the conclusion.
\end{proof}

The lemma just proven is a preliminary statement which will be needed to establish a link between the following two statements; all three put together will allows us to relate the gluing-dual commutativity
to compatibility of (dual) pseudo-metrics.

\paragraph{The triple $W_1^0\cup_{\tilde{h}_0}W_2^0\cong(W_1\cup_{\tilde{h}}W_2)^0\cong(W_1\cup_{\tilde{h}}W_2)^*$} The facts that we prove here ensure a kind of total commutativity
between $()^0$ (characteristic) and $()^*$ (dual); this phrase is of course very informal, what we really mean shall be clear from the two statements that follow.

\begin{prop}\label{dual:characteristic:gluing:commute:prop}
Let $\chi_1:W_1\to X_1$ and $\chi_2:W_2\to X_2$ be two locally trivial diffeological vector pseudo-bundles, let $W_1^0$ and $W_2^0$ be their characteristic sub-bundles, let $h:X_1\supseteq Y\to X_2$
be a smooth invertible map with smooth inverse, and let $\tilde{h}$ be its smooth fibrewise linear lift. Let $g_1$ and $g_2$ be pseudo-metrics on $W_1$ and $W_2$ respectively, compatible with the
gluing along $(\tilde{h},h)$, and assume that the dual pseudo-metrics $g_2^*$ and $g_1^*$ are compatible with $(\tilde{h}^*,h^{-1})$. Let $\tilde{h}_0$ be the restriction of
$\tilde{h}$ on $\mbox{Domain}(\tilde{h})\cap W_1^0$. Then:
\begin{enumerate}
\item $\tilde{h}_0$ is a diffeomorphism with values in $W_2^0$;
\item There is a pseudo-bundle diffeomorphism
$$\Phi^0:W_1^0\cup_{\tilde{h}_0}W_2^0\to(W_1\cup_{\tilde{h}}W_2)^0$$ covering the identity map $X_1\cup_f X_2\to X_1\cup_f X_2$;
\item The natural pairing map $\Psi_{\tilde{g}}^0:(W_1\cup_{\tilde{h}}W_2)^0\to(W_1\cup_{\tilde{h}}W_2)^*$ associated to the pseudo-metric $\tilde{g}$ on $W_1\cup_{\tilde{h}}W_2$ is a diffeomorphism of the
characteristic sub-bundle $(W_1\cup_{\tilde{h}}W_2)^0$ with the dual pseudo-bundle $(W_1\cup_{\tilde{h}}W_2)^*$.
\end{enumerate}
\end{prop}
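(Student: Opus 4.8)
The plan is to handle the three items in order, using the first two as scaffolding for the third. The standing hypotheses give two facts I will lean on repeatedly: since $g_2^*$ and $g_1^*$ are compatible, Theorem~\ref{when:dual:pseudometrics:compatible:bundles:thm} says $\tilde{h}^*$ is a pseudo-bundle diffeomorphism of its domain with its image; and since $g_1$ and $g_2$ are compatible, the pseudo-bundle implication of Subsection~3.2 says $\tilde{h}$ embeds the characteristic sub-bundle of $W_1$ into that of $W_2$ over the gluing domain.

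For item~1, I would read off the fibrewise criterion recorded in the proof of Theorem~\ref{when:dual:pseudometrics:compatible:bundles:thm} (with its vector-space source Theorem~\ref{crit:dual:pseudometrics:comp:vspaces:thm}): $\tilde{h}^*$ being a fibrewise diffeomorphism is equivalent to $\tilde{h}$ restricting to a linear diffeomorphism between the characteristic subspaces $(\chi_1^{-1}(y))_0$ and $(\chi_2^{-1}(h(y)))_0$. Combined with the Subsection~3.2 statement, which already puts the image of $\tilde{h}_0$ inside $W_2^0$, this shows $\tilde{h}_0$ is a fibrewise diffeomorphism onto $W_2^0$; by local triviality of $W_1^0$ and $W_2^0$ it is then a pseudo-bundle diffeomorphism, which is item~1.

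For item~2 the content is that forming the characteristic sub-bundle commutes with gluing, which is exactly what item~1 unlocks by guaranteeing $\tilde{h}_0(W_1^0)\subseteq W_2^0$. Over $i_1(x_1)$ the fibre of each of $W_1^0\cup_{\tilde{h}_0}W_2^0$ and $(W_1\cup_{\tilde{h}}W_2)^0$ is canonically $(\chi_1^{-1}(x_1))_0$, and over $i_2(x_2)$ it is $(\chi_2^{-1}(x_2))_0$, because the fibre of $W_1\cup_{\tilde{h}}W_2$ over such a point is just the corresponding fibre of $W_1$ or $W_2$ with its own diffeology, so its characteristic subspace is the expected one. I would define $\Phi^0$ to be this fibrewise identity and verify smoothness in both directions from the plot-lifting description of the gluing diffeology: a connected plot of the left-hand side lifts to a plot of $W_1^0$ or $W_2^0$, which composed with the smooth inclusions of the characteristic sub-bundles into $W_1$ or $W_2$ becomes a plot of $W_1\cup_{\tilde{h}}W_2$ landing in $(W_1\cup_{\tilde{h}}W_2)^0$; conversely a plot of $(W_1\cup_{\tilde{h}}W_2)^0$ lifts to a plot of $W_1$ or $W_2$ whose image, lying in the characteristic sub-bundle, is automatically a plot of $W_1^0$ or $W_2^0$. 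A byproduct worth recording is that $(W_1\cup_{\tilde{h}}W_2)^0\cong W_1^0\cup_{\tilde{h}_0}W_2^0$ is itself locally trivial, being a gluing of locally trivial bundles along the diffeomorphism $\tilde{h}_0$.

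For item~3 I would deliberately \emph{not} apply the pairing proposition to $W_1\cup_{\tilde{h}}W_2$ itself, since that bundle need not be locally trivial ($\tilde{h}$ is not assumed to be a diffeomorphism, so primal fibre dimensions may jump across the gluing). Instead I note that $\Psi_{\tilde{g}}^0$ is the natural pairing for $\tilde{g}$, hence smooth with no local-triviality hypothesis, and fibrewise a linear isomorphism onto the dual fibre; so only smoothness of the inverse remains. Precomposing with $\Phi^0$ from item~2, one checks that fibrewise $\Psi_{\tilde{g}}^0\circ\Phi^0$ equals the individual pairing maps $\psi_{g_1}$ and $\psi_{g_2}$, each a diffeomorphism of $W_i^0$ with $W_i^*$ by the pairing proposition legitimately applied to the locally trivial $W_1$ and $W_2$. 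Compatibility of $g_1$ and $g_2$ gives the intertwining relation $\psi_{g_1}=\tilde{h}^*\circ\psi_{g_2}\circ\tilde{h}_0$, by the direct computation $\tilde{h}^*(\psi_{g_2}(\tilde{h}(v)))(\cdot)=g_2(\tilde{h}(v),\tilde{h}(\cdot))=g_1(v,\cdot)$, so $\psi_{g_1}$ and $\psi_{g_2}$ are compatible for the gluing and assemble into a fibrewise bijection onto $(W_1\cup_{\tilde{h}}W_2)^*$. Smoothness of the inverse would then be shown by lifting a connected plot of $(W_1\cup_{\tilde{h}}W_2)^*$ to a plot of $W_2^*$ or $W_1^*$ and applying the smooth inverses $\psi_{g_2}^{-1}$, $\psi_{g_1}^{-1}$. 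The delicate point, which I expect to be the main obstacle, is the transition across the gluing region $i_2(h(Y))$, where the relevant fibres switch from $W_1^*$ to $W_2^*$: this is precisely where I would invoke that $\tilde{h}^*$ is a diffeomorphism (so the \emph{dual} fibre dimensions match across the gluing, unlike the primal ones) together with the intertwining relation, to see that the two local descriptions of $(\Psi_{\tilde{g}}^0)^{-1}$ agree and combine into a single plot of the locally trivial bundle $(W_1\cup_{\tilde{h}}W_2)^0$.
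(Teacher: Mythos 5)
Your proposal is correct, and for items 1 and 2 it runs on the same rails as the paper's proof: the paper also gets item 1 from the vector-space criteria (Theorems \ref{criterio:exist:pseudometrics:vspaces:thm} and \ref{crit:dual:pseudometrics:comp:vspaces:thm}, equivalently the fibrewise content of Theorem \ref{when:dual:pseudometrics:compatible:bundles:thm}) plus local triviality for the inverse, and its $\Phi^0$ is exactly your fibrewise-identity inclusion. Where you genuinely diverge is item 3. The paper's proof there is one line: smoothness from smoothness of $\tilde{g}$, fibrewise bijectivity on standard fibres, and smoothness of the inverse ``from the assumption of local triviality''. Your refusal to apply the pairing proposition to $W_1\cup_{\tilde{h}}W_2$ itself is well founded: that bundle need not be locally trivial (in the paper's own second example, Section 9.2, the fibres of $V_1$ and $V_2$ have different dimensions), so the paper's one-liner is at best elliptic, and your route supplies what it omits. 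Your substitute --- the intertwining identity $\Psi_{g_1}^0=\tilde{h}^*\circ\Psi_{g_2}^0\circ\tilde{h}_0$, gluing of the two pairing diffeomorphisms, and inverse-smoothness via plot lifting --- is precisely the content of the paper's subsequent Proposition \ref{dual:characteristic:gluing:commute:second:part:prop} (proved there by your same computation) combined with the plot analysis used in Theorem \ref{gluing:diffeo:implies:gluing-dual:commute:thm}; so your approach buys rigor at the cost of proving the next proposition inside this one. The ``delicate point'' you flag does go through: for a connected plot $q^*$ of $(W_1\cup_{\tilde{h}}W_2)^*$ whose base plot lifts to $X_1$, the map equal to $(j_1^{W_1})^*\circ q^*$ over $i_1^{X_1}(X_1\setminus Y)$ and to $\tilde{h}^*\circ(j_2^{W_2})^*\circ q^*$ over $i_2^{X_2}(h(Y))$ is a plot of $W_1^*$, because evaluating it against any plot $p_1$ of $W_1$ gives $q^*(u)\left((\iota\circ p_1)(u')\right)$, where $\iota:W_1\to W_1\cup_{\tilde{h}}W_2$ is the canonical smooth map; this is smooth by the very definition of the dual diffeology, and then $\tilde{h}^*$ being invertible plus the intertwining identity convert $\Psi_{g_1}^{0,-1}$ applied to this lift into $(\Psi_{\tilde{g}}^0)^{-1}\circ q^*$.

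One small repair is needed in your item 2, in the converse direction of the smoothness check. A connected plot $p$ of $(W_1\cup_{\tilde{h}}W_2)^0$ that lifts to a plot $p_1$ of $W_1$ need \emph{not} have $p_1$ with values in $W_1^0$: at points $u$ with $p_1(u)\in\chi_1^{-1}(Y)$ you only know that $\tilde{h}(p_1(u))\in W_2^0$, and $p_1(u)$ may carry a nonzero isotropic component. The fix is to replace $p_1$ by $P_1\circ p_1$, where $P_1$ is the fibrewise $g_1$-orthogonal projection onto $W_1^0$ (smooth by local triviality and smoothness of $g_1$, by the argument of Lemma \ref{compatible:pseudometrics:image:smooth:summand:lem}); compatibility then forces $\tilde{h}\left(p_1(u)-P_1(p_1(u))\right)=0$ whenever $\tilde{h}(p_1(u))\in W_2^0$, since that vector lies in $W_2^0$ yet is $g_2$-null, so pushing $P_1\circ p_1$ into $W_1^0\cup_{\tilde{h}_0}W_2^0$ reproduces $(\Phi^0)^{-1}\circ p$. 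This is a patchable slip rather than a structural flaw --- and it concerns a verification the paper does not carry out at all.
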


\begin{proof}
1. The compatibility of $g_1$ and $g_2$ means that their restrictions $g_1(y)$ and $g_2(h(y))$ on each fibre in the domain of definition and, respectively, the range of $\tilde{h}$ are
compatible pseudo-metrics on the diffeological vector spaces $\chi_1^{-1}(y)$ and $\chi_2^{-1}(h(y))$. By Theorem \ref{criterio:exist:pseudometrics:vspaces:thm} this means that the
restriction $\tilde{h}_0$ of $\tilde{h}$ to $\mbox{Domain}(\tilde{h})\cap W_1^0$ is a smooth injection, whose range is contained in $\mbox{Range}(\tilde{h})\cap W_2^0$.
Furthermore, by compatibility of $g_2^*$ and $g_1^*$ and Theorem \ref{crit:dual:pseudometrics:comp:vspaces:thm}, the dual spaces $(\chi_2^{-1}(h(y)))^*$ and $(\chi_1^{-1}(y))^*$ have the same
dimension, which is equal to the dimension of the corresponding characteristic subspaces; therefore $\tilde{h}_0$ is also surjective. Finally, that its inverse is smooth, follows from local
triviality and the fact that its restriction onto each fibre is a smooth linear map between finite-dimensional vector spaces.

2. This is a direct consequence of the definition of a characteristic sub-bundle. The diffeomorphism $\Phi^0:W_1^0\cup_{\tilde{h}_0}W_2^0\to(W_1\cup_{\tilde{h}}W_2)^0$
is defined by
$$\Phi^0=\left\{\begin{array}{ll} j_1^{W_1}\circ(j_1^{W_1^0})^{-1} & \mbox{over }i_1^{X_1}(X_1\setminus Y)\\
j_2^{W_2}\circ(j_2^{W_2^0})^{-1} & \mbox{over }i_2^{X_2}(X_2);
\end{array}\right.$$ it is essentially the natural inclusion map.

3. The diffeomorphism in question is the pairing map
$$\Psi_{\tilde{g}}^0:(W_1\cup_{\tilde{h}}W_2)^0\to(W_1\cup_{\tilde{h}}W_2)^*$$ restricted to the characteristic sub-bundle and defined in the usual way, \emph{i.e.}, by
$$\mbox{if }w\in(W_1\cup_{\tilde{h}}W_2)^0\Rightarrow\Psi_{\tilde{g}}^0(w)(\cdot)=\tilde{g}((\chi_1\cup_{(\tilde{h},h)}\chi_2)(w))(w,\cdot).$$ That it is smooth, follows from smoothness 
of $\tilde{g}$; that it is bijective, is easily obtained by examining its restriction on each fibre, where, since the fibres of characteristic sub-bundle have standard diffeology, it becomes the usual
isomorphism-by-duality on some standard $\matR^n$. Finally, the smoothness of its inverse follows from the assumption of local triviality.
\end{proof}

Under the same assumptions as those of Proposition \ref{dual:characteristic:gluing:commute:prop}, we also have the following:

\begin{prop}\label{dual:characteristic:gluing:commute:second:part:prop}
There is a canonical diffeomorphism
$$\Psi_{g_2}^0\cup_{(\tilde{h}_0^{-1},\tilde{h}^*)}\Psi_{g_1}^0:W_2^0\cup_{\tilde{h}_0^{-1}}W_1^0\to W_2^*\cup_{\tilde{h}^*}W_1^*,$$ whose restrictions onto the factors of gluing coincide with the
natural pairing maps associated to a pair of compatible pseudo-metrics $g_2$ and $g_1$.
\end{prop}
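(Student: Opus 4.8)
The plan is to realize the asserted map directly through the gluing-of-maps construction of Section 1, applied to the two natural pairing maps $\Psi_{g_2}^0\colon W_2^0\to W_2^*$ and $\Psi_{g_1}^0\colon W_1^0\to W_1^*$. By the Proposition identifying the characteristic sub-bundle of a locally trivial pseudo-bundle with its dual, each of these is already a pseudo-bundle diffeomorphism. Moreover, by part 1 of Proposition \ref{dual:characteristic:gluing:commute:prop} the restriction $\tilde{h}_0$ is a diffeomorphism of its domain with its image in $W_2^0$, so $\tilde{h}_0^{-1}$ is a legitimate gluing map, and by Theorem \ref{when:dual:pseudometrics:compatible:bundles:thm} (using the compatibility of $g_2^*$ and $g_1^*$) the map $\tilde{h}^*$ is a diffeomorphism of its domain with its image, so the target gluing $W_2^*\cup_{\tilde{h}^*}W_1^*$ is well-posed. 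Thus both source and target of the claimed map are defined; what remains is to produce the map and verify it is a diffeomorphism.

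First I would check that $\Psi_{g_2}^0$ and $\Psi_{g_1}^0$ are $(\tilde{h}_0^{-1},\tilde{h}^*)$-compatible in the sense of the gluing of maps, which is exactly the hypothesis needed to form $\Psi_{g_2}^0\cup_{(\tilde{h}_0^{-1},\tilde{h}^*)}\Psi_{g_1}^0$ and, by the general machinery recalled in Section 1, to guarantee its smoothness. Concretely, for $w\in W_2^0$ lying over a point $h(y)\in h(Y)$ and in the range of $\tilde{h}_0$, write $w=\tilde{h}(w_1)$ with $w_1=\tilde{h}_0^{-1}(w)\in W_1^0$ over $y$; then for every $u\in\chi_1^{-1}(y)$ one computes
$$\tilde{h}^*(\Psi_{g_2}^0(w))(u)=\Psi_{g_2}^0(w)(\tilde{h}(u))=g_2(h(y))(\tilde{h}(w_1),\tilde{h}(u))=g_1(y)(w_1,u)=\Psi_{g_1}^0(\tilde{h}_0^{-1}(w))(u),$$
the decisive third equality being the compatibility of $g_1$ and $g_2$. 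This is the key step; it is of the same type as the computation in the proof of Theorem \ref{when:dual:pseudometrics:compatible:bundles:thm}, and it shows simultaneously that, under the pairing isomorphisms, the gluing datum $\tilde{h}_0^{-1}$ on the characteristic sub-bundles corresponds to the gluing datum $\tilde{h}^*$ on the duals.

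Granting the compatibility, the glued map $\Psi_{g_2}^0\cup_{(\tilde{h}_0^{-1},\tilde{h}^*)}\Psi_{g_1}^0\colon W_2^0\cup_{\tilde{h}_0^{-1}}W_1^0\to W_2^*\cup_{\tilde{h}^*}W_1^*$ is smooth, and by construction its restrictions to the two factors are $j_1^{W_2^*}\circ\Psi_{g_2}^0$ and $j_2^{W_1^*}\circ\Psi_{g_1}^0$, which gives the last assertion of the statement at once. It is a bijection because the standard inclusions $j_1^{W_2^0},j_2^{W_1^0}$ (respectively $j_1^{W_2^*},j_2^{W_1^*}$) have disjoint ranges covering the source (respectively target), while on each individual fibre the map is a pairing map, i.e. the usual isomorphism-by-duality of a standard $\matR^n$ with its dual.

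Finally I would obtain smoothness of the inverse exactly as in part 3 of Proposition \ref{dual:characteristic:gluing:commute:prop}: the inverse is the gluing $(\Psi_{g_2}^0)^{-1}\cup(\Psi_{g_1}^0)^{-1}$ of the (smooth) inverse pairing maps along the reversed and, by the same displayed identity, again compatible data, so the local triviality of $W_1$ and $W_2$ (hence of all the sub-bundles and dual bundles involved) ensures that it is smooth across the fibres as well. I expect the only delicate bookkeeping to be matching the domains and ranges of $\tilde{h}_0^{-1}$ and $\tilde{h}^*$ under the pairings — precisely what the displayed computation settles — rather than any genuinely new difficulty.
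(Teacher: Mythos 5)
Your proposal is correct and follows essentially the same route as the paper's own proof: you realize the map as the gluing $\Psi_{g_2}^0\cup_{(\tilde{h}_0^{-1},\tilde{h}^*)}\Psi_{g_1}^0$ of the two pairing diffeomorphisms, and your displayed verification of the $(\tilde{h}_0^{-1},\tilde{h}^*)$-compatibility via the compatibility of $g_1$ and $g_2$ is exactly the computation the paper performs. The only cosmetic difference is that the paper concludes directly by citing that a gluing of diffeomorphisms along a pair of diffeomorphisms is a diffeomorphism, where you re-derive bijectivity and smoothness of the inverse by hand.
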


\begin{proof}
It follows from the assumptions, specifically, the assumption of compatibility of $g_2^*$ and $g_1^*$, that $\tilde{h}^*$ is a diffeomorphism; by Proposition
\ref{dual:characteristic:gluing:commute:prop} the $\tilde{h}_0^{-1}$ is also a diffeomorphism. Thus, the desired diffeomorphism of the two pseudo-bundles in the statement is the result
$\Psi_{g_2}^0\cup_{(\tilde{h}_0^{-1},\tilde{h}^*)}\Psi_{g_1}^0$ of a gluing (see Section 1.3.1 for definitions) of the natural pairing maps $\Psi_{g_2}^0:W_2^0\to W_2^*$ and $\Psi_{g_1}^0:W_1^0\to W_1^*$. 
Since the gluing of two diffeomorphisms along a pair of diffeomorphisms yields again a diffeomorphism, we only need to check that $\Psi_{g_2}^0$ and $\Psi_{g_1}^0$ are
$(\tilde{h}_0^{-1},\tilde{h}^0)$-compatible, that is, that for any $w_2\in\mbox{Domain}(\tilde{h}_0^{-1})$ we have
$$\tilde{h}^*(\Psi_{g_2}^0(w_2))=\Psi_{g_1}^0(\tilde{h}_0^{-1}(w_2)).$$ Let us verify why this is true.

The left-hand side of the expression is by definition
$$\tilde{h}^*(\Psi_{g_2}^0(w_2))(\cdot)=\tilde{h}^*(g_2(\chi_2(w_2))(w_2,\cdot))=g_2(\chi_2(w_2))(w_2,\tilde{h}(\cdot))=g_1(h^{-1}(\chi_2(w_2)))(\tilde{h}_0^{-1}(w_2),\cdot),$$
where the last equality follows from the compatibility of pseudo-metrics $g_1$ and $g_2$. The right-hand side of the expression is
$$\Psi_{g_1}^0(\tilde{h}_0^{-1}(w_2))(\cdot)=g_1(\chi_1(\tilde{h}_0^{-1}(w_2)))(\tilde{h}_0^{-1}(w_2),\cdot),$$ and it remains to observe that
$h^{-1}(\chi_2(w_2))=\chi_1(\tilde{h}_0^{-1}(w_2))$, simply because $\tilde{h}_0$ is a lift of $h$, and the statement is proven.
\end{proof}

\paragraph{The map $\Psi_{g_2}^0\cup_{(\tilde{h}_0^{-1},\tilde{h}^*)}\Psi_{g_1}^0$} We conclude this section by giving the precise formula for the map
$\Psi_{g_2}^0\cup_{(\tilde{h}_0^{-1},\tilde{h}^*)}\Psi_{g_1}^0$, which we will need in the next section. As follows from the general construction of gluing of two smooth maps, we have
$$\left(\Psi_{g_2}^0\cup_{(\tilde{h}_0^{-1},\tilde{h}^*)}\Psi_{g_1}^0\right)(w^0)=\left\{\begin{array}{ll}
j_1^{W_2^*}\circ\Psi_{g_2}^0\circ(j_1^{W_2^0})^{-1} & \mbox{on }(\chi_2^0\cup_{(\tilde{h}_0^{-1},h^{-1})}\chi_1^0)^{-1}(i_1^{X_2}(X_2\setminus h(Y)))\\
j_2^{W_1^*}\circ\Psi_{g_1}^0\circ(j_2^{W_1^0})^{-1} & \mbox{on }(\chi_2^0\cup_{(\tilde{h}_0^{-1},h^{-1})}\chi_1^0)^{-1}(i_2^{X_1}(X_1)).
\end{array}\right.$$

\subsubsection{Proving the gluing-dual commutativity}

We now give our final statement, which is a sufficient condition (and, together with Theorem \ref{gluing-dual:commute:dual:pseudometrics:compatible:thm}, a
criterion) for the gluing-dual commutativity condition to be satisfied.

\begin{thm}
Let $\pi_1:V_1\to X_1$ and $\pi_2:V_2\to X_2$ be diffeological vector pseudo-bundles, locally trivial and with finite-dimensional fibres, and let $(\tilde{f},f)$ be a gluing of $V_1$ to $V_2$, with
an invertible $f$. Suppose that $V_1$ and $V_2$ admit pseudo-metrics compatible with this gluing, and let $g_1$ and $g_2$ be a fixed choice of such pseudo-metrics. Assume, finally, that the induced
pseudo-metrics $g_2^*$ and $g_1^*$ on the dual pseudo-bundles $V_2^*$ and $V_1^*$ are compatible with the gluing along $(\tilde{f}^*,f^{-1})$. Then $V_1$, $V_2$, and $(\tilde{f},f)$
satisfy the gluing-dual commutativity condition.
\end{thm}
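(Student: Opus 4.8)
The plan is to exhibit the required diffeomorphism $\Phi_{\cup,*}\colon(V_1\cup_{\tilde{f}}V_2)^*\to V_2^*\cup_{\tilde{f}^*}V_1^*$ explicitly, as a composition of four maps each of which has already been shown to be a diffeomorphism in the preliminary results above. A direct appeal to Theorem~\ref{gluing:diffeo:implies:gluing-dual:commute:thm} is not available, since that theorem requires the gluing to be performed along a diffeomorphism, whereas here $\tilde{f}$ itself need not be one: only its restriction $\tilde{f}_0$ to the characteristic sub-bundle $V_1^0$ is (this is part~1 of Proposition~\ref{dual:characteristic:gluing:commute:prop}, which rests on the compatibility of $g_2^*$ and $g_1^*$ through Theorem~\ref{crit:dual:pseudometrics:comp:vspaces:thm}). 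The whole idea, therefore, is to route the construction through the characteristic sub-bundles, using the natural pairing maps to pass between the characteristic sub-bundle and the dual pseudo-bundle at either end.

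Concretely, I would set
$$\Phi_{\cup,*}:=\left(\Psi_{g_2}^0\cup_{(\tilde{f}_0^{-1},\tilde{f}^*)}\Psi_{g_1}^0\right)\circ\varphi_{V_1^0\leftrightarrow V_2^0}\circ(\Phi^0)^{-1}\circ(\Psi_{\tilde{g}}^0)^{-1},$$
read as the chain $(V_1\cup_{\tilde{f}}V_2)^*\to(V_1\cup_{\tilde{f}}V_2)^0\to V_1^0\cup_{\tilde{f}_0}V_2^0\to V_2^0\cup_{\tilde{f}_0^{-1}}V_1^0\to V_2^*\cup_{\tilde{f}^*}V_1^*$. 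Here $(\Psi_{\tilde{g}}^0)^{-1}$ inverts the pairing map of part~3 of Proposition~\ref{dual:characteristic:gluing:commute:prop}, $(\Phi^0)^{-1}$ inverts the identification of part~2 of the same proposition, $\varphi_{V_1^0\leftrightarrow V_2^0}$ is the switch map of Lemma~\ref{switch:characteristic:is:diffeo:lem}, and the last factor is the glued pairing map of Proposition~\ref{dual:characteristic:gluing:commute:second:part:prop}. Each factor is a diffeomorphism, hence so is $\Phi_{\cup,*}$; and examining the covered base maps---three of the four factors cover the identity, while $\varphi_{V_1^0\leftrightarrow V_2^0}$ covers $\varphi_{X_1\leftrightarrow X_2}$ by Lemma~\ref{switch:characteristic:is:diffeo:lem}---shows immediately that $\Phi_{\cup,*}$ covers the switch map, as the definition of the gluing-dual commutativity condition demands.

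It then remains to verify that this $\Phi_{\cup,*}$ satisfies the three defining identities of the gluing-dual commutativity condition, equivalently that it agrees with the explicit three-case formula. I would check this region by region over the base $X_1\cup_f X_2$, substituting the explicit formulas established above: the formula for $\Psi_{g_2}^0\cup_{(\tilde{f}_0^{-1},\tilde{f}^*)}\Psi_{g_1}^0$ recorded at the end of the previous subsection, the definition of $\varphi_{V_1^0\leftrightarrow V_2^0}$ from Lemma~\ref{switch:characteristic:is:diffeo:lem}, and the formula for $\Phi^0$ from part~2 of Proposition~\ref{dual:characteristic:gluing:commute:prop}, together with the fact that $\Psi_{\tilde{g}}^0$ coincides fibrewise with $\Psi_{g_1}^0$ over $i_1^{X_1}(X_1\setminus Y)$ and with $\Psi_{g_2}^0$ over $i_2^{X_2}(X_2)$. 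Over $i_1^{X_1}(X_1\setminus Y)$ and over $i_2^{X_2}(X_2\setminus f(Y))$ the paired maps $\Psi^0$ cancel against their inverses, and the composition collapses to $j_2^{V_1^*}\circ(j_1^{V_1})^*$ and to $j_1^{V_2^*}\circ(j_2^{V_2})^*$ respectively, matching the first and third cases.

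The main obstacle is the middle region $i_2^{X_2}(f(Y))$, the gluing locus, where the vector-side identification (carried out by $\tilde{f}_0$) and the dual-side identification (carried out by $\tilde{f}^*$) must be reconciled. The crucial input is exactly the intertwining relation established in the proof of Proposition~\ref{dual:characteristic:gluing:commute:second:part:prop}, namely $\tilde{f}^*(\Psi_{g_2}^0(w_2))=\Psi_{g_1}^0(\tilde{f}_0^{-1}(w_2))$, which says that the pairing maps conjugate $\tilde{f}_0^{-1}$ into $\tilde{f}^*$; this forces the middle case to come out precisely as $j_2^{V_1^*}\circ\tilde{f}^*\circ(j_2^{V_2})^*$. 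Once this identity is in hand, the remaining bookkeeping with the inclusions $j_\bullet^\bullet$ and their duals is routine, the three defining identities follow, and we conclude that $V_1$, $V_2$, and $(\tilde{f},f)$ satisfy the gluing-dual commutativity condition.
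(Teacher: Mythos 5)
Your proposal is correct and is essentially identical to the paper's own proof: the paper constructs exactly the same composition $\left(\Psi_{g_2}^0\cup_{(\tilde{f}_0^{-1},\tilde{f}^*)}\Psi_{g_1}^0\right)\circ\varphi_{V_1^0\leftrightarrow V_2^0}\circ(\Phi^0)^{-1}\circ(\Psi_{\tilde{g}}^0)^{-1}$ through the characteristic sub-bundles, justified by the same chain of preliminary results, and then verifies region by region that it collapses to the canonical three-case form of $\Phi_{\cup,*}$, with the compatibility of $g_1$ and $g_2$ (equivalently, the intertwining of the pairing maps with $\tilde{f}_0^{-1}$ and $\tilde{f}^*$) doing the work over the gluing locus.
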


\begin{proof}
Let us first show that $(V_1\cup_{\tilde{f}}V_2)^*$ and $V_2^*\cup_{\tilde{f}^*}V_1^*$ are diffeomorphic, and then explain why there is a canonical diffeomorphism between them. Applying
Proposition \ref{dual:characteristic:gluing:commute:prop}(3) and then (2), we obtain
$$(V_1\cup_{\tilde{f}}V_2)^*\cong(V_1\cup_{\tilde{f}}V_2)^0\cong V_1^0\cup_{\tilde{f}_0}V_2^0;$$ next, by Lemma \ref{switch:characteristic:is:diffeo:lem} and Proposition
\ref{dual:characteristic:gluing:commute:second:part:prop} we obtain 
$$V_1^0\cup_{\tilde{f}_0}V_2^0\cong V_2^0\cup_{\tilde{f}_0^{-1}}V_1^0\cong V_2^*\cup_{\tilde{f}^*}V_1^*,$$ as wanted. It remains to see that the diffeomorphisms involved produce in the end the canonical
commutativity diffeomorphism $\Phi_{\cup,*}$.

Let us now specify the final diffeomorphism that we obtain from the above sequence. Let $\tilde{g}$ be the pseudo-metric on $V_1\cup_{\tilde{f}}V_2$ induced by $g_1$ and $g_2$. Then the first
diffeomorphism of the sequence is $(\Psi_{\tilde{g}}^0)^{-1}$; the second is $(\Phi^0)^{-1}$, the inverse of the diffeomorphism described in the proof of Proposition
\ref{dual:characteristic:gluing:commute:prop}, the third is the switch-like map $\varphi_{V_1^0\leftrightarrow V_2^0}$, and the fourth is
$\Psi_{g_2}^0\cup_{(\tilde{f}_0^{-1},\tilde{f}^*)}\Psi_{g_1}^0$. We need to see that the composition
$$\Phi=\left(\Psi_{g_2}^0\cup_{(\tilde{f}_0^{-1},\tilde{f}^*)}\Psi_{g_1}^0\right)\circ\left(\varphi_{V_1^0\leftrightarrow V_2^0}\right)\circ(\Phi^0)^{-1}\circ(\Psi_{\tilde{g}}^0)^{-1}$$
coincides with the appropriate canonically defined map $\Phi_{\cup,*}$.

Indeed, after some obvious cancelations the pointwise description of the diffeomorphism $\Phi$ is as follows:
$$\Phi(v)=\left\{\begin{array}{ll}
\left(j_2^{V_1^*}\circ\Psi_{g_1}^0\circ(j_1^{V_1})^{-1}\circ(\Psi_{\tilde{g}}^0)^{-1}\right)(v) & \mbox{if }(\pi_1\cup_{(\tilde{f},f)}\pi_2)^*(v)\in i_1^{X_1}(X_1\setminus Y)\\
\left(j_2^{V_1^*}\circ\Psi_{g_1}^0\circ\tilde{f}_0^{-1}\circ(j_2^{V_2})^{-1}\circ(\Psi_{\tilde{g}}^0)^{-1}\right)(v) & \mbox{if }(\pi_1\cup_{(\tilde{f},f)}\pi_2)^*(v)\in i_2^{X_2}(f(Y))\\
\left(j_1^{V_2^*}\circ\Psi_{g_2}^0\circ(j_2^{V_2})^{-1}\circ(\Psi_{\tilde{g}}^0)^{-1}\right)(v) & \mbox{if }(\pi_1\cup_{(\tilde{f},f)}\pi_2)^*(v)\in i_2^{X_2}(X_2\setminus f(Y)).
\end{array}\right.$$ Let us consider the three cases indicated.

Let first $v$ be such that $(\pi_1\cup_{(\tilde{f},f)}\pi_2)^*(v)\in i_1^{X_1}(X_1\setminus Y)$; then we have
\begin{flushleft}
$\Phi(v)=\left(j_2^{V_1^*}\circ\Psi_{g_1}^0\circ(j_1^{V_1})^{-1}\right)\left((\Psi_{\tilde{g}}^0)^{-1}(v)\right),\mbox{ where } (\Psi_{\tilde{g}}^0)^{-1}(v)=v^0\in
j_1^{V_1}(\pi_1^{-1}(X_1\setminus Y))\cap(V_1\cup_{\tilde{f}}V_2)^0$
\end{flushleft}
\begin{flushright}
such that $v(\cdot)=g_1(x)((j_1^{V_1})^{-1}(v^0),(j_1^{V_1})^{-1}(\cdot))$ for $x=\pi_1((j_1^{V_1})^{-1}(v^0))$,
\end{flushright} where $(\cdot)$ stands for the argument of $v\in(V_1\cup_{\tilde{f}}V_2)^*$, that is being taken in $j_1^{V_1}(V_1\setminus\pi_1^{-1}(Y))$. Hence we obtain
$$\Phi(v)(\cdot)=j_2^{V_1^*}\left(g_1(x)((j_1^{V_1})^{-1}(v^0),\cdot)\right).$$ This time $(\cdot)$ stands for an element of $V_1^0$; it is related to the argument of $v(\cdot)$ by the map $j_1^{V_1^0}$, so we have in
the end
$$\Phi(v)(\cdot)=j_2^{V_1^*}\left(v(j_1^{V_1^0}(\cdot))\right)\Rightarrow \Phi(v)=\left(j_2^{V_1^*}\circ(j_1^{V_1})^*\right)(v)\Rightarrow\Phi=j_2^{V_1^*}\circ(j_1^{V_1})^*,$$ \emph{i.e.}, the
canonical form of the gluing-dual commutativity diffeomorphism.

The other two cases are rather similar. Let $v\in(V_1\cup_{\tilde{f}}V_2)^*$ be such that $(\pi_1\cup_{(\tilde{f},f)}\pi_2)^*(v)\in i_2^{X_2}(f(Y))$; we then have
\begin{flushleft}
$\Phi(v)=\left(j_2^{V_1^*}\circ\Psi_{g_1}^0\circ\tilde{f}_0^{-1}\circ(j_2^{V_2})^{-1}\right)\left((\Psi_{\tilde{g}}^0)^{-1}(v)\right)$, where
$(\Psi_{\tilde{g}}^0)^{-1}(v)=v^0\in j_2^{V_2}\left(\pi_2^{-1}(f(Y))\right)\cap(V_1\cup_{\tilde{f}}V_2)^0$
\end{flushleft}
\begin{flushright}
such that $v(\cdot)=g_2(x)((j_2^{V_2})^{-1}(v^0),(j_2^{V_2})^{-1}(\cdot))$ for $x=\pi_2((j_2^{V_2})^{-1}(v^0))$.
\end{flushright}
From this, we obtain
$$\Phi(v)=\left(j_2^{V_1^*}\circ\Psi_{g_1}^0\circ\tilde{f}_0^{-1}\circ(j_2^{V_2})^{-1}\right)(v^0)(\cdot)=j_2^{V_1^*}\left(g_1(f^{-1}(x))((\tilde{f}_0^{-1}\circ(j_2^{V_2})^{-1})(v^0),\cdot)\right).$$
Once again, we should relate this to the expression for $v(\cdot)$, this time keeping in mind the compatibility of the pseudo-metrics $g_1$ and $g_2$. It suffices to recall that the argument $(\cdot)$
in this case is being taken in $V_1^0$ and is related to that of $v(\cdot)$ by the map $j_2^{V_2}\circ\tilde{f}_0$. Therefore we can rewrite the expression for $\Phi(v)(\cdot)$ as follows:
$$\Phi(v)(\cdot)=j_2^{V_1^*}\left(g_1(f^{-1}(x))((\tilde{f}_0^{-1}\circ(j_2^{V_2})^{-1})(v^0),\cdot)\right)=(j_2^{V_1^*}\circ\tilde{f}_0^*)\left(g_2(x)((j_2^{V_2})^{-1}(v^0),\tilde{f}_0(\cdot))\right),$$ 
which then allows us to conclude that
$$\Phi(v)(\cdot)=\left(j_2^{V_1^*}\circ\tilde{f}_0^*\circ(j_2^{V_2})^*\right)(v)(\cdot)\Rightarrow\Phi=j_2^{V_1^*}\circ\tilde{f}_0^*\circ(j_2^{V_2})^*.$$

It remains to consider the third part of the definition of $\Phi$. Let $v\in(V_1\cup_{\tilde{f}}V_2)^*$ be such that $(\pi_1\cup_{(\tilde{f},f)}\pi_2)^*(v)\in i_2^{X_2}(X_2\setminus f(Y))$; we then have
\begin{flushleft}
$\Phi(v)=\left(j_1^{V_2^*}\circ\Psi_{g_2}^0\circ(j_2^{V_2})^{-1}\right)\left((\Psi_{\tilde{g}}^0)^{-1}(v)\right)$, where $(\Psi_{\tilde{g}}^0)^{-1}(v)=v^0\in
j_2^{V_2}(\pi_2^{-1}(X_2\setminus f(Y)))\cap(V_1\cup_{\tilde{f}}V_2)^0$
\end{flushleft}
\begin{flushright}
such that $v(\cdot)=g_2(x)((j_2^{V_2})^{-1}(v^0),(j_2^{V_2})^{-1}(\cdot))$ for $x=\pi_2((j_2^{V_2})^{-1}(v^0))$.
\end{flushright}
We therefore have
$$\Phi(v)(\cdot)=\left(j_1^{V_2^*}\circ\Psi_{g_2}^0\circ(j_2^{V_2})^{-1}\right)(v^0)(\cdot)=j_1^{V_2^*}\left(g_2(x)((j_2^{V_2})^{-1}(v^0),\cdot)\right).$$ By the same considerations regarding the argument 
$(\cdot)$, which in this case is related to that of $v(\cdot)$ by the map $j_2^{V_2}$, we obtain
$$\Phi(v)(\cdot)=j_1^{V_2^*}\left(v(j_2^{V_2^0}(\cdot))\right)\Rightarrow\Phi=j_1^{V_2^*}\circ(j_2^{V_2})^*,$$ therefore $\Phi$ has the canonical form also in the third case, whence the final claim.
\end{proof}

\subsection{Final remarks on the gluing-commutativity condition}

To conclude the discussion on the gluing-dual commutativity, we stress that the crucial point\footnote{Under the assumption that the pseudo-bundles involved are finite-dimensional and locally trivial.}
throughout was the dual map $\tilde{f}^*$ being a diffeomorphism (of its domain with its image). As follows from our proofs, it is this condition that is equivalent to both
\begin{itemize}
\item the \emph{specific} map $\Phi_{\cup,*}$ being a diffeomorphism; and
\item the dual pseudo-metrics $g_2^*$ and $g_1^*$ being compatible.
\end{itemize}
It remains to observe that this also justifies our choice to state right away the gluing-dual commutativity condition in terms of $\Phi_{\cup,*}$ being smooth, rather than just asking for the
existence of \emph{some} diffeomorphism between $(V_1\cup_{\tilde{f}}V_2)^*$ and $V_2^*\cup_{\tilde{f}^*}V_1^*$: the two are equivalent.

\section{The pseudo-metrics $\tilde{g}^*$ and $\widetilde{g^*}$ on pseudo-bundles $(V_1\cup_{\tilde{f}}V_2)^*$ and $V_2^*\cup_{\tilde{f}^*}V_1^*$}

Assuming that $V_1$, $V_2$, and $(\tilde{f},f)$ satisfy the gluing-dual commutativity condition implies, in particular, that there are two ways to construct a pseudo-metric on the pseudo-bundle
$(V_1\cup_{\tilde{f}}V_2)^*\cong V_2^*\cup_{\tilde{f}^*}V_1^*$, which correspond, respectively, to the left-hand side and the right-hand side of this expression. Specifically, the (specific
expression for the) pseudo-bundle on the left carries the pseudo-metric $\tilde{g}^*$ that is induced by, or dual to, the pseudo-metric $\tilde{g}$. The pseudo-bundle on the right is
obtained from a given gluing of two pseudo-bundles carrying compatible pseudo-metrics; it therefore carries a pseudo-metric $\widetilde{g^*}$ that corresponds to this gluing (we mentioned this
in Section 1; the details can be found in \cite{pseudometric-pseudobundle}, and we recall what we need immediately below). We show that this is actually the same
pseudo-metric.

\subsection{The pseudo-metric $\tilde{g}^*$ on $(V_1\cup_{\tilde{f}}V_2)^*$}

This is the pseudo-metric dual to\footnote{It would be more precise to say, induced by duality.} the pseudo-metric $\tilde{g}$ on the pseudo-bundle $V_1\cup_{\tilde{f}}V_2$; the latter is defined as the
composition
$$\tilde{g}=\left(\Phi_{\cup,*}^{-1}\otimes\Phi_{\cup,*}^{-1}\right)\circ\Phi_{\otimes,\cup}\circ(g_2\cup_{(f^{-1},\tilde{f}^*\otimes\tilde{f}^*)}g_1)\circ\varphi_{X_1\leftrightarrow X_2},$$ where
$\varphi_{X_1\leftrightarrow X_2}$ is the switch map, and
$$\Phi_{\cup,*}:(V_1\cup_{\tilde{f}}V_2)^*\to V_2^*\cup_{\tilde{f}^*}V_1^*\,\,\mbox{ and }\,\, \Phi_{\otimes,\cup}:(V_2^*\otimes V_2^*)
\cup_{\tilde{f}^*\otimes\tilde{f}^*}(V_1^*\otimes V_1^*)\to(V_2^*\cup_{\tilde{f}^*}V_1^*)\otimes(V_2^*\cup_{\tilde{f}^*}V_1^*)$$ are the appropriate versions of the commutativity diffeomorphisms
(see Section 4.2.1 for the explicit formula).

The pseudo-metric $\tilde{g}^*$ is then defined as the pseudo-metric dual to $\tilde{g}$, which by the usual definition means that, if $\Psi_{\tilde{g}}:V_1\cup_{\tilde{f}}V_2\to(V_1\cup_{\tilde{f}}V_2)^*$
is the (already-seen) pairing map relative to $\tilde{g}$, that is, $\Psi_{\tilde{g}}(v)=\tilde{g}((\pi_1\cup_{(\tilde{f},f)}\pi_2)(v))(v,\cdot)$, then for any $x\in X_1\cup_f X_2$ and any
$v^*,w^*\in((\pi_1\cup_{(\tilde{f},f)}\pi_2)^*)^{-1}(x)$ we have by definition
$$\tilde{g}^*(x)(v^*,w^*):=\tilde{g}(x)(v,w),$$ where $v,w\in(\pi_1\cup_{(\tilde{f},f)}\pi_2)^{-1}(x)$ are any two elements such that $\Psi_{\tilde{g}}(v)=v^*$ and
$\Psi_{\tilde{g}}(w)=w^*$. We can avail ourselves of the already-mentioned restriction $\Psi_{\tilde{g}}^0$ of $\Psi_{\tilde{g}}$ to the characteristic sub-bundle
$(V_1\cup_{\tilde{f}}V_2)^0$ and thus define
$$\tilde{g}^*(x)(v^*,w^*):=\tilde{g}(x)\left((\Psi_{\tilde{g}}^0)^{-1}(v^*),(\Psi_{\tilde{g}}^0)^{-1}(w^*)\right).$$ Finally, an even more explicit formula for $\tilde{g}^*$ is
$$\tilde{g}^*(x)(v^*,w^*)=\left\{\begin{array}{ll}
g_1((i_1^{X_1})^{-1}(x))(v_1,w_1), & \mbox{if }x\in\mbox{Range}(i_1^{X_1}),\\
g_2((i_2^{X_2})^{-1}(x))(v_2,w_2), & \mbox{if }x\in\mbox{Range}(i_2^{X_2}),
\end{array}\right.$$ where we have by definition
$$\left\{\begin{array}{ll}
v_1=\left((j_1^{V_1})^{-1}\circ(\Psi_{\tilde{g}}^0)^{-1}\right)(v^*), & w_1=\left((j_1^{V_1})^{-1}\circ(\Psi_{\tilde{g}}^0)^{-1}\right)(w^*)\\
v_2=\left((j_2^{V_2})^{-1}\circ(\Psi_{\tilde{g}}^0)^{-1}\right)(v^*), & w_2=\left((j_2^{V_2})^{-1}\circ(\Psi_{\tilde{g}}^0)^{-1}\right)(w^*).
\end{array}\right.$$

\subsection{The pseudo-metric $\widetilde{g^*}$ on $V_2^*\cup_{\tilde{f}^*}V_1^*$}

The pseudo-metric $\widetilde{g^*}$ is defined on the pseudo-bundle $V_2^*\cup_{\tilde{f}^*}V_1^*$ fibrewise, by imposing it to coincide with $g_2^*$ or $g_1^*$ on the appropriate subsets. Specifically,
for $i=1,2$ let, as before, $\Psi_{g_i}:V_i\to V_i^*$ be the pairing map associated to $g_1$ and $g_2$ respectively; then for any given $x\in X_2\cup_{f^{-1}}X_1$ and any two
$v^*,w^*\in(\pi_2^*\cup_{(\tilde{f}^*,f^{-1})}\pi_1^*)^{-1}(x)$ we define
$$\widetilde{g^*}(x)(v^*,w^*)=\left\{\begin{array}{ll}
g_2^*((i_1^{X_2})^{-1}(x))((j_1^{V_2^*})^{-1}(v^*),(j_1^{V_2^*})^{-1}(w^*))=g_2((i_1^{X_2})^{-1}(x))(v_2,w_2) & \mbox{if }x\in\mbox{Range}(i_1^{X_2}), \\
g_1^*((i_2^{X_1})^{-1}(x))((j_2^{V_1^*})^{-1}(v^*),(j_2^{V_1^*})^{-1}(w^*))=g_1((i_2^{X_1})^{-1}(x))(v_1,w_1) & \mbox{if }x\in\mbox{Range}(i_2^{X_1}),
\end{array}\right.$$ where again we make reference to the characteristic sub-bundles and the corresponding invertible restrictions $\Psi_{g_2}^0$ of $\Psi_{g_2}$ and $\Psi_{g_1}^0$ of
$\Psi_{g_1}$, since by construction
$$\left\{\begin{array}{ll}
v_2=\left((\Psi_{g_2}^0)^{-1}\circ(j_1^{V_2^*})^{-1}\right)(v^*), & w_2=\left((\Psi_{g_2}^0)^{-1}\circ(j_1^{V_2^*})^{-1}\right)(w^*),\\
v_1=\left((\Psi_{g_1}^0)^{-1}\circ(j_2^{V_1^*})^{-1}\right)(v^*), & w_1=\left((\Psi_{g_1}^0)^{-1}\circ(j_2^{V_1^*})^{-1}\right)(w^*).
\end{array}\right.$$

\subsection{Comparing $\tilde{g}^*$ and $\widetilde{g^*}$}

In the case we are considering, we have assumed\footnote{As is, in fact, necessary for the two pseudo-metrics just described to be well-defined.} that $V_1$, $V_2$, and $(\tilde{f},f)$ satisfy the
gluing-dual commutativity condition. By the very definition of the latter, this means that the pseudo-bundles $(V_1\cup_{\tilde{f}}V_2)^*$ and $V_2^*\cup_{\tilde{f}^*}V_1^*$ are
diffeomorphic, and in a canonical way. Since both of these pseudo-bundles also carry the canonical pseudo-metrics, described in the two sections above, it is natural to ask next whether their
canonical identification, via the gluing-dual commutativity diffeomorphism $\Phi_{\cup,*}$, is an isometry relative to these pseudo-metrics. In this section we prove that it is.

More precisely, since $\tilde{g}^*$ and $\widetilde{g^*}$ are maps 
$$\tilde{g}^*:X_1\cup_f X_2\to(V_1\cup_{\tilde{f}}V_2)^{**}\otimes(V_1\cup_{\tilde{f}}V_2)^{**}\,\,\mbox{ and }\,\,\widetilde{g^*}:X_2\cup_{f^{-1}}X_1\to(V_2^*\cup_{\tilde{f}^*}V_1^*)^*\otimes
(V_2^*\cup_{\tilde{f}^*}V_1^*)^*,$$ we show that by adding the diffeomorphism
$$(\Phi_{\cup,*}^{-1})^*\otimes(\Phi_{\cup,*}^{-1})^*:(V_1\cup_{\tilde{f}}V_2)^{**}\otimes(V_1\cup_{\tilde{f}}V_2)^{**}\to(V_2^*\cup_{\tilde{f}^*}V_1^*)^*\otimes(V_2^*\cup_{\tilde{f}^*}V_1^*)^*$$ 
and the switch map $\varphi_{X_1\leftrightarrow X_2}:X_1\cup_f X_2\to X_2\cup_{f^{-1}}X_2$, we obtain
$$(\Phi_{\cup,*}^{-1})^*\otimes(\Phi_{\cup,*}^{-1})^*\circ\tilde{g}^*=\widetilde{g^*}\circ(\varphi_{X_1\leftrightarrow X_2}).$$ The full statement is as follows.

\begin{thm}
Let $\pi_1:V_1\to X_1$ and $\pi_2:V_2\to X_2$ be two finite-dimensional locally trivial diffeological vector pseudo-bundle, and let $(\tilde{f},f)$ be a gluing between them such
that $f$ and $\tilde{f}^*$ are diffeomorphisms. Assume that there exist pseudo-metrics $g_1$ and $g_2$ on $V_1$ and $V_2$ respectively, that are compatible with the gluing along
$(\tilde{f},f)$. Then the following is true:
$$\left((\Phi_{\cup,*}^{-1})^*\otimes(\Phi_{\cup,*}^{-1})^*\right)\circ\tilde{g}^*=\widetilde{g^*}\circ(\varphi_{X_1\leftrightarrow X_2}).$$
\end{thm}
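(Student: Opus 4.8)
The plan is to prove the identity by evaluating both sides fibrewise and checking agreement on each of the three natural regions of the glued base. Both $\tilde{g}^*$ and $\widetilde{g^*}$ have already been given fully explicit fibrewise formulas (in Sections 5.1 and 5.2) in terms of the original pseudo-metrics $g_1$, $g_2$ and the various structure maps $j_\bullet^\bullet$, $\Psi_{g_i}^0$, $\Psi_{\tilde g}^0$, $\tilde f_0$. Since both are sections of pseudo-bundles over a base covered by the disjoint ranges of $i_1^{X_1}$ and $i_2^{X_2}$ (equivalently, after the switch map, $i_1^{X_2}$ and $i_2^{X_1}$), it suffices to verify the claimed equality of bilinear forms at an arbitrary point $x$ of each region, evaluated on an arbitrary pair $v^*,w^*$ in the fibre. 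The whole thing then reduces to smooth-section bookkeeping: since both sides are smooth and agree pointwise, they agree as sections.

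First I would fix the pointwise meaning of the left-hand side. For $x\in X_1\cup_f X_2$ and $v^*,w^*\in((\pi_1\cup_{(\tilde f,f)}\pi_2)^*)^{-1}(x)$, the operator $(\Phi_{\cup,*}^{-1})^*\otimes(\Phi_{\cup,*}^{-1})^*$ acts on $\tilde g^*(x)$ by precomposition with $\Phi_{\cup,*}^{-1}$ in each slot; so the left-hand side evaluated at $\varphi_{X_1\leftrightarrow X_2}(x)$, on a pair of covectors in the fibre of $V_2^*\cup_{\tilde f^*}V_1^*$, equals $\tilde g^*(x)$ applied to the $\Phi_{\cup,*}^{-1}$-images of those covectors. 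The strategy is then to take $x$ in each region, push the given $v^*,w^*\in(V_2^*\cup_{\tilde f^*}V_1^*)^*$ through $\Phi_{\cup,*}^{-1}$, feed the results into the explicit formula for $\tilde g^*$ from Section 5.1, and compare with the explicit formula for $\widetilde{g^*}$ from Section 5.2 evaluated directly at $\varphi_{X_1\leftrightarrow X_2}(x)$.

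The key computation is to track how the elements $v_1,w_1$ (resp.\ $v_2,w_2$) produced by the two formulas coincide. Over a point coming from $X_1\setminus Y$ (region one), both formulas reduce to $g_1$ evaluated on characteristic-subspace representatives obtained via $(j_1^{V_1})^{-1}$ and the relevant pairing-map inverses; here I would use the explicit piecewise form of $\Phi_{\cup,*}$ recalled at the start of Section 4.1, namely $\Phi_{\cup,*}=j_2^{V_1^*}\circ(j_1^{V_1})^*$ on this region, to identify the representatives. Over a point coming from $X_2\setminus f(Y)$ (region three) the same goes through with $g_2$ and the $j_2^{V_2}$-branch $\Phi_{\cup,*}=j_1^{V_2^*}\circ(j_2^{V_2})^*$. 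The interesting region is the overlap $f(Y)$ (region two), where the formula for $\tilde g^*$ uses $g_2$-representatives while that for $\widetilde{g^*}$, after the switch, uses $g_1$-representatives; here the branch $\Phi_{\cup,*}=j_2^{V_1^*}\circ\tilde f^*\circ(j_2^{V_2})^*$ together with $\tilde f_0$ being a diffeomorphism (guaranteed by the hypotheses and Theorem \ref{when:dual:pseudometrics:compatible:bundles:thm}) lets me transport between the two, and the compatibility identity $g_1(y)(v,w)=g_2(f(y))(\tilde f(v),\tilde f(w))$ closes the gap.

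I expect the main obstacle to be precisely this region-two matching: keeping straight which characteristic-subspace representative each pairing-map inverse selects, and verifying that the dual map $\tilde f^*$ appearing in $\Phi_{\cup,*}$ and the primal map $\tilde f_0$ appearing implicitly in the representatives interact so as to cancel exactly against the pseudo-metric compatibility, rather than leaving a stray adjoint or inverse. This is the only place where a genuine transfer of data between $g_1$ and $g_2$ occurs, and it is the only place where the compatibility hypothesis is actually used; everywhere else the identity is a formal consequence of the explicit shapes of $\Phi_{\cup,*}$ and the definitions. Once these representatives are pinned down, the equality of the two scalars is immediate, and since all the maps involved are smooth and the regions cover the base, the fibrewise equality upgrades to the asserted equality of the two sections, completing the proof.
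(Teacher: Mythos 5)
Your proposal is correct and follows essentially the same route as the paper's own proof: a fibrewise verification of $\tilde{g}^*(x)\left(\Phi_{\cup,*}^{-1}(v^*),\Phi_{\cup,*}^{-1}(w^*)\right)=\widetilde{g^*}(\varphi_{X_1\leftrightarrow X_2}(x))(v^*,w^*)$ over the three regions $i_1^{X_1}(X_1\setminus Y)$, $i_2^{X_2}(f(Y))$, $i_2^{X_2}(X_2\setminus f(Y))$, using the explicit piecewise form of $\Phi_{\cup,*}$ and matching the characteristic-sub-bundle representatives selected by $(\Psi_{\tilde{g}}^0)^{-1}$, $(\Psi_{g_1}^0)^{-1}$, $(\Psi_{g_2}^0)^{-1}$. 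You also correctly isolate the only non-formal point, which is exactly where the paper works hardest: on the overlap region the relations $(\Psi_{g_2}^0)^{-1}=\tilde{f}\circ(\Psi_{g_1}^0)^{-1}$ and the invertibility of $\tilde{f}_0$ convert $g_1$-representatives into $g_2$-representatives, and the compatibility of $g_1$ and $g_2$ closes the computation.
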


Notice that the assumptions of the theorem provide for the existence of all the maps that appear in the claim, that is, for the existence of the smooth switch map $\varphi_{X_1\leftrightarrow X_2}$, that of
the gluing-dual commutativity diffeomorphism $\Phi_{\cup,*}$, and the existence and compatibility of the dual pseudo-metrics $g_2^*$ and $g_1^*$, through which the pseudo-metrics $\tilde{g}^*$ and
$\widetilde{g^*}$ are defined.

\begin{proof}
The actual meaning of the formula that we wish to prove is as follows: taken an arbitrary $x\in X_1\cup_f X_2$ and arbitrary $v^*,w^*\in(\pi_2^*\cup_{(\tilde{f}^*,f^{-1})}\pi_1^*)^{-1}(\varphi_{X_1\leftrightarrow
X_2}(x))\in V_2^*\cup_{\tilde{f}^*}V_1^*$, we should have
$$\widetilde{g^*}(\varphi_{X_1\leftrightarrow X_2}(x))(v^*,w^*)=\tilde{g}^*(x)(\Phi_{\cup,*}^{-1}(v^*),\Phi_{\cup,*}^{-1}(w^*)).$$ Since this formula involves the switch map and a gluing-dual
commutativity diffeomorphism, both of which are defined separately in three cases, we should check the desired equality in the same three cases as well. These cases are: $x\in i_1^{X_1}(X_1\setminus Y)$, 
$x\in i_2^{X_2}(f(Y))$, and $x\in i_2^{X_2}(X_2\setminus f(Y))$.

Let $x\in i_1^{X_1}(X_1\setminus Y)$; then $v^*,w^*\in\mbox{Range}(j_2^{V_1^*})$ and $\varphi_{X_1\leftrightarrow X_2}(x)=i_2^{X_1}((i_1^{X_1})^{-1}(x))$. Notice that the
corresponding $v$ and $w$ that appear in the definition of the pseudo-metric $\widetilde{g^*}$ are then
$$v=\left((\Psi_{g_1}^0)^{-1}\circ(j_2^{V_1^*})^{-1}\right)(v^*),\,\,\,w=\left((\Psi_{g_1}^0)^{-1}\circ(j_2^{V_1^*})^{-1}\right)(w^*),$$ therefore we have
\begin{flushleft}
$\widetilde{g^*}(\varphi_{X_1\leftrightarrow X_2}(x))(v^*,w^*)=\widetilde{g^*}(i_2^{X_1}((i_1^{X_1})^{-1}(x)))(v^*,w^*)=$
\end{flushleft}
\begin{flushright}
$=g_1((i_1^{X_1})^{-1}(x))\left(((\Psi_{g_1}^0)^{-1}\circ(j_2^{V_1^*})^{-1})(v^*),((\Psi_{g_1}^0)^{-1}\circ(j_2^{V_1^*})^{-1})(w^*)\right)$.
\end{flushright}

On the other hand, when $\tilde{g}^*$ is applied to two elements $v_1^*,w_1^*$ in the fibre of $(V_1\cup_{\tilde{f}}V_2)^*$ over a point in $i_1^{X_1}(X_1\setminus Y)$, its value is
$g_1((i_1^{X_1})^{-1}(x))(v_1,w_1)$, where
$$v_1=\left((j_1^{V_1})^{-1}\circ(\Psi_{\tilde{g}}^0)^{-1}\right)(v_1^*),\,\,\,w_1=\left((j_1^{V_1})^{-1}\circ(\Psi_{\tilde{g}}^0)^{-1}\right)(w_1^*);$$ in our case we will have $v_1^*:=\Phi_{\cup,*}^{-1}(v^*)$ and
$w_1^*:=\Phi_{\cup,*}^{-1}(w^*)$. Observe now that
$$\Phi_{\cup,*}^{-1}(v^*)=\left(((j_1^{V_1})^*)^{-1}\circ(j_2^{V_1^*})^{-1}\right)(v^*)\,\,\,\mbox{ and }\,\,\,\Phi_{\cup,*}^{-1}(w^*)=\left(((j_1^{V_1})^*)^{-1}\circ(j_2^{V_1^*})^{-1}\right)(w^*),$$
and that in the case we are considering the relation between $(\Psi_{\tilde{g}}^0)^{-1}$ and $(\Psi_{g_1}^0)^{-1}$ is as follows:
$$(\Psi_{\tilde{g}}^0)^{-1}=j_1^{V_1}\circ(\Psi_{g_1}^0)^{-1}\circ(j_1^{V_1})^*.$$ Therefore we have
$$v_1=\left((j_1^{V_1})^{-1}\circ(\Psi_{\tilde{g}}^0)^{-1}\right)\left(\Phi_{\cup,*}^{-1}(v^*)\right)=\left((\Psi_{g_1}^0)^{-1}\circ(j_2^{V_1^*})^{-1}\right)(v^*)$$
$$w_1=\left((j_1^{V_1})^{-1}\circ(\Psi_{\tilde{g}}^0)^{-1}\right)\left(\Phi_{\cup,*}^{-1}(w^*)\right)=\left((\Psi_{g_1}^0)^{-1}\circ(j_2^{V_1^*})^{-1}\right)(w^*),$$
hence
\begin{flushleft}
$\tilde{g}^*(x)(\Phi_{\cup,*}^{-1}(v^*),\Phi_{\cup,*}^{-1}(w^*))=$
\end{flushleft}
$$=g_1\left((i_1^{X_1})^{-1}(x)\right)\left(((\Psi_{g_1}^0)^{-1}\circ(j_2^{V_1^*})^{-1})(v^*),((\Psi_{g_1}^0)^{-1}\circ(j_2^{V_1^*})^{-1})(w^*)\right)=$$
\begin{flushright}
$=\widetilde{g^*}(\varphi_{X_1\leftrightarrow X_2}(x))(v^*,w^*)$,
\end{flushright} as wanted.

Turning to the second case, let $x\in i_2^{X_2}(f(Y))$, so that $v^*,w^*\in\mbox{Range}(j_2^{V_1^*})$ and $\varphi_{X_1\leftrightarrow X_2}(x)=(i_2^{X_1}\circ f^{-1}\circ(i_2^{X_2})^{-1})(x)$. 
To calculate $\widetilde{g^*}(\varphi_{X_1\leftrightarrow X_2}(x))(v^*,w^*)$, write
$$v=\left((\Psi_{g_1}^0)^{-1}\circ(j_2^{V_1^*})^{-1}\right)(v^*),\,\,\,w=\left((\Psi_{g_1}^0)^{-1}\circ(j_2^{V_1^*})^{-1}\right)(w^*);$$ this implies that
\begin{flushleft}
$\widetilde{g^*}(\varphi_{X_1\leftrightarrow X_2}(x))(v^*,w^*)=$
\end{flushleft}
$$=g_1\left((f^{-1}\circ(i_2^{X_2})^{-1})(x)\right)\left(((\Psi_{g_1}^0)^{-1}\circ(j_2^{V_1^*})^{-1})(v^*),((\Psi_{g_1}^0)^{-1}\circ(j_2^{V_1^*})^{-1})(w^*)\right)=$$
\begin{flushright}
$=g_2((i_2^{X_2})^{-1})(x))\left((\tilde{f}\circ(\Psi_{g_1}^0)^{-1}\circ(j_2^{V_1^*})^{-1})(v^*),(\tilde{f}\circ(\Psi_{g_1}^0)^{-1}\circ(j_2^{V_1^*})^{-1})(w^*)\right)$,
\end{flushright}
where we have used the compatibility of the pseudo-metrics $g_1$ and $g_2$ and the fact that $v$ and $w$ belong to the characteristic sub-bundle, on which $\tilde{f}$ is invertible by assumption.

To calculate now the second part of the identity to verify, recall that by definition
$$\tilde{g}^*(x)(v_2^*,w_2^*)=g_2((i_2^{X_2})^{-1}(x))(v_2,w_2),$$ where
$$v_2:=\left((j_2^{V_2})^{-1}\circ(\Psi_{\tilde{g}}^0)^{-1}\right)(v_2^*)\,\,\,\mbox{ and }w_2:=\left((j_2^{V_2})^{-1}\circ(\Psi_{\tilde{g}}^0)^{-1}\right)(w_2^*),$$
with $v_2^*$, $w_2^*$ denoting for brevity
$$v_2^*:=\Phi_{\cup,*}^{-1}(v^*),\,\,\,w_2^*:=\Phi_{\cup,*}^{-1}(w^*).$$ We now recall that in the case we are considering,
$$\Phi_{\cup,*}^{-1}(v^*)=\left(((j_1^{V_1})^*)^{-1}\circ\tilde{f}^*\circ(j_2^{V_2^*})^{-1}\right)(v^*)\,\,\,\mbox{ and }\,\,\,
\Phi_{\cup,*}^{-1}(w^*)=\left(((j_1^{V_1})^*)^{-1}\circ\tilde{f}^*\circ(j_2^{V_2^*})^{-1}\right)(w^*),$$ and there are the following relations between $\Psi_{\tilde{g}}^0$,
$\Psi_{g_1}^0$, and $\Psi_{g_2}^0$ (which we state immediately for their inverses):
$$(\Psi_{\tilde{g}}^0)^{-1}=j_2^{V_2}\circ(\Psi_{g_2}^0)^{-1}\circ(\tilde{f}^*)^{-1}\circ(j_1^{V_1})^*\,\,\,\mbox{ and }\,\,\,
(\Psi_{g_2}^0)^{-1}=\tilde{f}\circ(\Psi_{g_1}^0)^{-1}.$$ Thus, by direct calculation
$$v_2=\left((\Psi_{g_2}^0)^{-1}\circ(j_2^{V_2^*})^{-1}\right)(v^*)\,\,\,\mbox{ and }\,\,\,w_2=\left((\Psi_{g_2}^0)^{-1}\circ(j_2^{V_2^*})^{-1}\right)(w^*),$$
which means that
$$\tilde{g}^*(x)(v_2^*,w_2^*)=\widetilde{g^*}(\varphi_{X_1\leftrightarrow X_2}(x))(v^*,w^*),$$ again as wanted.

Finally, let us consider the third case, that of $x\in i_2^{X_2}(X_2\setminus f(Y))$; then $v^*,w^*\in\mbox{Range}(j_1^{V_2^*})$ and
$\varphi_{X_1\leftrightarrow X_2}(x)=\left(i_1^{X_2}\circ(i_2^{X_2})^{-1}\right)(x)$. Furthermore,
$$\widetilde{g^*}(\varphi_{X_1\leftrightarrow X_2}(x))(v^*,w^*)=g_2((i_2^{X_2})^{-1}(x))(v,w),$$ where
$$v=\left((\Psi_{g_2}^0)^{-1}\circ(j_1^{V_2^*})^{-1}\right)(v^*)\,\,\,\mbox{ and }\,\,\,w=\left((\Psi_{g_2}^0)^{-1}\circ(j_1^{V_2^*})^{-1}\right)(w^*).$$

On the other side, we have
$$\tilde{g}^*(x)(v_2^*,w_2^*)=g_2((i_2^{X_2})^{-1}(x))(v_2,w_2),$$ where
$$v_2=\left((j_2^{V_2})^{-1}\circ(\Psi_{\tilde{g}}^0)^{-1}\right)(v_2^*)\,\,\,\mbox{ and }\,\,\,w_2=\left((j_2^{V_2})^{-1}\circ(\Psi_{\tilde{g}}^0)^{-1}\right)(w_2^*),$$
and in turn
$$v_2^*=(\Phi_{\cup,*})^{-1}(v^*)=\left(((j_2^{V_2})^*)^{-1}\circ(j_1^{V_2^*})^{-1}\right)(v^*),\,\,\,w_2^*=(\Phi_{\cup,*})^{-1}(v^*)=\left(((j_2^{V_2})^*)^{-1}\circ(j_1^{V_2^*})^{-1}\right)(w^*).$$
Finally, we observe that there is the following relation between $\Psi_{\tilde{g}}^0$ and $\Psi_{g_2}^0$ (stated again for their inverses):
$$(\Psi_{\tilde{g}}^0)^{-1}=j_2^{V_2}\circ(\Psi_{g_2}^0)^{-1}\circ(j_2^{V_2})^*.$$ Thus, putting together consecutively the expressions for $v_2$, $v_2^*$, and $(\Psi_{\tilde{g}}^0)^{-1}$ (and likewise, for $w_2$,
$w_2^*$, and $(\Psi_{\tilde{g}}^0)^{-1}$), we obtain
$$v_2=\left((\Psi_{g_2}^0)^{-1}\circ(j_1^{V_2^*})^{-1}\right)(v^*)=v\,\,\,\mbox{ and }\,\,\,w_2=\left((\Psi_{g_2}^0)^{-1}\circ(j_1^{V_2^*})^{-1}\right)(w^*)=w,$$ which implies that
$$\tilde{g}^*(x)(v_2^*,w_2^*)=\widetilde{g^*}(\varphi_{X_1\leftrightarrow X_2}(x))(v^*,w^*),$$ and therefore concludes our consideration of the third case. All cases having thus been exhausted, the proof is
finished.
\end{proof}

We can re-phrase our main conclusion by stating the following.

\begin{cor}\label{gluing:dual:commut:diffeo:is:isometry:cor}
The gluing-dual commutativity diffeomorphism $\Phi_{\cup,*}$ is a pseudo-bundle isometry between $\left((V_1\cup_{\tilde{f}}V_2)^*,\tilde{g}^*\right)$ and
$\left(V_2^*\cup_{\tilde{f}^*}V_1^*,\widetilde{g^*}\right)$.
\end{cor}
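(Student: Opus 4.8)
The plan is to read the corollary directly off the Theorem that immediately precedes it, since it is essentially a reformulation of that identity in the language of isometries. Recall that a pseudo-bundle isometry between $\left((V_1\cup_{\tilde{f}}V_2)^*,\tilde{g}^*\right)$ and $\left(V_2^*\cup_{\tilde{f}^*}V_1^*,\widetilde{g^*}\right)$ is a fibrewise linear pseudo-bundle diffeomorphism covering a base diffeomorphism and preserving the two pseudo-metrics fibrewise. First I would observe that $\Phi_{\cup,*}$ already has all the structural properties required: under the present hypotheses ($f$ and $\tilde{f}^*$ diffeomorphisms, $g_1$ and $g_2$ compatible) the gluing-dual commutativity condition holds, so $\Phi_{\cup,*}$ is a fibrewise linear pseudo-bundle diffeomorphism covering the switch map $\varphi_{X_1\leftrightarrow X_2}$. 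Thus the only thing left to verify is that it preserves the pseudo-metrics, and this is precisely the content of the preceding Theorem.

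Next I would translate the section-level identity $\left((\Phi_{\cup,*}^{-1})^*\otimes(\Phi_{\cup,*}^{-1})^*\right)\circ\tilde{g}^*=\widetilde{g^*}\circ\varphi_{X_1\leftrightarrow X_2}$ into its pointwise meaning. Evaluating both sides at an arbitrary $x\in X_1\cup_f X_2$, and using the standard rule that for a linear map $L$ the operator $L^*\otimes L^*$ sends a bilinear form $B$ to $B\circ(L\times L)$, the left-hand side becomes the bilinear form $v^*,w^*\mapsto\tilde{g}^*(x)\left(\Phi_{\cup,*}^{-1}(v^*),\Phi_{\cup,*}^{-1}(w^*)\right)$ on the fibre of $V_2^*\cup_{\tilde{f}^*}V_1^*$ over $\varphi_{X_1\leftrightarrow X_2}(x)$, while the right-hand side is $\widetilde{g^*}(\varphi_{X_1\leftrightarrow X_2}(x))$. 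Equating the two and then substituting $v^*=\Phi_{\cup,*}(a)$ and $w^*=\Phi_{\cup,*}(b)$ for arbitrary $a,b$ in the fibre of $(V_1\cup_{\tilde{f}}V_2)^*$ over $x$ yields
$$\tilde{g}^*(x)(a,b)=\widetilde{g^*}(\varphi_{X_1\leftrightarrow X_2}(x))\left(\Phi_{\cup,*}(a),\Phi_{\cup,*}(b)\right),$$
which is exactly the defining condition for $\Phi_{\cup,*}$ to be an isometry covering $\varphi_{X_1\leftrightarrow X_2}$.

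Since essentially all the work is already carried out in the Theorem, I do not expect any genuine difficulty; the corollary is a matter of reinterpretation rather than of new argument. The one point deserving a line of care is the identity $(L^*\otimes L^*)(B)=B\circ(L\times L)$ invoked above, that is, confirming that applying $(\Phi_{\cup,*}^{-1})^*\otimes(\Phi_{\cup,*}^{-1})^*$ to the $(0,2)$-tensor $\tilde{g}^*$ genuinely produces the pullback bilinear form (and not a push-forward), together with the remark that the fibrewise linearity of $\Phi_{\cup,*}$ is what lets this identity, valid on each fibre, assemble into the stated equality of smooth sections over the whole base. Once this is recorded, the chain of equalities above completes the proof.
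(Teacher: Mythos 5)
Your proposal is correct and matches the paper exactly: the corollary is presented there as a direct re-phrasing of the preceding theorem, whose proof already unpacks the identity $\left((\Phi_{\cup,*}^{-1})^*\otimes(\Phi_{\cup,*}^{-1})^*\right)\circ\tilde{g}^*=\widetilde{g^*}\circ\varphi_{X_1\leftrightarrow X_2}$ into the pointwise form $\widetilde{g^*}(\varphi_{X_1\leftrightarrow X_2}(x))(v^*,w^*)=\tilde{g}^*(x)(\Phi_{\cup,*}^{-1}(v^*),\Phi_{\cup,*}^{-1}(w^*))$, which after the substitution $v^*=\Phi_{\cup,*}(a)$, $w^*=\Phi_{\cup,*}(b)$ is precisely the isometry condition you state. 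Your added care about the pullback direction of $(L^*\otimes L^*)$ is sound and consistent with the paper's usage.
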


\section{The covariant Clifford algebras}

We choose this umbrella term to refer to the pseudo-bundles of Clifford algebras that are associated to whatever pseudo-bundles we obtain by interposing the operations of gluing and taking the dual
pseudo-bundle. These are, first of all, the pseudo-bundles of Clifford algebras associated to $(V_1\cup_{\tilde{f}}V_2)^*$ and $V_2^*\cup_{\tilde{f}^*}V_1^*$; as we have seen above, these
pseudo-bundles are \emph{a priori} different, but they are naturally identified under appropriate assumptions. Once these assumptions are imposed, we still have another \emph{a priori} difference, that of
the two natural pseudo-metrics $\tilde{g}^*$ and $\widetilde{g^*}$ on them being different, the possibility treated in the preceding section. Finally, there is a third option, that of the result of
gluing of two pseudo-bundles of Clifford algebras, those associated to $V_2^*$ and $V_1^*$. In this section we complete our consideration of these three cases, and of how they are
interrelated.

\subsection{The diffeomorphism $\cl((V_1\cup_{\tilde{f}}V_2)^*,\tilde{g}^*)\cong\cl(V_2^*\cup_{\tilde{f}^*}V_1^*,\widetilde{g^*})$}

The diffeomorphism in question is easily obtained from the gluing-dual commutativity diffeomorphism $\Phi_{\cup,*}$, whose existence we assume and which in this case both guarantees that the
pseudo-metrics $\tilde{g}^*$ and $\widetilde{g^*}$ exist and are well-defined, and, by Corollary \ref{gluing-dual:commute:dual:pseudometrics:compatible:thm}, is an
isometry with respect to them. Extending $\Phi_{\cup,*}$ to a diffeomorphism between $\cl((V_1\cup_{\tilde{f}}V_2)^*,\tilde{g}^*)$ and $\cl(V_2^*\cup_{\tilde{f}^*}V_1^*,\widetilde{g^*})$ is then a
standard procedure, whose result we denote by
$$\Phi_{\cup,*}^{\cl}:\cl((V_1\cup_{\tilde{f}}V_2)^*,\tilde{g}^*)\to\cl(V_2^*\cup_{\tilde{f}^*}V_1^*,\widetilde{g^*}).$$ To describe this diffeomorphism, it suffices to recall that for any equivalence class
of form
$$[v_1\otimes\ldots\otimes v_k]\in\left((\pi_1\cup_{(\tilde{f},f)}\pi_2)^{\cl}\right)^{-1}(Y)\subset\cl((V_1\cup_{\tilde{f}}V_2)^*,\tilde{g}^*),$$ with a representative
$v_1\otimes\ldots\otimes v_k$, its image is the equivalence class in $\cl(V_2^*\cup_{\tilde{f}^*}V_1^*,\widetilde{g^*})$ of $$\Phi_{\cup,*}(v_1')\otimes\ldots\otimes\Phi_{\cup,*}(v_k').$$
In other words, $\Phi_{\cup,*}^{\cl}$ is the pushdown, by the quotient projections
$$\pi^{T((V_1\cup_{\tilde{f}}V_2)^*)}:T((V_1\cup_{\tilde{f}}V_2)^*)\to\cl((V_1\cup_{\tilde{f}}V_2)^*,\tilde{g}^*)\mbox{ and }
\pi^{T(V_2^*\cup_{\tilde{f}^*}V_1^*)}:T(V_2^*\cup_{\tilde{f}^*}V_1^*)\to\cl(V_2^*\cup_{\tilde{f}^*}V_1^*,\widetilde{g^*}),$$ of the map
$\bigoplus_n\Phi_{\cup,*}^{\otimes n}$, so that we have
$$\Phi_{\cup,*}^{\cl}\circ\pi^{T((V_1\cup_{\tilde{f}}V_2)^*)}=\pi^{T(V_2^*\cup_{\tilde{f}^*}V_1^*)}\circ\left(\bigoplus_n\Phi_{\cup,*}^{\otimes n}\right).$$

\begin{thm}
The map $\Phi_{\cup,*}^{\cl}:\cl((V_1\cup_{\tilde{f}}V_2)^*,\tilde{g}^*)\to\cl(V_2^*\cup_{\tilde{f}^*}V_1^*,\widetilde{g^*})$ given by the identity
$$\Phi_{\cup,*}^{\cl}\circ\pi^{T((V_1\cup_{\tilde{f}}V_2)^*)}=\pi^{T(V_2^*\cup_{\tilde{f}^*}V_1^*)}\circ\left(\bigoplus_n\Phi_{\cup,*}^{\otimes n}\right)$$ is a well-defined diffeomorphism.
\end{thm}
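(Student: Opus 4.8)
The plan is to establish two things: that the diffeomorphism $\Phi_{\cup,*}^{\cl}$ is well-defined by the stated pushdown identity, and that it is indeed a diffeomorphism. The essential input is that $\Phi_{\cup,*}$ is already a known pseudo-bundle isometry between $\left((V_1\cup_{\tilde{f}}V_2)^*,\tilde{g}^*\right)$ and $\left(V_2^*\cup_{\tilde{f}^*}V_1^*,\widetilde{g^*}\right)$, by Corollary \ref{gluing:dual:commut:diffeo:is:isometry:cor}. The construction is then the standard functorial passage from an isometry of pseudo-metric pseudo-bundles to a diffeomorphism of the associated Clifford pseudo-bundles, exactly as $\tilde{F}^{\cl}$ was obtained from $\tilde{f}$ in the discussion following Theorem \ref{gluing:clifford:cited:thm}. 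So the proof should be short, invoking work already done rather than redoing it.

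First I would address well-definedness. The map $\bigoplus_n\Phi_{\cup,*}^{\otimes n}$ is a fibrewise algebra homomorphism on the tensor pseudo-bundles $T((V_1\cup_{\tilde{f}}V_2)^*)\to T(V_2^*\cup_{\tilde{f}^*}V_1^*)$. To push it down through the quotient projections $\pi^{T(\cdots)}$ onto the Clifford quotients, it suffices to check, on each fibre, that this homomorphism carries the Clifford-defining ideal into the Clifford-defining ideal. Over a fibre, the defining relations are generated by elements of the form $\xi\otimes\eta+\eta\otimes\xi+2\tilde{g}^*(\xi,\eta)$, and the key computation is that $\Phi_{\cup,*}$ sends this to $\Phi_{\cup,*}(\xi)\otimes\Phi_{\cup,*}(\eta)+\Phi_{\cup,*}(\eta)\otimes\Phi_{\cup,*}(\xi)+2\widetilde{g^*}(\Phi_{\cup,*}(\xi),\Phi_{\cup,*}(\eta))$, which is again a Clifford relation precisely because $\Phi_{\cup,*}$ is an isometry, i.e. $\tilde{g}^*(\xi,\eta)=\widetilde{g^*}(\Phi_{\cup,*}(\xi),\Phi_{\cup,*}(\eta))$. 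This is the same mechanism by which $\tilde{F}^{\cl}$ was shown well-defined earlier, and the isometry property from Corollary \ref{gluing:dual:commut:diffeo:is:isometry:cor} is exactly what makes it go through.

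Next I would verify the bijectivity and smoothness. Since $\Phi_{\cup,*}$ is a fibrewise linear isomorphism, $\bigoplus_n\Phi_{\cup,*}^{\otimes n}$ is a fibrewise isomorphism of tensor algebras, and by the same ideal-preservation argument applied to its inverse, $\Phi_{\cup,*}^{\cl}$ is a fibrewise linear isomorphism; hence it is a bijection. Smoothness follows from the universal property of the quotient (pushforward) diffeology: because $\bigoplus_n\Phi_{\cup,*}^{\otimes n}$ is smooth (being built from the smooth map $\Phi_{\cup,*}$ via tensor products and direct sums, all of which are smooth operations on pseudo-bundles) and descends to $\Phi_{\cup,*}^{\cl}$ through the quotient projection $\pi^{T((V_1\cup_{\tilde{f}}V_2)^*)}$, the map $\Phi_{\cup,*}^{\cl}$ is automatically smooth for the quotient diffeology on $\cl((V_1\cup_{\tilde{f}}V_2)^*,\tilde{g}^*)$. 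The smoothness of the inverse is obtained symmetrically, replacing $\Phi_{\cup,*}$ by $\Phi_{\cup,*}^{-1}$, which is itself an isometry. Finally, one should note that $\Phi_{\cup,*}^{\cl}$ covers the switch map on the base, inherited directly from $\Phi_{\cup,*}$ covering it.

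The main obstacle, to the extent there is one, is purely a matter of bookkeeping in the ideal-preservation step: one must confirm that the isometry identity relating $\tilde{g}^*$ and $\widetilde{g^*}$ is applied at the correct base point, keeping track of the switch map $\varphi_{X_1\leftrightarrow X_2}$ under which the two pseudo-metrics are compared, as made precise in the preceding section. Once the fibrewise isometry is correctly invoked, the ideal is preserved and everything else is formal. I therefore expect no genuine difficulty beyond carefully aligning the fibres via the switch map and citing Corollary \ref{gluing:dual:commut:diffeo:is:isometry:cor} for the isometry property that powers the whole argument.
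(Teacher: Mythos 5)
Your proof is correct and follows essentially the same route as the paper: the paper's own proof is a one-line appeal to Corollary \ref{gluing:dual:commut:diffeo:is:isometry:cor} (the isometry property of $\Phi_{\cup,*}$), with the ideal-preservation, fibrewise-bijectivity, and quotient-diffeology smoothness arguments you spell out being exactly the ``standard procedure'' the paper alludes to in the preceding paragraph and in the earlier construction of $\tilde{F}^{\cl}$. Your attention to applying the isometry identity at base points matched via the switch map is the right bookkeeping and is consistent with how the paper states the isometry.
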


\begin{proof}
This is a consequence of Corollary \ref{gluing:dual:commut:diffeo:is:isometry:cor}.
\end{proof}

\subsection{The pseudo-bundle $\cl(V_2^*,g_2^*)\cup_{(\tilde{F}^*)^{\cl}}\cl(V_1^*,g_1^*)$}

There is a third pseudo-bundle, with all fibres Clifford algebras, that is naturally associated to our data, the pseudo-bundle $\cl(V_2^*,g_2^*)\cup_{(\tilde{F}^*)^{\cl}}\cl(V_1^*,g_1^*)$. It is
obtained by gluing together the pseudo-bundles of Clifford algebras relative to, respectively, $(V_2^*,g_2^*)$ and $(V_1^*,g_1^*)$, along the map $(\tilde{F}^*)^{\cl}$, induced by the map
$\tilde{f}^*$; this is the construction described in Section 1.5.2. To indicate how this construction works specifically for this case, it suffices to say that, for any given equivalence class in
$((\pi_2^*)^{\cl})^{-1}(f(Y))\subset\cl(V_2^*,g_2^*)$, with an arbitrary representative $v_1^*\otimes\ldots\otimes v_k^*$, its image under $(\tilde{F}^*)^{\cl}$ is the equivalence class in
$\cl(V_1^*,g_1^*)$ of the element $$\tilde{f}^*(v_1^*)\otimes\ldots\otimes\tilde{f}^*(v_k^*).$$

The main point, however, is that the resulting pseudo-bundle $\cl(V_2^*,g_2^*)\cup_{(\tilde{F}^*)^{\cl}}\cl(V_1^*,g_1^*)$ is naturally diffeomorphic to the pseudo-bundle
$\cl(V_2^*\cup_{\tilde{f}^*}V_1^*,\widetilde{g^*})$ via the diffeomorphism
$$\Phi^{\cl(*)}:\cl(V_2^*,g_2^*)\cup_{(\tilde{F}^*)^{\cl}}\cl(V_1^*,g_1^*)\to\cl(V_2^*\cup_{\tilde{f}^*}V_1^*,\widetilde{g^*})$$ that is determined by the following formula:
$$\Phi^{\cl(*)}=\left\{\begin{array}{ll} (j_1^{V_2^*})^{\cl}\circ(j_1^{\cl(V_2^*)})^{-1} & \mbox{on }\left((\pi_2^*)^{\cl}\cup_{((\tilde{F}^*)^{\cl},f^{-1})}(\pi_1^*)^{\cl}\right)^{-1}(i_1^{X_2}(X_2\setminus f(Y))) \\
(j_2^{V_1^*})^{\cl}\circ(j_2^{\cl(V_1^*)})^{-1} & \mbox{on }\left((\pi_2^*)^{\cl}\cup_{((\tilde{F}^*)^{\cl},f^{-1})}(\pi_1^*)^{\cl}\right)^{-1}(i_2^{X_1}(X_1)),
\end{array}\right.$$ where
$$(j_1^{V_2^*})^{\cl}:\cl(V_2^*,g_2^*)\setminus((\pi_2^*)^{Cl})^{-1}(f(Y))\to\cl(V_2^*\cup_{\tilde{f}^*}V_1^*,\widetilde{g^*}) \mbox{ and }
(j_2^{V_1^*})^{\cl}:\cl(V_1^*,g_1^*)\to\cl(V_2^*\cup_{\tilde{f}^*}V_1^*,\widetilde{g^*})$$ are the fibrewise extensions to $\cl(V_2^*,g_2^*)\setminus((\pi_2^*)^{Cl})^{-1}(f(Y))$
and to $\cl(V_1^*,g_1^*)$, respectively, of the two natural inclusions
$$V_2^*\setminus(\pi_2^*)^{-1}(f(Y))\hookrightarrow\cl(V_2^*\cup_{\tilde{f}^*}V_1^*,\widetilde{g^*})\mbox{ and } V_1^*\hookrightarrow\cl(V_2^*\cup_{\tilde{f}^*}V_1^*,\widetilde{g^*}).$$ The latter
inclusions are in turn given by the compositions of either 
$j_1^{V_2^*}:V_2^*\setminus(\pi_2^*)^{-1}(f(Y))\to V_2^*\cup_{\tilde{f}^*}V_1^*$ or $j_2^{V_1^*}:V_1^*\to V_2^*\cup_{\tilde{f}^*}V_1^*$, with the standard inclusion 
$V_2^*\cup_{\tilde{f}^*}V_1^*\to\cl(V_2^*\cup_{\tilde{f}^*}V_1^*,\widetilde{g^*})$.

\begin{thm}
The map
$\Phi^{\cl(*)}:\cl(V_2^*,g_2^*)\cup_{(\tilde{F}^*)^{\cl}}\cl(V_1^*,g_1^*)\to\cl(V_2^*\cup_{\tilde{f}^*}V_1^*,\widetilde{g^*})$ thus defined is a smooth diffeomorphism.
\end{thm}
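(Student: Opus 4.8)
The plan is to recognize this statement as a direct instance of the already-cited gluing--Clifford commutativity theorem, Theorem~\ref{gluing:clifford:cited:thm}, applied not to the pseudo-bundles $V_1$ and $V_2$ but to their duals. Indeed, the two pseudo-bundles appearing in the claim, namely $\cl(V_2^*,g_2^*)\cup_{(\tilde{F}^*)^{\cl}}\cl(V_1^*,g_1^*)$ and $\cl(V_2^*\cup_{\tilde{f}^*}V_1^*,\widetilde{g^*})$, are precisely the two outcomes of interchanging the order of the operations ``glue along $(\tilde{f}^*,f^{-1})$'' and ``pass to the pseudo-bundle of Clifford algebras'', carried out on the dual data $(V_2^*,g_2^*)$ and $(V_1^*,g_1^*)$. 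So the whole statement reduces to checking that Theorem~\ref{gluing:clifford:cited:thm} applies after replacing $V_1$ by $V_2^*$, $V_2$ by $V_1^*$, the lift $\tilde{f}$ by $\tilde{f}^*$, the base map $f$ by $f^{-1}$, and the pseudo-metrics $g_1,g_2$ by $g_2^*,g_1^*$; its conclusion then furnishes the diffeomorphism, and it remains only to identify that diffeomorphism with the explicitly prescribed $\Phi^{\cl(*)}$.

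First I would verify the two hypotheses of Theorem~\ref{gluing:clifford:cited:thm} in this dual setting, that both gluing maps are diffeomorphisms and that the pseudo-metrics are compatible. That $f^{-1}$ is a diffeomorphism is immediate from the invertibility of $f$. That $\tilde{f}^*$ is a pseudo-bundle diffeomorphism of its domain with its image is the standing assumption of this section (the gluing--dual commutativity condition); equivalently, by Theorem~\ref{when:dual:pseudometrics:compatible:bundles:thm}, it is the same as the compatibility of $g_2^*$ and $g_1^*$, and that compatibility is in any case supplied by Theorem~\ref{gluing-dual:commute:dual:pseudometrics:compatible:thm}. With both hypotheses secured, Theorem~\ref{gluing:clifford:cited:thm} produces the Clifford-level gluing map (its well-definedness on generators, via $v_1^*\otimes\ldots\otimes v_k^*\mapsto\tilde{f}^*(v_1^*)\otimes\ldots\otimes\tilde{f}^*(v_k^*)$, being exactly the compatibility-of-$g_2^*,g_1^*$ argument reproduced in the dual algebras) together with a fibrewise-linear pseudo-bundle diffeomorphism of the two Clifford pseudo-bundles covering the identity on $X_2\cup_{f^{-1}}X_1$.

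The only step requiring genuine care is to match the diffeomorphism produced abstractly by Theorem~\ref{gluing:clifford:cited:thm} with the explicit two-case formula defining $\Phi^{\cl(*)}$, and to confirm that the induced pseudo-metric arising in that theorem is exactly the directly-constructed $\widetilde{g^*}$ defined above. Both identifications are fibrewise bookkeeping: over points of $i_1^{X_2}(X_2\setminus f(Y))$ each fibre is canonically $\cl((\pi_2^{-1}(\cdot))^*,g_2^*)$ and the abstract ``natural identification'' restricts to $(j_1^{V_2^*})^{\cl}\circ(j_1^{\cl(V_2^*)})^{-1}$, while over $i_2^{X_1}(X_1)$ each fibre is $\cl((\pi_1^{-1}(\cdot))^*,g_1^*)$ and it restricts to $(j_2^{V_1^*})^{\cl}\circ(j_2^{\cl(V_1^*)})^{-1}$; since $\widetilde{g^*}$ is by its very definition the pseudo-metric coinciding fibrewise with $g_2^*$ and $g_1^*$ on these respective pieces, the two induced pseudo-metrics agree on every fibre and hence as sections. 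The main obstacle here is thus organizational rather than conceptual, namely keeping the numerous inclusion maps $j_\bullet^\bullet$ and their Clifford extensions straight and confirming that the abstractly given map agrees with the prescribed generator-wise formula. Once this coincidence is established, smoothness of $\Phi^{\cl(*)}$ and of its inverse is inherited directly from Theorem~\ref{gluing:clifford:cited:thm}, and the proof is complete.
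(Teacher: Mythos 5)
Your proposal is correct and takes essentially the same route as the paper: the paper's proof is precisely an application of the cited gluing--Clifford theorem (Theorem 5.5 of \cite{clifford-alg}, i.e.\ the result recalled as Theorem \ref{gluing:clifford:cited:thm}) to the dual data $V_2^*$, $V_1^*$, $\tilde{f}^*$, $f^{-1}$, together with the observation that $\widetilde{g^*}$ is obtained from $g_2^*$ and $g_1^*$ in exactly the way $\tilde{g}$ is obtained from $g_1$ and $g_2$. Your verification of the hypotheses (that $f^{-1}$ and $\tilde{f}^*$ are diffeomorphisms, and that $g_2^*$, $g_1^*$ are compatible) and the fibrewise identification of the resulting diffeomorphism with the explicit formula for $\Phi^{\cl(*)}$ simply make explicit what the paper leaves implicit.
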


\begin{proof}
This is a consequence of Theorem 5.5 in \cite{clifford-alg}, applied to $V_2^*$, $V_1^*$, and $\tilde{f}^*$; notice that $\widetilde{g^*}$ is exactly the counterpart of the pseudo-metric
$\tilde{g}$ that appears in the statement of the theorem just cited, in that it is obtained from $g_2^*$ and $g_1^*$ in precisely the same way as $\tilde{g}$ is obtained from $g_1$ and $g_2$.
\end{proof}

\subsection{The three diffeomorphisms $\Phi_{\cup,*}^{\cl}$, $\Phi^{\cl(*)}$, and $\Phi_{\cup}^{\cl(*)}$}

We now summarize the above by listing the three possible forms of the Clifford algebra pseudo-bundle, together with the assumptions that allow us to identify them to each other, and with the
corresponding diffeomorphisms.

\paragraph{The assumptions} As (almost) everywhere throughout the paper, we consider two finite-dimensional diffeological vector pseudo-bundles $\pi_1:V_1\to X_1$ and $\pi_2:V_2\to X_2$, and a
gluing of the former to the latter along the pair $(\tilde{f},f)$. In order for the three diffeomorphisms to exist, we must also assume the following:
\begin{itemize}
\item the two pseudo-bundles are locally trivial;\footnote{This is sufficient but not necessary. What we really need is that the right inverse of the pairing map, that takes values in the characteristic 
sub-bundle be smooth, and so it suffices that this sub-bundle split off as a smooth direct summand.}
\item $f$ and $\tilde{f}^*$ are diffeomorphisms;
\item $V_1$ and $V_2$ admit compatible pseudo-metrics $g_1$ and $g_2$ respectively.
\end{itemize}

\paragraph{The three shapes of the pseudo-bundle of covariant Clifford algebras, and their equivalences} Under the assumptions just listed, the following three pseudo-bundles are well-defined:
$$\cl((V_1\cup_{\tilde{f}}V_2)^*,\tilde{g}^*),\,\,\,\cl(V_2^*\cup_{\tilde{f}^*}V_1^*,\widetilde{g^*}),\,\,\,\cl(V_2^*,g_2^*)\cup_{(\tilde{F}^*)^{\cl}}\cl(V_1^*,g_1^*).$$ Although
\emph{a priori} these could be three different pseudo-bundles, the same assumptions that guarantee that all three are well-defined at the same time, also guarantee that they are in fact equivalent, via
the following diffeomorphisms, already described above:
\begin{itemize}
\item $\Phi_{\cup,*}^{\cl}:\cl((V_1\cup_{\tilde{f}}V_2)^*,\tilde{g}^*)\to\cl(V_2^*\cup_{\tilde{f}^*}V_1^*,\widetilde{g^*})$;
\item $\Phi^{\cl(*)}:\cl(V_2^*,g_2^*)\cup_{(\tilde{F}^*)^{\cl}}\cl(V_1^*,g_1^*)\to\cl(V_2^*\cup_{\tilde{f}^*}V_1^*,\widetilde{g^*})$; and
\item $\left(\Phi^{\cl(*)}\right)^{-1}\circ\Phi_{\cup,*}^{\cl}=:\Phi_{\cup}^{\cl(*)}:\cl((V_1\cup_{\tilde{f}}V_2)^*,\tilde{g}^*)\to\cl(V_2^*,g_2^*)\cup_{(\tilde{F}^*)^{\cl}}\cl(V_1^*,g_1^*)$.
\end{itemize}

\section{The pseudo-bundles of exterior algebras, and gluing}

We now turn to considering the pseudo-bundles of exterior algebras, first in the contravariant case, and then in the covariant case. We recall their construction, which is standard, and concentrate on the
interactions with the operation of gluing.

\subsection{The contravariant case}

We first consider the contravariant version of the exterior algebra, by which we mean the following. Let first $V$ be a diffeological vector space; for each tensor power of $V$ consider the alternating
operator\footnote{In other terms, the antisymmetrization operator.}
$$\mbox{Alt}:\underbrace{V\otimes\ldots\otimes V}_n\to\underbrace{V\otimes\ldots\otimes V}_n,$$ acting, as usual, by
$$\mbox{Alt}(v_1\otimes\ldots\otimes v_n)=\frac{1}{n!}\sum_{\sigma}(-1)^{\mbox{sgn}(\sigma)}v_{\sigma(1)}\otimes\ldots\otimes v_{\sigma(n)}$$ and extended by linearity. In this section the
$n$-th exterior power of $V$ is the image $$\bigwedge_n(V)=\mbox{Alt}(\underbrace{V\otimes\ldots\otimes V}_n);$$ the whole exterior algebra $\bigwedge_*(V)$ is the direct sum of all
$\bigwedge_n(V)$. We obtain the pseudo-bundle $\bigwedge_*(V)$ of exterior algebras associated to a given pseudo-bundle $\pi:V\to X$ by employing the same operations in the pseudo-bundle version, and
defining the alternating operator fibrewise.

\subsubsection{The induced gluing map $\tilde{f}^{\bigwedge_*}$}

This map is provided by the universal factorization property for alternating maps. Specifically, let $\pi_1:V_1\to X_1$ and $\pi_2:V_2\to X_2$ be two finite-dimensional diffeological vector pseudo-bundles, 
and let $(\tilde{f},f)$ be a gluing between them. Then the restriction $\tilde{f}|_{\pi_1^{-1}(y)}$ of $\tilde{f}$ to each fibre in its domain of definition is a smooth linear map between diffeological vector spaces 
$\pi_1^{-1}(y)$ and $\pi_2^{-1}(f(y))$, and the direct sum of all tensor degrees of $\tilde{f}|_{\pi_1^{-1}(y)}$ is again a smooth linear map between the tensor algebras of these spaces:
$$\bigoplus_n\left(\tilde{f}|_{\pi_1^{-1}(y)}\right)^{\otimes n}:T(\pi_1^{-1}(y))\to T(\pi_2^{-1}(f(y))).$$ By construction, this map commutes with the two respective alternating operators, so its
restriction, that we denote by $\left(\tilde{f}|_{\pi_1^{-1}(y)}\right)^{\bigwedge_*}$, to $\bigwedge_*(\pi_1^{-1}(y))$ is a smooth linear map between the exterior algebras of the two fibres:
$$\left(\tilde{f}|_{\pi_1^{-1}(y)}\right)^{\bigwedge_*}:\bigwedge_*(\pi_1^{-1}(y))\to\bigwedge_*(\pi_2^{-1}(f(y))).$$ Finally, the collection
$$\tilde{f}^{\bigwedge_*}:=\bigcup_{y\in Y}\left(\tilde{f}|_{\pi_1^{-1}(y)}\right)^{\bigwedge_*},$$ where $Y$ is the domain of definition of $f$, yields a smooth and
fibrewise linear map $\tilde{f}^{\bigwedge_*}$ between the appropriate subsets of $\bigwedge_*(V_1)$ and $\bigwedge_*(V_2)$. Thus, it yields an induced gluing between the corresponding
pseudo-bundles of contravariant exterior algebras.

\subsubsection{The pseudo-bundles $\bigwedge_*(V_1\cup_{\tilde{f}}V_2)$ and $\bigwedge_*(V_1)\cup_{\tilde{f}^{\bigwedge_*}}\bigwedge_*(V_2)$}

In a similar manner, for the pseudo-bundle $V_1\cup_{\tilde{f}}V_2$ there is its own alternating operator $\mbox{Alt}$, whose image is the pseudo-bundle $\bigwedge_*(V_1\cup_{\tilde{f}}V_2)$. Since each
fibre of $T(V_1\cup_{\tilde{f}}V_2)$ coincides with either a fibre of $T(V_1)$ or one of $T(V_2)$, and fibrewise each of the three alternating operators under consideration (those relative to $V_1$,
$V_2$, and $V_1\cup_{\tilde{f}}V_2$) is the usual one of a diffeological vector space, it makes sense to expect the two pseudo-bundles of exterior algebras,
$\bigwedge_*(V_1\cup_{\tilde{f}}V_2)$ and $\bigwedge_*(V_1)\cup_{\tilde{f}^{\bigwedge_*}}\bigwedge_*(V_2)$, to be diffeomorphic. Indeed, they are, and it is not difficult to
describe the natural diffeomorphism between them; it is based on the gluing-tensor product commutativity diffeomorphism $\Phi_{\cup,\otimes}$, as the next construction shows.

\paragraph{A preliminary remark} At this moment we explicitly impose the assumption that for each of our pseudo-bundles $V_1$ and $V_2$ (and accordingly, for all the results of their gluings, their duals,
and any mixture of such), the set of the dimensions of their fibres has a finite upper limit. We denote it by
$$\dim_V=\sup_{x_1\in X_1,x_2\in X_2}\{\dim(\pi_1^{-1}(x_1)),\dim(\pi_2^{-1}(x_2))\};$$ we do not go into any detail about how this correlates with any other assumptions of ours, just note that we will need 
for one of the diffeomorphisms that we define in the next paragraph (specifically, we use to ensure that $\Phi^{\Lambda_*}$ is indeed onto).

\paragraph{The diffeomorphism $\Phi_{\cup,\otimes}^{(\otimes n)}:\left(V_1\cup_{\tilde{f}}V_2\right)^{\otimes n}\to V_1^{\otimes n}\cup_{\tilde{f}^{\otimes n}}V_2^{\otimes n}$} We first describe
the construction of this diffeomorphism, which is by induction on $n$. The base of the induction is $n=2$, in which case $\Phi_{\cup,\otimes}^{(\otimes n)}=\Phi_{\cup,\otimes}$, the
already-mentioned gluing-tensor product commutativity diffeomorphism. Suppose that $\Phi_{\cup,\otimes}^{(\otimes(n-1))}$ has already been defined. Then $\Phi_{\cup,\otimes}^{(\otimes n)}$
is obtained as the composition
$$\left(V_1\cup_{\tilde{f}}V_2\right)^{\otimes(n-1)}\otimes\left(V_1\cup_{\tilde{f}}V_2\right)\to\left(V_1^{\otimes(n-1)}\cup_{\tilde{f}^{\otimes n}}V_2^{\otimes
(n-1)}\right)\otimes\left(V_1\cup_{\tilde{f}}V_2\right)\to V_1^{\otimes n}\cup_{\tilde{f}^{\otimes n}}V_2^{\otimes n},$$ where the first arrow stands for
$\Phi_{\cup,\otimes}^{(\otimes(n-1))}\otimes\mbox{Id}_{V_1\cup_{\tilde{f}}V_2}$, and the second one, for the version $\Phi_{\cup,\otimes}^{\tilde{f}^{\otimes(n-1)},\tilde{f}}$ of the
gluing-tensor product commutativity diffeomorphism applied to the case of two factors, $V_1^{\otimes(n-1)}\cup_{\tilde{f}^{\otimes n}}V_2^{\otimes (n-1)}$ and $V_1\cup_{\tilde{f}}V_2$. We can
summarize the whole construction as
$$\Phi_{\cup,\otimes}^{(\otimes n)}=\Phi_{\cup,\otimes}^{\tilde{f}^{\otimes(n-1)},\tilde{f}}\circ\left(\Phi_{\cup,\otimes}^{(\otimes(n-1))}\otimes\mbox{Id}_{V_1\cup_{\tilde{f}}V_2}\right).$$
Note also that we will denote the inverse of $\Phi_{\cup,\otimes}^{(\otimes n)}$ by $\Phi_{\otimes,\cup}^{(\otimes n)}$. Finally, for all $k$ (limited in practice by $\dim_V$) we define a diffeomorphism
$$\Phi_{\cup,\oplus}^{(k)}:\bigoplus_{n=0}^k\left(V_1^{\otimes n}\cup_{\tilde{f}^{\otimes n}}V_2^{\otimes n}\right)\to\left(\oplus_n V_1^{\otimes
n}\right)\cup_{\oplus_n\tilde{f}^{\otimes n}}\left(\oplus_n V_2^{\otimes n}\right);$$ the construction is exactly the same, just using the gluing-direct sum commutativity diffeomorphism
$\Phi_{\cup,\oplus}$ in place of $\Phi_{\cup,\otimes}$.

\paragraph{The diffeomorphism $\Phi_{\cup,\otimes}^{(\otimes n)}$, and the alternating operators} We write $\mbox{Alt}^{(n)}$ for the restriction of the alternating operator $\mbox{Alt}$ on
$V_1\cup_{\tilde{f}}V_2$ onto the $n$-th tensor degree; likewise, $\mbox{Alt}_i^{(n)}$ stands for the same restriction of the alternating operator on $V_i$, for $i=1,2$. It is then quite trivial
to observe that we have
$$\mbox{Alt}^{(n)}=\Phi_{\otimes,\cup}^{(\otimes n)}\circ\left(\mbox{Alt}_1^{(n)}\cup_{\left(\tilde{f}^{\otimes
n},\tilde{f}^{\otimes}\right)}\mbox{Alt}_2^{(n)}\right)\circ \Phi_{\cup,\otimes}^{(\otimes n)};$$ equivalently,
$$\Phi_{\cup,\otimes}^{(\otimes n)}\circ\mbox{Alt}^{(n)}=\left(\mbox{Alt}_1^{(n)}\cup_{\left(\tilde{f}^{\otimes
n},\tilde{f}^{\otimes}\right)}\mbox{Alt}_2^{(n)}\right)\circ\Phi_{\cup,\otimes}^{(\otimes n)}.$$

\paragraph{The diffeomorphism $\Phi^{\bigwedge_*}:\bigwedge_*(V_1\cup_{\tilde{f}}V_2)\to \bigwedge_*(V_1)\cup_{\tilde{f}^{\bigwedge_*}}\bigwedge_*(V_2)$} We
can now define the desired diffeomorphism as 
$$\Phi^{\bigwedge_*}=\Phi_{\cup,\oplus}^{(\dim_V)}\circ\bigoplus_n\Phi_{\cup,\otimes}^{(\otimes n)}\mid_{\bigwedge_*(V_1\cup_{\tilde{f}}V_2)}.$$ It remains to
observe that $\Phi^{\bigwedge_*}$ is indeed onto $\bigwedge_*(V_1)\cup_{\tilde{f}^{\bigwedge_*}}\bigwedge_*(V_2)$, as follows from its commutativity (in the sense explained in the
previous paragraph) with the alternating operators.

\begin{thm}
The map
$$\Phi^{\bigwedge_*}=\Phi_{\cup,\oplus}^{(\dim_V)}\circ\bigoplus_n\Phi_{\cup,\otimes}^{(\otimes n)}\mid_{\bigwedge_*(V_1\cup_{\tilde{f}}V_2)}$$ is a pseudo-bundle diffeomorphism
$\bigwedge_*(V_1\cup_{\tilde{f}}V_2)\to\bigwedge_*(V_1)\cup_{\tilde{f}^{\bigwedge_*}}\bigwedge_*(V_2)$ covering the identity map on $X_1\cup_f X_2$.
\end{thm}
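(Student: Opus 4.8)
The plan is to obtain $\Phi^{\bigwedge_*}$ as the restriction of a diffeomorphism already assembled at the level of the (truncated) tensor algebra pseudo-bundles, and then to use the commutativity with the alternating operators only to pin down the image. Concretely, I would first argue that the ambient map $\Phi_{\cup,\oplus}^{(\dim_V)}\circ\bigoplus_n\Phi_{\cup,\otimes}^{(\otimes n)}$ is a pseudo-bundle diffeomorphism from $\bigoplus_{n=0}^{\dim_V}(V_1\cup_{\tilde{f}}V_2)^{\otimes n}$ onto $(\oplus_n V_1^{\otimes n})\cup_{\oplus_n\tilde{f}^{\otimes n}}(\oplus_n V_2^{\otimes n})$, covering the identity on $X_1\cup_f X_2$, and then restrict it to the exterior-algebra sub-bundle.

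For the ambient map, the first step is to check, by induction on $n$, that each $\Phi_{\cup,\otimes}^{(\otimes n)}$ is a diffeomorphism covering the identity. The base case is $\Phi_{\cup,\otimes}$, whose diffeomorphism property and covering of the identity were recorded earlier; the inductive step writes $\Phi_{\cup,\otimes}^{(\otimes n)}$ as the composition of $\Phi_{\cup,\otimes}^{(\otimes(n-1))}\otimes\mbox{Id}$ with a further instance of the gluing-tensor product commutativity diffeomorphism, both of which are diffeomorphisms covering the identity, so their composition is as well. An identical induction, using $\Phi_{\cup,\oplus}$ in place of $\Phi_{\cup,\otimes}$, shows that $\Phi_{\cup,\oplus}^{(\dim_V)}$ is a diffeomorphism covering the identity. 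Hence the ambient composition is a diffeomorphism covering the identity. Since the exterior-algebra pseudo-bundles are sub-bundles carrying the \emph{subset} diffeology, the restriction $\Phi^{\bigwedge_*}$ is then automatically smooth with smooth inverse onto its image and still covers the identity; in particular injectivity is inherited for free, and no separate argument is needed for smoothness of the inverse.

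The only genuine point, and what I expect to be the main obstacle, is to verify that the image of $\Phi^{\bigwedge_*}$ is \emph{exactly} $\bigwedge_*(V_1)\cup_{\tilde{f}^{\bigwedge_*}}\bigwedge_*(V_2)$. Here I would invoke the commutativity relation $\Phi_{\cup,\otimes}^{(\otimes n)}\circ\mbox{Alt}^{(n)}=(\mbox{Alt}_1^{(n)}\cup_{(\tilde{f}^{\otimes n},\tilde{f}^{\otimes})}\mbox{Alt}_2^{(n)})\circ\Phi_{\cup,\otimes}^{(\otimes n)}$ recorded above. Since $\Phi_{\cup,\otimes}^{(\otimes n)}$ is a bijection, this single identity yields both inclusions at once: applying it to $\bigwedge_n(V_1\cup_{\tilde{f}}V_2)=\mbox{Image}(\mbox{Alt}^{(n)})$ shows the image is contained in, and (using surjectivity of $\Phi_{\cup,\otimes}^{(\otimes n)}$) equal to, $\mbox{Image}(\mbox{Alt}_1^{(n)}\cup\mbox{Alt}_2^{(n)})$. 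The remaining task is to identify this last image with $\bigwedge_n(V_1)\cup\bigwedge_n(V_2)$: this holds because gluing and the alternating operator are both \emph{fibrewise}, and each fibre of $V_1^{\otimes n}\cup_{\tilde{f}^{\otimes n}}V_2^{\otimes n}$ is a fibre of either $V_1^{\otimes n}$ or $V_2^{\otimes n}$, on which the alternating operator projects onto the corresponding exterior power. Taking the direct sum over $n\leqslant\dim_V$, where the finiteness assumption $\dim_V$ serves precisely to make the sum finite and to let the target exhaust all of $\bigwedge_*$, and then applying $\Phi_{\cup,\oplus}^{(\dim_V)}$, identifies the image of $\Phi^{\bigwedge_*}$ with $\bigwedge_*(V_1)\cup_{\tilde{f}^{\bigwedge_*}}\bigwedge_*(V_2)$. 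The care required in this final step is essentially bookkeeping: one must check that the fibrewise identification of images is compatible across the gluing locus via $\tilde{f}^{\otimes n}$ and descends correctly to the induced gluing map $\tilde{f}^{\bigwedge_*}$.
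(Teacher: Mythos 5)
Your proposal is correct and follows essentially the same route as the paper: the paper's argument is exactly the preceding construction (the inductive assembly of $\Phi_{\cup,\otimes}^{(\otimes n)}$ and $\Phi_{\cup,\oplus}^{(\dim_V)}$ as compositions of diffeomorphisms covering the identity), followed by restriction to the exterior sub-bundle and the observation that surjectivity onto $\bigwedge_*(V_1)\cup_{\tilde{f}^{\bigwedge_*}}\bigwedge_*(V_2)$ follows from the commutativity with the alternating operators. Your write-up merely spells out the details (inheritance of smoothness of the restriction and of its inverse under the subset diffeology, and the fibrewise identification of $\mbox{Image}(\mbox{Alt}_1^{(n)}\cup\mbox{Alt}_2^{(n)})$ with the glued exterior powers) that the paper leaves implicit.
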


\subsection{The pseudo-bundles of covariant exterior algebras}

We now consider the covariant case. The basic definition is simple: the \textbf{covariant exterior algebra} $\bigwedge(V)$ of a pseudo-bundle $V$ is $\bigwedge_*(V^*)$, the contravariant exterior
algebra of its dual pseudo-bundle. So the reason why we consider it separately is to study its behavior with respect to the gluing, which, as we know, is not always well-behaved with respect to duality.

\subsubsection{The induced map between $\bigwedge(V_2)$ and $\bigwedge(V_1)$}

Indeed, let $\pi_1:V_1\to X_1$ and $\pi_2:V_2\to X_2$ be two finite-dimensional diffeological vector pseudo-bundles, and let $(\tilde{f},f)$ be a gluing between them such that $f$ is
invertible. The gluing map between $\bigwedge(V_2)$ and $\bigwedge(V_1)$ is defined exactly as $\tilde{f}^{\bigwedge_*}$, but it is based on the dual map $\tilde{f}^*$. This gluing map is
denoted by $\tilde{f}^{\bigwedge}$ and is in fact $$\tilde{f}^{\bigwedge}:=(\tilde{f}^*)^{\bigwedge_*}.$$

\subsubsection{The pseudo-bundles $\bigwedge(V_1\cup_{\tilde{f}}V_2)$ and $\bigwedge(V_2)\cup_{\tilde{f}^{\bigwedge}}\bigwedge(V_1)$}

As in the contravariant case, there are two natural pseudo-bundles of exterior algebras to consider, namely those mentioned in the title of this section. It is also natural to wonder whether they are
diffeomorphic; we show that indeed they are, under the assumption that the gluing-dual commutativity condition is satisfied, by constructing a certain pseudo-bundle diffeomorphism
$$\Phi^{\bigwedge}:\bigwedge(V_1\cup_{\tilde{f}}V_2)\to\bigwedge(V_2)\cup_{\tilde{f}^{\bigwedge}}\bigwedge(V_1)$$ covering the switch map.

\paragraph{The $n$-th degree component of $\Phi^{\bigwedge}$} Let $\Phi_{\cup,\otimes}^{(\otimes n)}:\left(V_2^*\cup_{\tilde{f}^*}V_1^*\right)^{\otimes
n}\to(V_2^*)^{\otimes n}\cup_{(\tilde{f}^*)^{\otimes n}}(V_1^*)^{\otimes n}$ be the diffeomorphism constructed in the previous section. The $n$-th tensor degree component of
$\Phi^{\bigwedge}$ is the composition 
$$\Phi_{\cup,\otimes}^{(\otimes n)}\circ\Phi_{\cup,*}^{\otimes n}:\left((V_1\cup_{\tilde{f}}V_2)^*\right)^{\otimes n}\to
\left(V_2^*\cup_{\tilde{f}^*}V_1^*\right)^{\otimes n}\to(V_2^*)^{\otimes n}\cup_{(\tilde{f}^*)^{\otimes n}}(V_1^*)^{\otimes n}.$$ Notice that if
$$\mbox{Alt}_{\cup,*}:\left((V_1\cup_{\tilde{f}}V_2)^*\right)^{\otimes n}\to\left((V_1\cup_{\tilde{f}}V_2)^*\right)^{\otimes n},\,\,\,
\mbox{Alt}_2:(V_2^*)^{\otimes n}\to(V_2^*)^{\otimes n}\mbox{ and }\mbox{Alt}_1:(V_1^*)^{\otimes n}\to(V_1^*)^{\otimes n}$$ are the $n$-th degree alternating operators on $(V_1\cup_{\tilde{f}}V_2)^*$,
$V_2^*$, and $V_1^*$ respectively, then we have 
$$\left(\Phi_{\cup,\otimes}^{(\otimes n)}\circ\Phi_{\cup,*}^{\otimes n}\right)\circ\mbox{Alt}_{\cup,*}=
\left(\mbox{Alt}_2\cup_{\left((\tilde{f}^*)^{\otimes n},(\tilde{f}^*)^{\otimes n}\right)}\mbox{Alt}_1\right)\circ\left(\Phi_{\cup,\otimes}^{(\otimes n)}\circ\Phi_{\cup,*}^{\otimes n}\right).$$

\paragraph{The diffeomorphism $\Phi^{\bigwedge}$} We now employ also the gluing-direct sum commutativity diffeomorphism $\Phi_{\cup,\oplus}^{(dim_{V^*})}$, also from the previous section,
to obtain $\Phi^{\bigwedge}$. Indeed, we define
$$\Phi^{\bigwedge}=\Phi_{\cup,\oplus}^{(\dim_{V^*})}\circ\bigoplus_{n=0}^{(\dim_{V^*})}\left(\Phi_{\cup,\otimes}^{(\otimes n)}
\circ\Phi_{\cup,*}^{\otimes n}\right)\mid_{\bigwedge(V_1\cup_{\tilde{f}}V_2)}.$$ This is a well-defined injective, smooth and fibrewise linear map on $\bigwedge(V_1\cup_{\tilde{f}}V_2)$; that its image is
$\bigwedge(V_2)\cup_{\tilde{f}^{\bigwedge}}\bigwedge(V_1)$, follows from the commutativity between each $\Phi_{\cup,\otimes}^{(\otimes n)}\circ\Phi_{\cup,*}^{\otimes n}$ and the appropriate alternating
operators (see above).

\begin{thm}
The map
$$\Phi^{\bigwedge}=\Phi_{\cup,\oplus}^{(\dim_{V^*})}\circ\bigoplus_{n=0}^{(\dim_{V^*})}\left(\Phi_{\cup,\otimes}^{(\otimes
n)}\circ\Phi_{\cup,*}^{\otimes n}\right)\mid_{\bigwedge(V_1\cup_{\tilde{f}}V_2)}$$ is a pseudo-bundle diffeomorphism
$\bigwedge(V_1\cup_{\tilde{f}}V_2)\to\bigwedge(V_2)\cup_{\tilde{f}^{\bigwedge}}\bigwedge(V_1)$ covering the switch map $\varphi_{X_1\leftrightarrow X_2}$.
\end{thm}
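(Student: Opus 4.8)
The plan is to recognize $\Phi^{\bigwedge}$ as a two-stage composition and to inherit all its properties from pieces already in hand, rather than to re-verify smoothness degree by degree. Under the standing gluing-dual commutativity assumption the map $\Phi_{\cup,*}\colon(V_1\cup_{\tilde{f}}V_2)^*\to V_2^*\cup_{\tilde{f}^*}V_1^*$ is a diffeomorphism covering the switch map, so applying the exterior functor to it (equivalently, taking $\bigoplus_n\Phi_{\cup,*}^{\otimes n}$ and restricting to antisymmetric tensors, which is legitimate since any fibrewise linear map commutes with the alternating operators) yields a diffeomorphism $\bigwedge(V_1\cup_{\tilde{f}}V_2)=\bigwedge_*((V_1\cup_{\tilde{f}}V_2)^*)\to\bigwedge_*(V_2^*\cup_{\tilde{f}^*}V_1^*)$, again covering the switch map. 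The remaining factor $\Phi_{\cup,\oplus}^{(\dim_{V^*})}\circ\bigoplus_n\Phi_{\cup,\otimes}^{(\otimes n)}$ is exactly the contravariant exterior-algebra gluing diffeomorphism of the previous theorem, now applied to the pseudo-bundles $V_2^*$ and $V_1^*$ glued along $(\tilde{f}^*,f^{-1})$; since $(\tilde{f}^*)^{\bigwedge_*}=\tilde{f}^{\bigwedge}$ by definition, that theorem hands us a diffeomorphism $\bigwedge_*(V_2^*\cup_{\tilde{f}^*}V_1^*)\to\bigwedge(V_2)\cup_{\tilde{f}^{\bigwedge}}\bigwedge(V_1)$ covering the identity on $X_2\cup_{f^{-1}}X_1$.

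First I would confirm that the composite of these two stages is literally the map written in the statement; this is a bookkeeping check that $\Phi_{\cup,*}^{\otimes n}$ is the degree-$n$ piece of the first stage and that the contravariant construction is assembled from $\Phi_{\cup,\otimes}^{(\otimes n)}$ and $\Phi_{\cup,\oplus}^{(\dim_{V^*})}$ exactly as recorded in the previous section. Then smoothness, fibrewise linearity, and bijectivity with smooth inverse follow at once, being preserved under composition and inherited from each diffeomorphism; the finite bound $\dim_{V^*}$ guarantees that all direct sums are finite and that $\Phi_{\cup,\oplus}^{(\dim_{V^*})}$ is onto.

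The surjectivity onto the \emph{exterior} sub-bundle, rather than onto the full tensor-algebra pseudo-bundle, is the one point deserving care, and I would handle it through the intertwining relation recorded just before the statement, namely that $\Phi_{\cup,\otimes}^{(\otimes n)}\circ\Phi_{\cup,*}^{\otimes n}$ conjugates $\mbox{Alt}_{\cup,*}$ into the glued operator $\mbox{Alt}_2\cup\mbox{Alt}_1$. Because $\bigwedge(V_1\cup_{\tilde{f}}V_2)$ is by definition the image of $\mbox{Alt}_{\cup,*}$ and $\bigwedge(V_2)\cup_{\tilde{f}^{\bigwedge}}\bigwedge(V_1)$ is the image of the glued alternating operator, this relation forces the image of the restriction to be exactly the target, with no proper sub-bundle slipping in. Finally, the covering claim is obtained by tracking base maps: the first stage covers $\varphi_{X_1\leftrightarrow X_2}$ (tensor powers and direct sums act fibrewise over the base, hence do not alter the covered map), while the second stage covers the identity on $X_2\cup_{f^{-1}}X_1$, so the composite covers $\varphi_{X_1\leftrightarrow X_2}$ as required.

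The main obstacle is conceptual rather than computational: the covariant statement is \emph{not} a direct instance of the contravariant theorem, because of the interposed dualization and the switch map, so the real content is verifying that $\Phi_{\cup,*}$ slots in as the degree-one building block compatibly with the iterated $\Phi_{\cup,\otimes}^{(\otimes n)}$, and that the composite still covers the switch map rather than collapsing to the identity. Everything downstream of that identification is routine functoriality of $\otimes$, $\oplus$, and $\mbox{Alt}$.
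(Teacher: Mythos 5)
Your proof is correct, and it rests on exactly the same non-trivial ingredients as the paper's argument --- the diffeomorphisms $\Phi_{\cup,*}$, $\Phi_{\cup,\otimes}^{(\otimes n)}$, $\Phi_{\cup,\oplus}^{(\dim_{V^*})}$, and the intertwining of $\Phi_{\cup,\otimes}^{(\otimes n)}\circ\Phi_{\cup,*}^{\otimes n}$ with the alternating operators, which is what pins the image down to the exterior sub-bundle rather than something larger inside the tensor pseudo-bundle --- but it organizes them differently. The paper builds $\Phi^{\bigwedge}$ degree by degree, taking $\Phi_{\cup,\otimes}^{(\otimes n)}\circ\Phi_{\cup,*}^{\otimes n}$ as the $n$-th component, and then reads off injectivity, smoothness, fibrewise linearity, and the identification of the image directly from this presentation; it never isolates your two-stage factorization. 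You instead factor $\Phi^{\bigwedge}$ through $\bigwedge_*(V_2^*\cup_{\tilde{f}^*}V_1^*)$: the first stage is the exterior power of $\Phi_{\cup,*}$ (a map the paper only introduces later, in Section 8, as $\Phi_{\cup,*}^{\bigwedge}$), covering the switch map, and the second stage is recognized as the contravariant theorem applied verbatim to $V_2^*$, $V_1^*$, and $(\tilde{f}^*,f^{-1})$, covering the identity, using $\tilde{f}^{\bigwedge}=(\tilde{f}^*)^{\bigwedge_*}$. This buys modularity --- the second factor needs no re-verification at all, and the covering claim splits cleanly as switch followed by identity --- at the modest cost of the bookkeeping check that the composite is literally the stated map, i.e.\ that the direct sum commutes past the composition and that restricting $\bigoplus_n\Phi_{\cup,*}^{\otimes n}$ to $\bigwedge(V_1\cup_{\tilde{f}}V_2)$ lands in $\bigwedge_*(V_2^*\cup_{\tilde{f}^*}V_1^*)$, which you correctly reduce to the commutation of fibrewise linear maps with the alternating operators. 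Neither route is more general than the other; yours is arguably the cleaner write-up, and it is consistent with the equivalences the paper itself establishes in Section 8 via $\Phi_{\cup,*}^{\bigwedge}$.
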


\section{The Clifford actions}

In this section we consider all possible (shapes of) Clifford actions, first outlining what acts on what, and then establishing the various natural equivalences.

\subsection{The outline}

As we have seen in the preceding sections, there is a multitude of formally distinct, but (as we are about to see) equivalent with respect to the diffeomorphisms described in the previous two
sections, Clifford actions relative to a given gluing of $(V_1,g_1)$ and $(V_2,g_2)$. In this section we give a list of these actions and their equivalences, with proofs and details appearing in the two
sections immediately following. As before, we assume that $f$ and $\tilde{f}^*$ are diffeomorphisms, and $g_1$ and $g_2$ are compatible.

\subsubsection{The contravariant case}

Recall that in this case we have two natural Clifford algebras,\footnote{A note on slight change in terminology: in the remainder of the paper we will just say \emph{Clifford algebra}
instead of a \emph{pseudo-bundle of Clifford algebras}, and \emph{exterior algebra} instead of \emph{pseudo-bundle of exterior algebras}; in the present context this is unlikely to cause
confusion.} specifically
$$\cl(V_1\cup_{\tilde{f}}V_2,\tilde{g})\cong\cl(V_1,g_1)\cup_{\tilde{F}^{\cl}}\cl(V_2,g_2)$$ (recall that the diffeomorphism that we have between them is
$\Phi^{\cl}:\cl(V_1,g_1)\cup_{\tilde{F}^{\cl}}\cl(V_2,g_2)\to\cl(V_1\cup_{\tilde{f}}V_2,\tilde{g})$) and two natural exterior algebras,
$$\bigwedge_*(V_1\cup_{\tilde{f}}V_2)\cong\bigwedge_*(V_1)\cup_{\tilde{f}^{\bigwedge_*}}\bigwedge_*(V_2).$$

\paragraph{Summary of actions} The natural actions are, the standard action $c_*$ of $\cl(V_1\cup_{\tilde{f}}V_2,\tilde{g})$ on $\bigwedge_*(V_1\cup_{\tilde{f}}V_2)$, and the composite action
$\tilde{c}_*:=c_1\cup_{(\tilde{F}^{\cl},\tilde{f}^{\bigwedge_*})}c_2$ of $\cl(V_1,g_1)\cup_{\tilde{F}^{\cl}}\cl(V_2,g_2)$ on $\bigwedge_*(V_1)\cup_{\tilde{f}^{\bigwedge_*}}\bigwedge_*(V_2)$,
where $c_i$ is the standard action of $\cl(V_i,g_i)$ on $\bigwedge_*(V_i)$. We will show that this is a partial case of the construction considered in \cite{clifford-alg}.

\paragraph{The equivalence of the two actions} This is expressed by the formula:
$$\Phi^{\bigwedge_*}(c_*(v)(e))=\tilde{c}_*\left((\Phi^{\cl})^{-1}(v)\right)\left(\Phi^{\bigwedge_*}(e)\right)$$ for all $v\in\cl(V_1\cup_{\tilde{f}}V_2,\tilde{g})$ and
$e\in\bigwedge_*(V_1\cup_{\tilde{f}}V_2)$ such that $\pi^{\cl}(v)=\pi^{\bigwedge_*}(e)$. Below we will explain why this relation does hold.

\subsubsection{The covariant case}

There are three Clifford algebras to consider:
$$\cl((V_1\cup_{\tilde{f}}V_2)^*,\tilde{g}^*)\cong\cl(V_2^*\cup_{\tilde{f}^*}V_1^*,\widetilde{g^*})\cong\cl(V_2^*,g_2^*)\cup_{(\tilde{F}^*)^{\cl}}\cl(V_1^*,g_1^*),$$ and
essentially two exterior algebras:
$$\bigwedge(V_1\cup_{\tilde{f}}V_2)\cong\bigwedge(V_2)\cup_{\tilde{f}^{\bigwedge}}\bigwedge(V_1),$$ to which we will also add the contravariant exterior algebra 
$\bigwedge_*(V_2^*\cup_{\tilde{f}^*}V_1^*)$.

\paragraph{Summary of Clifford actions} We now outline which Clifford algebra (or the result of gluing of such) acts on which pseudo-bundle of exterior algebras:
\begin{itemize}
\item $\cl((V_1\cup_{\tilde{f}}V_2)^*,\tilde{g}^*)$ acts on $\bigwedge(V_1\cup_{\tilde{f}}V_2)$ via the standard Clifford action $c^*$;
\item $\cl(V_2^*,g_2^*)\cup_{(\tilde{F}^*)^{\cl}}\cl(V_1^*,g_1^*)$ acts on $\bigwedge(V_2)\cup_{\tilde{f}^{\bigwedge}}\bigwedge(V_1)$ via the action $c_{*,\cup}$ (see Proposition 6.8) induced by the
standard Clifford actions $c_2^*$ and $c_1^*$ of $\cl(V_2^*,g_2^*)$ and $\cl(V_1^*,g_1^*)$ on $\bigwedge(V_2)$ and $\bigwedge(V_1)$ respectively;
\item $\cl(V_2^*\cup_{\tilde{f}^*}V_1^*,\widetilde{g^*})$ has, again, the standard Clifford action, which we have not mentioned yet and which we now denote by $\tilde{c}_{*,\cup}$, on
$\bigwedge_*(V_2^*\cup_{\tilde{f}^*}V_1^*)$.
\end{itemize}

\paragraph{The equivalence of actions} As in the contravariant case, the actions $c^*$, $c_{*,\cup}$, and $\tilde{c}_{*,\cup}$ turn out to be equivalent, with the equivalence established via the
diffeomorphisms $\Phi^{\cl(*)}$, $\Phi_{\cup,*}^{\cl}$, and $\Phi_{\cup}^{\cl(*)}$, as well as $\Phi^{\bigwedge}$ and $\Phi_{\cup,*}^{\bigwedge}:\bigwedge(V_1\cup_{\tilde{f}}V_2)\to
\bigwedge_*(V_2^*\cup_{\tilde{f}^*}V_1^*)$. Specifically, we have:
\begin{itemize}
\item the action $c^*$ of $\cl((V_1\cup_{\tilde{f}}V_2)^*,\tilde{g}^*)$ on $\bigwedge(V_1\cup_{\tilde{f}}V_2)$ is related to the action $\tilde{c}_{*,\cup}$ of
$\cl(V_2^*\cup_{\tilde{f}^*}V_1^*,\widetilde{g^*})$ on $\bigwedge_*(V_2^*\cup_{\tilde{f}^*}V_1^*)$, with respect to the diffeomorphisms $\Phi_{\cup,*}^{\cl}$ and $\Phi_{\cup,*}^{\bigwedge}$, via
$$\Phi_{\cup,*}^{\bigwedge}(c^*(v)(e))=\tilde{c}_{*,\cup}(\Phi_{\cup,*}^{Cl}(v))(\Phi_{\cup,*}^{\bigwedge}(e)),$$ that holds for all $v\in\cl((V_1\cup_{\tilde{f}}V_2)^*,\tilde{g}^*)$
and $e\in\bigwedge(V_1\cup_{\tilde{f}}V_2)$ such that $\pi^{\bigwedge}(e)=\pi^{\cl}(v)$;
\item the action $c_{*,\cup}$ of $\cl(V_2^*,g_2^*)\cup_{(\tilde{F}^*)^{\cl}}\cl(V_1^*,g_1^*)$ on $\bigwedge(V_2)\cup_{\tilde{f}^{\bigwedge}}\bigwedge(V_1)$ is equivalent to the action $\tilde{c}_{*,\cup}$ of
$\cl(V_2^*\cup_{\tilde{f}^*}V_1^*,\widetilde{g^*})$ on $\bigwedge_*(V_2^*\cup_{\tilde{f}^*}V_1^*)$, with respect to the diffeomorphisms $\Phi^{\cl(*)}$ and $\Phi_{\cup}^{\cl(*)}$, via
$$\left(\Phi_{\cup,*}^{\bigwedge}\circ(\Phi^{\bigwedge})^{-1}\right)(c_{*,\cup}(v)(e))=\tilde{c}_{*,\cup}\left(\Phi^{cl(*)}(v)\right)\left((\Phi_{\cup,*}^{\bigwedge}\circ(\Phi^{\bigwedge})^{-1})(e)\right)$$
that is true for all $v\in\cl(V_2^*,g_2^*)\cup_{(\tilde{F}^*)^{\cl}}\cl(V_1^*,g_1^*)$ and $e\in\bigwedge(V_2)\cup_{\tilde{f}^{\bigwedge}}\bigwedge(V_1)$ such that
$(\pi_2^{\bigwedge}\cup_{(\tilde{f}^{\bigwedge},f^{-1})}\pi_1^{\bigwedge})(e)=(\pi_2^{\cl}\cup_{((\tilde{F}^*)^{\cl},f^{-1})}\pi_1^{\cl})(v)$.
\end{itemize}
Notice that these equivalences imply also the equivalence of $c^*$ to $\tilde{c}_{*,\cup}$.

\subsection{The standard Clifford action is smooth}

The basis for several versions of the Clifford(-type) actions listed above is the usual action of the (contravariant) Clifford algebra on the corresponding (also contravariant) exterior algebra. This means
the following.

Let $\pi:V\to X$ be any locally trivial finite-dimensional diffeological vector pseudo-bundle that admits a pseudo-metric; let $g$ be a fixed choice of a pseudo-metric on it. The \textbf{standard
Clifford action} of $\cl(V,g)$ on $\bigwedge_*(V)$ is the map $c:\cl(V,g)\to\mathcal{L}(\bigwedge_*(V),\bigwedge_*(V))$ given by 
$$c(v)(v_1\wedge\ldots\wedge v_k)=v\wedge v_1\wedge\ldots\wedge v_k-\sum_{j=1}^k(-1)^{j+1}v_1\wedge\ldots\wedge g(\pi(v))(v,v_j)\wedge\ldots\wedge v_k.$$ On each fibre $\pi^{-1}(x)$ of $V$, this is 
the usual Clifford action of the Clifford algebra relative to the bilinear symmetric form $g(x)$ on the exterior algebra of $\pi^{-1}(x)$.

\begin{prop}\label{standard:clifford:action:is:smooth:lem}
The action $c$ is smooth as a map $\cl(V,g)\to\mathcal{L}(\bigwedge_*(V),\bigwedge_*(V))$.
\end{prop}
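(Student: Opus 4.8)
The plan is to use the very definition of the pseudo-bundle functional diffeology on $\mathcal{L}(\bigwedge_*(V),\bigwedge_*(V))$. By that definition, showing that $c$ is smooth amounts to showing that whenever $r:U\to\cl(V,g)$ is a plot of $\cl(V,g)$, the composite $c\circ r$ is a plot of $\mathcal{L}(\bigwedge_*(V),\bigwedge_*(V))$; and this in turn reduces to checking that for every plot $s:U'\to\bigwedge_*(V)$ the evaluation map $(u,u')\mapsto c(r(u))(s(u'))$, defined on $\{(u,u')\,|\,\pi^{\cl}(r(u))=\pi^{\bigwedge_*}(s(u'))\}$, is smooth as a map into $\bigwedge_*(V)$. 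Since smoothness is a local condition and every plot lifts locally, I may assume $U$ and $U'$ connected.

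First I would treat the generators, i.e. the case where $c$ is evaluated on degree-one elements $v\in V\subset\cl(V,g)$. Here the defining formula splits $c(v)$ as the sum of two operators: exterior multiplication $\omega\mapsto v\wedge\omega$ and the contraction $\omega\mapsto\iota_v\omega$ determined by the covector $g(\pi(v))(v,\cdot)$. Exterior multiplication is smooth because it is the fibrewise restriction of the tensor multiplication $T(V)\times_X T(V)\to T(V)$, which is part of the (smooth) pseudo-bundle-of-algebras structure, composed with the fibrewise linear smooth alternating operator $\mbox{Alt}$. The contraction is smooth because it is assembled from the smooth evaluation pairing together with the pseudo-metric $g$, which by hypothesis is a smooth section of $V^*\otimes V^*$; thus for plots $r_1$ of $V$ and $s$ of $\bigwedge_*(V)$ the map $(u,u')\mapsto\iota_{r_1(u)}(s(u'))$ is smooth. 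Consequently the restriction of $c$ to the generating sub-bundle $V$ is smooth into $\mathcal{L}(\bigwedge_*(V),\bigwedge_*(V))$.

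Next I would pass from generators to arbitrary Clifford elements using that $c$ is, on each fibre, an algebra homomorphism, and that the composition map $\mathcal{L}(\bigwedge_*(V),\bigwedge_*(V))\times_X\mathcal{L}(\bigwedge_*(V),\bigwedge_*(V))\to\mathcal{L}(\bigwedge_*(V),\bigwedge_*(V))$ is smooth. Since $\cl(V,g)$ carries the quotient diffeology coming from $T(V)=\bigoplus_r V^{\otimes r}$, an arbitrary plot $r$ of $\cl(V,g)$ lifts, locally and up to the direct-sum decomposition, to plots valued in the tensor powers $V^{\otimes k}$, which are in turn built from plots of $V$. For such a decomposable contribution $v_1\otimes\ldots\otimes v_k$ one has $c(v_1\otimes\ldots\otimes v_k)=c(v_1)\circ\ldots\circ c(v_k)$, a composition of the already-established smooth generator actions; smoothness of composition then yields smoothness of the evaluation for $r$, and the general plot follows by linearity and by the locality of the diffeology conditions.

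The main obstacle I anticipate is precisely this last reduction: making rigorous the claim that a plot of $\cl(V,g)$ may be handled through plots of $V$ via the quotient-of-tensor-algebra diffeology, since plots of the tensor powers $V^{\otimes k}$ need not be globally decomposable into pure tensors of plots of $V$. The local triviality hypothesis is what rescues this, guaranteeing that the relevant right inverses (of the pairing and of the alternating projection) are smooth across fibres, so that the fibrewise-defined operations glue to genuinely smooth pseudo-bundle maps. By contrast, the smoothness of the two generator operations, wedge and contraction, is routine once the smoothness of $g$ and of the algebra multiplication is invoked.
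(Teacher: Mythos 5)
Your proof follows essentially the same route as the paper's: reduce to the degree-one generators $v\in V\subset\cl(V,g)$, where $c(v)$ splits as exterior multiplication plus a contraction built from $g$; establish smoothness of these two pieces from the smoothness of the exterior product (the exterior degrees carry the pushforward of the tensor-product diffeology by $\mbox{Alt}$) and from the smoothness of the pseudo-metric; then extend to general elements of $\cl(V,g)$ multiplicatively, via $c(v_1\otimes\cdots\otimes v_k)=c(v_1)\circ\cdots\circ c(v_k)$ and smoothness of composition in $\mathcal{L}(\bigwedge_*(V),\bigwedge_*(V))$ (a routine fact, true though left implicit in the paper). The paper performs exactly this reduction: its maps $c_V$ and $c_j$ are your wedge operator and the individual terms of your contraction operator, and its justification of their smoothness is the same as yours.

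The gap is in how you discharge the obstacle you yourself flag at the end. Local triviality of $V$ is not what makes general plots of $\cl(V,g)$ tractable, and ``smooth right inverses of the pairing and of the alternating projection'' are beside the point: the right inverse of the pairing map $\Phi:V\to V^*$ is what Section 2 needs to define the dual pseudo-metric $g^*$, which is nowhere used here, and $\mbox{Alt}$ needs no right inverse at all, since each exterior degree carries the pushforward diffeology, through which plots lift locally by the very definition of a pushforward. What actually resolves the decomposability issue is the construction of the tensor-product diffeology itself: it is (the quotient of) the finest vector space diffeology containing the decomposable plots, so every plot of $V^{\otimes k}$ is locally a finite sum $u\mapsto\sum_i f_i(u)\,p_{i,1}(u)\otimes\cdots\otimes p_{i,k}(u)$ with the $f_i$ ordinary smooth functions and the $p_{i,j}$ plots of $V$. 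Pushing this normal form forward through $\mbox{Alt}$, through the direct-sum structure of $T(V)$, and through the quotient projection $T(V)\to\cl(V,g)$ gives the same local description of plots of $\bigwedge_*(V)$ and of $\cl(V,g)$; combined with your generator case, multiplicativity, and the (multi)linearity of evaluation, this closes the proof. This is precisely the local decomposition the paper invokes when it writes a plot of the $k$-th exterior degree, on a small enough neighborhood, as $(p_1,\ldots,p_k)$ acting by $u\mapsto p_1(u)\wedge\ldots\wedge p_k(u)$. Local triviality appears in the proposition only as a standing hypothesis on $(V,g)$; the proof itself never uses it.
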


\begin{proof}
Notice first of all that the pseudo-bundle $\bigwedge_*(V)$ smoothly splits as the direct sum $\bigoplus_k\bigwedge^kV$. It then follows from the above presentation of the action $c$ and the definition of
the diffeology of $\cl(V,g)$, that it suffices to show that the following two maps $c_V,c_j:V\to\mathcal{L}(\bigwedge_*(V),\bigwedge_*(V))$ are smooth:
$$c_V(v)(v_1\wedge\ldots\wedge v_k)=v\wedge v_1\wedge\ldots\wedge v_k\,\mbox{ and }\,c_j(v)(v_1\wedge\ldots\wedge v_k)=v_1\wedge\ldots\wedge g(\pi(v))(v,v_j)\wedge\ldots\wedge v_k.$$
Thus, $c_V$ acts as the exterior product, which is smooth by definition (recall that the diffeology on each exterior product degree can be described as the pushforward of the tensor product
diffeology by the alternating operator, which makes it, and the exterior product as a consequence, automatically smooth).

The smoothness of the map $c_j$ follows from the smoothness of the pseudo-metric $g$. To be slightly more explicit, we note that on a small enough neighborhood $U$, we can write a plot of the $k$-th
exterior degree as $(p_1,\ldots,p_k)$, where each $p_i$ is a plot of $V$, acting by $u\mapsto p_1(u)\wedge\ldots\wedge p_k(u)$. Therefore the evaluation map that determines the smoothness of $c_j$ is
locally of form
$$(u',u)\mapsto p_1(u)\wedge\ldots\wedge g(\pi(p(u')))(p(u'),p_j(u))\wedge\ldots\wedge p_k(u)$$ for some other plot $p:U'\to V$ of $V$. Since $(u',u)\mapsto g(\pi(p(u')))(p(u'),p_j(u))$ is a smooth function, 
and the diffeology of $\bigwedge_*(V)$ is a (vector) pseudo-bundle diffeology, we obtain a plot of $\bigwedge_*(V)$, whence the claim.
\end{proof}

\subsection{The compatibility of two standard Clifford actions}

Likewise, we can show that under certain assumptions, two standard Clifford actions are compatible with a given gluing; this happens precisely when the gluing itself is commutative. Here is the precise
statement.

\begin{prop}\label{standard:clifford:actions:compatible:prop}
Let $\pi_1:V_1\to X_1$ and $\pi_2:V_2\to X_2$ be two locally trivial finite-dimensional diffeological vector pseudo-bundles, let $(\tilde{f},f)$ be a gluing between them such that $f$ and
$\tilde{f}$ are diffeomorphisms, and let $g_1$ and $g_2$ be compatible pseudo-metrics on $V_1$ and $V_2$ respectively. Let $c_i$ for $i=1,2$ be the standard Clifford actions of $\cl(V_i,g_i)$ on
$\bigwedge_*(V_i)$. Then for all $v,v_1,\ldots,v_k\in V_1$ such that $\pi_1(v)=\pi_1(v_1)=\ldots=\pi_1(v_k)\in Y$ we have
$$\tilde{f}^{\bigwedge_*}(c_1(v)(v_1\wedge\ldots\wedge v_k))=c_2(\tilde{F}^{\cl}(v))(\tilde{f}^{\bigwedge_*}(v_1\wedge\ldots\wedge v_k)).$$
\end{prop}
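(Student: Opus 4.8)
The plan is to reduce the claimed identity to the compatibility of the pseudo-metrics $g_1$ and $g_2$ by a direct fibrewise computation. First I would fix the point $y\in Y$ with $\pi_1(v)=\pi_1(v_1)=\ldots=\pi_1(v_k)=y$, so that all the vectors involved lie in the single fibre $\pi_1^{-1}(y)$, and recall the three ingredients I shall use: the explicit formula for the standard Clifford action stated just above; the fact that on degree-one elements $\tilde{F}^{\cl}(v)=\tilde{f}(v)$ (immediate from the construction of $\tilde{F}^{\cl}$, which sends $v_1\otimes\ldots\otimes v_k$ to $\tilde{f}(v_1)\otimes\ldots\otimes\tilde{f}(v_k)$); and the fact that $\tilde{f}^{\bigwedge_*}$ is linear and carries $v_1\wedge\ldots\wedge v_k$ to $\tilde{f}(v_1)\wedge\ldots\wedge\tilde{f}(v_k)$.

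Next I would expand the left-hand side. By the definition of $c_1$,
$$c_1(v)(v_1\wedge\ldots\wedge v_k)=v\wedge v_1\wedge\ldots\wedge v_k-\sum_{j=1}^k(-1)^{j+1}g_1(y)(v,v_j)\,v_1\wedge\ldots\wedge\widehat{v_j}\wedge\ldots\wedge v_k,$$
and then applying $\tilde{f}^{\bigwedge_*}$, using its linearity (so that each scalar $g_1(y)(v,v_j)$ passes through unchanged) and its multiplicativity on wedge products, I obtain
$$\tilde{f}(v)\wedge\tilde{f}(v_1)\wedge\ldots\wedge\tilde{f}(v_k)-\sum_{j=1}^k(-1)^{j+1}g_1(y)(v,v_j)\,\tilde{f}(v_1)\wedge\ldots\wedge\widehat{\tilde{f}(v_j)}\wedge\ldots\wedge\tilde{f}(v_k).$$
Expanding the right-hand side via the definition of $c_2$, together with $\tilde{F}^{\cl}(v)=\tilde{f}(v)$ and $\tilde{f}^{\bigwedge_*}(v_1\wedge\ldots\wedge v_k)=\tilde{f}(v_1)\wedge\ldots\wedge\tilde{f}(v_k)$, yields the same leading wedge-product term, together with
$$-\sum_{j=1}^k(-1)^{j+1}g_2(f(y))(\tilde{f}(v),\tilde{f}(v_j))\,\tilde{f}(v_1)\wedge\ldots\wedge\widehat{\tilde{f}(v_j)}\wedge\ldots\wedge\tilde{f}(v_k).$$
Comparing the two expansions term by term, the only possible discrepancy is between the scalar coefficients $g_1(y)(v,v_j)$ and $g_2(f(y))(\tilde{f}(v),\tilde{f}(v_j))$ in the contraction sums, and these coincide for every $j$ precisely by the compatibility condition for $g_1$ and $g_2$. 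Hence the two sides agree, which is the assertion.

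The computation is entirely routine, and I expect no genuine obstacle beyond careful bookkeeping of the signs and the omitted factors; the heart of the matter is simply that the contraction term, which is the only feature distinguishing $c$ from pure exterior multiplication, is transported correctly by $\tilde{f}$ exactly when $\tilde{f}$ intertwines $g_1$ with $g_2$. The one point I would state explicitly is the multiplicativity of $\tilde{f}^{\bigwedge_*}$ used above; this follows from its very construction as the restriction to $\bigwedge_*(\pi_1^{-1}(y))$ of $\bigoplus_n(\tilde{f}|_{\pi_1^{-1}(y)})^{\otimes n}$, which commutes with the alternating operators, so that it indeed sends a decomposable element $v_1\wedge\ldots\wedge v_k$ to $\tilde{f}(v_1)\wedge\ldots\wedge\tilde{f}(v_k)$ and respects scalar multiples.
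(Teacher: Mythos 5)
Your proof is correct and follows essentially the same route as the paper's: both expand the two sides fibrewise via the explicit formula for the standard Clifford action, use that $\tilde{F}^{\cl}$ and $\tilde{f}^{\bigwedge_*}$ act on decomposable elements by applying $\tilde{f}$ factorwise, and reduce the comparison to the identity $g_1(y)(v,v_j)=g_2(f(y))(\tilde{f}(v),\tilde{f}(v_j))$ supplied by the compatibility of the pseudo-metrics. Your closing remark justifying the multiplicativity of $\tilde{f}^{\bigwedge_*}$ from its construction via $\bigoplus_n(\tilde{f}|_{\pi_1^{-1}(y)})^{\otimes n}$ commuting with the alternating operators is a detail the paper leaves implicit, but it matches the paper's construction exactly.
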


\begin{proof}
By the definition of $\tilde{F}^{\cl}$ and that of $\tilde{f}^{\bigwedge_*}$, we have that
$$c_2(\tilde{F}^{\cl}(v))(\tilde{f}^{\bigwedge_*}(v_1\wedge\ldots\wedge v_k))=c_2(\tilde{f}(v))(\tilde{f}(v_1)\wedge\ldots\wedge\tilde{f}(v_k)).$$ The desired condition easily follows from this. Indeed,
\begin{flushleft}
$\tilde{f}^{\bigwedge_*}(c_1(v)(v_1\wedge\ldots\wedge v_k))=$
\end{flushleft}
\begin{flushright}
$=\tilde{f}^{\bigwedge_*}(v\wedge v_1\wedge\ldots\wedge v_k-\sum_{j=1}^k(-1)^{j+1}g_1(\pi_1(v))(v,v_j)v_1\wedge\ldots\wedge v_{j-1}\wedge v_{j+1}\wedge\ldots\wedge v_k)=$
\end{flushright}
\begin{flushleft}
$=\tilde{f}(v)\wedge\tilde{f}(v_1)\wedge\ldots\wedge\tilde{f}(v_k)-$
\end{flushleft}
\begin{flushright}
$-\sum_{j=1}^k(-1)^{j+1}g_1(\pi_1(v))(v,v_j)\tilde{f}(v_1)\wedge\ldots\wedge\tilde{f}(v_{j-1})\wedge\tilde{f}(v_{j+1})\wedge\ldots\wedge\tilde{f}(v_k)).$
\end{flushright}
Now, since
\begin{flushleft}
$c_2(\tilde{f}(v))(\tilde{f}(v_1)\wedge\ldots\wedge\tilde{f}(v_k))=\tilde{f}(v)\wedge\tilde{f}(v_1)\wedge\ldots\wedge\tilde{f}(v_k)-$
\end{flushleft}
\begin{flushright}
$-\sum_{j=1}^k(-1)^{j+1}g_2(\pi_2(\tilde{f}(v)))(\tilde{f}(v),\tilde{f}(v_j))\tilde{f}(v_1)\wedge\ldots\wedge\tilde{f}(v_{j-1})\wedge\tilde{f}(v_{j+1})\wedge\ldots\wedge\tilde{f}(v_k))$.
\end{flushright}
The pseudo-metrics $g_1$ and $g_2$ being compatible ensures that $g_2(\pi_2(\tilde{f}(v)))(\tilde{f}(v),\tilde{f}(v_j))=g_1(\pi_1(v))(v,v_j)$, whence the claim.
\end{proof}

\subsection{The contravariant case: the actions on $\bigwedge_*(V_1\cup_{\tilde{f}}V_2)$ and $\bigwedge_*(V_1)\cup_{\tilde{f}^{\bigwedge_*}}\bigwedge_*(V_2)$}

We now describe the action $\tilde{c}_*$, and prove its equivalence (already stated above) to the action $c_*$. Notice that $c_*$ is an instance of the standard Clifford action, so it is smooth by Lemma
\ref{standard:clifford:action:is:smooth:lem}.

\paragraph{The action $\tilde{c}_*=c_1\cup_{(\tilde{F}^{Cl},\tilde{f}^{\bigwedge_*})}c_2$} This is a partial case of a more general construction described in \cite{clifford-alg}. The construction bears 
some similarity to that of the gluing of smooth maps, although, as mentioned in the same source, it is not quite the same thing. To describe this action, let $v\in cl(V_1,g_1)\cup_{\tilde{F}^{\cl}}\cl(V_2,g_2)$; 
then $\tilde{c}_*(v)$ is an endomorphism of the fibre of $\bigwedge_*(V_1)\cup_{\tilde{f}^{\bigwedge_*}}\bigwedge_*(V_2)$ over the point
$(\pi_1^{\cl}\cup_{(\tilde{F}^{\cl},f)}\pi_2^{\cl})(v)\in X_1\cup_f X_2$. Then the action $\tilde{c}_*$ is defined as follows:
$$\tilde{c}_*(v)(e)=\left\{\begin{array}{ll}
j_1^{\bigwedge_*(V_1)}\left(c_1((j_1^{\cl(V_1,g_1)})^{-1}(v))((j_1^{\bigwedge_*(V_1)})^{-1}(e))\right) & \mbox{over }i_1^{X_1}(X_1\setminus Y)\\
j_2^{\bigwedge_*(V_2)}\left(c_2((j_2^{\cl(V_2,g_2)})^{-1}(v))((j_2^{\bigwedge_*(V_2)})^{-1}(e))\right) & \mbox{over }i_2^{X_2}(X_2).
\end{array}\right.$$ In other words, we just pull back $v$ and $e$ to the respective factors of gluing, apply $c_1$ or $c_2$, as appropriate, and re-insert the result into the pseudo-bundle
$\bigwedge_*(V_1)\cup_{\tilde{f}^{\bigwedge_*}}\bigwedge_*(V_2)$. It now suffices to note that by Proposition \ref{standard:clifford:actions:compatible:prop} $c_1$ and $c_2$ are
compatible as Clifford actions, so it follows from \cite{clifford-alg} that the action $\tilde{c}_{*,\cup}$ is smooth.

\paragraph{The equivalence of $c_*$ to $\tilde{c}_{*,\cup}$} We now prove the already-mentioned statement of equivalence for these actions.

\begin{thm}\label{gluing:compatible:actions:commutes:with:standard:thm}
Let $v\in\cl(V_1\cup_{\tilde{f}}V_2,\tilde{g})$ and $e\in\bigwedge_*(V_1\cup_{\tilde{f}}V_2)$ be such that $\pi^{\cl}(v)=\pi^{\bigwedge_*}(e)$. Then
$$\Phi^{\bigwedge_*}(c_*(v)(e))=\tilde{c}_*\left((\Phi^{\cl})^{-1}(v)\right)\left(\Phi^{\bigwedge_*}(e)\right).$$
\end{thm}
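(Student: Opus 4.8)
The plan is to verify the identity fibrewise over the base $X_1\cup_f X_2$. Every map entering the statement covers the identity on $X_1\cup_f X_2$: the operators $c_*(v)$ and $\tilde{c}_*((\Phi^{\cl})^{-1}(v))$ are endomorphisms of a single fibre, while $\Phi^{\cl}$ and $\Phi^{\bigwedge_*}$ are fibrewise diffeomorphisms covering the identity. Hence both sides of the claimed equality live in the same fibre of $\bigwedge_*(V_1)\cup_{\tilde{f}^{\bigwedge_*}}\bigwedge_*(V_2)$, and it suffices to establish the equality after restricting to each fibre. Since the ranges of $i_1$ and $i_2$ form a disjoint cover of $X_1\cup_f X_2$, I would treat separately the two cases $x\in i_1^{X_1}(X_1\setminus Y)$ and $x\in i_2^{X_2}(X_2)$, where $x=\pi^{\cl}(v)=\pi^{\bigwedge_*}(e)$.

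First I would handle the case $x\in i_1^{X_1}(X_1\setminus Y)$, writing $x_1=(i_1^{X_1})^{-1}(x)$. Here the relevant fibre of $V_1\cup_{\tilde{f}}V_2$ is $\pi_1^{-1}(x_1)$, so the three pseudo-bundles restrict, over $x$, to $\cl(\pi_1^{-1}(x_1),g_1|_{\pi_1^{-1}(x_1)})$ and $\bigwedge_*(\pi_1^{-1}(x_1))$. The key step is to unwind the defining formulas. By the identities $\Phi_{\cup,\otimes}\circ(j_1^{V_1}\otimes j_1^{V_1})=j_1^{V_1\otimes V_1}$ and $\Phi_{\cup,\oplus}\circ(j_1^{V_1}\oplus j_1^{V_1'})=j_1^{V_1\oplus V_1'}$ (and their iterated versions used to build $\Phi_{\cup,\otimes}^{(\otimes n)}$ and $\Phi_{\cup,\oplus}^{(\dim_V)}$), the diffeomorphism $\Phi^{\bigwedge_*}$ restricts on this fibre to the natural identification $j_1^{\bigwedge_*(V_1)}$; likewise $\Phi^{\cl}$ restricts to $j_1^{\cl(V_1,g_1)}$, so $(\Phi^{\cl})^{-1}$ restricts to $(j_1^{\cl(V_1,g_1)})^{-1}$. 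Plugging these into the definition of $\tilde{c}_*$ over $i_1^{X_1}(X_1\setminus Y)$, the right-hand side becomes $j_1^{\bigwedge_*(V_1)}(c_1((j_1^{\cl(V_1,g_1)})^{-1}(v))((j_1^{\bigwedge_*(V_1)})^{-1}(e)))$, which is exactly $\Phi^{\bigwedge_*}$ applied to the standard action $c_*(v)(e)$ computed in $\pi_1^{-1}(x_1)$. Thus both sides coincide because, on this fibre, $c_*$ and $c_1$ are literally the same standard Clifford action of the diffeological vector space $\pi_1^{-1}(x_1)$.

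The case $x\in i_2^{X_2}(X_2)$ is entirely analogous, with every occurrence of the subscript $1$ replaced by $2$, the map $\Phi^{\bigwedge_*}$ restricting to $j_2^{\bigwedge_*(V_2)}$ and $\Phi^{\cl}$ to $j_2^{\cl(V_2,g_2)}$, so that both sides reduce to the standard Clifford action $c_2$ on $\pi_2^{-1}(x_2)$ with $x_2=(i_2^{X_2})^{-1}(x)$. I would note that this case also subsumes the glued locus $i_2^{X_2}(f(Y))$, so no separate appeal to the compatibility of $c_1$ and $c_2$ (Proposition \ref{standard:clifford:actions:compatible:prop}) is needed for the pointwise equality — that compatibility only guarantees that $\tilde{c}_*$ is globally well-defined and smooth. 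The main obstacle is the bookkeeping in the second step: carefully showing that the composite $\Phi^{\bigwedge_*}=\Phi_{\cup,\oplus}^{(\dim_V)}\circ\bigoplus_n\Phi_{\cup,\otimes}^{(\otimes n)}$ really does restrict to the single inclusion $j_1^{\bigwedge_*(V_1)}$ (respectively $j_2^{\bigwedge_*(V_2)}$) on each fibre, which requires an induction on $n$ tracking the defining identities of the tensor-product commutativity diffeomorphisms through the antisymmetrization; once this is in place, the Clifford-action computation is immediate.
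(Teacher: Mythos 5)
Your proposal is correct and takes essentially the same approach as the paper's proof: a fibrewise verification split over the disjoint cover $i_1^{X_1}(X_1\setminus Y)\cup i_2^{X_2}(X_2)$, using that $\Phi^{\cl}$ and $\Phi^{\bigwedge_*}$ restrict on each fibre to the natural identifications $j_1^{\cl(V_1,g_1)}$, $j_1^{\bigwedge_*(V_1)}$ (respectively the $j_2$ versions), and that both actions are then the same standard Clifford action on that fibre, with $\tilde{g}$ pulling back to $g_1$ or $g_2$. The paper only differs in presentation, making the computation explicit by reducing to $v$ of degree one and $e$ a pure wedge $v_1\wedge\ldots\wedge v_k$, which is the same bookkeeping you defer to the defining identities of $\Phi_{\cup,\otimes}^{(\otimes n)}$ and $\Phi_{\cup,\oplus}^{(\dim_V)}$.
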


\begin{proof}
The proof is almost trivial if we adopt the following viewpoint: since both actions are fibrewise based on the standard Clifford action, it suffices to assume that $v$ is an element of the copy of
$V_1\cup_{\tilde{f}}V_2$ naturally contained in $\cl(V_1\cup_{\tilde{f}}V_2,\tilde{g})$, and that $e$ belongs to the $k$-th exterior degree of $V_1\cup_{\tilde{f}}V_2$, that is,
$e=v_1\wedge\ldots\wedge v_k$ for $v_1,\ldots,v_k\in V_1\cup_{\tilde{f}}V_2$. Formally, there are two cases to consider: that of $\pi^{\cl}(v)\in i_1^{X_1}(X_1\setminus Y)$ and that of
$\pi^{\cl}(v)\in i_2^{X_2}(X_2)$.

Thus, suppose that $\pi^{\cl}(v)\in i_1^{X_1}(X_1\setminus Y)$. Since $c_*$ is the standard action, we have 
$$c_*(v)(e)=v\wedge v_1\wedge\ldots\wedge v_k-\sum_{j=1}^k(-1)^{j+1}v_1\wedge\ldots\wedge\tilde{g}(\pi^{\cl}(v))(v,v_j)\wedge \ldots\wedge v_k.$$ Therefore
\begin{flushleft}
$\Phi^{\bigwedge_*}(c_*(v)(e))=j_1^{\bigwedge_*(V_1)}((j_1^{V_1})^{-1}(v)\wedge(j_1^{V_1})^{-1}(v_1)\wedge\ldots\wedge(j_1^{V_1})^{-1}(v_k)- $
\end{flushleft}
\begin{flushright}
$-\sum_{j=1}^k(-1)^{j+1}(j_1^{V_1})^{-1}(v_1)\wedge\ldots\wedge g_1(\pi_1((j_1^{V_1})^{-1}(v)))((j_1^{V_1})^{-1}(v),(j_1^{V_1})^{-1}(v_j))\wedge\ldots\wedge(j_1^{V_1})^{-1}(v_k))$.
\end{flushright}
It therefore suffices to note that $j_1^{\cl(V_1,g_1)}((j_1^{V_1})^{-1}(v))=(\Phi^{\cl})^{-1}(v)$ and 
$j_1^{\bigwedge_*(V_1)}((j_1^{V_1})^{-1}(v_1)\wedge\ldots\wedge(j_1^{V_1})^{-1}(v_k))=\Phi^{\bigwedge_*}(v_1\wedge\ldots\wedge v_k)$, to draw the desired conclusion. Since the treatment of the case 
$\pi^{\cl}(v)\in i_2^{X_2}(X_2)$ is exactly the same, the proof is finished.
\end{proof}

\subsection{The covariant case}

In this case we have three potential actions, corresponding to the three shapes of the Clifford algebra and those of the three exterior algebras (one of which is actually a contravariant algebra,
trivially identified to a covariant one). After a detailed description of the actions, we prove their equivalences, already announced in Section 8.1.2.

\subsubsection{The action $c^*$ of $\cl((V_1\cup_{\tilde{f}}V_2)^*,\tilde{g}^*)$ on $\bigwedge(V_1\cup_{\tilde{f}}V_2)$}

This is a case of a standard Clifford action, considered in Lemma \ref{standard:clifford:action:is:smooth:lem}; this lemma, in particular, ensures, that $c^*$ is a smooth action. Recall indeed
that $\bigwedge(V_1\cup_{\tilde{f}}V_2)=\bigwedge_*((V_1\cup_{\tilde{f}}V_2)^*)$ by its definition.

\subsubsection{The action $c_{*,\cup}$ of $\cl(V_2^*,g_2^*)\cup_{(\tilde{F}^*)^{\cl}}\cl(V_1^*,g_1^*)$ on $\bigwedge(V_2)\cup_{\tilde{f}^{\bigwedge}}\bigwedge(V_1)$}

Let $c_2^*:\cl(V_2^*,g_2^*)\to\mathcal{L}(\bigwedge(V_2),\bigwedge(V_2))$ and $c_1^*:\cl(V_1^*,g_1^*)\to\mathcal{L}(\bigwedge(V_1),\bigwedge(V_1))$ be the standard Clifford actions. By Proposition
\ref{standard:clifford:actions:compatible:prop}, they are compatible with the gluings that yield respectively $\cl(V_2^*,g_2^*)\cup_{(\tilde{F}^*)^{\cl}}\cl(V_1^*,g_1^*)$ and
$\bigwedge(V_2)\cup_{\tilde{f}^{\bigwedge}}\bigwedge(V_1)$, with respect to the maps $\tilde{f}^{\bigwedge}$ and $(\tilde{F}^*)^{\cl}$. Thus, the procedure described in
\cite{clifford-alg} yields a smooth action $c_{*,\cup}=c_2^*\cup_{((\tilde{F}^*)^{\cl},\tilde{f}^{\bigwedge})}c_1^*$. The formula that describes it is as follows.

Let $v\in\cl(V_2^*,g_2^*)\cup_{(\tilde{F}^*)^{\cl}}\cl(V_1^*,g_1^*)$ and $e\in\bigwedge(V_2)\cup_{\tilde{f}^{\bigwedge}}\bigwedge(V_1)$ be such that
$(\pi_2^{\bigwedge}\cup_{(\tilde{f}^{\bigwedge},f^{-1})}\pi_1^{\bigwedge})(e)=(\pi_2^{\cl}\cup_{((\tilde{F}^*)^{\cl},f^{-1})}\pi_1^{\cl})(v)$. Then we will have
$$c_{*,\cup}(v)(e)=\left\{\begin{array}{ll}
j_1^{\bigwedge(V_2)}\left(c_2^*((j_1^{\cl(V_2^*,g_2^*)})^{-1}(v))((j_1^{\bigwedge(V_2)})^{-1}(e))\right) & \mbox{over }i_1^{X_2}(X_2\setminus f(Y))\\
j_2^{\bigwedge(V_1)}\left(c_1^*((j_2^{\cl(V_1^*,g_1^*)})^{-1}(v))((j_2^{\bigwedge(V_1)})^{-1}(e))\right) & \mbox{over }i_2^{X_1}(X_1).
\end{array}\right.$$

\subsubsection{The action $\tilde{c}_{*,\cup}$ of $\cl(V_2^*\cup_{\tilde{f}^*}V_1^*,\widetilde{g^*})$ on $\bigwedge_*(V_2^*\cup_{\tilde{f}^*}V_1^*)$}

This is also a standard Clifford action; its smoothness follows, once again, from Lemma \ref{standard:clifford:action:is:smooth:lem}.

\subsubsection{The equivalence of $c^*$ to $\tilde{c}_{*,\cup}$, and that of $c_{*,\cup}$ to $\tilde{c}_{*,\cup}$}

It now remains to prove the two equivalence formulae for the covariant actions $c^*$, $c_{*,\cup}$, and $\tilde{c}_{*,\cup}$.

\paragraph{The diffeomorphism $\Phi_{\cup,*}^{\bigwedge}:\bigwedge(V_1\cup_{\tilde{f}}V_2) \to\bigwedge_*(V_2^*\cup_{\tilde{f}^*}V_1^*)$} This diffeomorphism is induced by the gluing-dual commutativity 
diffeomorphism $\Phi_{\cup,*}$ in a completely standard way and is, by definition, the map
$$\Phi_{\cup,*}^{\bigwedge}=\bigoplus_{n=0}^{\dim_{V^*}}\Phi_{\cup,*}^{\otimes n}|_{\bigwedge(V_1\cup_{\tilde{f}}V_2)}.$$ That it has, in particular, the desired range follows from the
commutativity of $\Phi_{\cup,*}^{\otimes n}$ with the relevant alternating operators.

\paragraph{The equivalence of $c^*$ to $\tilde{c}_{*,\cup}$} We now show that the action $c^*$ of $\cl((V_1\cup_{\tilde{f}}V_2)^*,\tilde{g}^*)$ on $\bigwedge(V_1\cup_{\tilde{f}}V_2)$ is equivalent to the action
$\tilde{c}_{*,\cup}$ of $\cl(V_2^*\cup_{\tilde{f}^*}V_1^*,\widetilde{g^*})$ on $\bigwedge_*(V_2^*\cup_{\tilde{f}^*}V_1^*)$, via the rule described in the following statement.

\begin{thm}
Let $v\in\cl((V_1\cup_{\tilde{f}}V_2)^*,\tilde{g}^*)$ and $e\in\bigwedge(V_1\cup_{\tilde{f}}V_2)$ be such that $\pi^{\bigwedge}(e)=\pi^{\cl}(v)$. Then
$$\Phi_{\cup,*}^{\bigwedge}(c^*(v)(e))=\tilde{c}_{*,\cup}(\Phi_{\cup,*}^{\cl}(v))(\Phi_{\cup,*}^{\bigwedge}(e)).$$
\end{thm}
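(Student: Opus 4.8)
The plan is to recognize this as the covariant, dual-side analogue of Theorem~\ref{gluing:compatible:actions:commutes:with:standard:thm}, whose entire content is carried by the fact, established in Corollary~\ref{gluing:dual:commut:diffeo:is:isometry:cor}, that $\Phi_{\cup,*}$ is a fibrewise linear pseudo-bundle isometry between $\left((V_1\cup_{\tilde{f}}V_2)^*,\tilde{g}^*\right)$ and $\left(V_2^*\cup_{\tilde{f}^*}V_1^*,\widetilde{g^*}\right)$. First I would observe that both $c^*$ and $\tilde{c}_{*,\cup}$ are instances of the standard Clifford action of Proposition~\ref{standard:clifford:action:is:smooth:lem}: the former acts on $\bigwedge_*((V_1\cup_{\tilde{f}}V_2)^*)$ using $\tilde{g}^*$, the latter on $\bigwedge_*(V_2^*\cup_{\tilde{f}^*}V_1^*)$ using $\widetilde{g^*}$. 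Next I would note that, by their very constructions, $\Phi_{\cup,*}^{\cl}$ and $\Phi_{\cup,*}^{\bigwedge}$ are both the multiplicative (respectively tensorial) extensions of the single underlying map $\Phi_{\cup,*}$: in particular $\Phi_{\cup,*}^{\cl}$ restricts to $\Phi_{\cup,*}$ on the degree-one generators sitting inside the Clifford algebra, while $\Phi_{\cup,*}^{\bigwedge}$ sends a decomposable element $w_1\wedge\ldots\wedge w_m$ to $\Phi_{\cup,*}(w_1)\wedge\ldots\wedge\Phi_{\cup,*}(w_m)$.

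Exactly as in the proof of Theorem~\ref{gluing:compatible:actions:commutes:with:standard:thm}, I would then reduce to the generating case by linearity: it suffices to verify the identity when $v\in(V_1\cup_{\tilde{f}}V_2)^*$ is a degree-one element (these generate the Clifford algebra, and both sides are linear in $e$ and behave multiplicatively in $v$) and when $e=v_1\wedge\ldots\wedge v_k$ is a decomposable element of a single exterior degree, with all $v_i\in(V_1\cup_{\tilde{f}}V_2)^*$. For such $v$ and $e$, the standard Clifford action formula of Proposition~\ref{standard:clifford:action:is:smooth:lem} gives
$$c^*(v)(e)=v\wedge v_1\wedge\ldots\wedge v_k-\sum_{j=1}^k(-1)^{j+1}\tilde{g}^*(\pi^{\cl}(v))(v,v_j)\,v_1\wedge\ldots\wedge v_{j-1}\wedge v_{j+1}\wedge\ldots\wedge v_k.$$

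I would then push this expression through $\Phi_{\cup,*}^{\bigwedge}$. The wedge terms transport directly, by the multiplicativity noted above, into wedges of the $\Phi_{\cup,*}(v_i)$ together with $\Phi_{\cup,*}(v)$; for the scalar coefficients I would invoke the isometry property of Corollary~\ref{gluing:dual:commut:diffeo:is:isometry:cor} to replace each $\tilde{g}^*(\pi^{\cl}(v))(v,v_j)$ by $\widetilde{g^*}(\varphi_{X_1\leftrightarrow X_2}(\pi^{\cl}(v)))(\Phi_{\cup,*}(v),\Phi_{\cup,*}(v_j))$. The resulting expression is precisely the standard Clifford action formula for $\tilde{c}_{*,\cup}$ applied to $\Phi_{\cup,*}^{\cl}(v)=\Phi_{\cup,*}(v)$ acting on $\Phi_{\cup,*}^{\bigwedge}(e)=\Phi_{\cup,*}(v_1)\wedge\ldots\wedge\Phi_{\cup,*}(v_k)$, which is exactly the right-hand side of the claimed identity.

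The essential content is thus entirely contained in Corollary~\ref{gluing:dual:commut:diffeo:is:isometry:cor}: the only place where anything beyond formal naturality enters is the matching of the pairing coefficients, and this is exactly what the isometry guarantees. The remaining work---the reduction to generators and the verification that the two extension maps are multiplicative on the relevant pieces---is routine bookkeeping. I therefore expect the only (modest) obstacle to be the careful tracking of base points: since $\Phi_{\cup,*}$ covers the switch map rather than the identity, one must keep the factor $\varphi_{X_1\leftrightarrow X_2}$ in the argument of $\widetilde{g^*}$ and confirm that $\Phi_{\cup,*}^{\cl}(v)$ indeed sits over $\varphi_{X_1\leftrightarrow X_2}(\pi^{\cl}(v))$, so that the pseudo-metric on the right-hand side is evaluated at the correct fibre.
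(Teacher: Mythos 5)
Your proposal is correct and follows essentially the same route as the paper: the paper's own (very terse) proof simply says to repeat the reasoning of Theorem~\ref{gluing:compatible:actions:commutes:with:standard:thm} --- reduction to degree-one Clifford elements and decomposable wedge elements, multiplicativity of the extensions of the single underlying map $\Phi_{\cup,*}$, and matching of the pairing coefficients --- which is exactly what you spell out. Your explicit appeal to Corollary~\ref{gluing:dual:commut:diffeo:is:isometry:cor} for the coefficient matching, and your care with the switch map in the base, make precise the ingredients the paper leaves implicit, but the argument is the same.
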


\begin{proof}
The proof uses the same kind of reasoning as that of Theorem \ref{gluing:compatible:actions:commutes:with:standard:thm}, in which it suffices observe that both diffeomorphisms $\Phi^{\cl(*)}$ and
$\Phi_{\cup,*}^{\bigwedge}$ are based on the same diffeomorphism $\Phi_{\cup,*}$, and that the latter essentially commutes with the exterior product.
\end{proof}

\paragraph{The equivalence of $c_{*,\cup}$ to $\tilde{c}_{*,\cup}$} Let us now show that the action $c_{*,\cup}$ of $\cl(V_2^*,g_2^*)\cup_{(\tilde{F}^*)^{\cl}}\cl(V_1^*,g_1^*)$ on
$\bigwedge(V_2)\cup_{\tilde{f}^{\bigwedge}}\bigwedge(V_1)$ is equivalent to the action $\tilde{c}_{*,\cup}$ of $\cl(V_2^*\cup_{\tilde{f}^*}V_1^*,\widetilde{g^*})$ on
$\bigwedge_*(V_2^*\cup_{\tilde{f}^*}V_1^*)$. Specifically, we have the following statement.

\begin{thm}
Let $v\in\cl(V_2^*,g_2^*)\cup_{(\tilde{F}^*)^{\cl}}\cl(V_1^*,g_1^*)$ and $e\in\bigwedge(V_2)\cup_{\tilde{f}^{\bigwedge}}\bigwedge(V_1)$ be such that
$(\pi_2^{\bigwedge}\cup_{(\tilde{f}^{\bigwedge},f^{-1})}\pi_1^{\bigwedge})(e)=(\pi_2^{\cl}\cup_{((\tilde{F}^*)^{\cl},f^{-1})}\pi_1^{\cl})(v)$. Then
$$\left(\Phi_{\cup,*}^{\bigwedge}\circ(\Phi^{\bigwedge})^{-1}\right)(c_{*,\cup}(v)(e))=\tilde{c}_{*,\cup}\left(\Phi^{\cl(*)}(v)\right)\left((\Phi_{\cup,*}^{\bigwedge}\circ(\Phi^{\bigwedge})^{-1})(e)\right).$$
\end{thm}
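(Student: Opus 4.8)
The plan is to deduce this statement from the preceding theorem (the equivalence of $c^*$ with $\tilde{c}_{*,\cup}$) rather than to re-run the whole fibrewise computation from scratch. The key structural fact I would exploit is that the three Clifford-algebra diffeomorphisms are linked by $\Phi^{\cl(*)}\circ\Phi_{\cup}^{\cl(*)}=\Phi_{\cup,*}^{\cl}$, which is precisely how $\Phi_{\cup}^{\cl(*)}$ was defined. Since $\Phi_{\cup}^{\cl(*)}$ and $\Phi^{\bigwedge}$ are diffeomorphisms, I may write every $v\in\cl(V_2^*,g_2^*)\cup_{(\tilde{F}^*)^{\cl}}\cl(V_1^*,g_1^*)$ as $v=\Phi_{\cup}^{\cl(*)}(\bar v)$ and every $e\in\bigwedge(V_2)\cup_{\tilde{f}^{\bigwedge}}\bigwedge(V_1)$ as $e=\Phi^{\bigwedge}(\bar e)$, with $\bar v\in\cl((V_1\cup_{\tilde{f}}V_2)^*,\tilde{g}^*)$ and $\bar e\in\bigwedge(V_1\cup_{\tilde{f}}V_2)$; this is a bijective change of variables. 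Under it the right-hand side of the claim simplifies, because $\Phi^{\cl(*)}(v)=\Phi_{\cup,*}^{\cl}(\bar v)$ and $(\Phi_{\cup,*}^{\bigwedge}\circ(\Phi^{\bigwedge})^{-1})(e)=\Phi_{\cup,*}^{\bigwedge}(\bar e)$, so it becomes $\tilde{c}_{*,\cup}(\Phi_{\cup,*}^{\cl}(\bar v))(\Phi_{\cup,*}^{\bigwedge}(\bar e))$, which by the preceding theorem equals $\Phi_{\cup,*}^{\bigwedge}(c^*(\bar v)(\bar e))$. Cancelling the diffeomorphism $\Phi_{\cup,*}^{\bigwedge}$ from both sides then reduces the entire statement to the single intertwining identity
\[
c_{*,\cup}\big(\Phi_{\cup}^{\cl(*)}(\bar v)\big)\big(\Phi^{\bigwedge}(\bar e)\big)=\Phi^{\bigwedge}\big(c^*(\bar v)(\bar e)\big),
\]
that is, to the assertion that $\Phi^{\bigwedge}$ intertwines the standard action $c^*$ with the glued action $c_{*,\cup}$ through $\Phi_{\cup}^{\cl(*)}$.

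I would then prove this last identity by the template of Theorem \ref{gluing:compatible:actions:commutes:with:standard:thm}. Because $c^*$ is a single standard Clifford action while $c_{*,\cup}$ is fibrewise one of the standard actions $c_2^*$, $c_1^*$, and because $\Phi_{\cup}^{\cl(*)}$ is fibrewise multiplicative and $\Phi^{\bigwedge}$ fibrewise linear, it suffices to verify the identity when $\bar v$ is a degree-one generator lying in the canonical copy of $(V_1\cup_{\tilde{f}}V_2)^*$ and $\bar e=\omega_1\wedge\ldots\wedge\omega_k$ is decomposable; the general case follows by multiplicativity in $\bar v$ and linearity in $\bar e$. Next I would split according to the base point in $X_1\cup_f X_2$ into the regions $i_1^{X_1}(X_1\setminus Y)$, $i_2^{X_2}(X_2\setminus f(Y))$, and the gluing locus $i_2^{X_2}(f(Y))$, matching the piecewise definitions of the two diffeomorphisms. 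On the two non-gluing regions both $\Phi_{\cup}^{\cl(*)}$ and $\Phi^{\bigwedge}$ reduce to the relevant inclusions $j_\bullet$, so substituting the explicit standard-action formula for $c^*$ and for the appropriate $c_i^*$ yields the equality at once, exactly as in the contravariant case.

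The hard part will be the gluing locus $i_2^{X_2}(f(Y))$, where both $\Phi_{\cup}^{\cl(*)}$ and $\Phi^{\bigwedge}$ carry a factor of $\tilde{f}^*$ (through $(\tilde{F}^*)^{\cl}$ and through $\tilde{f}^{\bigwedge}=(\tilde{f}^*)^{\bigwedge_*}$). There the exterior-product part of the standard-action formula transforms correctly by the multiplicativity of $\tilde{f}^*$, but the metric-dependent contraction terms, which are read off from $\widetilde{g^*}$ on one side and from $g_2^*$ or $g_1^*$ on the other, must be shown to coincide after transport. This is exactly the compatibility of the standard Clifford actions $c_2^*$ and $c_1^*$ already recorded in Proposition \ref{standard:clifford:actions:compatible:prop}, which is applicable because $\tilde{f}^*$ is a fibrewise isometry of $g_2^*$ with $g_1^*$ (this holds by Theorem \ref{when:dual:pseudometrics:compatible:bundles:thm} under our hypothesis that $\tilde{f}^*$ is a diffeomorphism). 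Invoking that compatibility to identify the two contraction terms closes the gluing case, and with it the proof.
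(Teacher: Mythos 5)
Your proof is correct, but it follows a genuinely different route from the paper's. The paper's entire proof is one sentence: the statement is Theorem \ref{gluing:compatible:actions:commutes:with:standard:thm} applied verbatim to the dual data $(V_2^*,g_2^*)$, $(V_1^*,g_1^*)$, $(\tilde{f}^*,f^{-1})$ --- the observation being that $\Phi^{\cl(*)}$, the map $\Phi^{\bigwedge}\circ(\Phi_{\cup,*}^{\bigwedge})^{-1}$, and the actions $c_{*,\cup}$, $\tilde{c}_{*,\cup}$ are precisely the dual-data instances of $\Phi^{\cl}$, $\Phi^{\bigwedge_*}$, $\tilde{c}_*$, $c_*$, so that no new computation is needed at all. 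You instead pivot on the factorization $\Phi^{\cl(*)}\circ\Phi_{\cup}^{\cl(*)}=\Phi_{\cup,*}^{\cl}$ and on the preceding theorem (the equivalence of $c^*$ with $\tilde{c}_{*,\cup}$), reducing the claim to the intertwining identity
$$c_{*,\cup}\bigl(\Phi_{\cup}^{\cl(*)}(\bar v)\bigr)\bigl(\Phi^{\bigwedge}(\bar e)\bigr)=\Phi^{\bigwedge}\bigl(c^*(\bar v)(\bar e)\bigr),$$
which you then verify fibrewise. The reduction is sound (the diffeomorphisms used for the change of variables cover the switch map, so the base-point conditions correspond, and cancelling $\Phi_{\cup,*}^{\bigwedge}$ is legitimate since it is injective), and your treatment of the fibrewise verification is where the real content sits: on the two non-gluing regions both transporting maps are plain $j$-type identifications, while on the gluing locus the equality of the metric contraction terms is exactly Proposition \ref{standard:clifford:actions:compatible:prop} applied to $(V_2^*,g_2^*)$, $(V_1^*,g_1^*)$, $(\tilde{f}^*,f^{-1})$, which is available because the compatibility of $g_2^*$ with $g_1^*$ follows from $\tilde{f}^*$ being a diffeomorphism via Theorem \ref{when:dual:pseudometrics:compatible:bundles:thm}. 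In terms of what each approach buys: the paper's is maximally economical and exhibits the covariant statement as literally the contravariant statement for the dual pseudo-bundles; yours is longer and in effect re-proves that dual-data instance instead of citing it, but it stays entirely within the covariant picture, and as a by-product it establishes explicitly the third equivalence (of $c^*$ with $c_{*,\cup}$), which the paper only mentions in passing. One small slip to tighten: on the gluing locus the contraction terms on the $c^*$ side are read from $\tilde{g}^*$ (the pseudo-metric with respect to which $c^*$ is the standard action), not from $\widetilde{g^*}$; this is harmless, since over $i_2^{X_2}(f(Y))$ the form $\tilde{g}^*$ restricts, under the canonical identification, to $g_2^*$, and the two pseudo-metrics are in any case identified by $\Phi_{\cup,*}$ (Corollary \ref{gluing:dual:commut:diffeo:is:isometry:cor}).
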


\begin{proof}
This is a direct consequence of Theorem \ref{gluing:compatible:actions:commutes:with:standard:thm} applied to $(V_2^*,g_2^*)$, $(V_1^*,g_1^*)$, and $(\tilde{f}^*,f^{-1})$.
\end{proof}

\section{Examples}

The two examples that we describe in this section are chosen with the following considerations in mind. For one thing, even when we start with some usual smooth vector bundles (as in the first example
below), the gluing of them may be defined on a non-open set, producing a result which is not a smooth manifold, yet is being treated as if it were one. In the second example we consider a
non-diffeomorphic gluing of fibres.

\subsection{The wedge of two lines}

We start with the two pseudo-bundles $\pi_1:V_1\to X_1$ and $\pi_2:V_2\to X_2$. Let $V_1=V_2=\matR^2$ with the standard diffeology, let $X_1=X_2=\matR$, also standard, and let $\pi_1$ and
$\pi_2$ be the two standard projections on the $x$-axis: $\pi_1:V_1\ni(x,y)\mapsto x\in\matR=X_1$ and $\pi_2:V_2\ni(x,y)\mapsto x\in\matR=X_2$. The pseudo-bundle
structure is given by imposing on each fibre $(x,y_1)+(x,y_2)\mapsto(x,y_1+y_2)$ and $\lambda(x,y_1)\mapsto(x,\lambda y_1)$. The gluing of these two pseudo-bundles is given by $(\tilde{f},f)$, where
$f:X_1\supset\{0\}\to\{0\}\subset X_2$ and $\tilde{f}$ is determined by a non-zero constant $a\in\matR$ via the rule
$$\tilde{f}(0,1)=(0,a)\in V_2=\matR^2.$$

Let $f_1,f_2:\matR\to\matR$ be two smooth everywhere positive functions; let $g_i$ be the pseudo-metric on $V_i$ given by $$g_i(x)(v,w)=f_i(x)\cdot e^2(v)\cdot e^2(w),$$ where $e^2$ is the second 
element of the usual dual basis of the canonical basis of $\matR^2$ (relative to the notation used later on it can be written as $dy$). The compatibility condition is then $f_1(0)=a^2f_2(0)$.

\paragraph{The result of gluing} The pseudo-bundle $V_1\cup_{\tilde{f}}V_2$ can be described as the union $\{(x,0,z)\}\cup\{(0,y,z)\}$ of two planes in $\matR^3$, and, accordingly, $X_1\cup_f X_2$ as the 
union $\{(x,0,0)\}\cup\{(0,y,0)\}$ of the two axes, with the projection $\pi_1\cup_{(\tilde{f},f)}\pi_2$ acting by $(x,0,z)\mapsto(x,0,0)$, $(0,y,z)\mapsto(0,y,0)$. The pseudo-metric $\tilde{g}$ is then
$$\tilde{g}(x,y,0)=\left\{\begin{array}{ll} f_1(x)dz^2 & \mbox{if }y=0\mbox{ and }x\neq 0,\\ f_2(y)dz^2 & \mbox{if }x=0. \end{array}\right.$$

\paragraph{The pairing maps $\Psi_{g_1}$, $\Psi_{g_2}$, and $\Psi_{\tilde{g}}$} Since all fibres are standard, the characteristic sub-bundles coincide with the pseudo-bundles
themselves, so all three maps are automatically invertible. They act by:
$$\Psi_{g_1}(x,y)=f_1(x)ydy,\,\,\,\Psi_{g_2}(x,y)=f_2(x)ydy,$$ and then, using the just-mentioned presentation of $V_1\cup_{\tilde{f}}V_2$ as a subset of $\matR^3$, we have
$$\Psi_{\tilde{g}}(x,y,z)=\left\{\begin{array}{ll} f_1(x)zdz & \mbox{if }y=0,\\ f_2(y)zdz & \mbox{if }x=0 \end{array}\right.$$

\paragraph{The dual pseudo-metrics} The dual pseudo-metrics are therefore described in the same manner as $g_1$ and $g_2$, but the coefficients are inverted:
$$g_2^*(x)(v^*,w^*)=\frac{1}{f_2(x)}\cdot v^*(e_2)\cdot w^*(e_2)\,\,\,\mbox{ and }\,\,\,g_1^*(x)(v^*,w^*)=\frac{1}{f_1(x)}\cdot v^*(e_2)\cdot w^*(e_2).$$ The compatibility condition for $g_2^*$ and $g_1^*$ is
thus $\frac{1}{f_2(0)}=\frac{a^2}{f_1(0)}$, and so is equivalent to the one for $g_1$ and $g_2$. The pseudo-metric $\tilde{g}^*\equiv\widetilde{g^*}$ is therefore
$$\tilde{g}^*(x',y',0)=\left\{\begin{array}{ll}
\frac{1}{f_1(x')}\frac{\partial}{\partial z}\otimes\frac{\partial}{\partial z} & \mbox{if }y'=0,\\
\frac{1}{f_2(x')}\frac{\partial}{\partial z}\otimes\frac{\partial}{\partial z} & \mbox{if }x'=0.
\end{array}\right.$$

\paragraph{The pseudo-bundles of Clifford algebras} All fibres in our case are $1$-dimensional, so each of $\cl(V_1,g_1)$, $\cl(V_2,g_2)$ is thus a trivial fibering of $\matR^3$ over $\matR$;
the result of their gluing can be described as the subset in $\matR^4$ given by the equation $xy=0$, so that
$$\cl(V_1,g_1)\cup_{\tilde{F}^{\cl}}\cl(V_2,g_2)=\{(x,0,z,w),\mbox{ where }x\neq 0\}\cup\{(0,y,z,w)\},$$ with the Clifford multiplication being defined by
$$(x,0,z_1,w_1)\cdot_{\cl}(x,0,z_2,w_2)=(x,0,z_1w_2+z_2w_1,-f_1(x)z_1z_2+w_1w_2),$$
$$(0,y,z_1,w_1)\cdot_{\cl}(0,y,z_2,w_2)=(0,y,z_1w_2+z_2w_1,-f_2(y)z_1z_2+w_1w_2).$$

From this, it is also quite evident that the result trivially coincides with $\cl(V_1\cup_{\tilde{f}}V_2,\tilde{g})$, so much in fact, that we can only distinguish between the two by choosing two
slightly different forms of designating the same subset in $\matR^4$. Specifically, in the case of $\cl(V_1\cup_{\tilde{f}}V_2,\tilde{g})$ we describe the set of its
points as
$$\{(x,y,z,w),\mbox{ where }xy=0\}.$$ Obviously, this is the same set as we described as the set of points of $\cl(V_1,g_1)\cup_{\tilde{F}^{\cl}}\cl(V_2,g_2)$; the chosen
presentation of the latter emphasizes its structure as the result of a gluing.

\paragraph{The pseudo-bundles of covariant Clifford algebras} Consider again the subset in $\matR^4$ given by the equation $xy=0$. This is the subset that is identified with all three (shapes of) the
Clifford algebra. For all three possibilities, we identify the copy of $V_1^*$ contained in either of them with the hyperplane $\{(x,0,z,0)\}$, and the copy of $V_2^*$, with the hyperplane
$\{(0,y,z,0)\}$; the fourth coordinate $w$ corresponds to the scalar part of the Clifford algebra.

The distinction between the various shapes of the Clifford algebra is the following one. When this subset is viewed as $\cl((V_1\cup_{\tilde{f}}V_2)^*,\tilde{g}^*)$, we describe the Clifford multiplication as
$$\left\{\begin{array}{l}
(x,0,z_1,w_1)\cdot_{\cl}(x,0,z_2,w_2)=(x,0,z_1w_2+z_2w_1,-\frac{1}{f_1(x)}z_1z_2+w_1w_2)\mbox{ for }x\neq 0,\\
(0,y,z_1,w_1)\cdot_{\cl}(0,y,z_2,w_2)=(0,y,z_1w_2+z_2w_1,-\frac{1}{f_2(2)}z_1z_2+w_1w_2)\mbox{ otherwise}.
\end{array}\right.$$ On the other hand, when we view the same subset as either $\cl(V_2^*,g_2^*)\cup_{(\tilde{F}^*)^{\cl}}\cl(V_1^*,g_1^*)$ or $\cl(V_2^*\cup_{\tilde{f}^*}V_1^*,\widetilde{g^*})$, we describe the
corresponding product by
$$\left\{\begin{array}{l}
(x,0,z_1,w_1)\cdot_{\cl}(x,0,z_2,w_2)=(x,0,z_1w_2+z_2w_1,-\frac{1}{f_1(x)}z_1z_2+w_1w_2)\mbox{ for all }x,\\
(0,y,z_1,w_1)\cdot_{\cl}(0,y,z_2,w_2)=(0,y,z_1w_2+z_2w_1,-\frac{1}{f_2(2)}z_1z_2+w_1w_2)\mbox{ for }y\neq 0.
\end{array}\right.$$

\paragraph{The pseudo-bundles of exterior algebras} These can be presented in exactly the same way as those of Clifford algebras. In both the contravariant and the covariant case we have a unique
presentation, again as a subset of $\matR^4$ given by the equation $xy=0$, with the exterior product
$$\left\{\begin{array}{l}
(x,0,z_1,w_1)\wedge(x,0,z_2,w_2)=(x,0,z_1w_2+z_2w_1,w_1w_2),\\
(0,y,z_1,w_1)\wedge(0,y,z_2,w_2)=(0,y,z_1w_2+z_2w_1,w_1w_2)
\end{array}\right.$$

\paragraph{The Clifford actions} In the contravariant case, we have two exterior algebras, $\bigwedge_*(V_1\cup_{\tilde{f}}V_2)$ and $\bigwedge_*(V_1)\cup_{\tilde{f}_*^{\bigwedge}}\bigwedge_*(V_2)$,
with the actions $c$ and $\tilde{c}$ of, respectively, $\cl(V_1\cup_{\tilde{f}}V_2,\tilde{g})$ and $\cl(V_1,g_1)\cup_{\tilde{F}^{\cl}}\cl(V_2,g_2)$. In the former case,
we have
\begin{flushleft}
$c((0,0,z_2,w_2))(0,0,z,w)=(0,0,z_2,w_2)\wedge(0,0,z,w)-\left(0,0,0,\tilde{g}(0,0,0)((0,0,z_2),(0,0,z))\right)=$
\end{flushleft}
\begin{flushright}
$=(0,0,z_2w+w_2z,w_2w+f_2(0)z_2z)$;
\end{flushright}
in the latter case, the only thing that changes with respect to the formula just given, is that the term $\tilde{g}(0,0,0)((0,0,z_2),(0,0,z)$ is replaced by the term
$g_2(0,0)((0,z_2),(0,z))$, whose value however is exactly the same.

The covariant case is analogous, although we have three exterior algebras, $\bigwedge(V_1\cup_{\tilde{f}}V_2)$, $\bigwedge_*(V_2^*\cup_{\tilde{f}^*}V_1^*)$, and
$\bigwedge(V_2)\cup_{(\tilde{f}^*)^{\bigwedge}}\bigwedge(V_1)$, with the actions $c$, $c_{*,\cup}$, and $c_{\cup,*}$ of, respectively, $\cl((V_1\cup_{\tilde{f}}V_2)^*,\tilde{g}^*)$,
$\cl(V_2^*\cup_{\tilde{f}^*}V_1^*,\widetilde{g^*})$, and $\cl(V_2^*,g_2^*)\cup_{(\tilde{F}^*)^{\cl}}\cl(V_1^*,g_1^*)$. Once again, these actions have the same form everywhere except over the
point of gluing (the origin), where we would formally write the formulae for $c((0,0,z_2,w_2))(0,0,z,w)$, $c_{*,\cup}((0,0,z_2,w_2))(0,0,z,w)$, and
$c_{\cup,*}((0,0,z_2,w_2))(0,0,z,w)$ with respect to $\tilde{g}^*$, $\widetilde{g^*}$, or $g_1^*$, respectively:
\begin{flushleft}
$c((0,0,z_2,w_2))(0,0,z,w)=(0,0,z_2,w_2)\wedge(0,0,z,w)-\left(0,0,0,\tilde{g}^*(0,0,0)((0,0,z_2),(0,0,z))\right)=$
\end{flushleft}
\begin{flushright}
$=(0,0,z_2w+w_2z,w_2w+\frac{1}{f_2(0)}z_2z)$,
\end{flushright}
\begin{flushleft}
$c_{*,\cup}((0,0,z_2,w_2))(0,0,z,w)=(0,0,z_2,w_2)\wedge(0,0,z,w)-\left(0,0,0,\widetilde{g^*}(0,0,0)((0,0,z_2),(0,0,z))\right)=$
\end{flushleft}
\begin{flushright}
$=(0,0,z_2w+w_2z,w_2w-g_1^*(0,0)((0,z_2),(0,z)))=(0,0,z_2w+w_2z,w_2w+\frac{1}{f_1(0)}z_2z)$,
\end{flushright}
\begin{flushleft}
$c_{\cup,*}((0,0,z_2,w_2))(0,0,z,w)=(0,0,z_2,w_2)\wedge(0,0,z,w)-\left(0,0,0,g_1^*(0,0)((0,z_2),(0,z))\right)=$
\end{flushleft}
\begin{flushright}
$=(0,0,z_2w+w_2z,w_2w+\frac{1}{f_1(0)}z_2z)$.
\end{flushright}

\subsection{A non-diffeomorphism $\tilde{f}$ and diffeomorphism $\tilde{f}^*$}

Let $\pi_2:V_2\to X_2$ be the same as in the previous section, \emph{i.e.}, the standard projection $\matR^2\to\matR$; define $\pi_1:V_1\to X_1$ to be the projection of $V_1=\matR^3$ to
$X_1=\matR$, where $X_1$ carries the standard diffeology, and $V_1=\matR\times\matR\times\matR$ carries the product diffeology relative to the standard diffeologies on the first two factors and
the vector space diffeology generated by the plot $\matR\ni x\mapsto|x|$ on the third factor.\footnote{In fact, any non-standard vector space diffeology would be sufficient for our purposes.} The
projection $\pi_1$ is just the projection onto the first factor. The gluing map $f$ for the bases is the same, $\{0\}\to\{0\}$, and the one for the total spaces is almost the same, specifically,
$\tilde{f}(0,y,z)=(0,ay)$ with $a\neq 0$ (again, notice that zeroing out the third coordinate is necessary for $\tilde{f}$ to be smooth). The pseudo-bundle $\pi_2:V_2\to X_2$ carries the same pseudo-metric
$g_2$ as in the previous example, while the pseudo-metric $g_1$ on $\pi_1:V_1\to X_1$ extends the previous one in a trivial manner:
$$g_1(x)((x,y_1,z_1),(x,y_2,z_2))=f_1(x)y_1y_2.$$ The compatibility condition remains the same.

The entire covariant case coincides with that of the example treated in the previous section. We only consider the pseudo-bundle $V_1\cup_{\tilde{f}}V_2$ and the corresponding contravariant
constructions.

\paragraph{The pseudo-bundle $V_1\cup_{\tilde{f}}V_2$} We represent it as a subset in $\matR^4$, specifically as the union of the plane given by the equations $x=0$ and $w=0$ (the part corresponding to
$V_2$), and of the set $\{y=0\}\setminus\{x=0,y=0,w=0\}$; this is the part corresponding to $V_1$, where excising the line $\{x=0,y=0,w=0\}$ reflects how $V_1\cup_{\tilde{f}}V_2$ contains
$V_1\setminus\pi_1^{-1}(Y)$, and not the entire $V_1$. Thus, the entire set can be described as
$$\left\{\begin{array}{ll} (x,0,z,w) & \mbox{except the points }(0,0,z,0)\\ (0,y,z,0) & \mbox{for all }y,z. \end{array}\right.$$

\paragraph{The two Clifford algebras} The Clifford algebra of $V_2$ is the already seen one; relative to the presentation of $V_1\cup_{\tilde{f}}V_2$ given above, we could describe it as a
subset of $\matR^5$, adding the 5th coordinate $u_1$ for the scalar part of $\cl(V_2,g_2)\cong\matR\oplus V_2$. Thus,
$$\cl(V_2,g_2)=\{(0,y,z,0,u_1)\},$$ with the Clifford multiplication given by
$$(0,y,z',0,u_1')\cdot_{\cl}(0,y,z'',0,u_1'')=(0,y,u_1''z'+u_1'z'',0,u_1'u_1''-f_2(y)z'z'').$$

The Clifford algebra $\cl(V_1,g_1)$ is bigger; since the fibres of $V_1$ have dimension $2$, each fibre of $cl(V_1,g_1)$ has dimension $4$. Thus, we represent it as a subset in $\matR^6$, by adding the
coordinates $u_1,u_2$, where $u_1$ corresponds to the scalar part and $u_2$ corresponds to the degree $2$ vector part. Thus,
$$\cl(V_1,g_1)=\{(x,0,z,w,u_1,u_2)\},$$ with the Clifford multiplication given by
\begin{flushleft}
$(x,0,z',w',u_1',u_2')\cdot_{Cl}(x,0,z'',w'',u_1'',u_2'')=$
\end{flushleft}
\begin{flushright}
$=(x,0,z'u_1''+z''u_1',w'u_1''+w''u_1'+f_1(x)w'',u_1'u_1''-f_1(x)z'z'',u_1'u_2''+u_1''u_2')$.
\end{flushright}
Finally, $\cl(V_1\cup_{\tilde{f}}V_2,\tilde{g})$ can be described as the following subset in $\matR^6$:
$$\{(x,y,z,w,u_1,u_2)\mbox{ such that }xy=0,\,x=0\Rightarrow w=u_2=0\},$$ while $\cl(V_1,g_1)\cup_{\tilde{F}^{\cl}}\cl(V_2,g_2)$ is presented as the subset in $\matR^6$ of the following form:
$$\{(x,0,z,w,u_1,u_2)\mbox{ such that }x\neq 0\}\cup\{(0,y,z,0,u_1,0)\mbox{ for all }y,z\}.$$ The fibrewise multiplication is described by uniting the two formulae just given.

\paragraph{The contravariant exterior algebras} Likewise, the exterior algebras $\bigwedge_*(V_1)$ and $\bigwedge_*(V_2)$ are given by the same sets. Both of these we immediately represent as subsets of
$\matR^6$, with the $5$-th coordinate being the scalar part and the $6$-th coordinate being the exterior product corresponding to the exterior product relative to the $3$-rd and the $4$-th coordinates;
in the case of $V_2$, this part is obviously trivial. Thus, we have
$$\bigwedge_*(V_1)=\{(x,0,z,w,u_1,u_2)\},\,\,\,\bigwedge_*(V_2)=\{(0,y,z,0,u_1,0)\},$$ with the exterior product given by
\begin{flushleft}
$(x,0,z',w',u_1',u_2')\wedge(x,0,z'',w'',u_1'',u_2'')=$
\end{flushleft}
\begin{flushright}
$=(x,0,u_1''z'+u_1'z'',u_1''w'+u_1'w'',u_1'u_1'',u_1''u_2'+u_1'u_2''+z'w''-z''w')$,
\end{flushright}
\begin{flushleft}
$(0,y,z',0,u_1',0)\wedge(0,y,z'',0,u_1'',0)=(0,y,u_1''z'+u_1'z'',0,u_1'u_1'',0)$.
\end{flushleft}
The exterior algebras $\bigwedge_*(V_1\cup_{\tilde{f}}V_2)$ and $\bigwedge_*(V_1)\cup_{\tilde{f}^{\bigwedge_*}}\bigwedge_*(V_2)$ are then represented respectively by the sets
$$\bigwedge_*(V_1\cup_{\tilde{f}}V_2)=\{(x,y,z,w,u_1,u_2),\mbox{ where } xy=0,\,x=0\Rightarrow w=u_2=0\},$$
$$\bigwedge_*(V_1)\cup_{\tilde{f}^{\bigwedge_*}}\bigwedge_*(V_2)=\{(x,0,z,w,u_1,u_2)\mbox{ such that }x\neq 0\}\cup\{(0,y,z,0,u_1,0)\}.$$ It is obvious that the two
presentations determine the same set, with the second one possibly giving a better idea of the structure of the set, and the first one allowing for the uniform description of the exterior product, in the
following way:
\begin{flushleft}
$(x,y,z',w',u_1',u_2')\wedge(x,y,z'',w'',u_1'',u_2'')=$
\end{flushleft}
\begin{flushright}
$=(x,y,u_1''z'+u_1'z'',u_1''w'+u_1'w'',u_1'u_1'',u_1''u_2'+u_1'u_2''+z'w''-z''w')$.
\end{flushright}

\paragraph{The Clifford actions} It remains to describe the corresponding Clifford actions. As is standard, in the case of $\cl(V_1,g_1)$, it suffices to consider the action of elements of
form $(x,0,z,0,0,0)$ and $(x,0,0,w,0,0)$ on elements of form $(x,0,z,0,0,0)$, $(x,0,0,w,0,0)$, $(x,0,0,0,u_1,0)$, and $(x,0,0,0,0,u_2)$.

For these elements the multiplication is determined as follows
$$\left\{\begin{array}{l}
c_1(x,0,z,0,0,0)(x,0,z',0,0,0)=(x,0,0,0,-f_1(x)z^2,0)\\
c_1(x,0,z,0,0,0)(x,0,0,w,0,0)=(x,0,0,0,0,zw)\\
c_1(x,0,z,0,0,0)(x,0,0,0,u_1,0)=(x,0,u_1z,0,0,0)\\
c_1(x,0,z,0,0,0)(x,0,0,0,0,u_2)=(x,0,0,-u_2f_1(x)z,0,0)\\
c_1(x,0,0,w,0,0)(x,0,z,0,0,0)=(x,0,0,0,0,-zw)\\
c_1(x,0,0,w,0,0)(x,0,0,w',0,0)=(x,0,0,0,-f_1(x)ww',0)\\
c_1(x,0,0,w,0,0)(x,0,0,0,u_1,0)=(x,0,0,u_1w,0,0)\\
c_1(x,0,0,w,0,0)(x,0,0,0,0,u_2)=(x,0,0,0,0,0).
\end{array}\right.$$ In the case of $\cl(V_2,g_2)$, it suffices to consider the action of $(0,y,z,0,0,0)$ on elements of form $(0,y,z,0,0,0)$ and $(0,y,0,0,u_1,0)$, and we have
$$\left\{\begin{array}{l}
c_2(0,y,z,0,0,0)(0,y,z',0,0,0)=(0,y,0,0,-f_2(y)zz',0) \\
c_2(0,y,z,0,0,0)(0,y,0,0,u_1,0)=(0,y,u_1z,0,0,0)
\end{array}\right.$$
Finally, the Clifford action on both $\bigwedge_*(V_1\cup_{\tilde{f}}V_2)$ and $\bigwedge_*(V_1)\cup_{\tilde{f}^{\bigwedge_*}}\bigwedge_*(V_2)$ is obtained by concatenating the two lists; the difference 
between the two pseudo-bundles is not seen on the level of defining the action, but rather in how we determine the two sets of points (as already been indicated above), underlying the commutativity between 
the gluing and the exterior product.

\vspace{1cm}

\noindent University of Pisa \\
Department of Mathematics \\
Via F. Buonarroti 1C\\
56127 PISA -- Italy\\
\ \\
ekaterina.pervova@unipi.it\\

\end{document}